\newcommand{\R}{\mathbb R}
\newcommand{\Z}{\mathbb Z}
\newcommand{\C}{\mathbb C}
\newcommand{\N}{\mathbb N}
\newcommand{\T}{\mathbb T}
\renewcommand{\S}{\text{S}}
\newcommand{\w}{\omega}
\newcommand{\e}{\varepsilon}
\newcommand{\g}{\gamma}
\newcommand{\p}{\varphi}
\newcommand{\s}{\psi}
\newcommand{\z}{\zeta}
\renewcommand{\a}{\alpha}
\renewcommand{\b}{\beta}
\renewcommand{\d}{\delta}
\renewcommand{\P}{\mathbb{P}}
\newcommand{\mc}{\mathcal}
\newcommand{\mb}{\mathbb}
\def\set4{\mathcal I}
\def\tup14{(1,2,3,4)}
\newcommand\vwidehat[1]{\arraycolsep=0pt\relax%
\begin{array}{c}
\stretchto{
  \scaleto{
    \scalerel*[\widthof{\ensuremath{#1}}]{\kern-.5pt\bigwedge\kern-.5pt}
    {\rule[-\textheight/2]{1ex}{\textheight}} 
  }{\textheight} %
}{0.5ex}\\           
#1\\                 
\rule{-1ex}{0ex}
\end{array}
}
\newtheorem*{comm*}{Comment}
\newtheorem*{rmk}{Remark}
\newtheorem{definition}{Definition}
\newtheorem*{lemma*}{Lemma}
\newtheorem*{theorem*}{Theorem}
\newtheorem*{cor*}{Corollary}
\newtheorem{theorem}{Theorem}
\newtheorem{thm}{Theorem}[section]
\newtheorem{corollary}[thm]{Corollary}
\newtheorem{proposition}[thm]{Proposition}
\newtheorem*{proposition*}{Proposition}
\newtheorem{lemma}[thm]{Lemma}
\newcommand\widecheck[1]{%
\savestack{\tmpbox}{\stretchto{%
  \scaleto{%
    \scalerel*[\widthof{\ensuremath{#1}}]{\kern-.6pt\bigwedge\kern-.6pt}%
    {\rule[-\textheight/2]{1ex}{\textheight}}
  }{\textheight}%
}{0.5ex}}%
\stackon[1pt]{#1}{\scalebox{-1}{\tmpbox}}%
}
\newcommand{\supp}{\mathrm{supp}}
\begin{document}

\author{Dominique Maldague}
\address{Department of Mathematics\\
Massachusetts Institute of Technology\\
Cambridge, MA 02142-4307, USA}
\email{dmal@mit.edu}

\keywords{square function estimate, superlevel sets}
\subjclass[2020]{42B15, 42B20}

\date{}

\title{A sharp square function estimate for the moment curve in $\R^3$ }
\maketitle

\begin{abstract}
    We prove a sharp (up to $C_\e R^\e$) $L^7$ square function estimate for the moment curve in $\R^3$. 
\end{abstract}
\section{Introduction}

We use a combination of high-low frequency analysis and induction to prove a sharp $L^7$ square function estimate for the moment curve $\mc{M}^3=\{(t,t^2,t^3):0\le t\le 1\}$. As in \cite{locsmooth} for the cone in $\R^3$, this is an example of using techniques developed in decoupling theory \cite{BD} to prove square function estimates. Our argument does not follow the inductive scheme in \cite{locsmooth}, though we do use their wave envelope estimate (Theorem 1.3 of \cite{locsmooth}). Instead, we add an element of induction to the high-low proof of decoupling for the moment curve, which is contained in \cite{M3smallcap} and is based on the argument from \cite{gmw} for the parabola.
We provide an overview of the proof technique in \textsection\ref{int}. It is worth noting that the only previous sharp square function estimates in restriction theory are for even $L^p$ exponents, which often makes Fourier analysis problems more approachable. 

For $R\ge 1$, consider the anisotropic neighborhood 
\[\mc{M}^3(R)=\{(\xi_1,\xi_2,\xi_3): \xi_1\in[0,1],\,|\xi_2-\xi_1^2|\le R^{-{2/3}},\,|\xi_3-3\xi_1\xi_2+2\xi_1^3|\le R^{-1} \}.  \]
Partition this neighborhood of $\mc{M}^3$ into canonical blocks $\theta$, which have the form  
\begin{equation}\label{momblocksintro}
    \theta=\{(\xi_1,\xi_2,\xi_3): lR^{-1/3}\le \xi_1<(l+1)R^{-1/3},\,|\xi_2-\xi_1^2|\le R^{-2/3},\,|\xi_3-3\xi_1\xi_2+2\xi_1^3|\le R^{-1} . \} 
\end{equation}
For Schwartz functions $f:\R^3\to\C$, define the Fourier projection onto $\theta$ by $f_\theta(x)=\int_\theta \widehat{f}(\xi)e^{2\pi i x\cdot\xi} d\xi$. Our main result is the following. 

\begin{theorem} \label{main} For any $\e>0$, there exists $C_\e<\infty$ such that
\begin{equation}\label{sqfn}    \int_{\R^3}|f|^7\le C_\e R^\e\int_{\R^3}|\sum_\theta|f_\theta|^2|^{\frac{7}{2}}   \end{equation}
for any Schwartz function $f:\R^3\to\C$ with Fourier transform supported in $\mc{M}^3(R)$.
\end{theorem}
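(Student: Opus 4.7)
My plan is to recast Theorem \ref{main} in the equivalent superlevel set form --- for each $\lambda>0$, $\lambda^7|\{|f|>\lambda\}| \le C_\epsilon R^\epsilon \int g^{7/2}$ where $g = \sum_\theta|f_\theta|^2$ --- and prove this by induction on the scale $R$, following the high-low frequency framework used for moment curve decoupling in \cite{M3smallcap}, which is itself modeled on the parabola argument in \cite{gmw}. The main new input beyond what the decoupling argument requires is the wave envelope estimate, Theorem 1.3 of \cite{locsmooth}, which replaces ordinary $L^2$-orthogonality at the step where one would otherwise incur a logarithmic loss per scale. The fact that the exponent $7$ is odd --- and therefore cannot be accessed by a direct Plancherel/convolution computation --- already signals that an inductive multiscale argument is unavoidable.

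For each dyadic intermediate scale $s \in [R^{-1/3},1]$, the moment curve admits a coarser partition into blocks $\tau$ of dimensions $s\times s^2\times s^3$. Since $|f_\tau|^2$ is Fourier-supported in the difference set $\tau-\tau$, the square function decomposes on the frequency side as $g = g_\ell + \sum_s g_s^h$, where $g_\ell$ is concentrated at frequencies $\lesssim 1$ and each $g_s^h$ lives in an annulus at frequency scale $\sim s^3$. The low part $g_\ell$ is pointwise comparable to $\sum_\tau|f_\tau|^2$ by local constancy on the dual wave packets, so its contribution to the superlevel set inequality is controlled by applying the inductive hypothesis at scale $s^{-3}$ to the coarser sum. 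For the high pieces I would write $|f|^2 = g + (\text{cross terms at scale }s)$ and apply Plancherel together with the wave envelope estimate at scale $s^{-3}$; this converts the pointwise inequality $\lambda^2 \le |f|^2$ on the superlevel set into a weighted $L^2$ bound over wave envelopes which, after a pigeonholed refinement argument pairing it against the trivial bound $|f| \lesssim R^{1/6} g^{1/2}$ from Cauchy--Schwarz, produces the target $L^{7/2}$ bound on $g$.

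The hardest step will be closing the induction with a cumulative loss of only $R^\epsilon$. Each intermediate scale contributes a factor that couples the wave envelope estimate to the inductive hypothesis, and these must balance exactly at the exponent $7/2$; summed over $O(\log R)$ scales, the losses must compress into $R^\epsilon$ rather than $(\log R)^{O(1)}$. The delicate point is that $|f_\tau|^2$ cannot be bounded pointwise by the finer square function $g$, so every reduction has to pass through either Plancherel or a refinement that first pigeonholes the height of $|f|$ together with the density of $\theta$'s contributing to it. This multiscale bookkeeping --- absent in the pure decoupling analogue, since decoupling tolerates logarithmic losses --- is what forces the additional induction on $R$ on top of the multiscale high-low decomposition.
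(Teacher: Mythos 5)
Your proposal correctly identifies the outer framework of the paper's argument: reduction to superlevel sets, a multiscale high-low decomposition of the square function indexed by intermediate blocks $\tau$, the wave envelope estimate of \cite{locsmooth} applied to the high-frequency part (which, after the linear change of variables identifying $(\tau-\tau)\setminus B$ with a cone plank, is exactly how the paper controls $g_k^h$), and a final induction on scales. But there is a genuine gap at the step you dispose of in one clause: ``the low part \ldots is controlled by applying the inductive hypothesis at scale $s^{-3}$ to the coarser sum.'' The inductive hypothesis at a coarser scale bounds $\int|f|^7$ by $\int|\sum_\tau|f_\tau|^2|^{7/2}$; it says nothing about how to pass from the coarse square function $\int|\sum_\tau|f_\tau|^2|^{7/2}$ down to the fine one $\int|\sum_\theta|f_\theta|^2|^{7/2}$. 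Unlike decoupling, where the right-hand side factors as $(\sum_\tau\|f_\tau\|_7^2)^{7/2}$ and each $f_\tau$ can be rescaled and inducted on individually, the $L^{7/2}$ integrand here couples all the $\tau$ together, and the reverse pointwise inequality between coarse and fine square functions is false. Closing this descent is the main new content of the paper: it requires a cylindrical $L^{7/2}$ square function estimate for the parabola, an $\ell^{7/4}$ wave envelope estimate for the cone, and an iterative ``unwinding'' algorithm (the paper's \textsection 4--5) that alternates these two inputs across all intermediate scales while a wave-packet pruning keeps the amplitudes under control. None of this appears in your outline.

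Two further points. First, your mechanism for upgrading the $L^4$ wave envelope output to an $L^7$ level-set bound --- pairing against the trivial Cauchy--Schwarz bound $|f|\lesssim R^{1/6}g^{1/2}$ --- loses a power of $R$; the paper instead prunes wave packets so that $\|f_{\tau}^{k}\|_\infty\lesssim\beta/\alpha$, which converts $|f_\tau^k|^4$ into $(\beta/\alpha)^{1/2}|f_\tau^k|^{7/2}$ with no loss, and it is precisely this pruning that later forces the unwinding algorithm (the pruned functions no longer have Fourier support in $\cup_{\theta\subset\tau}\theta$). Second, writing $|f|^2=g+(\text{cross terms})$ and applying Plancherel does not by itself relate $\lambda^2\le|f|^2$ to $g$ at an odd exponent; the paper instead runs a broad--narrow decomposition and uses the $L^6$ trilinear restriction theorem for the moment curve to get $\alpha^6|\Omega_k|\lesssim\int|g_k|^3$ on the broad part. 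As it stands the proposal reproduces the skeleton shared with the decoupling proof but omits the specific new ingredients that make the square function estimate go through.
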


We also obtain a version of Theorem \ref{main} for curves in $\R^3$ with torsion, which is explained in Appendix B. Theorem \ref{main} is sharp, up to the $C_\e R^\e$ factor. This may be seen from the constructive interference example which we now describe. Let $\s:\R^3\to[0,\infty)$ be a smooth bump function supported in the unit ball. For each $\theta$, let $\s_\theta=|\theta|^{-1}\s\circ T_\theta$ where $T_\theta:\R^3\to\R^3$ is an affine transformation mapping an ellipsoid comparable to $\theta$ to the unit ball. For a small universal constant $c>0$, the support of $\s_\theta$ is completely contained in a canonical block at scale $cR$, so Theorem \ref{main} implies that
\begin{equation}\label{sharpex} 
\int_{\R^3}|\sum_{\theta}\widecheck{\s}_\theta|^7\lesssim_\e R^\e \int_{\R^3}|\sum_\theta|\widecheck{\s}_\theta|^2|^{7/2}.
\end{equation}  
The function $|\widecheck{\s}_\theta|$ has amplitude $\sim 1$ on the set $\theta^*=\{x\in\R^3:|x\cdot\xi|\le 1\quad\forall\xi\in\theta-\theta\}$ and decays rapidly away from $\theta^*$. The union of the sets $\theta^*$ is a bush centered at the origin of $\sim R^{1/3}\times R^{2/3}\times R$ planks which are tangent to the light cone. A calculation shows that the right hand side of \eqref{sharpex} is dominated by the portion of the bush at a distance $\sim R$ from the origin, where the $\theta^*$ do not overlap, so 
\[ \int_{\R^3}|\sum_\theta|\widecheck{\s}_\theta|^2|^{7/2}\sim \sum_\theta|\theta^*|\sim R^{7/3}. \]
Finally, it is easy to see that in a neighborhood of radius $\sim 1$ at the origin, $|\sum_\theta\widecheck{\s}_\theta|\gtrsim R^{1/3}$, so the left hand side of \eqref{sharpex} is bounded below by $R^{7/3}$. This example also shows that no estimate of the form \eqref{sqfn} holds in $L^p$ if $p>7$.

Estimates of the form \eqref{sqfn} have a long history in harmonic analysis. The classical $L^4$ square function estimate for the parabola is based on geometric observations by Fefferman \cite{feffL4} and is recorded in \cite{cordoba}. The square function conjecture for paraboloids in $\R^n$ (Conjecture 5.19 of \cite{demeter}) is known to imply sharp results for the  Kakeya maximal function, the Bochner-Riesz means, and the Fourier restriction operator; see \cite{carbery} and the references therein. In future work, we intend to explore analogous applications for the moment curve. Although square function estimates are known to have many applications, there are few sharp results of the form \eqref{sqfn} in the literature. There has, however, been some partial progress on establishing square function estimates for the moment curve. In an unpublished work that was shared with the author, H. Jung proved a non-sharp version of Theorem \ref{main} with a positive power of $R$ in the upper bound. In \cite{revphil}, the authors used approximate solution counting for Vinogradov systems to obtain square function estimates for the moment curve in $\R^n$, but for non-sharp exponents $2\le p\le 2n$ which are even. The only other sharp square function estimate we are aware of is for the cone in $\R^3$ \cite{locsmooth}, which, by the work of \cite{mss}, resolves the local smoothing conjecture for the wave equation in $\R^{2+1}$. Our proof of Theorem \ref{main} uses both the square function estimate for the parabola in $\R^2$ and for the cone in $\R^3$.

The organization of this paper is as follows. In \textsection\ref{int}, we describe the main ideas behind the proof of Theorem \ref{main}. The proof of Theorem \ref{main} happens in two steps. First, we prove a version of \eqref{sqfn} where the terms in the square function are convolved with certain weights. Then we show that this averaged version implies Theorem \ref{main}. We introduce the square function constants $S_1(R)$ and $S_2(R)$ corresponding to these two versions of \eqref{sqfn} in \textsection\ref{const}. We set up tools for the high-low argument in \textsection\ref{tools} and discuss the relevant geometry related to the moment curve and to the cone in \textsection\ref{geo}. We devote \textsection\ref{keyalgo} to the key technical step of unwinding the pruning process (see \textsection\ref{int}). The high-low method then allows us to bound the \emph{broad} part of the left hand side of \eqref{sqfn}, when the integrand is dominated by a trilinear version of itself, which we carry out in \textsection\ref{broad}. Finally we prove Theorem \ref{main} by bounding $S_1(R)$ and then $S_2(R)$ in \textsection\ref{mainsec}. Appendix A contains proofs of auxiliary square function and wave envelope estimates we require for the parabola and the cone. In Appendix B, we explain how to adapt the argument to obtain Theorem \ref{main} for general curves $\g(t)$ with torsion.

I am grateful for the mentorship of Larry Guth, with whom I discussed many methods for approaching the moment curve. I also want to thank Ciprian Demeter for helpful conversations about background. DM is supported by the National Science Foundation under Award No. 2103249.

\subsection{Overview of the proof of Theorem \ref{main}\label{int}}

In this section, we fix $\e>0$ and a Schwartz function $f:\R^3\to\C$. Let $\theta$ denote canonical $R^{-1/3}\times R^{-2/3}\times R^{-1}$ moment curve blocks. By pigeonholing arguments, it suffices to assume that either $\|f_\theta\|_\infty\sim 1$ or $\|f_\theta\|_\infty=0$ for each $\theta$, and that for some $\a>0$ and $\b>0$,  
\[ \int_{\R^3}|f|^7\lesssim (\log R)\a^7|U_{\a,\b}| \]
where $U_{\a,\b}=\{x\in\R^3:|f(x)|>\a,\quad\frac{\b}{2}\le \sum_\theta|f_\theta(x)|^2\le \b\}$. 
Our goal is to show that
\begin{equation}\label{goal'} \a^7|U_{\a,\b}|\lesssim_\e R^\e \int_{\R^3}|\sum_\theta|f_\theta|^2|^{7/2}. \end{equation}
The initial step in bounding $|U_{\a,\b}|$ involves the $L^6$ trilinear restriction theorem for $\mc{M}^3$. Since this theorem bounds trilinear expressions, we are actually bounding the \emph{broad} part of $U_{\a,\b}$ (which is the subset where $|f|\lesssim |f_1f_2f_3|^{1/3}$ for Fourier projections $f_i$ of $f$ onto separated neighborhoods of $\mc{M}^3$). The \emph{narrow} (or {not broad}) part of $U_{\a,\b}$ is dealt with using a standard inductive argument. 

The high-low method partitions $U_{\a,\b}$ into $\le\e^{-1}$ many subsets $\Omega_k$ which we bound in separate cases. Let $R_{N-1}=R^{1-\e}$ and let $\tau_{N-1}$ denote $R_{N-1}^{-1/3}\times R_{N-1}^{-2/3}\times R_{N-1}$ moment curve blocks and let $g_{N-1}=\sum_{\tau_{N-1}}|f_{\tau_{N-1}}|^2$. The first subset we bound is 
\[ \Omega_{N-1}=\{x\in U_{\a,\b}:A\b\le g_{N-1}(x)\} \]
where $A>0$ is a constant that we will choose below. The high-low decomposition for $g_{N-1}$ is $g_{N-1}=g_{N-1}^\ell+g_{N-1}^h$ where $g_{N-1}^{\ell}=g_{N-1}*\widecheck{\eta}_{N}$ for a bump function $\eta_N$ equal to $1$ on $B_{R^{-1/3}}(0)$ and supported in $B_{2R^{-1/3}}(0)$. 
By a local $L^2$-orthogonality argument, $A\b\le g_{N-1}(x)$ implies that $g_{N-1}(x)\le 2|g_{N-1}^h(x)|$ when $A$ is a  sufficiently large constant.

Applying the $L^6$ multilinear restriction essentially yields 
\[ \a^6|\Omega_{N-1}|\lesssim \int_{R_{N-1}^{1/3}-\Omega_{N-1}}|g_{N-1}|^3 \]
where $R_{N-1}^{1/3}-\Omega_{N-1}$ means the $R_{N-1}^{1/3}$-neighborhood of $\Omega_{N-1}$. Since the Fourier support of $g_{N-1}$ is contained in $\cup_{\tau_{N-1}}(\tau_{N-1}-\tau_{N-1})\subset B_{2R_{N-1}^{-1/3}}(0)$, $g_{N-1}$ has roughly constant modulus on ${R_{N-1}^{1/3}}$-balls on the spatial side. Therefore, since $A\b\le g_{N-1}\lesssim|g_{N-1}^h|$ on $\Omega_{N-1}$, 
\[ \int_{R_{N-1}^{1/3}-\Omega_{N-1}}|g_{N-1}|^3 \lesssim \frac{1}{A\b}\int_{R_{N-1}^{1/3}-\Omega_{N-1}}|g_{N-1}^h|^4 .\]
Note that we chose to go from an $L^3$ expression of $g_{N-1}$ to an $L^4$ expression. We did this because the Fourier support of $g_{N-1}^h$ is contained in a neighborhood of the truncated cone, so we would like to use the sharp $L^4$ square function estimate for the cone \cite{locsmooth}. This yields the bound
\[\int_{\R^3}|g_{N-1}^h|^4\lesssim_\e R^\e \int_{\R^3}\big|\sum_{\tau_{N-1}}||f_{\tau_{N-1}}|^2-|f_{\tau_{N-1}}|^2*\widecheck{\eta}_N|^2\big|^2.  \]
Since $\|\widecheck{\eta}_{N}\|_1\sim 1$ and there are $\lesssim R^{\e/3}$ many $\theta$ contained in each $\tau_{N-1}$, the integral on the right hand side is bounded using Cauchy-Schwarz and Young's convolution inequality by a constant factor times
\[ R^{2\e}\int_{\R^3}|\sum_\theta|f_\theta|^4|^2. \]
The summary of the argument so far is that
\[ \a^6|\Omega_{N-1}|\lesssim \frac{1}{A\b}R^{3\e}\int_{\R^3}|\sum_\theta|f_\theta|^4|^2. \]
Comparing with \eqref{goal'}, we see that it suffices to check that $\a\lesssim \b$ and $\int_{\R^3}|\sum_\theta|f_\theta|^4|^2\lesssim \int_{\R^3}|\sum_\theta|f_\theta|^2|^{7/2}$. The first inequality is justified since morally, each $|f_\theta|$ may be thought of as a sum amplitude $1$ wave packets localized to non-overlapping translates of dual planks $\theta^*$, so $|f_\theta|\lesssim |f_\theta|^2$. Then for $x\in U_{\a,\b}$, we have $\a\lesssim |\sum_\theta f_\theta(x)|\lesssim \sum_\theta|f_\theta(x)|\lesssim \sum_\theta|f_\theta(x)|^2\lesssim \b$. The second inequality is justified by the assumption that $\|f_\theta\|_\infty\lesssim 1$, so $\sum_\theta|f_\theta|^4\lesssim \sum_\theta|f_\theta|^{7/2}$, and then using $\|\cdot\|_{\ell^{7/2}}\le \|\cdot\|_{\ell^2}$. This concludes the bound of $|\Omega_{N-1}|$. 

\[\]

\noindent\underline{Intermediate scales:} Let $R_k=R^{k\e}$, let $\tau_k$ be canonical $R_k^{-1/3}\times R_k^{-2/3}\times R_k^{-1}$ moment curve blocks, and let $g_k=\sum_{\tau_k}|f_{\tau_k}|^2$. Decompose $g_k$ into high-low parts $g_k=g_k^\ell+g_k^h$ by defining $g_k^\ell=g_k*\widecheck{\eta}_k$, where $\eta_k$ is a bump function equal to $1$ on $B_{R_{k+1}^{-1/3}}(0)$. The $k$th subset of $U_{\a,\b}$ that we consider is 
\[ \Omega_k=\{x\in U_{\a,\b}:A^{N-k}\b\le g_k\quad \text{and}\quad  g_l\le A^{N-l}\b \quad\forall l=k+1,\ldots,N-1\}. \]
As in the analysis of $\Omega_{N-1}$, we can show that on $\Omega_k$, $g_k$ is high-dominated. After applying the $L^6$ trilinear restriction and using that $A^{N-k}\b\le g_k\lesssim|g_k^h|$ on $\Omega_k$, we have
\[ \a^6|\Omega_k|\lesssim \frac{1}{A^{N-k}\b}\int_{\R^3}|g_k^h|^4. \]
Again, $g_k^h$ is Fourier supported on the $R_k^{-1/3}$-dilation of the truncated cone, so we may apply the $L^4$ square function estimate for the cone, yielding
\begin{equation}\label{last} \a^6|\Omega_k|\lesssim \frac{1}{A^{N-k}\b}C_\e R^\e \int_{\R^3}|\sum_{\tau_k}|f_{\tau_k}|^4|^2. \end{equation}
Here, unlike in the analysis of $\Omega_{N-1}$, $\tau_k$ may be much coarser than $\theta$, so we cannot use trivial inequalities and the assumption that $\|f_\theta\|_{L^\infty}\lesssim 1$ to arrive at the right hand side of \eqref{goal'}. To provide an alternative $L^\infty$ bound for each $f_{\tau_k}$, we perform a pruning process on the wave packets. This pruning process is the same as the one from \cite{gmw}, in which we argue that on $\Omega_k$, $f$ may be replaced by a version $f^k=\sum_{\tau_k} f_{\tau_k}^k$ where each $f_{\tau_k}^k$ only has wave packets with amplitude $\lesssim \b/\a$. The pruned $f_{\tau_k}^k$ satisfy the property that $\|f_{\tau_k}^k\|_{\infty}\lesssim \frac{\b}{\a}$. Using $f^k$ in place of $f$ in \eqref{last} and the good $L^\infty$ bound for each $f_{\tau_k}^k$, we arrive at the inequality
\[ \a^6|\Omega_k|\lesssim_\e \frac{1}{A^{N-k}\b}R^\e\frac{\b}{\a}\int_{\R^3}|\sum_{\tau_k}|f_{\tau_k}^k|^{7/2}|^2, \]
which, using $\|\cdot\|_{\ell^{7/2}}\le\|\cdot\|_{\ell^2}$, implies that 
\begin{equation}\label{int2} \a^7|\Omega_k|\lesssim_\e R^\e \int_{\R^3}|\sum_{\tau_k}|f_{\tau_k}^k|^{2}|^{7/2}. \end{equation}
The left hand side looks good because it is an $L^7$ expression. It remains to consider how to bound the right hand side by the $L^7$ integral of the square function at our desired scale $\theta$. It looks as though the right hand side is partial progress towards $\sum_\theta|f_\theta|^2$, so we would like to invoke induction to finish the argument. Indeed, if our goal were to show an $(\ell^2,L^7)$ decoupling estimate and had shown \[ \a^7|\Omega_k|\lesssim_\e R^\e\Big(\sum_{\tau_k}\|f_{\tau_k}\|_{L^7(\R^3)}^2 \Big)^{7/2}, \]
then we could rescale each $f_{\tau_k}$ and invoke induction on scales to justify $\|f_{\tau_k}\|_{L^7(\R^3)}\lesssim (\sum_{\theta\subset\tau_k}\|f_\theta\|_{L^7(\R^3)}^2)^{1/2}$, which combines with the displayed inequality to give the desired $(\ell^2,L^7)$ estimate. The main difficulty of proving a square function estimate compared to a decoupling estimate is that the integrand on the right hand side of \eqref{int2} involves all of the $f_{\tau_k}^k$, so we cannot rescale each $f_{\tau_k}^k$ individually and invoke induction. 

To address this problem, we use the more-detailed wave envelope estimate Theorem 1.3 from \cite{locsmooth} in place of the $L^4$ square function estimate to bound $g_k^h$. The wave envelope estimate was invented in \cite{locsmooth} as a tool to prove the $L^4$ square function estimate for the cone in $\R^3$ and it has two main features: (1) it is stronger than the square function estimate and (2) it behaves well in induction on scales. To simplify the explanation of our strategy, we will consider a special case that arises after applying the wave envelope estimate in place of the square function estimate for the cone. Otherwise repeating the reasoning that led to \eqref{int2}, we have
\begin{equation}\label{easy}  \a^7|\Omega_k|\lesssim_\e R^\e \int\sum_{\tau_k}|f_{\tau_k}^k|^7. \end{equation}
At first sight, the integral on the right hand side of \eqref{easy} looks similar to 
\begin{equation}\label{realeasy}
\sum_{\tau_k}\int|f_{\tau_k}|^7. \end{equation}
Since each $f_{\tau_k}$ is integrated individually, this expression can be handled by induction as follows. Let $\S(R)$ be the smallest constant for which 
\[ \int|\sum_\theta f_\theta|^7\le S(R)\int|\sum_\theta|f_\theta|^2|^{7/2} \]
for any $f$ satisfying the hypotheses of Theorem \ref{main}. Using affine rescaling of the moment curve, \eqref{realeasy} is bounded by
\[ S(R/R_k)\sum_{\tau_k}\int|\sum_{\theta\subset\tau_k}|f_\theta|^2|^{7/2}\le S(R/R_k)\int|\sum_{\theta}|f_\theta|^2|^{7/2} .  \]
Supposing that $R_k>R^{C_0\e}$, this would be a favorable scenario since the multi-scale inequality $S(R)\lesssim_\e R^{C\e} S(R/R_k)$ implies that $S(R)\lesssim_\d R^\d$ for any $\d>0$. The issue with this argument is that the $f_{\tau_k}^k$ in \eqref{easy} are different from the $f_{\tau_k}$ in \eqref{realeasy}. The pruning process which leads to the favorable $L^\infty$ bounds for $f_{\tau_k}^k$ also changes the Fourier support from $\cup_{\theta\subset\tau_k}\theta$ to potentially all of $\tau_k$. A rescaling argument may still be used to argue that
\[ \sum_{\tau_k}\int|f_{\tau_k}^k|^7\le S(R^\e)\sum_{\tau_k}\int|\sum_{\tau_{k+1}}f_{\tau_{k+1}}^{k+1}|^2|^{7/2},  \]
but then we need to analyze the expression on the right hand side. The idea is to dyadically decompose Fourier space into annuli and perform a similar analysis as our initial bounds for $g_k^h$, except using an $L^{7/2}$ wave envelope estimate. The bulk of the new technical work in this paper is to perform an ``unpruning" iteration (going from expressions with $f^k$ to $f^{k+1}$ and eventually to $f^N=f$) while carefully keeping track of constants. Eventually, we bound the right hand side of \eqref{easy} by an expression like 
\[ C_{\d,\e} R^{\d+\e} [S(R^\e)]^{\e^{-1}-C_0}\int|\sum_\theta|f_\theta|^2|^{7/2}  \]
where $\d>0$ may be arbitrarily small. This is again a favorable case since $S(R)\le C_{\d,\e} R^{\d+\e} S(R^\e)^{\e^{-1}-C_0}$ for any $\d>0$ and any $\e>0$ implies the desired bound for $S(R)$.

\noindent\underline{Small scale: $R_k\le R^{C_0\e}$. }
In this case, the remaining subset of $U_{\a,\b}$ is 
\[ L=\{x\in U_{\a,\b}:g_l\le A^{N-l}\b\quad\forall l=C_0,\ldots,N-1\}. \]
The argument for the intermediate scales case no longer works when $R_k$ is too small since we could conclude, for example, that $S(R)\le C_{\d,\e}R^\d S(R^\e)^{\e^{-1}}$. This would not lead to the desired bound $S(R)\lesssim_\d R^\d$. Instead, we use trivial bounds based on the definition of $L$. For each $x\in L$, for any $l=C_0,\ldots,N-1$, we may write
\[ \a^2=|\sum_{\tau_l}f_{\tau_l}(x)|^2\lesssim \#\tau_l\sum_{\tau_l}|f_{\tau_l}|^2(x)\lesssim R_l^{1/3}A^{N-l}\b\lesssim R_l^{1/3}A^{N-l}\sum_\theta|f_\theta|^2. \]
Then using $l=C_0$,
\[ \a^7|L|\lesssim R^{C_0\e} \int_{L}|\sum_\theta|f_\theta|^2|^{7/2},\]
which gives the bound $S(R)\lesssim R^{C_0\e}$ directly.

The conclusion of the high-low argument from each of the previous cases is that for any $\d,\e>0$ and $C_0>0$,
\[ S(R)\lesssim_{\d,\e} R^{\d+\e}\Big[R^{C_0\e}+S(R^\e)^{\e^{-1}-C_0}\Big]. \]
The exact version of the multi-scale inequality we prove is in Lemma \ref{multiscale'}. We show in \textsection\ref{ind} how the multi-scale inequality implies the desired bound for $S(R)$.

\subsection{Definitions of the square function constants $S_1(R)$ and $S_2(R)$ \label{const}}

Technically, the result of our high-low argument is 
\begin{equation}\label{ver1} \int_{\R^3}|f|^7\lesssim_\e R^\e \int_{\R^3}|\sum_\theta|f_\theta|^2*\w_\theta|^{7/2} \end{equation}
for appropriate $L^1$-normalized weight functions $\w_\theta$ adapted to $\theta^*$. While the locally constant property (see Lemma \ref{locconst}) tells us that pointwise, 
\[ \sum_\theta|f_\theta|^2(x)\lesssim\sum_\theta|f_\theta|^2*\w_\theta(x),\]
it is not generally true that the reverse inequality holds, either pointwise or in $L^{7/2}$. Indeed, let ${\bf{v}}_\theta\in\R^3$ be a unit vector in the direction of the $R$-long side of $\theta^*$, let $T_\theta$ be an $R^{2/3}\times R^{2/3}\times R$ tube centered at the origin with orientation ${\bf{v}}_\theta$, and let each $|f_\theta|^2(x)\approx \chi_{T_\theta}(x-R{\bf{v}}_\theta)$. Then 
\[ \int |\sum_\theta|f_\theta|^2|^{7/2}\sim \sum_\theta\int|f_\theta|^7\sim R^{8/3}\]
since the $|f_\theta|^2$ are essentially disjointly supported. On the other hand, after averaging, $\sum_\theta|f_\theta|^2*\w_\theta\gtrsim R^{1/3}$ on a ball of radius $\sim R^{2/3}$ centered at the origin, giving the lower bound
\[ \int|\sum_\theta|f_\theta|^2*\w_\theta|^{7/2}\gtrsim R^2(R^{1/3})^{7/2}=R^{19/6}. \]
Therefore, the version of a square function estimate \eqref{ver1} that we obtain from the high-low argument is ostensibly weaker than our goal. After proving \eqref{ver1}, we use an inductive argument to upgrade it to \eqref{sqfn}. 

\begin{definition} \label{S1df}Let $R\ge 10$. Let $S_1(R)$ be the infimum of $A>0$ such that
\[ \int_{\R^3}|f|^7\le A\int_{\R^3}|\sum_\theta|f_\theta|^2*\w_\theta|^{7/2} \]
for any Schwartz function $f:\R^3\to\C$ with Fourier transform supported in $\mc{M}^3(R)$. 
\end{definition}

\begin{definition} \label{S2df} Let $R\ge 10$. Let $S_2(R)$ be the infimum of $B>0$ such that
\[ \int_{\R^3}|f|^7\le B\int_{\R^3}|\sum_\theta|f_\theta|^2|^{7/2} \]
for any Schwartz function $f:\R^3\to\C$ with Fourier transform supported in $\mc{M}^3(R)$.  

\end{definition}

\section{Set-up for the high-low analysis \label{tools}}

In this section, we set-up the notation and basic properties of the high-low analysis of square functions at various scales. This is analogous to the high-low set-up from \cite{gmw}.

Begin with precise definitions of canonical blocks of the moment curve. 

\begin{definition}[Canonical moment curve blocks]
For $S\in 2^{\N}$, consider the anisotropic neighborhood 
\[\mc{M}^3(S^3)=\{(\xi_1,\xi_2,\xi_3): \xi_1\in[0,1],\,|\xi_2-\xi_1^2|\le S^{-2},\,|\xi_3-3\xi_1\xi_2+2\xi_1^3|\le S^{-3} \}.  \]
Define ${\bf{S}}(S^{-1})$ to be the following collection of canonical moment curve blocks at scale $S$ which partition $\mc{M}^3(S^3)$:
\[ \bigsqcup\limits_{l=0}^{S-1}\{(\xi_1,\xi_2,\xi_3): lS^{-1}\le \xi_1<(l+1)S^{-1},\,|\xi_2-\xi_1^2|\le S^{-2},\,|\xi_3-3\xi_1\xi_2+2\xi_1^3|\le S^{-3} \}.  \]
\end{definition}

The (not unit-normalized) Frenet frame for the moment curve $\mc{M}^3$ at $t\in[0,1]$ is 
\begin{align}
{\bf{T}}(t)&=(1,2t,3t^2)\nonumber \\
\label{Frenet} 
{\bf{N}}(t)&=(-2t-9t^3, 1-9t^4, 3t+6t^3)\\\nonumber
{\bf{B}}(t)&=(3t^2,-3t,1). 
\end{align}
If $\tau\in{\bf{S}}(S^{-1})$ is the $\ell$th moment curve block
\[ \{(\xi_1,\xi_2,\xi_3): lS^{-1}\le \xi_1<(l+1)S^{-1},\,|\xi_2-\xi_1^2|\le S^{-2},\,|\xi_3-3\xi_1\xi_2+2\xi_1^3|\le S^{-3} \},\]
then $\tau$ is comparable to the set
\[ \{\g(lS^{-1})+A{\bf{T}}(lS^{-1})+B{\bf{N}}(lS^{-1})+C{\bf{B}}(lS^{-1}): |A|\le S^{-1},|B|\le S^{-2},|C|\le S^{-3} \} . \]
By comparable, we mean that there is an absolute constant $C>0$ for which $C^{-1}\tau$ is contained in the displayed set and $C\tau$ contains the displayed set, where the dilations are taken with respect to the centroid of $\tau$. Define the dual set $\tau^*$ by 
\begin{equation}\label{dualdef}
\tau^* = \{A{\bf{T}}(lS^{-1})+B{\bf{N}}(lS^{-1})+C{\bf{B}}(lS^{-1}): |A|\le S,|B|\le S^2,|C|\le S^{3} \}.  
\end{equation}
We sometimes refer to the set $\tau^*$ as well as its translates as wave packets.

Next, we fix some notation for the scales. Let $\e>0$. To prove Theorem \ref{main}, it suffices to assume that $R$ is larger than a constant which depends on $\e$. Consider scales $R_k\in 8^\N$ closest to $R^{k\e}$, for $k=1,\ldots,N$ and $R_N\le R\le R^\e R_N$. Since $R$ differs from $R_N$ at most by a factor of $R^\e$, we will assume that $R=R_N$. The relationship between the parameters is
\[ 1=R_0\le R_k^{\frac{1}{3}}\le R_{k+1}^{\frac{1}{3}}\le R_N^{\frac{1}{3}}. \]

Fix notation for moment curve blocks of various sizes. 
\begin{enumerate}
    \item Let $\theta$ denote $\sim R^{-\frac{1}{3}}\times R^{-\frac{2}{3}}\times R^{-1}$ moment curve blocks from the collection ${\bf{S}}(R^{-1/3})$. 
    \item Let $\tau_k$ denote $\sim R_k^{-\frac{1}{3}}\times R_k^{-\frac{2}{3}}\times R_k^{-1}$ moment curve blocks from the collection ${\bf{S}}(R_k^{-1/3})$. 
\end{enumerate}
The definitions of $\theta,\tau_k$ provide the additional property that if $\tau_k\cap\tau_{k+m}\not=\emptyset$, then $\tau_{k+m}\subset\tau_k$ (and similarly for the $\tau_k$).

Fix a ball $B_R\subset\R^3$ of radius $R$ as well as a Schwartz function $f:\R^3\to\C$ with Fourier transform supported in $\mc{M}^3(R)$. The parameters $\a,\b>0$ describe the set 
\[ U_{\a,\b}=\{x\in B_{R}:|f(x)|\ge \a,\quad\frac{\b}{2}\le \sum_{\theta\in{\bf{S}}(R^{-1/3})}|f_\theta|^2*\w_{\theta}(x)\le \b\}.\]
The weight function $\w_{\theta}$ is defined in Definition \ref{M3ballweight} below. We assume throughout this section (and until \textsection\ref{M3pigeon}) that the $f_\theta$ satisfy the extra condition that
\begin{equation}\label{unihyp} 
\frac{1}{2}\le \|f_\theta\|_{L^\infty(\R^3)}\le 2\qquad\text{or}\qquad \|f_\theta\|_{L^\infty(\R^3)}=0. \end{equation}

\subsection{A pruning step \label{prusec}}

We define wave packets associated to $f_{\tau_k}$ and sort them according to an amplitude condition which depends on the parameters $\a$ and $\b$. 

For each $\tau_k$, let $\T_{\tau_k}$ contain $\tau_k^*$ and its translates $T_{\tau_k}$ which tile $\R^3$. Fix an auxiliary function $\p(\xi)$ which is a bump function supported in $[-\frac{1}{4},\frac{1}{4}]^3$. For each $m\in\Z^3$, let 
\[ \s_m(x)=c\int_{[-\frac{1}{2},\frac{1}{2}]^3}|\widecheck{\p}|^2(x-y-m)dy, \]
where $c$ is chosen so that $\sum_{m\in\Z^3}\s_m(x)=c\int_{\R^3}|\widecheck{\p}|^2=1$. Since $|\widecheck{\p}|$ is a rapidly decaying function, for any $n\in\N$, there exists $C_n>0$ such that
\[ \s_m(x)\le c\int_{[-\frac{1}{2},\frac{1}{2}]^3}\frac{C_n}{(1+|x-y-m|^2)^n}dy \le \frac{\tilde{C}_n}{(1+|x-m|^2)^n}. \]
Define the partition of unity $\s_{T_{\tau_k}}$ associated to ${\tau_k}$ to be $\s_{T_{\tau_k}}(x)=\s_m\circ A_{\tau_k}$, where $A_{\tau_k}$ is a linear transformations taking $\tau_k^*$ to $[-\frac{1}{2},\frac{1}{2}]^3$ and $A_{\tau_k}(T_{\tau_k})=m+[-\frac{1}{2},\frac{1}{2}]^3$. The important properties of $\s_{T_{\tau_k}}$ are (1) rapid decay off of $T_{\tau_k}$ and (2) Fourier support contained in $\tau_k$ translated to the origin. We sort the wave packets $\T_{\tau_k}=\T_{\tau_k}^g\sqcup\T_{\tau_k}^b$ into ``good" and ``bad" sets, and define corresponding versions of $f$, as follows. 


\begin{rmk} In the following definitions, let $K\ge 1$  be a large parameter which will be used to define the broad set in Proposition \ref{mainprop}. Also, $A=A(\e)\gg 1$ is a large enough constant (determined by Lemma \ref{ftofk}) which also satisfies $A\ge D$, where $D$ is from Lemma \ref{low}.
\end{rmk}

\begin{definition}[Pruning with respect to $\tau_k$]\label{taukprune} Let $f^{N}=f$, $f^{N}_{\tau_N}=f_{\theta}$. 
For each $1\le k\le N-1$, let 
\begin{align*} \T_{\tau_k}^{g}&=\{T_{\tau_k}\in\T_{\tau_{k}}:\|\s_{T_{\tau_{k}}}^{1/2}f_{\tau_{k}}^{k+1}\|_{L^\infty(R^3)}\le K^3A^{N-k+1}\frac{\b}{\a}\}, \\
f_{\tau_{k}}^{k}=\sum_{T_{\tau_k}\in\T_{\tau_k}^{g}}&\s_{T_{\tau_k}}f^{k+1}_{\tau_k}\qquad\text{and}\qquad  f_{\tau_{k-1}}^{k}=\sum_{\tau_k\subset\tau_{k-1}}f_{\tau_k}^k .
\end{align*}
\end{definition}
For each $k$, the $k$th version of $f$ is $f^k=\underset{\tau_k}{\sum} f_{\tau_k}^k$.

\begin{rmk}
We may assume that $\a\lesssim R^{C_0}\b$. This will be discussed in Proposition \ref{wpd} and Corollary \ref{wpdcor}, which involve pigeonholing the wave packets of $f$.
\end{rmk}

\begin{lemma}[Properties of $f^k$] \label{pruneprop}
\begin{enumerate} 
\item\label{item1} $| f_{\tau_{k}}^k (x) | \le |f_{ \tau_{k}}^{k+1}(x)|\lesssim \#\theta\subset\tau_k.$
\item \label{item2} $\| f_{\tau_k}^k \|_{L^\infty(\R^3)} \le K^3A^{N-k+1}\frac{\b}{\a}$.
\item\label{item3} For $R$ sufficiently large depending on $\e$, $\text{supp} \widehat{f_{\tau_k}^{k}}\subset3\tau_k. $
\end{enumerate}
\end{lemma}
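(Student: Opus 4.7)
The plan is to proceed by downward induction on $k$ from $N$ to $1$, establishing all three items simultaneously. The base case $k=N$ is essentially immediate: $f_{\tau_N}^N=f_\theta$ has Fourier support in $\theta=\tau_N\subset 3\tau_N$, and \eqref{unihyp} gives $\|f_\theta\|_\infty\le 2$, which yields item (2) at $k=N$ provided $K,A$ are chosen large enough to absorb absolute constants and the factor coming from the (forthcoming) pigeonholing reduction $\a\lesssim\b$. Item (1) is trivial at $k=N$ and is verified at $k=N-1$ by the triangle inequality together with \eqref{unihyp}.

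For the inductive step at level $k$, the first inequality in item (1) follows from $\s_{T_{\tau_k}}\ge 0$ and $\sum_{T_{\tau_k}}\s_{T_{\tau_k}}\equiv 1$:
\[
|f_{\tau_k}^k(x)|=\Big|f_{\tau_k}^{k+1}(x)\sum_{T_{\tau_k}\in\T_{\tau_k}^{g}}\s_{T_{\tau_k}}(x)\Big|\le|f_{\tau_k}^{k+1}(x)|.
\]
The bound $|f_{\tau_k}^{k+1}(x)|\lesssim \#\{\theta\subset\tau_k\}$ then follows by the decomposition $f_{\tau_k}^{k+1}=\sum_{\tau_{k+1}\subset\tau_k}f_{\tau_{k+1}}^{k+1}$, the triangle inequality, and the inductive hypothesis of item (1) at level $k+1$. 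For item (2), write $\s_{T_{\tau_k}}=\s_{T_{\tau_k}}^{1/2}\cdot\s_{T_{\tau_k}}^{1/2}$ and invoke the defining good-tube inequality $\|\s_{T_{\tau_k}}^{1/2}f_{\tau_k}^{k+1}\|_\infty\le K^3 A^{N-k+1}\b/\a$ to obtain
\[
|f_{\tau_k}^k(x)|\le K^3 A^{N-k+1}\tfrac{\b}{\a}\sum_{T_{\tau_k}\in\T_{\tau_k}^{g}}\s_{T_{\tau_k}}^{1/2}(x),
\]
and the remaining sum is $O(1)$ by the rapid decay of the $\s_m$, which is absorbed into $K^3$.

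Item (3) is the geometric heart of the lemma and is where we use that $R$ is large relative to $\e$. By induction, $\widehat{f_{\tau_k}^{k+1}}=\sum_{\tau_{k+1}\subset\tau_k}\widehat{f_{\tau_{k+1}}^{k+1}}$ is supported in $\bigcup_{\tau_{k+1}\subset\tau_k}3\tau_{k+1}$. Crucially, since each $\tau_{k+1}\subset\tau_k$ has linear dimensions smaller by a factor of $R^{-\e/3}$, and the Frenet frames of the children $\tau_{k+1}$ deviate from that of $\tau_k$ by amounts negligible at the scale of $\tau_k$ once $R$ is large, this union fits inside $2\tau_k$. Multiplication by $\s_{T_{\tau_k}}$ convolves on the Fourier side by a function supported in $\tau_k-c_{\tau_k}$ (a translate of $\tau_k$ to the origin), and the Minkowski sum $(\tau_k-c_{\tau_k})+2\tau_k$ is contained in $3\tau_k$; summing over good $T$ preserves this support. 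The main obstacle is ensuring the constant $3$ in $3\tau_k$ does not compound across the $N\approx\e^{-1}$ inductive steps. This is precisely why one must upgrade $3\tau_{k+1}$ at the child level to $2\tau_k$ at the parent level using the scale gap $R^{-\e/3}$, so that the single convolution by $\widehat{\s_{T_{\tau_k}}}$ returns us to exactly $3\tau_k$ rather than something larger.
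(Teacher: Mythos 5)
Your proposal is correct and follows essentially the same route as the paper: iterate $|f_{\tau_k}^k|\le|f_{\tau_k}^{k+1}|$ down to the $\theta$-scale and invoke \eqref{unihyp} for item (1), split $\s_{T_{\tau_k}}=\s_{T_{\tau_k}}^{1/2}\cdot\s_{T_{\tau_k}}^{1/2}$ and use the good-tube threshold plus $\sum_T\s_T^{1/2}\lesssim 1$ for item (2), and induct on the Fourier support via the fact that multiplying by $\s_{T_{\tau_k}}$ convolves $\widehat{f_{\tau_k}^{k+1}}$ (supported in $\cup_{\tau_{k+1}\subset\tau_k}3\tau_{k+1}$) with a function supported in $\tau_k$ translated to the origin for item (3). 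Your extra discussion of why the factor $3$ does not compound across the $\approx\e^{-1}$ steps is a correct elaboration of a point the paper leaves implicit.
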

\begin{proof} For the first property, recall that $\sum_{T_{\tau_k} \in \T_{\tau_k}}\s_{T_{\tau_k}}$ is a partition of unity so we may iterate the inequalities 
\begin{align*}
|f_{\tau_k}^k|\le |f_{\tau_k}^{k+1}|&\le \sum_{\tau_{k+1}\subset\tau_k}|f_{\tau_{k+1}}^{k+1}|\le\cdots\le \sum_{\tau_N\subset\tau_k}|f_{\tau_N}^N|= \sum_{\theta \subset\tau_k}|f_{\theta}|.  
\end{align*}
The first property follows from our assumption \eqref{unihyp} that each $\|f_\theta\|_{L^\infty(\R^3)}\lesssim 1$. For the $L^\infty$ bound in the second property, write
\[ |f_{ \tau_k}^k(x)| = |\sum_{\substack{T_{\tau_k} \in \T_{\tau_k^h}}} \s_{T_{\tau_k}}(x) f_{ \tau_k}^{k+1}(x)|\le \sum_{\substack{T_{\tau_k} \in \T_{\tau_k^h}}} \s_{T_{\tau_k}}^{1/2}(x) \|\s_{T_{\tau_k}}^{1/2}f_{ \tau_k}^{k+1}\|_\infty\lesssim \|\s_{T_{\tau_k}}^{1/2}f_{ \tau_k}^{k+1}\|_\infty. \]
\noindent By the definition of $\T_{\tau_k}^h$, $\|\s_{T_{\tau_k}}^{1/2}f_{\tau_k}^{k+1}\|_\infty\le K^3 A^{N-k+1}\frac{\b}{\a}$.

The third property depends on the Fourier support of $\s_{T_{\tau_k}}$, which is contained in $\tau_k$ shifted to the origin. Note if each $f_{\tau_k}^{k+1}$ has Fourier support in $\cup_{\tau_{k+1}\subset\tau_k}3\tau_{k+1}$, then $\supp\widehat{f_{\g_k}^k}$ is contained in $3\tau_k$. 

\end{proof}

\begin{definition} \label{M3ballweight} Let $\phi:\R^3\to\R$ be a smooth, radial function supported in $[-\frac{1}{4},\frac{1}{4}]^3$ and satisfying $|\widecheck{\phi}(x)|\ge 1$ when $|x|\le 1$. Then define $w:\R^3\to[0,\infty)$ by 
\[ w(x)=|\widecheck{\phi}|^2(x)+\sum_{k=0}^\infty\frac{1}{2^{100 k}}\int_{2^k\le |y|\le 2^{k+1}}|\widecheck{\phi}|^2(x-y)dy. \]
Let $B\subset\R^3$ denote the unit ball centered at the origin. For any set $U=T(B)$ where $T$ is an affine transformation $T:\R^3\to\R^3$, define
\[ w_{U}(x)=|U|^{-1}w(T^{-1}(x)). \]
For each $\tau_k$, let $A_{\tau_k}$ be a linear transformation mapping $\tau_k^*$ to the unit cube and define $\w_{\tau_k}$ by
\[  \w_{\tau_k}(x)=|\tau_k^*|^{-1}w(A_{\tau_k}(x)). \]
\end{definition}
Let the capital-W version of weight functions denote the $L^\infty$-normalized (as opposed to $L^1$-normalized) versions, so for example, for any ball $B_s$, $W_{B_s}(x)=|B_s|w_{B_s}(x)$. If a weight function has subscript which is only a scale, say $s$, then the functions $w_s,W_s$ are weight functions localized to the $s$-ball centered at the origin.

\begin{rmk}
Note the additional property that $\widehat{w}$ is supported in $[-\frac{1}{2},\frac{1}{2}]^3$, so each $w_{B_s}$ is Fourier supported in an $s^{-1}$-neighborhood of the origin. Finally, note the property that if $A_1,A_2$ are affine transformations of the unit ball and $A_1\subset A_2$, then $w_{A_1}*w_{A_2}\lesssim w_{A_2}$. 
\end{rmk}

Next, we record the locally constant property. By locally constant property, we mean that if a function $f$ has Fourier transform supported in a convex set $A$, then $|f|$ is bounded above by an averaged version of $|f|$ over a dual set $A^*$. 

\begin{lemma}[Locally constant property]\label{locconst} For each $\tau_k$ and $T_{\tau_k}\in\T_{\tau_k}$, 
\begin{align*} 
\|f_{\tau_k}\|_{L^\infty(T_{\tau_k})}^2\lesssim |f_{\tau_k}|^2*\w_{\tau_k}(x)\qquad\text{for any}\quad x\in T_{\tau_k} .\end{align*}
Also, for any $R_k^{1/3}$-ball $B_{R_k^{1/3}}$, 
\begin{align*} 
\|\sum_{\tau_k}|f_{\tau_k}|^2\|_{L^\infty(B_{R_k^{1/3}})}\lesssim |f_{\tau_k}|^2*w_{B_{R_k^{1/3}}}(x)\qquad\text{for any}\quad x\in B_{R_k^{1/3}} .\end{align*}
\end{lemma}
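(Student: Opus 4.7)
The plan is to apply the standard locally constant principle twice, once to $f_{\tau_k}$ itself and once to the square function $\sum_{\tau_k}|f_{\tau_k}|^2$. The principle is that when $F$ has Fourier support in a convex set $A$, then $|F|^2$ is essentially constant on translates of $A^*$ and is dominated pointwise by its average against a weight adapted to $A^*$. Here the two instances are $A = \tau_k$ with $A^* = \tau_k^*$, and $A = B_{cR_k^{-1/3}}(0) \supset \bigcup_{\tau_k}(\tau_k - \tau_k)$ with $A^* = B_{R_k^{1/3}}$.

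For the first inequality, fix a smooth bump $\eta_{\tau_k}$ equal to $1$ on $\tau_k$ and supported in a comparable dilate, chosen (via the change of variables $A_{\tau_k}$ from the explicit $\phi$ in Definition \ref{M3ballweight}) so that $\|\widecheck{\eta_{\tau_k}}\|_{L^1} \lesssim 1$ and $|\widecheck{\eta_{\tau_k}}(t)| \lesssim \w_{\tau_k}(t)$ pointwise. Since $\widehat{f_{\tau_k}}$ lives in $\tau_k$, Fourier inversion gives $f_{\tau_k} = f_{\tau_k} * \widecheck{\eta_{\tau_k}}$, and Cauchy--Schwarz yields
\[
|f_{\tau_k}(y)|^2 \le \|\widecheck{\eta_{\tau_k}}\|_{L^1}\, \bigl(|f_{\tau_k}|^2 * |\widecheck{\eta_{\tau_k}}|\bigr)(y) \lesssim |f_{\tau_k}|^2 * \w_{\tau_k}(y).
\]
To pass from the bound at $y$ to the claimed bound for $x \in T_{\tau_k}$, note that $\w_{\tau_k}(t) = |\tau_k^*|^{-1} w(A_{\tau_k}(t))$ where $w$ has polynomial decay of arbitrarily high order from its definition, so $w$ is essentially constant under unit-scale translations. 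Since $A_{\tau_k}(T_{\tau_k} - T_{\tau_k}) \subset B_C(0)$ for an absolute constant $C$, we obtain $\w_{\tau_k}(y - z) \lesssim \w_{\tau_k}(x - z)$ uniformly in $z$ whenever $x, y \in T_{\tau_k}$. Integrating against $|f_{\tau_k}(z)|^2$ and taking $\sup_{y \in T_{\tau_k}}$ gives the first assertion.

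The second assertion is proved by the same two steps applied to $F = \sum_{\tau_k}|f_{\tau_k}|^2$, whose Fourier support is contained in $B_{cR_k^{-1/3}}(0)$. Choose a bump $\eta$ adapted to this ball with $\|\widecheck{\eta}\|_{L^1} \lesssim 1$ and $|\widecheck{\eta}| \lesssim w_{B_{R_k^{1/3}}}$, write $F = F * \widecheck{\eta}$, and apply Cauchy--Schwarz followed by the locally-constant transfer from $y$ to $x \in B_{R_k^{1/3}}$ using the fact that $w_{B_{R_k^{1/3}}}$ is essentially constant on $R_k^{1/3}$-scale translations. The only genuine technical point in the entire argument is the construction of $\eta_{\tau_k}$ and $\eta$ ensuring the pointwise domination by $\w_{\tau_k}$ and $w_{B_{R_k^{1/3}}}$ respectively; given how explicitly $w$ is defined in Definition \ref{M3ballweight} (already folding in polynomial-decay tails of $|\widecheck{\phi}|^2$), this is immediate once $\eta_{\tau_k}$ is chosen as the pullback of $\phi$ under $A_{\tau_k}$.
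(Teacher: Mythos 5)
Your argument for the first inequality is exactly the paper's: reproduce $f_{\tau_k}=f_{\tau_k}*\widecheck{\rho_{\tau_k}}$ with a bump $\rho_{\tau_k}$ equal to $1$ on $\tau_k$, apply Cauchy--Schwarz to get $|f_{\tau_k}|^2\le\|\widecheck{\rho_{\tau_k}}\|_1\,|f_{\tau_k}|^2*|\widecheck{\rho_{\tau_k}}|$, dominate $|\widecheck{\rho_{\tau_k}}|\lesssim\w_{\tau_k}$, and use that $\w_{\tau_k}(x-z)\sim\w_{\tau_k}(y-z)$ for $x,y$ in a common $T_{\tau_k}$. For the second inequality the paper again works term by term, reproducing each $f_{\tau_k}$ with a bump equal to $1$ on a $2R_k^{-1/3}$-ball containing $\tau_k$ (so that all the kernels $|\widecheck{\rho_{\tau_k}}|$ coincide with a single $L^1$-normalized kernel concentrated on $B_{R_k^{1/3}}$) and then summing over $\tau_k$; you instead apply the locally constant principle to $F=\sum_{\tau_k}|f_{\tau_k}|^2$ directly, which is also legitimate since $\widehat{F}$ is supported in $B_{CR_k^{-1/3}}(0)$. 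The one slip is that in this second application you should \emph{not} apply Cauchy--Schwarz again: squaring $F=F*\widecheck{\eta}$ would yield $\|F\|_{L^\infty}^2\lesssim |F|^2*w_{B_{R_k^{1/3}}}$, which is not the stated bound. Since $F\ge 0$ you simply estimate $F(y)=F*\widecheck{\eta}(y)\le F*|\widecheck{\eta}|(y)\lesssim F*w_{B_{R_k^{1/3}}}(y)$ and then transfer from $y$ to $x$ exactly as in part one.
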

Because the pruned versions of $f$ and $f_{\tau_k}$ have essentially the same Fourier supports as the unpruned versions, the locally constant lemma applies to the pruned versions as well.

\begin{proof}[Proof of Lemma \ref{locconst}] For the first claim, we write the argument for $f_{\tau_k}$ in detail. Let $\rho_{\tau_k}$ be a bump function equal to $1$ on $\tau_k$ and supported in $2\tau_k$. Then using Fourier inversion and H\"{o}lder's inequality, 
\[ |f_{\tau_k}(y)|^2=|f_{\tau_k}*\widecheck{\rho_{\tau_k}}(y)|^2\le\|\widecheck{\rho_{\tau_k}}\|_1 |f_{\tau_k}|^2*|\widecheck{\rho_{\tau_k}}|(y). \]
Since $\rho_{\tau_k}$ may be taken to be an affine transformation of a standard bump function adapted to the unit ball, $\|\widecheck{\rho_{\tau_k}}\|_1$ is a constant. The function $\widecheck{\rho_{\tau_k}}$ decays rapidly off of $\tau_k^*$, so $|\widecheck{\rho_{\tau_k}}|\lesssim \w_{{\tau_k}}$.
Since for any $T_{\tau_k}\in\T_{\tau_k}$, $\w_{\tau_k}(y)\sim\w_{\tau_k}(y')$ for all $y,y'\in T_{\tau_k}$, we have
\begin{align*} \sup_{x\in T_{\tau_k}}|f_{\tau_k}|^2*\w_{\tau_k}(x)&\le \int|f_{\tau_k}|^2(y)\sup_{x\in T_{\tau_k}}\w_{\tau_k}(x-y)dy\\
&\sim \int|f_{\tau_k}|^2(y)\w_{\tau_k}(x-y)dy\qquad \text{for all}\quad x\in T_{\tau_k}. 
\end{align*}

For the second part of the lemma, repeat analogous steps as above, except begin with $\rho_{\tau_k}$ which is identically $1$ on a ball of radius $2R_k^{-1/3}$ containing $\tau_k$. Then 
\[  \sum_{\tau_k}|f_{\tau_k}(y)|^2=\sum_{\tau_k}|f_{\tau_k}*\widecheck{\rho_{\tau_k}}(y)|^2\lesssim \sum_{\tau_k}|f_{\tau_k}|^2*|\widecheck{\rho_{R_k^{-1/3}}}|(y),\]
where we used that each $\rho_{\tau_k}$ is a translate of a single function $\rho_{R^{-1/3}}$. The rest of the argument is analogous to the first part. 
\end{proof}

The following local $L^2$-orthogonality lemma which is Lemma 3 in \cite{M3smallcap}.  
\begin{lemma}[Local $L^2$ orthogonality]\label{L2orth} Let $U=T(B)$ where $B$ is the unit ball centered at the origin and $T:\R^3\to\R^3$ is an affine transformation. Let $h:\R^3\to\C$ be a Schwartz function with Fourier transform supported in a disjoint union $X=\sqcup_k X_k$, where $X_k\subset B$ are Lebesgue measurable. If the maximum overlap of the sets $X_k+U^*$ is $L$, then
\[ \int |h_X|^2w_U\lesssim L\sum_{X_k}\int|h_{X_k}|^2w_U, \]
where $h_{X_k}=\int_{X_k}\widehat{h}(\xi)e^{2\pi i x\cdot\xi}d\xi$.
\end{lemma}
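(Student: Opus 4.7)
The plan is to decompose the weight $w_U$ as an integral superposition of pure squares $|\psi_y|^2$, where each $\psi_y$ is a Schwartz function whose Fourier transform is supported in $U^*$, apply standard $L^2$-orthogonality-with-loss-$L$ to each $\psi_y$, and then integrate back.

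Unwinding Definition \ref{M3ballweight} and pushing forward under the affine map $T$, I can write
\[ w_U(x) = \int_{\R^3} |\psi_y(x)|^2 \, d\mu_U(y), \]
where $d\mu_U$ is a non-negative (distributional) measure assembled from the Dirac mass at the center of $U$ together with the geometrically decaying weighted shells $2^{-100k}\chi_{\{2^k\le |z|\le 2^{k+1}\}}$ appearing in the definition of $w$. Each atom $\psi_y$ is a translate of a common profile adapted to $U$; because the bump $\phi$ in Definition \ref{M3ballweight} is supported in $[-1/4,1/4]^3$, the Fourier transform $\widehat{\psi_y}$ is supported inside $\tfrac{1}{2}U^*\subset U^*$ independently of $y$.

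By Fubini, the lemma reduces to showing, for every fixed $y$, the frequency-localized inequality
\[ \int |h_X \psi_y|^2 \,dx \,\lesssim\, L \sum_k \int |h_{X_k}\psi_y|^2 \,dx. \]
For this I take Fourier transforms. Since $h_{X_k}\psi_y$ is Fourier supported in $X_k + \supp(\widehat{\psi_y}) \subset X_k + U^*$, and by hypothesis at most $L$ of these translated sets contain any given frequency $\eta$, Cauchy-Schwarz at each $\eta$ gives
\[ \Bigl|\sum_k \widehat{h_{X_k}\psi_y}(\eta)\Bigr|^2 \le L \sum_k \bigl|\widehat{h_{X_k}\psi_y}(\eta)\bigr|^2. \]
Integrating in $\eta$ and applying Plancherel on each side yields the claim for fixed $y$, and then integrating in $d\mu_U(y)$ restores the weight $w_U$ on both sides of the lemma.

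The main obstacle I anticipate is the bookkeeping needed to verify that the Fourier support of each $\psi_y$ really sits inside $U^*$ (and not some larger constant dilate), so that the overlap hypothesis is applied verbatim with constant $L$ and no extra loss. This is exactly why $\phi$ is supported in $[-1/4,1/4]^3$ in Definition \ref{M3ballweight}: under the affine change of variable sending the unit ball to $U$, the frequency support $[-1/4,1/4]^3$ lands inside $\tfrac{1}{2}U^*$, leaving plenty of room for the required containment.
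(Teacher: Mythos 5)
Your argument is correct, and it takes a genuinely different route from the paper's. The paper proves the lemma entirely on the frequency side: by Plancherel, $\int|h_X|^2w_U=\sum_{k,k'}\int\widehat{h_{X_k}}\,\overline{\widehat{h_{X_{k'}}}*\widehat{w_U}}$, and since $\widehat{w_U}$ is compactly supported in (a fixed dilate of) $U^*$, each $X_k$ interacts with only $\lesssim L$ of the $X_{k'}$, after which Cauchy--Schwarz on the double sum finishes. You instead linearize the weight itself, writing $w_U=\int|\psi_y|^2\,d\mu_U(y)$ directly from Definition \ref{M3ballweight} (the measure being the Dirac mass at the origin plus the $2^{-100k}$-weighted shells), which reduces the weighted inequality to the unweighted orthogonality statement for the functions $h_{X_k}\psi_y$, handled by pointwise Cauchy--Schwarz in frequency. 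Your version has the advantage of using the hypothesis \emph{verbatim}: the pointwise bound $|\sum_k\widehat{h_{X_k}\psi_y}(\eta)|^2\le L\sum_k|\widehat{h_{X_k}\psi_y}(\eta)|^2$ needs only that each frequency $\eta$ lies in at most $L$ of the sets $X_k+U^*$, whereas the paper's pair-counting formulation strictly speaking requires passing from pointwise overlap to a bound on the number of interacting pairs $(k,k')$ (harmless here, but an extra remark). The cost of your route is the bookkeeping of the superposition, which you have done correctly: with $U=T(B)$, $T=A(\cdot)+b$, one has $U^*=(A^t)^{-1}(B_{1/2})$ and $\supp\widehat{\psi_y}\subset(A^t)^{-1}([-\tfrac14,\tfrac14]^3)\subset U^*$ uniformly in $y$. (Your parenthetical claim that this sits inside $\tfrac12U^*$ is slightly off, since the corner of $[-\tfrac14,\tfrac14]^3$ has norm $\sqrt3/4>1/4$, but only the containment in $U^*$ is used, and that holds.) Both proofs yield the constant $L$ up to absolute constants.
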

Here, we may take $\{x:|x\cdot\xi|\le 1\quad\forall \xi\in U-U\}$ as the definition of $U^*$. We will include a sketch of the proof for future reference. 
\begin{proof} By Plancherel's theorem, we have
\begin{align*}
    \int|h_X|^2w_U&=\int h_X \overline{h_Xw_U}=\int \widehat{h_X}\overline{\widehat{h_X}*\widehat{w_U}}.
\end{align*}
Since $\widehat{h_X}=\sum_k\widehat{h_{X_k}}$, $\int \widehat{h_X}\overline{\widehat{h_X}*\widehat{w_U}}=\sum_{X_k}\sum_{X_k'}\int\widehat{h_{X_k}}\overline{\widehat{h_{X_k'}}*\widehat{w_U}}$. For each $X_k$, the integral on the right hand side vanishes except for $\lesssim L$ many choices of $X_k'$. 

\end{proof}

\subsection{High-low frequency decomposition of square functions}

\begin{definition}[Auxiliary functions] Let $\eta:\R^3\to[0,\infty)$ be a radial, smooth bump function satisfying $\eta(x)=1$ on $B_{1/2}$ and $\supp\eta\subset B_1$. Then for each $s>0$, let 
\[ \eta_{\le s}(\xi) =\eta(s^{-1}\xi) .\]
We will sometimes abuse notation by denoting $h*\widecheck{\eta}_{>s}=h-h*\widecheck{\eta}_{\le s}$, where $h$ is some Schwartz function. Also define $\eta_{s}(x)=\eta_{\le s}-\eta_{\le s/2}$. 
\end{definition}

\vspace{3mm}
Fix $N_0<N-1$ which will be specified in \textsection\ref{ind}. 
\begin{definition} For $N_0\le k\le N-1$, let 
\[ g_k(x)=\sum_{\tau_k}|f_{\tau_k}^{k+1}|^2*\w_{\tau_k}, \qquad g_k^{\ell}(x)=g_k*\widecheck{\eta}_{\le R_{k+1}^{-1/3}}, \qquad\text{and}\qquad g_k^h=g_k-g_k^{\ell}. \]
\end{definition}
\vspace{3mm}

In the following definition, $A\gg 1$ is the same constant that goes into the pruning definition of $f^k$. 
\begin{definition} \label{impsets}Define the high set by 
\[ H=\{x\in U_{\a,\b}: A \b \le g_{N-1}(x)\}. \]
For each $k=N_0,\ldots,N-2$, let $H=\Omega_{N-1}$ and let
\[ \Omega_k=\{x\in U_{\a,\b}\setminus \cup_{l=k+1}^{N-1}\Omega_{l}: A^{N-k}\b\le g_k(x) \}. \] 
Define the low set to be
\[ L=U_{\a,\b}\setminus[\cup_{k=N_0}^{N-1}\Omega_k]. \]
\end{definition}
\vspace{3mm}

\begin{lemma}[Low lemma]\label{low} There is an absolute constant $D>0$ so that for each $x$, $|g_k^\ell(x)|\le D g_{k+1}(x)$. 
\end{lemma}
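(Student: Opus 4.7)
\medskip
\noindent\textbf{Proof plan for Lemma \ref{low}.} The plan is to show that after low-pass filtering $g_k$, only near-diagonal contributions from pairs $\tau_{k+1},\tau_{k+1}'\subset\tau_k$ survive, and then to bound these by $g_{k+1}$ through the locally constant property and a weight comparison. First, commute the convolutions to write
\[ g_k^\ell(x) = \sum_{\tau_k}\bigl[|f_{\tau_k}^{k+1}|^2*\widecheck{\eta}_{\le R_{k+1}^{-1/3}}\bigr]*\w_{\tau_k}(x) \]
and expand $|f_{\tau_k}^{k+1}|^2=\sum_{\tau_{k+1},\tau_{k+1}'\subset\tau_k}f_{\tau_{k+1}}^{k+1}\overline{f_{\tau_{k+1}'}^{k+1}}$. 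By Lemma \ref{pruneprop}(\ref{item3}) each $f_{\tau_{k+1}}^{k+1}$ has Fourier support in $3\tau_{k+1}$, so the cross term has Fourier support in $3\tau_{k+1}-3\tau_{k+1}'$; after convolving with $\widecheck{\eta}_{\le R_{k+1}^{-1/3}}$, only pairs for which this set meets $B_{R_{k+1}^{-1/3}}$ contribute. Using the Frenet-frame parametrization of the canonical blocks (see \eqref{Frenet} and the subsequent description), one checks this forces $\tau_{k+1}$ and $\tau_{k+1}'$ to be neighbors within $\tau_k$, i.e.\ to have tangent indices differing by $O(1)$.

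Next apply AM-GM pointwise inside the convolution integral: for each surviving pair,
\[ \bigl|(f_{\tau_{k+1}}^{k+1}\overline{f_{\tau_{k+1}'}^{k+1}})*\widecheck{\eta}_{\le R_{k+1}^{-1/3}}(x)\bigr| \le \tfrac{1}{2}(|f_{\tau_{k+1}}^{k+1}|^2+|f_{\tau_{k+1}'}^{k+1}|^2)*|\widecheck{\eta}_{\le R_{k+1}^{-1/3}}|(x).  \]
Since each $\tau_{k+1}$ has $O(1)$ neighbors and lies in a unique $\tau_k$, the outer sum collapses to give
\[ |g_k^\ell(x)| \lesssim \sum_{\tau_{k+1}} |f_{\tau_{k+1}}^{k+1}|^2 * |\widecheck{\eta}_{\le R_{k+1}^{-1/3}}| * \w_{\tau_k}(x). \]

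To finish, use Lemma \ref{pruneprop}(\ref{item1}) to replace $|f_{\tau_{k+1}}^{k+1}|^2$ by $|f_{\tau_{k+1}}^{k+2}|^2$, and invoke the locally constant property (Lemma \ref{locconst}) for $|f_{\tau_{k+1}}^{k+2}|^2$, whose Fourier support lies in $3\tau_{k+1}-3\tau_{k+1}$, to pointwise dominate $|f_{\tau_{k+1}}^{k+2}|^2(y)\lesssim |f_{\tau_{k+1}}^{k+2}|^2*\w_{\tau_{k+1}}(y)$. Absorbing this into the convolution chain yields a factor of $\w_{\tau_{k+1}}*|\widecheck{\eta}_{\le R_{k+1}^{-1/3}}|*\w_{\tau_k}$, which I claim is $\lesssim\w_{\tau_{k+1}}$ by the remark after Definition \ref{M3ballweight}: the support scales $B_{R_{k+1}^{1/3}}$ and $\tau_k^*$ both sit inside a bounded dilate of $\tau_{k+1}^*$, since $R_k^{1/3}\le R_{k+1}^{1/3}$, $R_k^{2/3}\le R_{k+1}^{2/3}$, and $R_k\le R_{k+1}$. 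Summing over $\tau_{k+1}$ then gives $|g_k^\ell(x)|\le Dg_{k+1}(x)$ for an absolute $D$.

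The main technical subtlety is the final weight comparison: $\tau_k^*$ and $\tau_{k+1}^*$ are oriented by the Frenet frames at different base points, differing by an angle of at most $O(R_k^{-1/3})$. The resulting tilt shifts $\tau_k^*$ in its binormal direction by at most $R_k\cdot R_k^{-1/3}=R_k^{2/3}\le R_{k+1}^{2/3}$, which is still within the normal dimension of $\tau_{k+1}^*$, so the inclusion $\tau_k^*\subset C\tau_{k+1}^*$ holds with an absolute constant $C$ and the convolution estimate goes through.
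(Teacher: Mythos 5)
Your proposal is correct and follows essentially the same route as the paper's proof: expand $|f_{\tau_k}^{k+1}|^2$ into cross terms, use the Fourier support of $f_{\tau_{k+1}}^{k+1}\overline{f_{\tau_{k+1}'}^{k+1}}$ against the low-pass cutoff to restrict to near-diagonal pairs, apply AM--GM, upgrade $|f_{\tau_{k+1}}^{k+1}|$ to $|f_{\tau_{k+1}}^{k+2}|$ via Lemma \ref{pruneprop}(\ref{item1}), and conclude with the locally constant property together with the weight comparison $w_{\tau_{k+1}}*\w_{\tau_k}*|\widecheck{\eta}_{\le R_{k+1}^{-1/3}}|\lesssim w_{\tau_{k+1}}$. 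Your closing verification that $\tau_k^*\subset C\tau_{k+1}^*$ despite the Frenet frames being anchored at slightly different base points is a detail the paper leaves implicit, but it does not change the argument.
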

\begin{proof} We perform a pointwise version of the argument in the proof of local/global $L^2$-orthogonality (Lemma \ref{L2orth}). For each $\tau_k^{k+1}$, by Plancherel's theorem,
\begin{align}
|f_{\tau_k}^{k+1}|^2*\widecheck{\eta}_{<R_{k+1}^{-1/3}}(x)&= \int_{\R^3}|f_{\tau_k}^{k+1}|^2(x-y)\widecheck{\eta}_{<R_{k+1}^{-1/3}}(y)dy \nonumber \\
&=  \int_{\R^3}\widehat{f_{\tau_k}^{k+1}}*\widehat{\overline{f_{\tau_k}^{k+1}}}(\xi)e^{-2\pi i x\cdot\xi}\eta_{<R_{k+1}^{-1/3}}(\xi)d\xi \nonumber \\
&=  \sum_{\tau_{k+1},\tau_{k+1}'\subset\tau_k}\int_{\R^3}e^{-2\pi i x\cdot\xi}\widehat{f_{\tau_{k+1}}^{k+1}}*\widehat{\overline{f_{\tau_{k+1}'}^{k+1}}}(\xi)\eta_{<R_{k+1}^{-1/3}}(\xi)d\xi .\label{dis2}\nonumber
\end{align}
The integrand is supported in $(2\tau_{k+1}-2\tau_{k+1}')\cap B_{R_{k+1}^{-1/3}}$. This means that the integral vanishes unless $\tau_{k+1}$ is within $\sim  R_{k+1}^{-1/3}$ of $\tau_{k+1}'$, in which case we write $\tau_{k+1}\sim\tau_{k+1}'$. Then 
\[\sum_{\tau_{k+1},\tau_{k+1}'\subset\tau_k}\int_{\R^2}e^{-2\pi i x\cdot\xi}\widehat{f}_{\tau_{k+1}}^{k+1}*\widehat{\overline{f}_{\tau_{k+1}'}^{k+1}}(\xi)\eta_{<R_{k+1}^{-1/3}}(\xi)d\xi=\sum_{\substack{\tau_{k+1},\tau_{k+1}'\subset\tau_k\\
\tau_{k+1}\sim\tau_{k+1}'}}\int_{\R^2}e^{-2\pi i x\cdot\xi}\widehat{f}_{\tau_{k+1}}^{k+1}*\widehat{\overline{f}_{\tau_{k+1}'}^{k+1}}(\xi)\eta_{<R_{k+1}^{-1/3}}(\xi)d\xi. \]
Use Plancherel's theorem again to get back to a convolution in $x$ and conclude that
\begin{align*}
|g_k*\widecheck{\eta}_{<R_{k+1}^{-1/3}}(x)|&=\Big|\sum_{\substack{\tau_{k+1},\tau_{k+1}'\subset\tau_k\\
\tau_{k+1}\sim\tau_{k+1}'}}(f_{\tau_{k+1}}^{k+1}\overline{f_{\tau_{k+1}'}^{k+1}})*\w_{\tau_k}*\widecheck{\eta}_{<R_{k+1}^{-1/3}}(x) \Big|\lesssim \sum_{\tau_k} \sum_{\tau_{k+1}\subset\tau_k}|f_{\tau_{k+1}}^{k+1}|^2*\w_{\tau_k}*|\widecheck{\eta}_{<R_{k+1}^{-1/3}}|(x)
. 
\end{align*}
By the locally constant property (Lemma \ref{locconst}) and \eqref{item1} of Lemma \ref{pruneprop},
\[ \sum_{\tau_k} \sum_{\tau_{k+1}\subset\tau_k}|f_{\tau_{k+1}}^{k+1}|^2*\w_{\tau_k}*|\widecheck{\eta}_{<R_{k+1}^{-1/3}}|(x)\lesssim \sum_{\tau_k} \sum_{\tau_{k+1}\subset\tau_k}|f_{\tau_{k+1}}^{k+2}|^2*w_{\tau_{k+1}}*\w_{\tau_k}*|\widecheck{\eta}_{<R_{k+1}^{-1/3}}|(x)\lesssim g_{k+1}(x). \]
It remains to note that
\[ w_{\tau_{k+1}}*\w_{\tau_k}*|\widecheck{\eta}_{<R_{k+1}^{-1/3}}|(x)\lesssim w_{\tau_{k+1}}(x) \]
since $\tau_k^*\subset\tau_{k+1}^*$ and $\widecheck{\eta}_{<R_{k+1}^{-1/3}}$ is an $L^1$-normalized function that is rapidly decaying away from $B_{R_{k+1}^{1/3}}(0)$. 

\end{proof}

\begin{corollary}[High-dominance on $\Omega_k$]\label{highdom} For $R$ large enough depending on $\e$, $g_k(x)\le 2|g_k^h(x)|$ for all $x\in\Omega_k$. 
\end{corollary}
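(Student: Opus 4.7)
The plan is a short chain of inequalities that exploits the two-sided control the definition of $\Omega_k$ gives us on the square functions $g_k$ and $g_{k+1}$. First I would extract the consequences of membership: for any $x\in\Omega_k$, the definition directly yields the lower bound $g_k(x)\ge A^{N-k}\b$, and moreover, since $\Omega_k$ excludes $\cup_{l=k+1}^{N-1}\Omega_l$ while $x$ still lies in $U_{\a,\b}$, the only way for $x$ to avoid $\Omega_{k+1}$ is that the complementary inequality $g_{k+1}(x)<A^{N-k-1}\b$ holds.

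Next I would apply Lemma \ref{low}, which provides an absolute constant $D$ with $|g_k^\ell(x)|\le D\,g_{k+1}(x)$. Combining this with the upper bound on $g_{k+1}(x)$ from the previous step gives
\[
|g_k^\ell(x)|\le D g_{k+1}(x)<D A^{N-k-1}\b=\frac{D}{A}\,A^{N-k}\b\le \frac{D}{A}\,g_k(x).
\]
By the standing choice of $A$ (taken large enough, in particular $A\ge 2D$, which is consistent with the assumption $A\ge D$ stated before Definition \ref{taukprune} since $A=A(\e)$ may be enlarged), this gives $|g_k^\ell(x)|\le \tfrac{1}{2}g_k(x)$.

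Finally I would conclude using that $g_k$ is nonnegative and real, so $g_k^h=g_k-g_k^\ell$ is real and satisfies
\[
|g_k^h(x)|\ge g_k(x)-|g_k^\ell(x)|\ge \tfrac{1}{2}g_k(x),
\]
which rearranges to the desired $g_k(x)\le 2|g_k^h(x)|$.

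I do not expect any real obstacle here: the whole argument is a pigeonhole between the layers of the $\Omega_l$ filtration combined with Lemma \ref{low}. The one point to be careful about is the exact constant: Lemma \ref{low} is only proved with an implicit absolute constant $D$, so one must be sure $A$ has been chosen at least $2D$ (rather than merely $\ge D$) in order to pass from $|g_k^\ell|\le \tfrac{D}{A}g_k$ to the factor $\tfrac12$. The hypothesis ``$R$ large enough depending on $\e$'' most likely enters only to guarantee that tail terms absorbed into $D$ in the proof of Lemma \ref{low} (from the rapid-decay weights $\w_{\tau_k}$ and $|\widecheck{\eta}_{<R_{k+1}^{-1/3}}|$ at scales ${\ll R_{k+1}^{1/3}}$) are genuinely bounded by an absolute constant.
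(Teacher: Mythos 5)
Your argument is correct and is essentially the paper's own proof run in the forward direction rather than by contradiction: both rest on Lemma \ref{low}, the upper bound $g_{k+1}(x)<A^{N-k-1}\b$ forced by $x\in U_{\a,\b}\setminus\Omega_{k+1}$, and the lower bound $A^{N-k}\b\le g_k(x)$ from the definition of $\Omega_k$, with $A$ taken sufficiently larger than the absolute constant $D$. Your caveat that one needs $A\ge 2D$ rather than merely $A\ge D$ is exactly the paper's ``for $A$ sufficiently larger than $D$,'' so there is no gap.
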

\begin{proof}
This follows directly from Lemma \ref{low}. Indeed, since $g_k(x)=g_k^{\ell}(x)+g_k^h(x)$, the inequality $g_k(x)>2|g_k^h(x)|$ implies that $g_k(x)<2|g_k^{\ell}(x)|$. Then by Lemma \ref{low}, $|g_k(x)|<2D g_{k+1}(x)$. Since $x\in\Omega_k$, $g_{k+1}(x)\le A^{N-k-1}\b$, which altogether gives the upper bound
\[ g_k(x)\le 2D A^{N-k-1}\b. \]
The contradicts the property that on $\Omega_k$, $A^{N-k}\b\le g_k(x)$, for $A$ sufficiently larger than $D$, which finishes the proof. 

\end{proof}

\begin{lemma}[Pruning lemma]\label{ftofk} For any $s\ge R^{-\e/3}$ and $\tau\in{\bf{S}}(s)$, 
\begin{align*} 
|\sum_{\tau_k\subset\tau}f_{\tau_k}-\sum_{\tau_k\subset\tau}f_{\tau_k}^{k+1}(x)|&\le \frac{\a}{A^{1/2}K^3} \qquad\text{for all $x\in \Omega_k$}, \qquad N_0\le k\le N-1,\\
\text{and}\qquad |\sum_{\tau_B\subset\tau}f_{\tau_B}-\sum_{\tau_B\subset\tau}f_{\tau_B}^{B}(x)|&\le \frac{\a}{A^{1/2}K^3}\qquad \text{ for all $x\in L$}. \end{align*}
\end{lemma}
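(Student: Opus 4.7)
My plan is to telescope the difference across scales $j=k+1,\ldots,N-1$ using the pruning identity from Definition \ref{taukprune}, then bound the single-scale contribution of ``bad'' wave packets pointwise on $\Omega_k$ using Cauchy-Schwarz together with a Chebyshev-type argument that exploits both the bad-packet amplitude threshold and the bound $g_j \le A^{N-j}\b$ that holds on $\Omega_k$. From the identity $f_{\tau_l}^{j+1}-f_{\tau_l}^{j}=\sum_{\tau_j\subset\tau_l}\sum_{T_{\tau_j}\in\T_{\tau_j}^{b}}\s_{T_{\tau_j}}f_{\tau_j}^{j+1}$ and $f_{\tau_k}^{N}=f_{\tau_k}$, telescoping yields
\[
\sum_{\tau_k\subset\tau}f_{\tau_k}-\sum_{\tau_k\subset\tau}f_{\tau_k}^{k+1}=\sum_{j=k+1}^{N-1}\sum_{\tau_j\subset\tau}\sum_{T_{\tau_j}\in\T_{\tau_j}^{b}}\s_{T_{\tau_j}}f_{\tau_j}^{j+1},
\]
and it suffices to bound the single-scale quantity $E_j(x):=\big|\sum_{\tau_j\subset\tau}\sum_{T_{\tau_j}\in\T_{\tau_j}^{b}}\s_{T_{\tau_j}}(x)f_{\tau_j}^{j+1}(x)\big|$ by $\tfrac{\a}{(N-k)A^{1/2}K^3}$ on $\Omega_k$ for each $j$, then sum over $j$.

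To bound $E_j(x)$, I would apply Cauchy-Schwarz in the $T_{\tau_j}$ variable to split it as $\big(\sum_{T_{\tau_j}\in\T_{\tau_j}^{b}}\s_{T_{\tau_j}}(x)\big)^{1/2}\cdot\big(\sum_{T_{\tau_j}\in\T_{\tau_j}^{b}}\s_{T_{\tau_j}}(x)|f_{\tau_j}^{j+1}(x)|^{2}\big)^{1/2}$, then sum over $\tau_j\subset\tau$. The second factor is at most $\sum_{\tau_j\subset\tau}|f_{\tau_j}^{j+1}(x)|^2$ since $\{\s_{T_{\tau_j}}\}$ is a partition of unity, which is $\lesssim g_j(x)\le A^{N-j}\b$ by Lemma \ref{locconst} and the definition of $\Omega_k$ (Definition \ref{impsets}). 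For the first factor, the bad-packet threshold $\|\s_{T_{\tau_j}}^{1/2}f_{\tau_j}^{j+1}\|_{\infty}>K^3A^{N-j+1}\b/\a$ gives the Chebyshev bound
\[
\sum_{T_{\tau_j}\in\T_{\tau_j}^{b}}\s_{T_{\tau_j}}(x) \le \frac{\a^{2}}{K^6 A^{2(N-j+1)}\b^{2}}\sum_{T_{\tau_j}\in\T_{\tau_j}}\s_{T_{\tau_j}}(x)\|\s_{T_{\tau_j}}^{1/2}f_{\tau_j}^{j+1}\|_{\infty}^{2}.
\]
Applying Lemma \ref{locconst} plank by plank converts $\|\s_{T_{\tau_j}}^{1/2}f_{\tau_j}^{j+1}\|_\infty^{2}$ into $\sup_{cT_{\tau_j}} g_j$ up to rapidly decaying tails that I absorb into $\w_{\tau_j}$; summing in $T_{\tau_j}$ and $\tau_j\subset\tau$ then produces a bound of order $\tfrac{\a^2}{K^6 A^{2(N-j+1)}\b^2}(g_j*\w_{\tau_j})(x)\lesssim \tfrac{\a^2}{K^6 A^{N-j+2}\b}$ on $\Omega_k$.

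Multiplying the two factors gives $E_j(x)^{2}\lesssim \tfrac{\a^{2}}{K^6 A^{2}}$, hence $E_j(x)\lesssim \tfrac{\a}{A K^3}$. Summing over the $N-k\lesssim\e^{-1}$ scales and choosing $A=A(\e)$ large enough (e.g.\ $A\gg\e^{-2}$) absorbs the extra $\e^{-1}$ factor into the gap between $A^{-1}$ and $A^{-1/2}$, leaving the required $\tfrac{\a}{A^{1/2}K^3}$ bound. For $x\in L$, the identical telescoping from scale $N$ down to scale $B$ applies, using the bounds $g_l\le A^{N-l}\b$ on $L$ for all $l=C_0,\ldots,N-1$ (Definition \ref{impsets}). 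The main obstacle is making rigorous the Chebyshev step: the bad condition is a global $L^\infty$ statement, while $\Omega_k$ only controls $g_j$ pointwise, so one must carefully pair the rapid decay of $\s_{T_{\tau_j}}$ off its dual plank with the anisotropic locally constant property at scale $\tau_j$ and track the powers of $K, A$, and $\a/\b$ so that the series over $j$ is geometrically summable.
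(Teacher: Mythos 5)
Your proposal is correct and follows essentially the same route as the paper: telescope over scales, use the bad-packet amplitude threshold to gain the factor $K^{-3}A^{-(N-j+1)}\a/\b$, convert sup norms to averages via the locally constant property and $\w_{\tau_j}*\w_{\tau_j}\lesssim\w_{\tau_j}$, bound by $g_j\le A^{N-j}\b$ on $\Omega_k$, and absorb the $\e^{-1}$ scales into the gap between $A^{-1}$ and $A^{-1/2}$. The only difference is cosmetic: the paper substitutes the amplitude lower bound directly into $\sum_{T}\|\s_T^{1/2}f^{j+1}_{\tau_j}\|_\infty\s_T^{1/2}(x)$ rather than splitting into two factors by Cauchy--Schwarz and applying Chebyshev to the counting factor, but both yield the same per-scale bound $\lesssim \a K^{-3}A^{-1}$.
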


\begin{proof} 
Begin by proving the first claim about $\Omega_k$. By the definition of the pruning process, we have 
\begin{equation}\label{diffs} f_{\tau}=f^{N-1}_{\tau}+(f_{\tau}^N-f^{N-1}_{\tau})=\cdots=f^{k+1}_{\tau}(x)+\sum_{m=k+1}^{N-1}(f^{m+1}_{\tau}-f^{m}_{\tau})\end{equation}
where formally, the subscript $\tau$ means $f_\tau=\sum_{\theta\subset\tau}f_\theta$ and $f_{\tau}^m=\sum_{\tau_m\subset\tau}f_{\tau_m}^m$. We will show that each difference in the sum is much smaller than $\a$.
For each $N-1\ge m\ge k+1$ and $\tau_m$, 
\begin{align*}
    |f_{\tau_m}^m(x)-f_{\tau_m}^{m+1}(x)|&=|\sum_{T_{\tau_m}\in\T_{\tau_m}^{b}}\s_{T_{\tau_m}}(x)f_{\tau_m}^{m+1}(x)|  = \sum_{T_{\tau_m}\in T_{\tau_m}^b} |\s_{T_{\tau_m}}^{1/2}(x)f_{\tau_m}^{m+1}(x)|\s_{T_{\tau_m}}^{1/2}(x) \\
     & \le\sum_{T_{\tau_m}\in \T_{\tau_m}^b}  K^{-3}A^{-(N-m+1)}\frac{\a}{\b}  \| \s_{T_{\tau_m}}^{1/2}f_{{\tau_m}}^{m+1} \|_{L^\infty(\R^3)}^2  \s_{T_{\tau_m}}^{1/2}(x) \\
     & \lesssim K^{-3}A^{-(N-m+1)}\frac{\a}{\b}\sum_{T_{\tau_m}\in \T_{\tau_m}^b}
      \sum_{\tilde{T}_{{\tau_m}}\in\T_{\tau_m}} \| \s_{T_{\tau_m}}|f_{{\tau_m}}^{m+1}|^2 \|_{L^\infty(\tilde{T}_{{\tau_m}})} \s_{T_{\tau_m}}^{1/2}(x) \\
     & \lesssim K^{-3}A^{-(N-m+1)}\frac{\a}{\b} \sum_{T_{\tau_m},\tilde{T}_{\tau_m}\in \T_{\tau_m}} \| \s_{T_{\tau_m}}\|_{L^\infty(\tilde{T}_{\tau_m})}\||f_{{\tau_m}}^{m+1} |^2\|_{{L}^\infty(\tilde{T}_{{\tau_m}})} \s_{T_{\tau_m}}^{1/2}(x) .
\end{align*}
Let $c_{\tilde{T}_{\tau_m}}$ denote the center of $\tilde{T}_{\tau_m}$ and note the pointwise inequality
\[ \sum_{{T}_{\tau_m}}\|\s_{T_{\tau_m}}\|_{L^\infty(\tilde{T}_{\tau_m})}\s_{T_{\tau_m}}^{1/2}(x)\lesssim |\tau_m^*|\w_{\tau_m}(x-c_{\tilde{T}_{\tau_m}}) ,\]
which means that
\begin{align*}
|f_{\tau_m}^m(x)-f_{\tau_m}^{m+1}(x)| & \lesssim K^{-3}A^{-(N-m+1)}\frac{\a}{\b} |\tau_m^*|\sum_{\tilde{T}_{\tau_m}\in \T_{\tau_m}} \w_{\tau_m}(x-c_{\tilde{T}_{\tau_m}})\||f_{{\tau_m}}^{m+1} |^2\|_{{L}^\infty(\tilde{T}_{{\tau_m}})} \\
&\lesssim K^{-3}A^{-(N-m+1)}\frac{\a}{\b}|\tau_m^*| \sum_{\tilde{T}_{\tau_m}\in \T_{\tau_m}} \w_{\tau_m}(x-c_{\tilde{T}_{\tau_m}})|f_{{\tau_m}}^{m+1} |^2*\w_{\tau_m}(c_{\tilde{T}_{\tau_m}})\\
&\lesssim K^{-3}A^{-(N-m+1)}\frac{\a}{\b} |f_{{\tau_m}}^{m+1} |^2*\w_{\tau_m}(x)
\end{align*}
where we used the locally constant property in the second to last inequality. The last inequality is justified by the fact that $\w_{\tau_m}(x-c_{\tilde{T}_{\tau_m}})\sim \w_{\tau_m}(x-y)$ for any $y\in\tilde{T}_{\tau_m}$, and we have the pointwise relation $\w_{\tau_m}*\w_{\tau_m}\lesssim \w_{\tau_m}$. 
Then 
\[
    |\sum_{\tau_m\subset\tau}(f_{\tau_m}^m(x)-f_{\tau_m}^{m+1}(x))|\lesssim K^{-3}A^{-(N-m+1)}\frac{\a}{\b}\sum_{\tau_m\subset\tau}|f_{\tau_m}^{m+1}|^2*\w_{\tau_m}(x)\sim K^{-3}A^{-(N-m+1)}\frac{\a}{\b}g_m(x). \]
At this point, choose $A$ sufficiently large determined by the proof of Corollary \ref{highdom} and so that if $g_m(x)\le A^{N-m}\b$, then the above inequality implies that
\[ |\sum_{\tau_m\subset\tau}(f_{\tau_m}^m(x)-f_{\tau_m}^{m+1}(x))|\le  \e K^{-3}A^{-1/2}\a  .\]
This finishes the proof since the number of terms in \eqref{diffs} is bounded by $N\le \e^{-1}$. The argument for the pruning on $L$ is analogous.  
\end{proof}

\section{Geometry for the cone and the moment curve \label{geo}}

We have seen in Corollary \ref{highdom} that on $\Omega_k$, $g_k$ is high-dominated.  We will now describe how to use the wave envelope estimate for the cone (Theorem 1.3 from \cite{locsmooth}) to control the high part of $g_k$.

Begin by describing the cone set-up using the rotated coordinate system from \textsection{5} of \cite{locsmooth}. Define the truncated cone by $\Gamma = \{r(1,\w,\frac{1}{2}\w^2):\frac{1}{2}\le r\le 1,\quad|\w|\le 1\}$. We consider the neighborhood  
\[ \Gamma(S^2)=\{(\nu_1,\nu_2,\nu_3): \frac{1}{2}\le \nu_1\le 1,\,\, |\nu_3-\frac{1}{2}\frac{\nu_2^2}{\nu_1}|\le S^{-2}  \}  \]
where $S\ge 1$ is dyadic. 
For $|\w|\le 1$, we will use the (not unit-normalized) frame
\[ \begin{cases}
{\bf{c}}(\w)&=(1,\w,\frac{1}{2}\w^2) \\
{\bf{b}}(\w)&=(-\w,1-\frac{1}{2}\w^2,\w) \\
{\bf{t}}(\w)&=(\frac{1}{2}\w^2,-\w,1) 
\end{cases}.  \]
Define a cone plank $\tau$ of dimension  $1\times S^{-1}\times S^{-2}$ and centered at $(1,\w,\frac{1}{2}\w^2)\in\Gamma$ by 
\begin{equation}\label{rotconeplank}
    \tau=\{A{\bf{c}}(\w)+B{\bf{b}}(\w)+C{\bf{t}}(\w):\quad \frac{1}{2}\le A\le 1,\quad |B|\le S^{-1},\quad |C|\le S^{-2}  \}.
\end{equation}
Let ${\bf{S}}_{S^{-1}}$ denote a collection of $1\times S^{-1}\times S^{-2}$ conical blocks $\tau$ which approximately partition $\Gamma(S^2)$. The definitions and notation we use are compatible with those in \textsection3 of \cite{locsmooth}.

Recall that our goal in this section is to bound the high part of $g_k$. The summands of $g_k$ have Fourier support in $2\tau_k-2\tau_k$, where $\tau_k\in{\bf{S}}(R_k^{-1/3})$ (noting that ${\bf{S}}(R_k^{-1/3})$ refers to \emph{moment curve} blocks defined at the beginning of \textsection\ref{tools}). Removing the high part cuts away a ball of radius $R_{k+1}^{-1/3}$. In Proposition \ref{geo3}, we will essentially show that $(2\tau_k-2\tau_k)\setminus B_{R_{k+1}^{-1/3}}$ may be identified with a \emph{conical} plank $\tau\in{\bf{S}}_{R_k^{-1/3}}$. 

Let $S\ge1$ be a dyadic parameter that will be chosen to be sufficiently large in Proposition \ref{geo3}. 
Suppose that $\tau\in{\bf{S}}(S^{-1})$ is the $l$th piece, meaning that 
\[ \tau=\{(\xi_1,\xi_2,\xi_3): lS^{-1}\le \xi_1<(l+1)S^{-1},\,|\xi_2-\xi_1^2|\le S^{-2},\,|\xi_3-3\xi_1\xi_2+2\xi_1^3|\le S^{-3} \}   \]
where $l\in\{0,\ldots,S-1\}$. Using the Frenet frame description from \eqref{Frenet}, the set $(10\tau-10\tau)\setminus B_{(4S)^{-1}}(0)$ is contained in 
\begin{align}\label{set} \tilde{\tau}:=\{A{\bf{T}}(lS^{-1})+B{\bf{N}}(lS^{-1})+C{\bf{B}}(lS^{-1}):A\sim  S^{-1},\quad |B|\lesssim S^{-2},\quad |C|\lesssim S^{-3}\}. \end{align}
Define the linear transformation 
$T:\R^3\to\R^3$ by
\begin{equation}\label{lintrans} T(x,y,z):=(x,\frac{y}{2},\frac{z}{6}). \end{equation} 
\begin{proposition} \label{geo3} Let $S\in 2^\N$ be larger than some absolute constant. After dilating by $S$, the sets $T[(10\tau-10\tau)\setminus B_{(4S)^{-1}}(0)]$ with $\tau\in{\bf{S}}(S^{-1})$, are comparable to the cone planks from ${\bf{S}}_{S^{-1}}$.  \end{proposition}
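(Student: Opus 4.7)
The plan is to reduce the statement to a direct change-of-basis calculation between the push-forward of the Frenet frame of $\mc{M}^3$ at $t=lS^{-1}$ and the cone frame at $\omega=t$. The set inclusion $(10\tau-10\tau)\setminus B_{(4S)^{-1}}(0)\subset \tilde{\tau}$ already recorded in the paper reduces the problem, after applying $T$ and dilating by $S$, to comparing
\[
\{A'\,T(\mathbf{T}(t))+B'\,T(\mathbf{N}(t))+C'\,T(\mathbf{B}(t)):\ A'\sim 1,\ |B'|\lesssim S^{-1},\ |C'|\lesssim S^{-2}\}
\]
with a cone plank in $\mathbf{S}_{S^{-1}}$ centered at $\omega=t$.

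The essential algebraic identity is the direct computation
\[
T(\mathbf{T}(t))=(1,t,t^2/2)=\mathbf{c}(t),
\]
which plants the image of the tangent direction exactly on the cone at $\omega=t$. To decompose $T(\mathbf{N}(t))$ and $T(\mathbf{B}(t))$ in the cone frame, I would first observe that $\{\mathbf{c}(t),\mathbf{b}(t),\mathbf{t}(t)\}$ is mutually orthogonal with common squared norm $1+t^2+t^4/4$, so the coordinates can be read off by dot products. The main computational miracle is the clean cancellation
\[
T(\mathbf{N}(t))\cdot \mathbf{t}(t)=0,
\]
which places $T(\mathbf{N}(t))$ in $\mathrm{span}(\mathbf{c}(t),\mathbf{b}(t))$, with $\mathbf{b}(t)$-coefficient uniformly $\sim 1$ for $t\in[0,1]$. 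For $T(\mathbf{B}(t))$ I would check that all three coordinates are polynomials in $t$, bounded on $[0,1]$, with the $\mathbf{t}(t)$-coefficient bounded away from zero. Consequently the change-of-basis matrix from $\{\mathbf{c}(t),\mathbf{b}(t),\mathbf{t}(t)\}$ to $\{T(\mathbf{T}(t)),T(\mathbf{N}(t)),T(\mathbf{B}(t))\}$ is lower triangular with diagonal entries $\sim 1$ and uniformly bounded off-diagonal entries.

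To finish, I would rewrite the parametric form in cone coordinates as $A\mathbf{c}(t)+B\mathbf{b}(t)+C\mathbf{t}(t)$ and read off $A=A'+O(S^{-1})$, $B=a_{22}B'+O(S^{-2})$, and $C=a_{33}C'$ with $a_{22},a_{33}\sim 1$. For $S$ larger than an absolute constant, the $O(S^{-1})$ and $O(S^{-2})$ perturbations are harmless, and inverting the (triangular) change of basis produces the reverse containment with the same uniform constants, giving the two-sided comparability with cone planks in $\mathbf{S}_{S^{-1}}$. The step I expect to be the crux is the identity $T(\mathbf{N}(t))\cdot\mathbf{t}(t)=0$: without this vanishing, $T(\mathbf{N}(t))$ would contribute an $S^{-1}$-sized component in the $\mathbf{t}(t)$ direction, blowing past the $S^{-2}$ transverse scale of a cone plank and destroying comparability. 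Everything else is routine bookkeeping of polynomial coefficients on $[0,1]$.
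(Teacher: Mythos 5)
Your proposal is correct and follows essentially the same route as the paper: the paper's proof consists precisely of the two identities you single out, $T(\mathbf{T}(t))=\mathbf{c}(t)$ and $T(\mathbf{N}(t))\cdot\mathbf{t}(t)=0$, followed by the observation that the comparability is then easy to see. Your write-up simply makes explicit the triangular change-of-basis bookkeeping that the paper leaves implicit.
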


\begin{proof} The image of the set \eqref{set} under $T$ is
\begin{align*}
T(\tilde{\tau})=\{A(1,lS^{-1},\frac{1}{2}(lS^{-1})^2)+&BT({\bf{N}}(lS^{-1}))+C(\frac{1}{2}T({\bf{B}}(lS^{-1})): A\sim S^{-1},\quad |B|\lesssim S^{-2},\quad |C|\le S^{-3}\}.
\end{align*}
Define $\w=lS^{-1}$. Since $T({\bf{T}}(lS^{-1}))={\bf{c}}(lS^{-1})$ and $T({\bf{N}}(lS^{-1}))\cdot{\bf{t}}(lS^{-1})=0$, it is easy to see that the set $T(\tilde{\tau})$ is comparable to the $S^{-1}$ dilation of \eqref{rotconeplank}.

\end{proof}

Next, we define moment curve wave envelopes, which are roughly the smallest convex sets containing wave packets from neighboring moment curve blocks.  
\begin{definition}\label{waveenvdef} Let $1\le S^3\le R $. For $\tau\in{\bf{S}}(S^{-1})$ which is the $\ell$th block, define the moment curve wave envelope $V_{\tau,R}$ to be  
\[V_{\tau,R}=\{A{\bf{T}}(lS^{-1})+B{\bf{N}}(lS^{-1})+C{\bf{B}}(lS^{-1}): |A|\le S^{-2}R,\quad|B|\le S^{-1}R,\quad|C|\le R \} . \]
\end{definition}

We will compare these with wave envelopes for the cone defined in \textsection 3 from \cite{locsmooth}. Again let $1\le S^3\le R$. Using our notation, if $\tau'\in{\bf{S}}_{S^{-1}}$ is given by \eqref{rotconeplank}, then the cone wave envelope $U_{\tau',R^{2/3}}$ is defined by
\begin{align} \label{rotconeenv} 
U_{\tau',R^{2/3}}=\{A{\bf{c}}(\w)+&B{\bf{b}}(\w)+C{\bf{t}}(\w): |A|\le S^{-2}R^{2/3},\quad |B|\le S^{-1}R^{2/3},\quad |C|\le R^{-2/3}  \}. \nonumber
\end{align}

After applying a linear transformation, the $R^{-1/3}$-dilation of the moment curve wave envelope $V_{\tau,R}$ is comparable to a cone wave envelope $U_{\tau',R^{2/3}}$. 

\begin{proposition} \label{geo4} Let $T$ be the linear transformation \eqref{lintrans} and let $S$ be a sufficiently large dyadic number with $1\le S^3\le R$. Then for each $\tau\in{\bf{S}}(S^{-1})$, there is a $\tau'\in{\bf{S}}_{S^{-1}}$ for which $R^{-1/3}\cdot [(T^t)^{-1}V_{\tau,R}]$ is comparable to $U_{\tau',R^{2/3}}$.  \end{proposition}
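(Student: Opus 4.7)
The plan is to take $\tau' \in {\bf{S}}_{S^{-1}}$ to be the cone plank with base point $\omega = lS^{-1}$ matching the base point $t = lS^{-1}$ of the Frenet frame defining $V_{\tau,R}$, and compare the two parallelepipeds directly by expanding the images of $(T^t)^{-1}$ applied to $\{{\bf{T}}(t), {\bf{N}}(t), {\bf{B}}(t)\}$ in the cone frame $\{{\bf{c}}(t), {\bf{b}}(t), {\bf{t}}(t)\}$. Since $T = \operatorname{diag}(1, 1/2, 1/6)$ is symmetric, $(T^t)^{-1} = \operatorname{diag}(1, 2, 6)$, and the cone frame is orthogonal with common squared norm $|{\bf{c}}(t)|^2 = |{\bf{b}}(t)|^2 = |{\bf{t}}(t)|^2 = 1 + t^2 + t^4/4$, which makes the change-of-basis computation explicit.

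The core observation is that $(T^t)^{-1}$ is triangular with respect to the cone frame. Direct computation gives $(T^t)^{-1}{\bf{B}}(t) = 6\,{\bf{t}}(t)$ exactly, and $(T^t)^{-1}{\bf{N}}(t) \cdot {\bf{c}}(t) = 0$ (analogous to the orthogonality $T{\bf{N}}(t) \cdot {\bf{t}}(t) = 0$ used in Proposition \ref{geo3}), so $(T^t)^{-1}{\bf{N}}(t) = d(t){\bf{b}}(t) + e(t){\bf{t}}(t)$ with no ${\bf{c}}$-component. Writing $(T^t)^{-1}{\bf{T}}(t) = a(t){\bf{c}}(t) + b(t){\bf{b}}(t) + c(t){\bf{t}}(t)$, taking inner products against the cone frame and dividing by the common squared norm produces the bounds $a(t), d(t) \sim 1$, $|b(t)|, |e(t)| \lesssim |t|$, and $|c(t)| \lesssim t^2$ for $|t| \le 1$.

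Substituting these expansions into a generic element $A{\bf{T}}(t) + B{\bf{N}}(t) + C{\bf{B}}(t) \in V_{\tau, R}$ with $|A| \le S^{-2}R$, $|B| \le S^{-1}R$, $|C| \le R$, and using $|t| = lS^{-1} \le 1$, the cone-frame coefficients of the image are controlled by $\lesssim S^{-2}R$, $\lesssim S^{-1}R$ (since $|A b(t)| \lesssim S^{-2}R \le S^{-1}R$), and $\lesssim R$, respectively. Dilation by $R^{-1/3}$ then places $R^{-1/3}(T^t)^{-1}V_{\tau,R}$ inside a constant multiple of $U_{\tau', R^{2/3}}$. The reverse inclusion follows by inverting the triangular system, which is possible because the three diagonal coefficients $a(t)$, $d(t)$, and $6$ are bounded away from zero for $|t| \le 1$, so ${\bf{c}}(t), {\bf{b}}(t), {\bf{t}}(t)$ can be expressed as bounded-coefficient combinations of $(T^t)^{-1}{\bf{T}}(t), (T^t)^{-1}{\bf{N}}(t), (T^t)^{-1}{\bf{B}}(t)$. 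The main obstacle is purely bookkeeping: verifying the vanishing inner products and the coefficient size bounds from the coordinate expressions for the Frenet and cone frame vectors.
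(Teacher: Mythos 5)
Your proposal is correct and follows essentially the same route as the paper: apply $(T^t)^{-1}=\operatorname{diag}(1,2,6)$ to the Frenet vectors and compare the resulting parallelepiped to the cone-frame box, which the paper does by a brief Gram--Schmidt remark and you do by an explicit triangular change of basis with the coefficient bounds $a,d\sim 1$, $|b|,|e|\lesssim|t|$, $|c|\lesssim t^2$. Your identities $(T^t)^{-1}{\bf{B}}(t)=6\,{\bf{t}}(t)$ and $(T^t)^{-1}{\bf{N}}(t)\cdot{\bf{c}}(t)=0$ check out, and the two-sided containment argument is sound.
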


\[ \]
\begin{proof} Note that $(T^t)^{-1}(x,y,z)=(x,2y,6z)$. The $R^{-1/3}$-dilation of the image of the wave envelope $V_{\tau,R}$ under $(T^t)^{-1}$ is
\begin{align*}
R^{-1/3}\cdot [(T^t)^{-1}V_{\tau,R}]=\{A(1,4lS^{-1},&18(lS^{-1})^2)+B(-2lS^{-1}-9(lS^{-1})^3,2-18(lS^{-1})^4,18lS^{-1}+36(lS^{-1})^3)\\
&+C(3(lS^{-1})^2,-6lS^{-1}, 6): |A|\lesssim S^{-2}R^{2/3},\quad |B|\lesssim S^{-1}R^{2/3},\quad |C|\lesssim R^{2/3}\}. 
\end{align*}
Applying a Gram-Schmidt process to the vectors, we see that the above set is comparable to 
\begin{align*}
\{A(1,lS^{-1},&\frac{1}{2}(lS^{-1})^2)+B(-lS^{-1},1-\frac{1}{2}(lS^{-1})^2,lS^{-1})\\
&+C(\frac{1}{2}(lS^{-1})^2,-lS^{-1}, 1): |A|\lesssim S^{-2}R^{2/3},\quad |B|\lesssim S^{-1}R^{2/3},\quad |C|\lesssim R^{2/3}\}. 
\end{align*}
Define $\w=lS^{-1}$. Then it is clear that the above set is comparable to $U_{\tau',R^{2/3}}$ where  $\tau'\in{\bf{S}}_{S^{-1}}$ is centered at $(1,\w,\frac{1}{2}\w^2)$, as require by the proposition. 

\end{proof}

\subsection{High-frequency analysis }

Now that we have identified moment curve blocks with cone planks and moment curve wave envelopes with cone wave envelopes, we are prepared to use Theorem 1.3 from \cite{locsmooth} to control the high part of square functions. Recall the theorem statement.
\begin{thm}[Theorem 1.3 from \cite{locsmooth}]\label{wee} For each $\d>0$, there exists $B_\d\in(0,\infty)$ so that 
\[ \int_{\R^3}|f|^4\le B_\d R^\d\sum_{R^{-1/2}\le\sigma\le 1}\sum_{\tau\in{\bf{S}}_{\sigma^{-1}R^{-1/2}}}\sum_{U\|U_{\tau,R}}|U|^{-1}\Big(\int_U\sum_{\substack{\theta\subset\tau\\\theta\in{\bf{S}}_{R^{-1/2}}}}|f_\theta|^2\Big)^2 \]
for any Schwartz function $f:\R^3\to\C$ with Fourier transform supported in $\Gamma(R)$. 

\end{thm}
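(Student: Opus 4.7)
The plan is to prove Theorem \ref{wee} by induction on $R$, using an iterated high-low frequency argument that peels off the contribution of wave envelopes at each dyadic scale $\sigma \in [R^{-1/2},1]$. Let $W(R)$ denote the best constant in the stated inequality; the target is $W(R) \lesssim_\delta R^\delta$ for every $\delta > 0$, established by a multi-scale recursion whose base case at $R \sim 1$ is trivial. After a standard dyadic pigeonholing to reduce to the case $\|f_\theta\|_\infty \sim 1$ or $0$, the problem becomes a level-set estimate analogous to the one described in \textsection\ref{int}.

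The structural heart of the argument is a family of high-low decompositions of $g := \sum_{\theta \in {\bf{S}}_{R^{-1/2}}} |f_\theta|^2$. For each dyadic $\sigma$, write $g = g^\ell_\sigma + g^h_\sigma$ with $g^\ell_\sigma := g * \widecheck{\eta}_{\le \sigma R^{-1/2}}$. Since $\widehat{g}$ is supported in $\bigcup_\theta (2\theta - 2\theta)$, a local $L^2$-orthogonality argument (in the spirit of Lemma \ref{L2orth}) shows that $g^h_{\sigma} - g^h_{\sigma/2}$ has Fourier transform essentially supported in the annular region $\{|\xi| \sim \sigma R^{-1/2}\}$, and moreover localized to the cone-plank differences $\tau - \tau$ for $\tau \in {\bf{S}}_{\sigma^{-1}R^{-1/2}}$. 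The dual of $\tau - \tau$ is precisely a translate of the wave envelope $U_{\tau,R}$, so these pieces of $g$ are locally constant on tiles $U \| U_{\tau,R}$.

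The iteration proceeds as follows. Pointwise $|f|^4 = g \cdot g$. Apply the decomposition at $\sigma_0 \sim 1$: on the region where $g \lesssim g^\ell_{\sigma_0}$, bound directly by the $\sigma \sim 1$ term of the wave envelope sum, since a single plank $\tau$ essentially covers $\Gamma(R)$ at that scale. On the complementary region, Cauchy--Schwarz and local $L^2$-orthogonality together with the wave-envelope localization yield
\[
\int |g^h_{\sigma_0}|^2 \;\lesssim\; \sum_{\tau \in {\bf{S}}_{R^{-1/2}}} \sum_{U \| U_{\tau, R}} |U|^{-1}\Big(\int_U \sum_{\theta \subset \tau}|f_\theta|^2\Big)^2,
\]
and the residual low part $g^\ell_{\sigma_0/2}$ is fed back into the decomposition at the next dyadic $\sigma$. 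Iterating over $\log R$ many scales between $\sigma = 1$ and $\sigma = R^{-1/2}$ reproduces the right-hand side of the theorem.

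The main technical obstacle I anticipate is a broad/narrow analysis at each scale needed to handle the cross terms $\int g^h_\sigma \cdot g^\ell_\sigma$: for transverse triples of cone planks the multilinear Kakeya / trilinear restriction estimate for the cone converts the $L^4$ problem to an $L^2$ problem controlled by the wave envelope functional, while the narrow (essentially coplanar) contribution is absorbed by the inductive hypothesis at a smaller radius after parabolic rescaling onto a cone of radius $R^{1-\delta}$. Verifying that the annular Fourier supports match the wave envelope geometry exactly, and that the $R^\delta$ losses per dyadic scale compound only to a single overall $R^\delta$, is the bulk of the bookkeeping and closes the induction.
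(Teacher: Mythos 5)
You are proposing to prove a theorem that this paper does not prove at all: Theorem \ref{wee} is imported verbatim as Theorem 1.3 of \cite{locsmooth} and used as a black box. The closest in-paper analogue is Appendix B, where the generalized version for cones $\Gamma_\g$ is split into two separate statements: Theorem \ref{gensqfn} (the passage from $\int|f|^4$ to $\int|\sum_\z|f_\z|^2|^2$, proved by a two-scale induction using rescaling of sub-cones and Khintchine's inequality) and Theorem \ref{genwee} (the decomposition of $\int|\sum_\z|f_\z|^2|^2$ into wave envelopes, proved by Plancherel plus a quantitative transversality computation with the auxiliary function $F_t(s)=[\dot\g(t)+\tfrac{B}{h}\ddot\g(t)]\cdot(\dot\g(s)\times\ddot\g(s))$ controlling the overlap of the sets $\z_\sigma$ on each annulus). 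Your sketch conflates these two halves into a single iteration, and the point where they are glued together is exactly where it breaks.

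Concretely, the step ``Pointwise $|f|^4=g\cdot g$'' with $g=\sum_\theta|f_\theta|^2$ is false: $|f|^2\neq\sum_\theta|f_\theta|^2$, and Cauchy--Schwarz only gives $|f|^2\le(\#\theta)\sum_\theta|f_\theta|^2$, a loss of $R^{1/2}$ that destroys the estimate. The reduction of $\int|f|^4$ to $\int g^2$ is itself the $L^4$ square function estimate for the cone, i.e.\ one of the two main contents of the theorem you are trying to prove; it cannot be assumed at the outset, and the paper's Appendix B needs a genuine induction on scales (Theorem \ref{gensqfn}, with the recursion $T(S)\lesssim_\d S^\d T(S^{1/4})T(S^{5/8})$) to obtain it \emph{from} the wave envelope inequality. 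Your fallback --- broad/narrow plus ``trilinear restriction for the cone'' --- is not the mechanism used in \cite{locsmooth} and is not obviously available: the natural transversality for cone planks in $\R^3$ is bilinear, and you would still need to show the per-scale $R^\d$ losses and the losses from the induction at radius $R^{1-\d}$ compound to a single $R^\d$, which you flag but do not carry out. Separately, your displayed single-scale inequality for $\int|g^h_{\sigma_0}|^2$ is asserted to follow from ``Cauchy--Schwarz and local $L^2$-orthogonality,'' but plain almost-orthogonality of the supports $\theta-\theta$ is not enough; one needs the finite-overlap statement for the annular pieces $\z_\sigma$ across distinct $\tau\in{\bf{S}}_{\sigma^{-1}R^{-1/2}}$, which is the content of the $F_t$ argument in the proof of Theorem \ref{genwee} and is the geometric heart of the wave envelope decomposition. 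As written, the proposal assumes both hard components of the theorem rather than proving them.
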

The initial sum on the right hand side is over dyadic $\sigma$, $R^{-1/2}\le\sigma\le 1$. Whenever we sum over an interval $(a,b)$, we always mean the numbers in $2^\Z\cap(a,b)$.

\begin{lemma}[High lemma]\label{high1} For each $\d>0$, there is $B_\d\in(0,\infty)$ so that the following holds. For any $k$, there is some dyadic scale $R_{k+1}^{-1/3}\le s\le 10 R_k^{-1/3}$ for which 
\[ \int|g_k^h|^4\le B_{\d}R^{\d}(\log R)\sum_{s\le\sigma\le 1}\sum_{\tau\in{\bf{S}}(\sigma^{-1}s)}\sum_{V\|V_{\tau,s^{-3}}}|V|^{-1}\Big(\int_V\sum_{\substack{\tau_s\subset\tau\\\tau_s\in{\bf{S}}(s)}}|f_{\tau_s}^{k+1}|^4\Big)^2 .\]
\end{lemma}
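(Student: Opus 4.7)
The plan is to reduce Lemma \ref{high1} to the cone wave envelope estimate, Theorem \ref{wee}, by pigeonholing a single dyadic frequency scale $s$ at which to concentrate the analysis. The argument has four phases: a dyadic frequency decomposition and pigeonhole producing a single scale $s$; a moment-curve-to-cone Fourier-support identification via Proposition \ref{geo3}; application of Theorem \ref{wee} at cone scale $s^{-2}$; and a translation of the resulting cone wave envelopes back to moment curve wave envelopes via Proposition \ref{geo4}.

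For the first phase, $\widehat{g_k}$ is supported in $\bigcup_{\tau_k}(C\tau_k - C\tau_k)\subset B_{CR_k^{-1/3}}(0)$, and removing the low part excises a ball of radius $R_{k+1}^{-1/3}/2$. I insert a smooth dyadic partition of unity $\{\eta_s\}$ with $s$ ranging over $\sim\log R$ dyadic scales in $[R_{k+1}^{-1/3},10R_k^{-1/3}]$ and $\supp\eta_s\subset\{|\xi|\sim s\}$, so that $g_k^h=\sum_s g_k^{(s)}$ with $g_k^{(s)}:=g_k*\widecheck{\eta_s}$. The triangle inequality in $L^4$ followed by pigeonhole produces an $s$ with
\[
\int|g_k^h|^4 \lesssim (\log R)^4 \int|g_k^{(s)}|^4,
\]
and the extra powers of $\log R$ are absorbed into the eventual $R^\delta$ factor.

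For the next two phases, let $\{\psi_{\tau_s}\}_{\tau_s\in{\bf{S}}(s)}$ be a smooth partition of unity subordinate to the cover of $\bigcup_{\tau_k}(C\tau_k - C\tau_k)\cap\{|\xi|\sim s\}$ by the sets $(C\tau_s - C\tau_s)\setminus B_{s/4}(0)$, and set $G_{\tau_s}=g_k^{(s)}*\widecheck{\psi_{\tau_s}}$. By Proposition \ref{geo3} applied with $S=s^{-1}$, each $\supp\widehat{G_{\tau_s}}$ is carried by the linear map $T$ (followed by dilation by $s^{-1}$) to a cone plank in ${\bf{S}}_s\subset\Gamma(s^{-2})$. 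Pulling this change of variables back to the spatial side (its Jacobian is a universal constant), I apply Theorem \ref{wee} at cone scale $R^{\mathrm{cone}}=s^{-2}$ to $g_k^{(s)}$, obtaining
\[
\int|g_k^{(s)}|^4 \lesssim B_\delta s^{-2\delta}\sum_{s\le\sigma\le 1}\sum_{\tau}\sum_{U\|U^{\mathrm{cone}}_{\tau,s^{-2}}}|U|^{-1}\Bigl(\int_U\sum_{\tau_s\subset\tau}|G_{\tau_s}|^2\Bigr)^2.
\]
Proposition \ref{geo4} then identifies, up to absolute constants, each cone wave envelope $U^{\mathrm{cone}}_{\tau,s^{-2}}$ with the moment curve wave envelope $V_{\tau,s^{-3}}$ for the corresponding $\tau\in{\bf{S}}(\sigma^{-1}s)$, matching the outer sums on the claimed right-hand side.

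It remains to bound $|G_{\tau_s}|^2$ pointwise by $|f_{\tau_s}^{k+1}|^4$ against an $L^1$-normalized weight. Only $\tau_k$'s with $\tau_s\subset\tau_k$ contribute to $G_{\tau_s}$, since $\supp\widehat{G_{\tau_s}}\subset 2\tau_k-2\tau_k$ forces $\tau_s$ to be a subblock of the unique such $\tau_k$. Using local $L^2$-orthogonality (Lemma \ref{L2orth}) for the pieces $\{f_{\tau_{k+1}}^{k+1}\}_{\tau_{k+1}\subset\tau_s}$ together with the locally constant property (Lemma \ref{locconst}) to absorb the weights $\omega_{\tau_k}$, I obtain a pointwise bound $|G_{\tau_s}|\lesssim|f_{\tau_s}^{k+1}|^2*\tilde\omega_{\tau_s}$ for a weight $\tilde\omega_{\tau_s}$ with $\|\tilde\omega_{\tau_s}\|_1\lesssim 1$ that decays rapidly off a dual plank contained in $V_{\tau_s,s^{-3}}\subset V_{\tau,s^{-3}}$. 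Cauchy--Schwarz upgrades this to $|G_{\tau_s}|^2\lesssim|f_{\tau_s}^{k+1}|^4*\tilde\omega_{\tau_s}$; integrating over $V\|V_{\tau,s^{-3}}$ and summing over $\tau_s\subset\tau$ absorbs the convolution, delivering the stated inequality. The hardest part of the proof is the geometric bookkeeping across the cone/moment-curve interface: namely, arranging the spatial change of variables so that it is simultaneously compatible with the cone plank identification of Proposition \ref{geo3} and the wave envelope identification of Proposition \ref{geo4}, so that the Jacobians and the scales of the weights line up cleanly with no unaccounted powers of $R$.
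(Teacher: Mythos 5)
Your proposal is correct and follows essentially the same route as the paper: dyadic pigeonholing onto a single annulus $|\xi|\sim s$, identification of the resulting Fourier supports with cone planks via Proposition \ref{geo3}, application of Theorem \ref{wee} at cone scale $s^{-2}$, translation of the cone wave envelopes back to $V_{\tau,s^{-3}}$ via Proposition \ref{geo4}, and Cauchy--Schwarz/Young manipulations to arrive at $\sum_{\tau_s\subset\tau}|f_{\tau_s}^{k+1}|^4$. The only cosmetic caveat is that your claimed pointwise bound $|G_{\tau_s}|\lesssim|f_{\tau_s}^{k+1}|^2*\tilde\omega_{\tau_s}$ omits the cross terms $f_{\tau_s'}^{k+1}\overline{f_{\tau_s''}^{k+1}}$ coming from the $O(1)$ many neighboring pairs $\tau_s'\sim\tau_s''$ whose difference sets also meet the support of $\psi_{\tau_s}$; these are absorbed by exactly the Cauchy--Schwarz over $O(1)$ terms that the paper applies, so nothing is lost.
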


\begin{proof} First describe the Fourier support of $g_k^h$. By \eqref{item3} of Lemma \ref{pruneprop}, the support of $\widehat{|f_{\tau_k}^{k+1}|^2}$ is $2(\tau_k-\tau_k)$. The high-frequency cutoff removes a ball of radius $R_{k+1}^{-1/3}$, so $g_k^h$ is Fourier supported within the annulus $R_{k+1}^{-1/3}\le |\xi|\le 10 R_k^{-1/3}$. By dyadic pigeonholing, there is some dyadic $s\in[R_{k+1}^{-1/3},R_k^{-1/3}]$ for which 
\[ \int|g_k^h|^4\lesssim(\log R)  \int|g_k^h*\widecheck{\eta}_s|^4, \]
where $\eta_s:\R^3\to[0,\infty)$ is a smooth function supported in the annulus $s/4\le |\xi|\le s$ (in the case that $s=R_k^{-1/3}$, let $\eta_s$ be supported on $s\le |\xi|\le 20R_k^{-1/3}$). In the proof of Lemma \ref{low}, we showed the pointwise equality
\begin{equation}\label{ptwiselo}g_k^h*\widecheck{\eta}_s(x)=\sum_{\tau_s}\sum_{\tau_s'\sim\tau_s}(f_{\tau_s}^{k+1}\overline{f_{\tau_s'}^{k+1}})*\w_{\tau_k}*\widecheck{\eta}_{>R_{k+1}^{-1/3}}*\widecheck{\eta}_s(x)\end{equation}
where $\tau_s\in{\bf{S}}(s)$ and $\tau_s'\sim\tau_s$ means that $\tau_s'\in{\bf{S}}(s)$ and $\text{dist}(2\tau_s,2\tau_s')\le  2s$. For each $\tau_s$, the sub-sum on the right hand side has Fourier transform supported in $10(\tau_{2s}-\tau_{2s})\setminus B_s(0)$ where $\tau_{2s}\in{\bf{S}}(2s)$ contains $\tau_s$. Now write 
\begin{equation}\label{formg} g_k^h*\widecheck{\eta}_s(x)=|\det T|^{-1}(\widehat{g_k^h}\eta_s\circ T^{-1})^{\widecheck{\,\,\,}}((T^{-1})^t x) . \end{equation}
Perform the change of variables $x\mapsto T^tx$ to get
\[ \int|g_k^h*\widecheck{\eta}_s(x)|^4dx=|\det T|^{-3}\int|(\widehat{g_k^h}\eta_s\circ T^{-1})^{\widecheck{\,\,\,}}(x)|^4dx. \]
By Proposition \ref{geo3}, we may view the sub-sums corresponding to each $\tau_k$ on the right hand side of \eqref{ptwiselo} as having Fourier support which is part of a tiling of the cone, after applying $T$ and dilating by a factor of $s^{-1}$. Therefore, we may apply the wave envelope estimate Theorem \ref{wee}, dilated by a factor of $s^{-1}$, to obtain 
\begin{align*}
\int|(\widehat{g_k^h}\eta_s*\circ T^{-1})^{\widecheck{\,\,\,}}&(x)|^4dx\lesssim_{\d}R^\d\sum_{s^{-1}\le\sigma\le 1}\sum_{\tau'\in{\bf{S}}_{\sigma^{-1}s}}\sum_{U\|U_{\tau',s^{-2}}}\\
&|s^{-1}U|^{-1}\Big(\int_{s^{-1} U}\sum_{\tau_{2s}``\subset" \tau'}|\sum_{\tau_s\subset\tau_{2s}}\sum_{\tau_s'\sim\tau_s}((\widehat{f_{\tau_s}^{k+1}}*\widehat{\overline{f_{\tau_s'}^{k+1}}})\widehat{\w}_{\tau_k}{\eta}_{>R_{k+1}^{-1/3}}{\eta}_s\circ T^{-1})^{\widecheck{\,\,\,}}(x)|^2dx\Big)^{2}
\end{align*}
where $\tau_{2s}``\subset"\tau'$ means that $(2s)^{-1}\cdot[ T(10\tau_{2s}-10\tau_{2s})\setminus B_{s}(0)]\subset\tau'$. It remains to undo the initial steps which allowed us to apply the wave envelope estimate for the cone. First multiply both sides of the above inequality by $|\det T|^{-3}$. Then do the change of variables $x\mapsto (T^{-1})^tx$ to obtain
\begin{align*}
|\det T&|^{-3}|s^{-1}U|^{-1}\Big(\int_{s^{-1} U}\sum_{\tau_{2s}``\subset" \tau'}|\sum_{\tau_s\subset\tau_{2s}}\sum_{\tau_s'\sim\tau_s}((\widehat{f_{\tau_s}^{k+1}}*\widehat{\overline{f_{\tau_s'}^{k+1}}})\widehat{\w}_{\tau_k}{\eta}_{>R_{k+1}^{-1/3}}{\eta}_s\circ T^{-1})^{\widecheck{\,\,\,}}(x)|^2dx\Big)^{2}\\
&= |\det T|^{-4}|s^{-1}T^tU|^{-1}\Big(\int_{s^{-1} T^tU}\sum_{\tau_{2s}``\subset" \tau'}|\sum_{\tau_s\subset\tau_{2s}}\sum_{\tau_s'\sim\tau_s}((\widehat{f_{\tau_s}^{k+1}}*\widehat{\overline{f_{\tau_s'}^{k+1}}})\widehat{\w}_{\tau_k}{\eta}_{>R_{k+1}^{-1/3}}{\eta}_s\circ T^{-1})^{\widecheck{\,\,\,}}((T^{-1})^t x)|^2dx\Big)^{2}\\
&= |s^{-1}T^tU|^{-1}\Big(\int_{s^{-1} T^tU}\sum_{\tau_{2s}``\subset" \tau'}|\sum_{\tau_s\subset\tau_{2s}}\sum_{\tau_s'\sim\tau_s}({f_{\tau_s}^{k+1}}{\overline{f_{\tau_s'}^{k+1}}})*{\w}_{\tau_k}*\widecheck{\eta}_{>R_{k+1}^{-1/3}}*\widecheck{\eta}_s(x)|^2dx\Big)^{2}\\
&\sim |V|^{-1}\Big(\int_{V}\sum_{\tau_{2s}\subset\tau}|\sum_{\tau_s\subset\tau_{2s}}\sum_{\tau_s'\sim\tau_s}({f_{\tau_s}^{k+1}}{\overline{f_{\tau_s'}^{k+1}}})*{\w}_{\tau_k}*\widecheck{\eta}_{>R_{k+1}^{-1/3}}*\widecheck{\eta}_s(x)|^2dx\Big)^{2} 
\end{align*}
where by Proposition \ref{geo4}, we identify $\tau\in{\bf{S}}(\sigma^{-1}s)$ with $\tau'\in{\bf{S}}_{\sigma^{-1} s}$ via $s^{-1}T[(10\tau-10\tau)\setminus B_{\sigma^{-1}s/4}(0)]\sim\tau'$ and identify $s^{-1} T^tU$ with a $V\|V_{\tau,s^{-3}}$. Next, by Cauchy-Schwartz, since the number of $\tau_s\subset\tau_{2s}$ and the number of $\tau_s'$ satisfying $\tau_s'\sim\tau_s$ is $O(1)$, it suffices to replace the above integrals by 
\[  |V|^{-1}\Big(\int_{V}\sum_{\tau_{s}\subset\tau}||{f_{\tau_s}^{k+1}}|^2*\rho(x)|^2dx\Big)^{2}\]
where $\rho=|\w_{\tau_k}*\widecheck{\eta}_{>R_{k+1}^{-1/3}}*\widecheck{\eta}_s|$. Note that $\|\rho\|_1\sim 1$. Again, by Cauchy-Schwarz, the above integral is bounded by 
\[  |V|^{-1}\Big(\int_{V}\sum_{\tau_{s}\subset\tau}|{f_{\tau_s}^{k+1}}|^4*\rho(x)dx\Big)^{2}. \]
Write $\chi_{V_{0}}$ for the characteristic function of $2V_{\tau,s^{-3}}$. After summing the above integral over $V\|V_{\tau,s^{-3}}$, we have the bound
\[ \sum_{V\|V_{\tau,s^{-3}}}|V|^{-1}\Big(\int_{V}\sum_{\tau_{s}\subset\tau}|{f_{\tau_s}^{k+1}}|^4*\rho(x)dx\Big)^{2}\lesssim |V|^{-2}\int \Big(\sum_{\tau_{s}\subset\tau}|{f_{\tau_s}^{k+1}}|^4*\rho*\chi_{V_0}\Big)^{2}. \]
By Cauchy-Schwarz and Young's convolution inequality, the right hand side is bounded by 
\[|V|^{-2}\int \Big(\sum_{\tau_{s}\subset\tau}|{f_{\tau_s}^{k+1}}|^4*\chi_{V_0}\Big)^{2}\lesssim  \sum_{V\|V_{\tau,s^{-3}}}|V|^{-1}\Big(\int_V\sum_{\tau_{s}\subset\tau}|{f_{\tau_s}^{k+1}}|^4\Big)^{2}. \]

\end{proof}

\section{Key iterations that unwind the pruning process\label{keyalgo}} 
\subsection{An $L^{7/2}$ square function estimate for the parabola. }
Let $\mb{P}^1=\{(t,t^2):0\le t\le 1\}$ and for $r\ge 1$, let $\mc{N}_{r^{-1}}(\P^1)$ denote the $r^{-1}$-neighborhood of $\P^1$ in $\R^2$. Define the collection of canonical $\sim r^{-1/2}\times r^{-1}$ parabola blocks as follows. Let $s\in 2^\Z$ be the smallest number satisfying $r^{-1}\le s^2$. Then write $\mc{N}_{s^2}(\P^1)$ as 
\begin{equation*} 
\bigsqcup_{1\le l\le s^{-1}-2} \{(\xi_1,\xi_2)\in\mc{N}_{s^2}(\P^1):ls\le \xi_1<(l+1)s \}  \end{equation*}
and the two end pieces
\[  \{(\xi_1,\xi_2)\in\mc{N}_{s^2}(\P^1):\xi_1<s\} \sqcup  \{(\xi_1,\xi_2)\in\mc{N}_{s^2}(\P^1):1-s\le \xi_1\} \]
We use the notation $\ell(\tau)=r^{-1/2}$ in two ways: (1) to describe $\tau$ as one of the blocks from the above partition and (2) to index the set of $\tau$ from the above partition.

\begin{thm}[Cylindrical $L^{7/2}$ square function estimate over $\P^1$] \label{cylPS}
Let $\mb{P}^1=\{(t,t^2):0\le t\le 1\}$ and for $r\ge1$, let $\mc{N}_{r^{-1}}(\P^1)$ denote the $r^{-1}$-neighborhood of $\mc{P}^1$ in $\R^2$. If $h:\R^3\to\C$ is a Schwartz function with Fourier transform supported in $\mc{N}_{r^{-1}}(\P^1)\times\R$, then
\[ \int_{\R^3}|h|^{7/2}\lesssim_\e r^{\e}\int_{\R^3}(\sum_{\z}|h_\z|^2)^{7/4} \]
where the $\z$ are products of approximate rectangles $\theta$, $\ell(\theta)={r^{-1/2}}$, with $\R$.
\end{thm}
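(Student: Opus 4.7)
The plan is to reduce the three-dimensional estimate to the known planar $L^{7/2}$ square function estimate for the parabola by exploiting the fact that the Fourier support of $h$ is cylindrical in the third variable.

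First, I would perform Fourier slicing in $x_3$. For each fixed $x_3 \in \R$, set $F_{x_3}(x_1,x_2) := h(x_1,x_2,x_3)$. Fourier inverting in $\xi_3$ alone shows
\[
\widehat{F_{x_3}}(\xi_1,\xi_2) = \int \widehat{h}(\xi_1,\xi_2,\xi_3)\, e^{2\pi i x_3\xi_3}\, d\xi_3,
\]
so $\widehat{F_{x_3}}$ is supported in the projection of $\supp(\widehat h)$ onto the $(\xi_1,\xi_2)$-plane, i.e.\ in $\mathcal N_{r^{-1}}(\P^1)$. Because each $\zeta=\theta\times\R$ is a product set, the same slicing identity gives the crucial matching
\[
(F_{x_3})_\theta(x_1,x_2) = h_\zeta(x_1,x_2,x_3),
\]
which is what lets us move the decomposition across the slicing.

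Next, I would invoke the planar $L^{7/2}$ square function estimate for the parabola applied to $F_{x_3}$, obtaining
\[
\int_{\R^2} |F_{x_3}|^{7/2}\, dx_1 dx_2 \lesssim_\e r^\e \int_{\R^2} \Big(\sum_\theta |(F_{x_3})_\theta|^2\Big)^{7/4} dx_1 dx_2
\]
for each $x_3$, and then integrate in $x_3$ to conclude the desired inequality on $\R^3$.

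The only remaining ingredient is the planar $L^{7/2}$ square function estimate for $\P^1$. I would derive this by complex interpolation of the linear operator $T\colon (f_\theta)_\theta\mapsto \sum_\theta f_\theta$ (where each $f_\theta$ is Fourier localized to $\theta$) between the two endpoints $p=2$ and $p=4$. At $p=2$, essentially disjoint Fourier supports yield $\|T\|_{L^2(\ell^2)\to L^2}\lesssim 1$ via Plancherel, while at $p=4$, the classical Fefferman--C\'ordoba square function estimate gives $\|T\|_{L^4(\ell^2)\to L^4}\lesssim 1$. Complex interpolation of vector-valued $L^p$ spaces then produces $\|T\|_{L^{7/2}(\ell^2)\to L^{7/2}}\lesssim 1$, which is more than enough to absorb the $r^\e$ factor. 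I do not anticipate any real obstacle here: the only thing that has to be checked carefully is the product structure $\zeta=\theta\times\R$, which ensures that slicing in $x_3$ commutes with the Fourier projection onto $\zeta$, so the reduction to the planar estimate is clean.
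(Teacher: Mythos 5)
Your slicing step is exactly the paper's proof of Theorem \ref{cylPS}: fix $x_3$, note that the slice $F_{x_3}$ has Fourier support in $\mc{N}_{r^{-1}}(\P^1)$ and that $(F_{x_3})_\theta=h_{\theta\times\R}(\cdot,\cdot,x_3)$, apply the planar $L^{7/2}$ square function estimate, and integrate in $x_3$ by Fubini. The divergence is entirely in how that planar input is obtained. The paper does not treat it as a consequence of known endpoint bounds: it introduces the constant $\text{PS}(r)$, proves a multiscale inequality for it via a broad--narrow decomposition together with bilinear restriction for the parabola (Proposition \ref{multiscale}), and iterates to conclude $\text{PS}(r)\lesssim_\d r^\d$ (Corollary \ref{PS}).

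Your interpolation argument for the planar estimate has a genuine gap as stated. The operator $T:(f_\theta)_\theta\mapsto\sum_\theta f_\theta$ is bounded on $L^4(\ell^2)$ with constant $O(1)$ only when restricted to the subspace of tuples whose components are Fourier supported in the caps $\theta$; on all of $L^4(\ell^2)$ its norm is as large as $(\#\theta)^{1/2}\sim r^{1/4}$ (take the $f_\theta$ to be translates of a common bump). Complex interpolation cannot be applied to an operator that is only bounded on a subspace at one endpoint, because the complex interpolation space of the subspaces need not coincide with the subspace of the interpolation space. The standard repair is to extend $T$ to all of $L^p(\ell^2)$ by $\tilde T:(g_\theta)_\theta\mapsto\sum_\theta P_\theta g_\theta$, where $P_\theta$ is a smooth Fourier projection adapted to $\theta$; the $L^4$ endpoint for $\tilde T$ then requires, besides the Fefferman--C\'ordoba biorthogonality bound for functions Fourier supported in the dilates $2\theta$, the vector-valued inequality $\|(\sum_\theta|P_\theta g_\theta|^2)^{1/2}\|_4\lesssim(\log r)^{O(1)}\|(\sum_\theta|g_\theta|^2)^{1/2}\|_4$, which by the Fefferman--Stein duality argument reduces to C\'ordoba's $L^2(\R^2)$ bound for the directional (Nikodym-type) maximal function over $\sim r^{1/2}$-eccentric rectangles in $r^{-1/2}$-separated directions. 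With that extra ingredient your route does close (the logarithmic losses are harmless against the $r^\e$ you are allowed), and it is arguably more elementary than the paper's bilinear-restriction argument; but as written, the sentence asserting that complex interpolation ``produces'' the $L^{7/2}(\ell^2)\to L^{7/2}$ bound skips precisely the step where the work lies.
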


The local version of Theorem \ref{cylPS} is 
\begin{corollary}\label{Pcor} Let $B_r$ be an $r$-ball in $\R^3$. If $h:\R^3\to\C$ is a Schwartz function with Fourier transform supported in $\mc{N}_{r^{-1}}(\P^1)\times\R$, then
\[ \int_{\R^3}|h|^{7/2}W_{B_r}\lesssim_\e r^{\e}\int_{\R^3}(\sum_{\z}|h_\z|^2)^{7/4}W_{B_r} \]
where the $\z$ are products of approximate rectangles $\theta$, $\ell(\theta)={r^{-1/2}}$, with $\R$.
\end{corollary}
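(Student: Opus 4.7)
The plan is to deduce the local estimate from the global Theorem \ref{cylPS} by the standard device of absorbing the weight $W_{B_r}$ into the function itself via a Fourier-localized multiplier.

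\textbf{Step 1 (choose a localizing factor).} First I would construct an auxiliary function $\eta:\R^3\to\C$ such that $\widehat{\eta}$ is supported in the ball $B_{r^{-1}}(0)$, $|\eta|$ is comparable to $1$ on $B_r$ and decays rapidly away from $B_r$, and satisfies the pointwise domination $W_{B_r}\lesssim |\eta|^{7/2}$. This is a routine construction: take $\eta=\widecheck{\rho}$ for a suitable bump $\rho$ adapted to $B_{r^{-1}}(0)$, possibly raising it to a high power and renormalizing to make its rapid decay dominate the polynomial weight $W_{B_r}$. The two key properties are thus that $|\eta|^{7/2}\gtrsim W_{B_r}$ pointwise and that convolution with $\widehat\eta$ only spreads Fourier support by an $r^{-1}$-ball.

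\textbf{Step 2 (reduce to a global estimate for $h\eta$).} From $W_{B_r}\lesssim |\eta|^{7/2}$ we get
\[ \int_{\R^3}|h|^{7/2}W_{B_r}\lesssim \int_{\R^3}|h\eta|^{7/2}. \]
The Fourier support of $h\eta$ is contained in $\mc{N}_{r^{-1}}(\P^1)\times\R+B_{r^{-1}}(0)\subset \mc{N}_{Cr^{-1}}(\P^1)\times\R$ for a universal constant $C$, so (up to replacing $r$ by $r/C$ and absorbing the constant into $\lesssim_\e r^\e$) Theorem \ref{cylPS} applies to $h\eta$ to give
\[ \int_{\R^3}|h\eta|^{7/2}\lesssim_\e r^{\e}\int_{\R^3}\Big(\sum_{\z'}|(h\eta)_{\z'}|^2\Big)^{7/4}, \]
where the $\z'$ are the canonical blocks at scale $\ell(\z')\sim r^{-1/2}$ adapted to the slightly thickened neighborhood; each $\z'$ sits inside $O(1)$ many of the original $\z$.

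\textbf{Step 3 (replace $(h\eta)_{\z'}$ by $h_{\z}\eta$).} This is the step I expect to be the main technical obstacle. The Fourier multiplication by the characteristic function of $\z'$ does not commute with multiplication by $\eta$, so $(h\eta)_{\z'}\neq h_{\z}\eta$. However, since $\widehat\eta$ is supported in $B_{r^{-1}}(0)$ and each $\z'$ only receives Fourier-side contributions from blocks $\z$ whose $r^{-1}$-thickening meets $\z'$, and this happens for at most $O(1)$ many $\z$, one has the pointwise domination
\[ \sum_{\z'}|(h\eta)_{\z'}|^2\lesssim |\eta|^{2}\sum_{\z}|h_{\z}|^2 \]
in an averaged sense (on the scale of $B_r$). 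More carefully, I would apply local $L^2$-orthogonality (Lemma \ref{L2orth}) or a direct Plancherel argument using the finite-overlap partition of the Fourier neighborhood by the $\z$'s; since $\eta$ is essentially constant on balls dual to $\z$, the convolution by $\widehat\eta$ only mixes $O(1)$ neighboring blocks.

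\textbf{Step 4 (conclude).} Combining Steps 2 and 3,
\[ \int_{\R^3}|h|^{7/2}W_{B_r}\lesssim_\e r^{\e}\int_{\R^3}\Big(\sum_{\z}|h_{\z}|^2\Big)^{7/4}|\eta|^{7/2}\lesssim_\e r^{\e}\int_{\R^3}\Big(\sum_{\z}|h_{\z}|^2\Big)^{7/4}W_{B_r}, \]
using once more that $|\eta|^{7/2}\lesssim W_{B_r}$ (up to a harmless constant, since $W_{B_r}$ is a slowly varying weight that dominates any fixed rapidly decreasing function concentrated on $B_r$, by absorbing the decay rates). The main subtlety, as flagged, is making the passage $(h\eta)_{\z'}\leadsto h_\z\eta$ quantitatively clean; once that is in hand the rest is bookkeeping.
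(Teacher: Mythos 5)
Your overall strategy (dominate $W_{B_r}$ by band-limited objects and feed the result into Theorem \ref{cylPS}) is the right family of ideas, but Step 3 as written is a genuine gap, and it is exactly the step the paper is engineered to avoid. Once you apply Theorem \ref{cylPS} to $h\eta$, you are stuck with the square function $\sum_{\z'}|(h\eta)_{\z'}|^2$, and the Fourier projections $(h\eta)_{\z'}$ do not commute with multiplication by $\eta$. The tools you invoke (local $L^2$ orthogonality, ``$\eta$ is essentially constant on $\z^*$'') only yield the \emph{averaged} domination $\sum_{\z'}|(h\eta)_{\z'}|^2\lesssim \sum_{\z}(|h_\z\eta|^2*\w_{\z^*})$, so your right-hand side becomes $\int\big(\sum_\z|h_\z|^2*\w_{\z^*}\big)^{7/4}|\eta|^{7/2}$ rather than $\int\big(\sum_\z|h_\z|^2\big)^{7/4}W_{B_r}$. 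Removing a block-dependent convolution from inside an $L^{7/4}$ norm is not bookkeeping: the paper explicitly notes in \textsection 1.2 that $\int|\sum_\theta|f_\theta|^2*\w_\theta|^{7/2}$ can be strictly larger than $\int|\sum_\theta|f_\theta|^2|^{7/2}$, and devotes \textsection 6.3 to exactly this kind of upgrade. (One could instead try to prove a genuine vector-valued bound for the sharp projections onto the $O(r^{1/2})$ differently-oriented blocks $\z'$, but that requires a Nikodym-maximal-function input, which is a substantive extra ingredient you have not supplied.) There is also a smaller problem in Step 1: you need $|\eta|^{7/2}\gtrsim W_{B_r}$ \emph{pointwise}, but $\widecheck{\rho}$ for a bump $\rho$ (or any power of it) has real zeros, so the lower bound fails; a two-sided comparison requires a zero-free band-limited construction, not a generic bump.

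The paper's proof reverses the order of operations precisely to dodge your Step 3. It writes $W_{B_r}\lesssim\sum_{k\ge0}2^{-10k}\phi_k^{7/2}$ where each $\phi_k=1_{A_k}*|\widecheck{\rho_r}|^2$ is \emph{nonnegative} with compactly supported Fourier transform (this also fixes the zero problem), multiplies each block by $\phi_k$ \emph{before} invoking the global theorem, and sparsifies the blocks into $O(1)$ families $\Xi_i$ so that the $h_\z\phi_k$ have pairwise disjoint Fourier supports. Then Theorem \ref{cylPS} applied to $\sum_{\z\in\Xi_i}h_\z\phi_k$ has canonical square function exactly $\phi_k^2\sum_{\z\in\Xi_i}|h_\z|^2$, so no commutation of projections with multipliers is ever needed, and resumming in $k$ recovers $W_{B_r}$. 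I would recommend restructuring your argument along these lines rather than trying to repair Step 3.
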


We delay the proofs of Theorem \ref{cylPS} and Corollary \ref{Pcor} to \textsection \ref{cylPSsec} in Appendix A. 

\begin{lemma}\label{algo1} Let $R_{k-1}\le r\le R_{k}$. For each $r^{-\frac{1}{3}}\le \sigma\le 1$, $\tau\in{\bf{S}}({\sigma^{-1}r^{-\frac{1}{3}}})$, and $V\|V_{\tau,r}$, we have 
\begin{align*}
\fint_V\sum_{\substack{\tau'\subset\tau\\\tau'\in{\bf{S}}(r^{-\frac{1}{3}})}}|f_{\tau'}^{k}|^{7/2}\le E_{\e^7}R^{\e^7}|V|^{-1}\int\sum_{\substack{\tau'\subset\tau\\\tau'\in{\bf{S}}(r^{-\frac{1}{3}})}}|\sum_{\substack{\tau''\subset\tau'\\\tau''\in{\bf{S}}(\sigma^{\frac{1}{2}}r^{-\frac{1}{3}})}}|f_{\tau''}^{m}|^2|^{7/4} W_V
\end{align*}
where $m\ge k$ satisfies $R_{m}^{-\frac{1}{3}}\le \sigma^{\frac{1}{2}}r^{-\frac{1}{3}}\le R_{m-1}^{-\frac{1}{3}}$.
\end{lemma}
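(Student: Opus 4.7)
The approach is to apply the cylindrical $L^{7/2}$ square function estimate for the parabola (Corollary~\ref{Pcor}) after an affine rescaling that turns the moment-curve block $\tau\in{\bf{S}}(\sigma^{-1}r^{-1/3})$ into a cylindrical parabola neighborhood, and then to pass from the pruned $f^k$ on the right to the less-pruned $f^m$. The key geometric point (analogous to Propositions~\ref{geo3}--\ref{geo4}) is that an affine change of variables built from the Frenet frame at the basepoint of $\tau$ identifies the moment-curve structure inside $\tau$ with a parabolic neighborhood in the first two coordinates together with a cylindrical third direction; under the dual map, $V_{\tau,r}$ corresponds to the natural box for the local parabola square function estimate, and the finer blocks $\tau''\in{\bf{S}}(\sigma^{1/2}r^{-1/3})$ become parabola blocks at scale $\sim\sigma^{3/2}$.

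Fixing $\tau'\subset\tau$ and applying Corollary~\ref{Pcor} at parabola scale $r'=\sigma^{-3}$ with precision $\delta=\e^7$ yields
\begin{align*}
\int|f^{k}_{\tau'}|^{7/2}\,W_V \;\le\; E_{\e^7}\,(r')^{\e^7}\int\Bigl(\sum_{\substack{\tau''\subset\tau' \\ \tau''\in{\bf{S}}(\sigma^{1/2}r^{-1/3})}}|f^{k}_{\tau''}|^2\Bigr)^{7/4}W_V.
\end{align*}
Since $\sigma\ge r^{-1/3}\ge R^{-1/3}$, we have $(r')^{\e^7}=\sigma^{-3\e^7}\le R^{\e^7}$. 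Summing over $\tau'\subset\tau$ and dividing by $|V|$ produces the desired form, modulo replacing $f^{k}_{\tau''}$ by $f^{m}_{\tau''}$. For this replacement, iterating Definition~\ref{taukprune} gives the pointwise bound $|f^{k}_{\tau_j}|\le|f^{m}_{\tau_j}|$ at every scale $R_j^{-1/3}$ with $k\le j\le m$. I would then invoke Lemma~\ref{L2orth} on boxes dual to $\tau''$ sitting inside $V$: decompose $|f^{k}_{\tau''}|^2$ (averaged) into a sum over $\tau_m\subset\tau''$ of $|f^{k}_{\tau_m}|^2$, dominate each term pointwise by $|f^{m}_{\tau_m}|^2$, and reassemble back to $|f^{m}_{\tau''}|^2$ via another local $L^2$-orthogonality step, picking up a controllable constant that is absorbed into $R^{\e^7}$.

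The main obstacles are keeping the geometric rescaling faithful (verifying that $V_{\tau,r}$ and the $\tau''$ transform consistently to the cylindrical parabola picture at the relevant scales, parallel to Propositions~\ref{geo3}--\ref{geo4} for the cone) and making the $f^{k}$-to-$f^{m}$ transition precise, since the pruning couples all scales $R_j^{-1/3}$ for $k\le j\le m$ and interacts nontrivially with the Fourier supports at scale $\tau''$. I expect the latter to be the harder step, as it is exactly the ``unwinding'' of the pruning that the section's title advertises.
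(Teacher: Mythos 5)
Your overall framework (parabolic rescaling of $\tau$, identification of $V$ with a $\sigma^{-3}$-ball, application of Corollary \ref{Pcor}) matches the paper's, and you correctly flag the $f^k\to f^m$ transition as the crux. But your proposed execution has a genuine gap precisely there, and it already surfaces in your first step. You apply Corollary \ref{Pcor} at parabola scale $r'=\sigma^{-3}$ in a single shot to reach blocks $\tau''\in{\bf{S}}(\sigma^{1/2}r^{-1/3})$. This is not legitimate: by Lemma \ref{pruneprop}(3) the Fourier support of $f^k_{\tau'}$ is only contained in $\cup_{\tau_k\subset\tau'}3\tau_k$, and after the rescaling these blocks fill out a neighborhood of the parabola of thickness $\sim(\sigma r^{1/3}R_k^{-1/3})^2$. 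Since the hypothesis $\sigma^{1/2}r^{-1/3}\le R_k^{-1/3}$ gives $\sigma\le r^{2/3}R_k^{-2/3}$, this thickness is $\ge\sigma^3$, i.e.\ the rescaled function does \emph{not} have Fourier support in $\mc{N}_{\sigma^3}(\P^1)\times\R$, so Corollary \ref{Pcor} at scale $\sigma^{-3}$ does not apply. A single application can only decouple down to the scale of the actual Fourier support, namely to the square function $\sum_{\tau_k\subset\tau'}|f^k_{\tau_k}|^2$ — not to $\sum_{\tau''}|f^k_{\tau''}|^2$. Relatedly, the objects $f^k_{\tau''}$ and $f^k_{\tau_m}$ at scales finer than $R_k^{-1/3}$ are not produced by the pruning at all; they could only be Fourier projections of $f^k$, for which no pointwise comparison with $f^m_{\tau_m}$ (Lemma \ref{pruneprop}(1) compares consecutive pruning levels on the \emph{same} block $\tau_j$) is available, and local $L^2$-orthogonality does not commute with the outer $L^{7/4}$ power in the direction you need.

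The missing idea — which is the entire content of the ``unwinding'' — is the iteration the paper runs: apply Corollary \ref{Pcor} only down to the current Fourier-support scale $R_k^{-1/3}$, obtaining $\sum_{\tau_k\subset\tau'}|f^k_{\tau_k}|^2$; pass pointwise to $|f^{k+1}_{\tau_k}|$, whose Fourier support now refines to $\cup_{\tau_{k+1}\subset\tau_k}3\tau_{k+1}$; use Khintchine's inequality to choose signs $c_{\tau_k}$ converting the $L^{7/4}$ square-function expression back into the $L^{7/2}$ norm of a single randomized function $\sum c_{\tau_k}f^{k+1}_{\tau_k}$; and then reapply Corollary \ref{Pcor} at the next finer scale. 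Repeating $\le\e^{-1}$ times reaches scale $m$ with accumulated constant $(C_\e R^{\e^9})^{\e^{-1}}\le E_{\e^7}R^{\e^7}$, after which one final Khintchine-plus-Corollary \ref{Pcor} step produces the square function over $\tau''\in{\bf{S}}(\sigma^{1/2}r^{-1/3})$. Without the Khintchine resummation step your argument has no mechanism to reapply the square function estimate, and without the scale-by-scale descent the first application is simply out of range.
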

The weight function $W_V$ is defined by $w(T^{-1}x)$, where $T$ is an affine transformation mapping the unit cube to $V$ and $w$ is from Definition \ref{M3ballweight}. 
\begin{proof}[Proof of Lemma \ref{algo1}]
This would be a straightforward consequence of Corollary \ref{Pcor} after a moment curve rescaling, except for the fact that the pruning process alters the Fourier support of $f$. Let $s=\min(r^{-\frac{1}{3}},R_{m-1}^{-\frac{1}{3}})$. First we will show that 
\begin{equation}\label{first}
\fint_V\sum_{\substack{\tau'\subset\tau\\\tau'\in{\bf{S}}(r^{-\frac{1}{3}})}}|f_{\tau'}^{k}|^{7/2}\lesssim_\e R^{\e^8}|V|^{-1}\int\sum_{\substack{\tau'\subset\tau\\\tau'\in{\bf{S}}(r^{-\frac{1}{3}})}}|\sum_{\substack{\tau''\subset\tau'\\\tau''\in{\bf{S}}(s)}}|f_{\tau''}^{m}|^2|^{7/4} W_V, 
\end{equation}
for $R_m^{-\frac{1}{3}}\le \sigma^{\frac{1}{2}}r^{-\frac{1}{3}}\le R_{m-1}^{-\frac{1}{3}}$. If $r^{-\frac{1}{3}}\le R_{m-1}^{-\frac{1}{3}}$, then $|f_{\tau'}^k|\le |f_{\tau'}^m|$ and \eqref{first} is trivially true, so assume that $R_{m-1}^{-\frac{1}{3}}<r^{-\frac{1}{3}}$. This assumption also implies that $R_{m-1}^{-\frac{1}{3}}\le R_k^{-\frac{1}{3}}$, so $k<m$. We will perform an ``unwinding the pruning" process using successive applications of Corollary \ref{Pcor}. 

Begin by performing a moment curve rescaling. Suppose that $\tau\in{\bf{S}}(\sigma^{-1}r^{-\frac{1}{3}})$ is the $l$th piece, meaning  
\[ \{(\xi_1,\xi_2,\xi_3): l(\sigma^{-1}r^{-\frac{1}{3}})\le \xi_1<(l+1)(\sigma^{-1}r^{-\frac{1}{3}}),\,|\xi_2-\xi_1^2|\le (\sigma^{-1}r^{-\frac{1}{3}})^{2},\,|\xi_3-3\xi_1\xi_2+2\xi_1^3|\le (\sigma^{-1}r^{-\frac{1}{3}})^{3} \}  \]
where $0\le l\le \sigma r^{\frac{1}{3}}-1$. 
Define the affine transformation $T:\R^3\to\R^3$ by $T(\xi_1,\xi_2,\xi_3)=(\xi_1',\xi_2',\xi_3')$ where
\[ \begin{cases}\xi_1'&=\frac{\xi_1-l(\sigma^{-1}r^{-\frac{1}{3}})}{(\sigma^{-1}r^{-\frac{1}{3}})} \\
\xi_2'&= \frac{\xi_2-2(l\sigma^{-1}r^{-\frac{1}{3}})\xi_1+(l\sigma^{-1}r^{-\frac{1}{3}})^2}{(\sigma^{-1}r^{-\frac{1}{3}})^2}\\
\xi_3'&=\frac{\xi_3-3(l\sigma^{-1}r^{-\frac{1}{3}})\xi_2+3(l\sigma^{-1}r^{-\frac{1}{3}})^2\xi_1-(l\sigma^{-1}r^{-\frac{1}{3}})^3}{(\sigma^{-1}r^{-\frac{1}{3}})^3}.
\end{cases} \]
It is not difficult to verify that for each $k\le l\le m-1$ and $\tau_l\in{\bf{S}}(R_l^{-\frac{1}{3}})$, $T(\tau_l)$ is a canonical moment curve block in ${\bf{S}}(\sigma r^{\frac{1}{3}}R_l^{-\frac{1}{3}})$. For each $\tau''\in{\bf{S}}(\sigma^{\frac{1}{2}}r^{-\frac{1}{3}})$, $T(\tau'')$ is a canonical moment curve block in ${\bf{S}}(\sigma^{\frac{3}{2}})$. 

Let $T(\xi)=A\xi+b$ where $A:\R^3\to\R^3$ is a linear transformation and $\xi,b\in\R^3$. For each $\tau''\in{\bf{S}}(\sigma^{\frac{1}{2}}r^{-\frac{1}{3}}$, $f_{\tau''}^{k}$. Perform the change of variables $x\mapsto A^Tx$, obtaining 
\[\fint_V\sum_{\substack{\tau'\subset\tau\\\tau'\in{\bf{S}}(r^{-\frac{1}{3}})}}|f_{\tau'}^{k}|^{7/2}=|(A^T)^{-1}(V)|^{-1}\int\sum_{\substack{\tau'\subset\tau\\\tau'\in{\bf{S}}(r^{-\frac{1}{3}})}}|f_{\tau'}^{k}\circ A^T|^{7/2}W_V\circ A^T. \]
Write $f_{\tau'}^k\circ A^T=g_{T(\tau')}^k$ where $g_{T(\tau')}^k$ is the Fourier projection of a Schwartz function onto $2T(\tau')$ (the dilation by $2$ reflects the fact that $f_{\tau'}^k$ is supported in $2\tau'$). In fact, since $f_{\tau'}^k$ is Fourier supported in $\cup_{\tau_k\subset\tau'}2\tau_k$, $g_{T(\tau')}^k$ is Fourier supported in $\cup_{\tau_k\subset\tau'}2T(\tau_k)$.

Next, consider what the set $(A^T)^{-1}(V)$ is. The matrix representations of $A$ and $(A^T)^{-1}$ are 
\[ A=\begin{bmatrix} \frac{1}{\sigma^{-1}r^{-\frac{1}{3}}} & 0 & 0\\
-\frac{2l}{\sigma^{-1}r^{-\frac{1}{3}}} & \frac{1}{\sigma^{-2}r^{-\frac{2}{3}}} & 0 \\
\frac{3l^2}{\sigma^{-1}r^{-\frac{1}{3}}} & -\frac{3l}{\sigma^{-2}r^{-\frac{2}{3}}} & \frac{1}{\sigma^{-3}r^{-1}} 
\end{bmatrix} \qquad\text{and}\qquad (A^T)^{-1}=\begin{bmatrix} \sigma^{-1}r^{-\frac{1}{3}} & 2l(\sigma^{-1}r^{-\frac{1}{3}})^2 & 3l^2(\sigma^{-1}r^{-\frac{1}{3}})^3 \\
0 & (\sigma^{-1}r^{-\frac{1}{3}})^2 & 3l(\sigma^{-1}r^{-\frac{1}{3}})^3 \\
0 & 0 & (\sigma^{-1}r^{-\frac{1}{3}})^3
\end{bmatrix}. \]
The set $V$ is a translation of $V_{\tau,r}$ which is comparable to the set
\[ \{A{\bf{T}}(l\sigma^{-1}r^{-\frac{1}{3}})+B{\bf{N}}(l\sigma^{-1}r^{-\frac{1}{3}})+C{\bf{B}}(l\sigma^{-1}r^{-\frac{1}{3}}):|A|\le \sigma^{-2}r^{\frac{1}{3}} ,\quad|B|\le \sigma^{-1}r^{\frac{2}{3}},\quad|C|\le r\} \]
where the vectors ${\bf{T}},{\bf{N}},{\bf{B}}$ are defined in \eqref{Frenet}. Using these explicit expressions, we see that $(A^T)^{-1}(V)$ is comparable to a $\sigma^{-3}$-ball in $\R^3$. 

For our first step in showing \eqref{first}, we would like to apply Corollary \ref{Pcor} to estimate $g_{T(\tau')}^k$ by a square function in $T(\tau_k)\subset T(\tau')$. Since $T(\tau_k)$ are moment curve blocks in ${\bf{S}}(\sigma r^{\frac{1}{3}}R_k^{-\frac{1}{3}})$, Corollary \ref{Pcor} may be employed on balls of radius $\sigma^{-2}r^{-\frac{2}{3}}R_k^{\frac{2}{3}}$. The assumptions that $\sigma^{\frac{1}{2}}r^{-\frac{1}{3}}\le R_{m-1}^{-\frac{1}{3}}\le R_k^{-\frac{1}{3}}$ mean that $r^{-\frac{2}{3}}R_k^{\frac{2}{3}}\le \sigma^{-1}$. Since $(A^T)^{-1}(V)$ is comparable to a $\sigma^{-3}$-ball which contains $\sigma^{-2}r^{-\frac{2}{3}}R_k^{\frac{2}{3}}$-balls, Corollary \ref{Pcor} applies, yielding
\[ \int\sum_{\substack{\tau'\subset\tau\\\tau'\in{\bf{S}}(r^{-\frac{1}{3}})}}|f_{\tau'}^{k}\circ A^T|^{7/2} W_{(A^T)^{-1}(V)}\le C_\e R^{\e^9} \int\sum_{\substack{\tau'\subset\tau\\\tau'\in{\bf{S}}(r^{-\frac{1}{3}})}}|\sum_{\tau_k\subset\tau'}|f_{\tau_k}^{k}\circ A^T|^2|^{7/4}W_{(A^T)^{-1}(V)}. \]
Then by \eqref{item1} of Lemma \ref{pruneprop}, $|f_{\tau_k}^{k}|\le|f_{\tau_k}^{k+1}|$, so 
\[ \int\sum_{\substack{\tau'\subset\tau\\\tau'\in{\bf{S}}(r^{-\frac{1}{3}})}}|f_{\tau'}^{k}\circ A^T|^{7/2}W_{(A^T)^{-1}(V)}\le C_{\e}R^{\e^9} \int\sum_{\substack{\tau'\subset\tau\\\tau'\in{\bf{S}}(r^{-\frac{1}{3}})}}|\sum_{\tau_k\subset\tau'}|f_{\tau_k}^{k+1}\circ A^T|^2|^{7/4}W_{(A^T)^{-1}(V)}. \]
If $k+1=m$, then we are done showing \eqref{first}. If $k+1<m$, then by Khintchine's inequality, we may select signs $c_{\tau_k}\in\{\pm 1\}$ so that
\[ \int\sum_{\substack{\tau'\subset\tau\\\tau'\in{\bf{S}}(r^{-\frac{1}{3}})}}|\sum_{\tau_k\subset\tau'}|f_{\tau_k}^{k+1}\circ A^T|^2|^{7/4}W_{(A^T)^{-1}(V)}\sim \int\sum_{\substack{\tau'\subset\tau\\\tau'\in{\bf{S}}(r^{-\frac{1}{3}})}}|\sum_{\tau_k\subset\tau'}c_{\tau_k}f_{\tau_k}^{k+1}\circ A^T|^{7/2}W_{(A^T)^{-1}(V)}.\]
The function $\sum_{\tau_k\subset\tau'}c_{\tau_k}f_{\tau_k}^{k+1}\circ A^T=g_{T(\tau')}^{k+1}$ where $g_{T(\tau')}^{k+1}$ has Fourier support in $\cup_{\tau_{k+1}\subset\tau'}2T(\tau_{k+1})$. Then we may apply Corollary \ref{Pcor} again to obtain 
\[ \int\sum_{\substack{\tau'\subset\tau\\\tau'\in{\bf{S}}(r^{-\frac{1}{3}})}}|\sum_{\tau_k\subset\tau'}c_{\tau_k}f_{\tau_k}^{k+1}\circ A^T|^{7/2}W_{(A^T)^{-1}(V)}\le C_\e R^{\e^9}\int\sum_{\substack{\tau'\subset\tau\\\tau'\in{\bf{S}}(r^{-\frac{1}{3}})}}|\sum_{\tau_{k+1}\subset\tau'}|f_{\tau_{k+1}}^{k+1}\circ A^T|^2|^{7/4}W_{(A^T)^{-1}(V)}. \]
Again, use $|f_{\tau_{k+1}}^{k+1}|\le|f_{\tau_{k+2}}^{k+2}|$ and halt if $m=k+2$ or find signs $c_{\tau_{k+1}}\in\{\pm1\}$ for which 
\[ \int\sum_{\substack{\tau'\subset\tau\\\tau'\in{\bf{S}}(r^{-\frac{1}{3}})}}|\sum_{\tau_{k+1}\subset\tau'}|f_{\tau_{k+1}}^{k+2}\circ A^T|^2|^{7/4}W_{(A^T)^{-1}(V)}\sim \int\sum_{\substack{\tau'\subset\tau\\\tau'\in{\bf{S}}(r^{-\frac{1}{3}})}}|\sum_{\tau_{k+1}\subset\tau'}c_{\tau_{k+1}}f_{\tau_{k+1}}^{k+1}\circ A^T|^{7/2}W_{(A^T)^{-1}(V)}. \]
Iterating this process and undoing the change of variables results in \eqref{first}. Note that since $m-k\le \e^{-1}$, the accumulated constant satisfies $(C_\e R^{\e^9})^{\e^{-1}}\le C_\e^{\e^{-1}}R^{\e^8}$. For the final step, we perform the same argument as above, using a change of variables and Khintchine's inequality to write
\[ |V|^{-1}\int\sum_{\substack{\tau'\subset\tau\\\tau'\in{\bf{S}}(r^{-\frac{1}{3}})}}|\sum_{\substack{\tau''\subset\tau'\\\tau''\in{\bf{S}}(s)}}|f_{\tau''}^{m}|^2|^{7/4} W_V\sim |{(A^T)^{-1}(V)}|^{-1}\int\sum_{\substack{\tau'\subset\tau\\\tau'\in{\bf{S}}(r^{-\frac{1}{3}})}}|\sum_{\substack{\tau''\subset\tau'\\\tau''\in{\bf{S}}(s)}}c_{\tau''}f_{\tau''}^{m}\circ A^T|^{7/2}W_{(A^T)^{-1}(V)}. 
\]
Since for $\tau''\in{\bf{S}}(\sigma^{\frac{1}{2}}r^{-\frac{1}{3}}$ with $\tau''\subset\tau$ we have $T(\tau'')\in{\bf{S}}(\sigma^{\frac{3}{2}})$, we may apply Corollary \ref{Pcor} one last time to get
\[\int\sum_{\substack{\tau'\subset\tau\\\tau'\in{\bf{S}}(r^{-\frac{1}{3}})}}|\sum_{\substack{\tau''\subset\tau'\\\tau''\in{\bf{S}}(s)}}c_{\tau''}f_{\tau''}^{m}\circ A^T|^{7/2}W_{(A^T)^{-1}(V)}\le C_\e R^{\e^8} \int \sum_{\substack{\tau'\subset\tau\\\tau'\in{\bf{S}}(r^{-\frac{1}{3}})}}|\sum_{\substack{\tau''\subset\tau'\\\tau''\in{\bf{S}}(\sigma^{\frac{1}{2}}r^{-\frac{1}{3}})}}|f_{\tau''}^{m}\circ A^T|^2|^{7/4}W_{(A^T)^{-1}(V)}. \]
Finally, undo the change of variables to get 
\[ |{(A^T)^{-1}(V)}|^{-1}\int\sum_{\substack{\tau'\subset\tau\\\tau'\in{\bf{S}}(r^{-\frac{1}{3}})}}|\sum_{\substack{\tau''\subset\tau'\\\tau''\in{\bf{S}}(\sigma^{\frac{1}{2}}r^{-\frac{1}{3}})}}|f_{\tau''}^{m}\circ A^T|^2|^{7/4}W_{(A^T)^{-1}(V)}=|V|^{-1}\int\sum_{\substack{\tau'\subset\tau\\\tau'\in{\bf{S}}(r^{-\frac{1}{3}})}}|\sum_{\substack{\tau''\subset\tau'\\\tau''\in{\bf{S}}(\sigma^{\frac{1}{2}}r^{-\frac{1}{3}})}}|f_{\tau''}^{m}
|^2|^{7/4}W_V.\]

\end{proof}

\subsection{An $\ell^{7/4}$-estimate for the cone. }

Let $\Gamma=\{(\xi_1,\xi_2,\xi_3)\in\R^3:\xi_1^2+\xi_2^2=\xi_3^2,\quad\frac{1}{2}\le\xi_3\le 1\}$ be the truncated cone. In this section, let ${\bf{S}}_{r^{-1/2}}$ denote the collection of $1\times r^{-1/2}\times r^{-1}$ blocks which tile $\mc{N}_{r^{-1}}(\Gamma)$, as defined in \textsection5 of \cite{ampdep}.

\begin{proposition}\label{cone3.5} For any Schwartz function $h:\R^3\to\C$ with $\widehat{h}$ supported in $\mc{N}_{r^{-1}}(\Gamma)$, we have 
\[ \int_{\R^3}|h|^{7/2}\lesssim_\e r^\e\sum_{r^{-1/2}\le \sigma\le 1}\sum_{\tau\in{\bf{S}}_{\sigma^{-1}r^{-1/2}}}\sum_{U\|U_{\tau_k,r}}|U|\Big(|U|^{-1}\int\sum_{\substack{\theta\subset\tau\\\theta\in{\bf{S}}_{r^{-1/2}}}}|h_\theta|^{7/4}W_U\Big)^{2} .\]
\end{proposition}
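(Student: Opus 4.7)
The plan is to derive Proposition~\ref{cone3.5} from the $L^4$ wave envelope estimate for the cone (Theorem~\ref{wee}) by combining it with dyadic pigeonholing of wave packet amplitudes and a level-set interpolation that converts $L^4$ control to $L^{7/2}$ control. The dimensional analysis is consistent under the scaling $h\mapsto\lambda h$ (both sides scale like $\lambda^{7/2}$), which strongly suggests that an amplitude-level pigeonhole plus Theorem~\ref{wee} should suffice, with the cylindrical parabolic estimate Corollary~\ref{Pcor} filling in the destructive interference regime.

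First I would perform a standard wave packet pigeonhole: at the loss of a $(\log r)^{O(1)}$ factor absorbed into $r^\e$, reduce to the case where $\|h_\theta\|_{L^\infty}\sim\lambda$ for all active $\theta\in{\bf S}_{r^{-1/2}}$ and each $|h_\theta|$ is essentially $\lambda$ times the characteristic function of a union of wave packets. In this regime the pointwise comparison
\[
\sum_\theta|h_\theta|^2\sim\lambda^{1/4}\sum_\theta|h_\theta|^{7/4}
\]
holds, so Theorem~\ref{wee} rewrites as
\[
\int|h|^4\lesssim r^\e\lambda^{1/2}\sum_{\sigma,\tau,U}|U|^{-1}\Big(\int_U\sum_{\theta\subset\tau}|h_\theta|^{7/4}\Big)^2.
\]
I would next pigeonhole a dyadic $\mu$ so that $\int_{\{|h|\sim\mu\}}|h|^{7/2}\gtrsim(\log r)^{-1}\int|h|^{7/2}$; on this level set $|h|^{7/2}\sim\mu^{-1/2}|h|^4$, so combining with the display above gives
\[
\int|h|^{7/2}\lesssim r^\e(\lambda/\mu)^{1/2}\sum_{\sigma,\tau,U}|U|^{-1}\Big(\int_U\sum_{\theta\subset\tau}|h_\theta|^{7/4}\Big)^2,
\]
which yields the desired bound whenever $\mu\gtrsim\lambda$.

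For the complementary regime $\mu\ll\lambda$, corresponding to destructive cancellation of wave packets, I would invoke the cylindrical $L^{7/2}$ parabolic square function estimate of Corollary~\ref{Pcor}, applied to $h_\tau=\sum_{\theta\subset\tau}h_\theta$ on each wave envelope $U\|U_{\tau,r}$. The key geometric observation is that a cone plank $\tau\in{\bf S}_{\sigma^{-1}r^{-1/2}}$, after an affine change of variables, is a product of a parabolic neighborhood with the ruling interval of $\tau$, so Corollary~\ref{Pcor} applies and gives $\int|h_\tau|^{7/2}W_U\lesssim r^\e\int(\sum_{\theta\subset\tau}|h_\theta|^2)^{7/4}W_U$. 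Converting back through the pigeonhole identity $|h_\theta|\sim\lambda\chi_{E_\theta}$ and summing over $\tau$ produces the target $\ell^{7/4}$ wave envelope form. The main obstacle is precisely this low-$\mu$ regime: here $\|h\|_{L^{7/2}}^{7/2}$ is controlled by a large-volume low-amplitude level set rather than by peaks, and recovering the $(\lambda/\mu)^{1/2}$ deficit requires the refined cylindrical parabolic estimate together with a careful identification of cone planks with parabolic products. Bookkeeping this affine identification across all scales $\sigma$ and the several logarithmic pigeonhole losses is the chief technical difficulty.
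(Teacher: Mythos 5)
Your first regime is essentially the paper's argument: after pigeonholing the wave packets to a common amplitude $\lambda$ (the paper's $A$) and the level set to height $\mu$ (the paper's $\underline{\a}$), the bound $|h_\theta|^2\le\lambda^{1/4}|h_\theta|^{7/4}$ fed into the $L^4$ wave envelope estimate gives exactly the $(\lambda/\mu)^{1/2}$ factor you describe, and this closes the case $\mu\gtrsim\lambda$.

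The complementary regime $\mu\ll\lambda$ is where your proposal has a genuine gap, and your proposed fix is the wrong tool. Corollary \ref{Pcor} outputs $\int(\sum_{\theta\subset\tau}|h_\theta|^2)^{7/4}W_U$, which is an $L^{7/2}(\ell^2)$ square function, not the $\ell^{7/4}$--$L^1(U)$--$\ell^2(U)$ wave envelope expression on the right-hand side of the proposition; the wave envelope form is strictly stronger than the square function form, so there is no loss-free passage from one to the other (a crude $\|\cdot\|_{\ell^2}\le\|\cdot\|_{\ell^{7/4}}$ or Cauchy--Schwarz step costs a power of $r$), and "converting back through the pigeonhole identity" does not supply the missing $(\lambda/\mu)^{1/2}$. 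You correctly flag this as the chief difficulty but do not resolve it. The paper closes this case by an entirely different and much simpler observation: once the wave packets all have amplitude $\lambda\ge\mu$, the level set $\{|h|\gtrsim\mu\}$ is (up to rapidly decaying tails, controlled by the a priori lower bound on $\mu$) contained in the union of the fattened tubes $r^\e T$, $T\in\tilde{\T}_\theta$. Hence
\[
\mu^{7/2}\,|\{|h|\gtrsim\mu\}|\;\le\;\sum_\theta\sum_{T\in\tilde{\T}_\theta}r^{O(\e)}|T|\,\|\s_T\phi_{B_r}h_\theta\|_\infty^{7/2},
\]
and the locally constant property plus H\"older convert $\|\s_T\phi_{B_r}h_\theta\|_\infty^{7/2}$ into $\big(|\theta^*|^{-1}\int|h_\theta|^{7/4}W_T\big)^2$, which is precisely the $\sigma=r^{-1/2}$ (single-$\theta$ wave envelope) term of the desired right-hand side. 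No parabolic input is needed for Proposition \ref{cone3.5}; Corollary \ref{Pcor} is used elsewhere in the paper (in the unwinding lemmas), not here. You should replace your low-$\mu$ case with this tube-covering argument.
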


We delay the proof of Proposition \ref{cone3.5} to \textsection\ref{coneappsec}. 

\begin{lemma}\label{algo2} For $10\le r_1\le r_2$ and $R_{k-1}\le r_2\le R_{k}$, there exists some dyadic $s$, $R^{-\frac{1}{3}}\le s\le r_2^{-\frac{1}{3}}$ such that \begin{align}
\sum_{\tau_0\in{\bf{S}}(r_1^{-\frac{1}{3}})}\int_{\R^3}(&\sum_{\substack{\tau'\subset\tau_0\\\tau'\in{\bf{S}}(r_2^{-\frac{1}{3}})}}|f_{\tau'}^{k}|^2)^{7/2}\le (C\log R)^{2\e^{-1}}\int_{\R^3}(\sum_{\theta\in{\bf{S}}(R^{-1/3})}|f_\theta|^2)^{7/2} \label{LHS}\\
&+ (C\log R)^{\e^{-1}}B_{\d}R^{\d} \sum_{\tau_0\in{\bf{S}}(r_1^{-\frac{1}{3}})}\sum_{sr_1^{\frac{1}{3}}\le \sigma\le 1}\sum_{\substack{\tau\in{\bf{S}}(\sigma^{-1}s)\\\tau\subset\tau_0}} \sum_{V\|V_{\tau,s^{-3}}}|V|\Big(\fint_V\sum_{\substack{\tau'\subset\tau \\\tau'\in{\bf{S}}(s)}}|f_{\tau'}^{k_m}|^{7/2} \Big)^{2}\nonumber
\end{align}
where $k_m\ge k$ satisfies $R_{k_m}^{-\frac{1}{3}}\le s\le R_{k_m-1}^{-\frac{1}{3}}$.
\end{lemma}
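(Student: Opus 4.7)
The plan is to prove Lemma \ref{algo2} by an iterated high-low frequency decomposition, modeled on the high-low set-up of Section \ref{tools} but with the cone $L^{7/2}$ wave envelope estimate (Proposition \ref{cone3.5}) playing the role that the cone $L^4$ square function estimate plays in Lemma \ref{high1}. Write $G_j(x) := \sum_{\tau' \in {\bf{S}}(R_j^{-1/3}),\,\tau'\subset\tau_0}|f_{\tau'}^{j}|^2(x)$ so that the LHS integrand is $|G_{k_0}|^{7/2}$, where $k_0$ satisfies $R_{k_0-1}\le r_2\le R_{k_0}$ (so $k_0 = k$). At each iteration step $j\ge k_0$, the Fourier support of $G_j$ lies in $\bigcup_{\tau'}(2\tau'-2\tau')\subset B_{4R_j^{-1/3}}(0)$, and I dyadically pigeonhole it into a central low-pass of radius $R_{j+1}^{-1/3}$ plus dyadic annuli at scales $s\in[R_{j+1}^{-1/3},R_j^{-1/3}]$, paying a factor $C\log R$ per step.

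When the central low-pass piece dominates, a pointwise version of local $L^2$-orthogonality identical to the proof of Lemma \ref{low} bounds $G_j*\widecheck{\eta}_{\le R_{j+1}^{-1/3}}$ by $D\cdot G_{j+1}(x)$; combined with Khintchine's inequality applied to trade $(\sum|f_{\tau''}^{j+1}|^2)^{7/4}$ for a signed $|\sum\pm f_{\tau''}^{j+1}|^{7/2}$ (and the pruning bound $|f_{\tau'}^{j}|\le|f_{\tau'}^{j+1}|$ from Lemma \ref{pruneprop}), this lets me continue the iteration at scale $j+1$ at the cost of another $(C\log R)$ factor. When instead some annular scale $s$ dominates, I invoke Proposition \ref{cone3.5}: expanding $G_j*\widecheck\eta_s(x)$ exactly as in the proof of Lemma \ref{high1} as a double sum over $\tau_s\sim\tau_s'$ of $(f_{\tau_s}^{j+1}\overline{f_{\tau_s'}^{j+1}})*w_{\tau_j}*\widecheck\eta_s$, the resulting function has Fourier support in $T$-images of cone planks by Proposition \ref{geo3}, so applying Proposition \ref{cone3.5} (after the change of variable by $T$ and dilation by $s^{-1}$) produces wave envelopes that, by Proposition \ref{geo4}, correspond to the moment curve wave envelopes $V_{\tau,s^{-3}}$ in the statement. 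Cauchy--Schwarz on the $\tau_s\sim\tau_s'$ sum, together with the pruning property iterated up to index $k_m$ with $R_{k_m}^{-1/3}\le s\le R_{k_m-1}^{-1/3}$, lets me deduce the $\fint_V\sum|f_{\tau'}^{k_m}|^{7/2}$ form on the RHS.

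The iteration terminates in at most $N-k_0\le \e^{-1}$ steps. If it ends because of a dominant annulus at some intermediate step, the total cost is at most $\e^{-1}$ pigeonhole factors (giving $(C\log R)^{\e^{-1}}$) and exactly one application of Proposition \ref{cone3.5} (giving $B_\d R^\d$), yielding the second term of the RHS. If instead the low-pass dominates at every step, the iteration reaches the finest scale $R^{-1/3}$, where property \eqref{item1} of Lemma \ref{pruneprop} gives $|f_\theta^j|^2\le|f_\theta|^2$, producing the first term of the RHS; the exponent $2\e^{-1}$ in $(C\log R)^{2\e^{-1}}$ absorbs one $\log R$ per step from the Fourier-annulus pigeonhole and a second $\log R$ per step from an auxiliary pigeonhole used to control the local weight/Khintchine step. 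The main obstacle I anticipate is the precise bookkeeping: matching up scales $(R_j,s,\sigma^{-1}s)$, verifying Fourier support conditions through Propositions \ref{geo3}--\ref{geo4} at every iteration, and ensuring that the Khintchine-based swap between square-function and signed-sum forms interacts cleanly with the successive pruning upgrades $f^j\to f^{j+1}\to\cdots\to f^{k_m}$ without losing the cone wave envelope structure.
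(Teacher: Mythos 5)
Your proposal is correct and follows essentially the same route as the paper's proof: an iterated dyadic high/low decomposition of the square function's Fourier support, where the low-pass case is handled by the pointwise local $L^2$-orthogonality argument of Lemma \ref{low} together with the pruning monotonicity $|f_{\tau'}^{j}|\le|f_{\tau'}^{j+1}|$ (allowing the iteration to descend one scale per step, hence at most $\e^{-1}$ steps), and the high case is handled by a single application of Proposition \ref{cone3.5} after identifying the annular Fourier support with cone planks via Propositions \ref{geo3} and \ref{geo4}, exactly as in Lemma \ref{high1}. The only superfluous element is the Khintchine swap in the low-pass step: after the local-orthogonality bound the integrand is already of the square-function form $(\sum_{\tau''}|f_{\tau''}^{j+1}|^2)^{7/2}$ needed to repeat the pigeonholing, so no signed-sum conversion is required there (Khintchine is needed only in Lemma \ref{algo1}).
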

\begin{proof}[Proof of Lemma \ref{algo2}]
Fix a $\tau_0$ and analyze each integral on the left hand side of \eqref{LHS}. Suppose that there is some dyadic value $s$ in the range $R_k^{-1/3}\le s\le r_2^{-1/3}$ which satisfies
\begin{equation}\label{sup1} \int_{\R^3}(\sum_{\substack{\tau'\subset\tau_0\\\tau'\in{\bf{S}}(r_2^{-\frac{1}{3}})}}|f_{\tau'}^{k}|^2)^{7/2}\lesssim(\log R)\int_{\R^3}|\sum_{\substack{\tau'\subset\tau_0\\\tau'\in{\bf{S}}(r_2^{-\frac{1}{3}})}}|f_{\tau'}^{k}|^2*\widecheck{\eta}_{s}|^{7/2}.\end{equation}
The next step repeats the argument from Lemma \ref{high1}, which we provide a sketch for now. By the proof of Lemma \ref{geo4}, the integral on the right hand side above is equivalent to 
\[\int_{\R^3}(\sum_{\substack{\tau_s\subset\tau_0\\\tau_s\in{\bf{S}}(s)}}\sum_{\substack{\tau_s'\sim\tau_s\\\tau_s'\in{\bf{S}}(s)}}f_{\tau_s}^{k}\overline{f_{\tau_s'}^k}*\widecheck{\eta}_{s})^{7/2} \]
where $\tau_s'\sim\tau_s$ means that $\text{dist}(\tau_s',\tau_s)\lesssim s$. By Propositions \ref{geo3} and \ref{geo4}, the Fourier support of the integrand may now be viewed as a canonical tiling of (an $s$-dilation of) the cone. Therefore, we may apply Proposition \ref{cone3.5} to bound the previous displayed integral by 
\[  C_\d R^\d\sum_{s\le\sigma\le 1}\sum_{\substack{\tau\in{\bf{S}}(\sigma^{-1}s)\\\tau\subset\tau_0}}\sum_{V\|V_{\tau,s^{-3}}}|V|^{-1}\Big(\int\sum_{\tau_s\subset\tau}|f_{\tau_s}^k|^{7/2}W_V\Big)^2 \]
where we are free to choose $\d>0$. Here, the iteration ends since we have proven the lemma. 

In the case that \eqref{sup1} does not hold, we may assume that 
\begin{equation}\label{sup2} \int_{\R^3}(\sum_{\substack{\tau'\subset\tau_0\\\tau'\in{\bf{S}}(r_2^{-\frac{1}{3}})}}|f_{\tau'}^{k}|^2)^{7/2}\lesssim(\log R)\int_{\R^3}|\sum_{\substack{\tau'\subset\tau_0\\\tau'\in{\bf{S}}(r_2^{-\frac{1}{3}})}}|f_{\tau'}^{k}|^2*\widecheck{\eta}_{<R_k^{-1/3}}|^{7/2}.\end{equation}
Applying the argument from Lemma \ref{low} to the integrand, we have 
\[ \int_{\R^3}|\sum_{\substack{\tau'\subset\tau_0\\\tau'\in{\bf{S}}(r_2^{-\frac{1}{3}})}}|f_{\tau'}^{k}|^2*\widecheck{\eta}_{<R_k^{-1/3}}|^{7/2}\lesssim \int_{\R^3}|\sum_{\substack{\tau_k\subset\tau_0}}|f_{\tau_k}^{k}|^2*|\widecheck{\eta}_{<R_k^{-1/3}}||^{7/2}.\]
Since $\widecheck{\eta}_{<R_k^{-1/3}}$ is $L^1$-normalized, we may ignore the $*|\widecheck{\eta}_{<R_k^{-1/3}}|$ by Young's convolution inequality. Then by Lemma \ref{pruneprop}, we have the pointwise inequality $|f_{\tau_k}^k|\le |f_{\tau_k}^{k+1}|$. The conclusion in this case is that
\[\int_{\R^3}(\sum_{\substack{\tau'\subset\tau_0\\\tau'\in{\bf{S}}(r_2^{-\frac{1}{3}})}}|f_{\tau'}^{k}|^2)^{7/2}\lesssim(\log R) \int_{\R^3}|\sum_{\substack{\tau_k\subset\tau_0}}|f_{\tau_k}^{k}|^2|^{7/2}. \]
Next, we iterate this procedure, considering the two cases \eqref{sup1} and \eqref{sup2} applied to the integral on the right hand side above. The number of steps in the iteration is the same as the number of times (the appropriate version of) \eqref{sup2} holds. Each time \eqref{sup2} holds, we refine the scale of our square function by a factor of $R^{\e/3}$ (moving from $\tau_k$ to $\tau_{k+1}$, say). Therefore, the total number of steps is bounded by $\e^{-1}$. In the case that \eqref{sup1} holds at one step, the iteration terminates with an accumulated constant of at most $(\log R)^{\e^{-1}}B_\d R^{\d\e^{-1}}$. Since we are free to choose $\d$, this proves the proposition. The final case is that \eqref{sup2} holds at all steps in the iteration. This leads to the first term from the upper bound in \eqref{LHS}.

\end{proof}

\subsection{Algorithm to fully unwind the pruning process \label{hisec}}

Recall some notation which was defined in \textsection\ref{geo}. 
\begin{enumerate}
    \item For each dyadic $\sigma$, $R_k^{-\frac{1}{3}}\sigma\le 1$ and each $\tau\in{\bf{S}}(\sigma^{-1}R_k^{-\frac{1}{3}})$, the dual set $\tau^*$ is a $\sigma R_k^{\frac{1}{3}}\times \sigma^2R_k^{\frac{2}{3}}\times \sigma^3R_k$ plank centered at the origin and comparable to $\{x\in\R^3:|x\cdot\xi|\le 1\qquad\forall\xi\in\tau-\tau\}$. The wave envelope $V_{\tau,R_k}$ is an anisotropically dilated version of $\tau^*$ with dimensions $\sigma^{-2}R_k^{-\frac{1}{3}}\times \sigma^{-1}R_k^{-\frac{2}{3}}\times R_k^{-1}$. 
    \item Let $V\|V_{\tau,R_k}$ denote an indexing set for $V$ which are translates of $V_{\tau,R_k}$ and which tile $\R^3$. 
\end{enumerate} 

\begin{proposition}\label{algo} For each $k$, we have 
\begin{align}
\label{algline1}\sum_{R_k^{-\frac{1}{3}}\le \sigma\le 1}\sum_{\tau\in{\bf{S}}(\sigma^{-1}R_k^{-\frac{1}{3}})}\sum_{V\|V_{\tau,R_k}}|V|^{-1}&\left(\int\sum_{\tau_k\subset\tau}|f_{\tau_k}^{k+1}|^{7/2}W_V\right)^2\le C_\e R^{3\e^2}\S_1(R^\e)^{N-k+1}\int(\sum_\theta|f_\theta|^2)^{7/2}.
\end{align}
\end{proposition}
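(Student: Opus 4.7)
The proposition will be proved by descending induction on $k$, with each inductive step unwinding one layer of the pruning via a combined application of Lemmas \ref{algo1} and \ref{algo2}; the base case $k=N-1$ is handled directly since $f^N=f$ and the $\tau_{N-1}$ blocks contain only $\sim R^{\e/3}$ of the finest blocks $\theta$.

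The plan for the inductive step is to first apply Lemma \ref{algo1} with $r=R_k$ to each $\fint_V\sum_{\tau_k\subset\tau}|f_{\tau_k}^{k+1}|^{7/2}$, which replaces the pruned $L^{7/2}$-sum by the $L^{7/4}$ integral of a square function at a refined pruning level $m\ge k+1$ (with $m$ determined by $\sigma$ via $R_m^{-1/3}\le\sigma^{1/2}R_k^{-1/3}\le R_{m-1}^{-1/3}$). Then squaring, multiplying by $|V|$, and summing over the tiling $V\|V_{\tau,R_k}$, the Cauchy--Schwarz bound $(\int hW_V)^2\le(\int W_V)(\int h^2W_V)$ combined with the essentially bounded overlap $\sum_V W_V\lesssim 1$ upgrades the outer weighted sum to a single global integral of $(\sum_{\tau_k\subset\tau}|\sum_{\tau''\subset\tau_k}|f_{\tau''}^m|^2|^{7/4})^2$. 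The elementary nested-norm inequality $\sum_ia_i^{7/4}\le(\sum_ia_i)^{7/4}$, applied with $a_i=\sum_{\tau''\subset\tau_i}|f_{\tau''}^m|^2$, then collapses this to $\int(\sum_{\tau''\subset\tau}|f_{\tau''}^m|^2)^{7/2}$.

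Next, I would sum over $\sigma\in[R_k^{-1/3},1]$ (losing only an $O(\log R)$ factor) and over $\tau\in\mathbf{S}(\sigma^{-1}R_k^{-1/3})$, and apply Lemma \ref{algo2} with $r_1=\sigma^3R_k$, $r_2=\sigma^{-3/2}R_k\in[R_{m-1},R_m]$, and pruning level $m$. The first output is $(\log R)^{O(\e^{-1})}\int(\sum_\theta|f_\theta|^2)^{7/2}$, which is exactly the target and is absorbed into the Proposition's constant. The second output is an LHS-type quantity at a strictly higher pruning level $k_m+1\ge k+2$, after using the monotonicity $|f^{k_m}|\le|f^{k_m+1}|$ from Lemma \ref{pruneprop}(1) to match indexing. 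Applying the inductive hypothesis at level $k_m+1$ bounds this second term by $C_\e R^{3\e^2}S_1(R^\e)^{N-k_m}\int(\sum_\theta|f_\theta|^2)^{7/2}$; the one remaining $S_1(R^\e)$ factor needed to reach $S_1(R^\e)^{N-k+1}$ is paid by a moment-curve rescaling of each $\tau_k$ to unit scale, after which the pruned $f_{\tau_k}^{k+1}$ becomes Fourier-supported in $\mathcal{M}^3(R^{(N-k)\e})$ and the definition of $S_1(R^\e)$ applies, together with a multi-scale monotonicity relation of the form $S_1(R^{(N-k)\e})\lesssim S_1(R^\e)^{N-k}$ (as sketched in \S\ref{int}).

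The main obstacle will be the careful bookkeeping: ensuring that the constants from Lemmas \ref{algo1} and \ref{algo2} (namely $E_{\e^7}R^{\e^7}$, $B_\d R^\d$, and the $(\log R)^{\e^{-1}}$ factor) combine with the per-iteration $S_1(R^\e)$ factor so that the telescoped bound after $N-k+1$ iterations is \emph{exactly} $C_\e R^{3\e^2}S_1(R^\e)^{N-k+1}$ and not something larger in $R$. Secondary subtleties are verifying that Lemma \ref{algo2}'s output strictly increases the pruning level at each iteration (otherwise the descending induction would not terminate), and checking that the replacement $|f^{k_m}|\to|f^{k_m+1}|$ does not disrupt the recursion hypothesis.
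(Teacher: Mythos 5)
Your proposal assembles the right ingredients (Lemmas \ref{algo1} and \ref{algo2}, the $\ell^{7/4}\subset\ell^1$ collapse, Cauchy--Schwarz against $W_V$), but the overall architecture has two genuine gaps. First, you apply Lemma \ref{algo1} uniformly in $\sigma$ and claim the output pruning level satisfies $m\ge k+1$. This fails for $\sigma$ close to $1$: the lemma only refines the block scale from $r^{-1/3}$ to $\sigma^{1/2}r^{-1/3}$, so for $\sigma\sim 1$ one gets $m=k$ and neither the scale nor the pruning level advances --- your recursion does not terminate. This is precisely why the paper's proof splits each step into the cases $\sigma\ge R^{-\e^3}$ and $\sigma<R^{-\e^3}$: in the large-$\sigma$ regime it does \emph{not} use Lemma \ref{algo1} at all, but instead uses Cauchy--Schwarz over the $\#\{\tau'\subset\tau\}\lesssim\sigma^{-1}\le R^{\e^3}$ blocks to pass to $\sum_{\tau'}\int|f_{\tau'}|^7$, rescales, and pays exactly one factor of $\S_1(R^\e)$ before re-entering the iteration via Lemma \ref{algo2}. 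The paper's progress measure is the geometric decrease of the scale $s_m$ (by at least $R^{\e^3/2}$ per Lemma \ref{algo1} application and $R^{\e/3}$ per $\S_1(R^\e)$ payment), tracked by two counters $a,b$ with $a\lesssim\e^{-3}(N-k)$ and $b\le N-k$; this is what bounds the total number of steps and hence the accumulated powers of $B_\d R^\d$, $E_{\e^7}R^{\e^7}$, $(\log R)^{\e^{-1}}$, and $\S_1(R^\e)$.

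Second, your final step invokes a submultiplicativity relation $\S_1(R^{(N-k)\e})\lesssim\S_1(R^\e)^{N-k}$ to generate the required powers of $\S_1(R^\e)$. No such relation is available: establishing multi-scale inequalities for $\S_1$ is the whole difficulty of the square-function (as opposed to decoupling) problem --- as explained in \textsection\ref{int}, one cannot rescale the individual $f_{\tau_k}^{k}$ and induct because the integrand couples all of them --- so assuming it here is essentially circular. In the paper, each factor of $\S_1(R^\e)$ arises from a concrete rescaling of a single block $\tau'\in{\bf{S}}(s)$ with $R_{j}^{-1/3}\le s\le R_{j-1}^{-1/3}$ down to the next pruning scale $R_{j}^{-1/3}$ (a genuine gap of at most $R^\e$), and the exponent $N-k+1$ emerges from counting how many times this can happen before the scale bottoms out at $R^{-1/3}$, not from any a priori submultiplicativity.
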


Propositon \ref{algo} will follow from an algorithm which uses the Lemmas \ref{algo1} and \ref{algo2} as building blocks. 

\begin{proof}[Proof of Proposition \ref{algo}] Let $C>0$ be an absolute constant that we specify later in the proof. We will define an algorithm which at intermediate step $m$, produces an inequality 
\begin{align}\label{stepm} 
(\text{L.H.S. of \eqref{algline1}})\le (C\log R)^{4\e^{-1}m}&(B_{\e^7}R^{\e^7})^{m}(E_{\e^7}R^{\e^7})^{a}(R^{\e^3}\S_1(R^\e))^b  \\
&\times \sum_{s_m\le \sigma\le 1}\sum_{\substack{\tau\in{\bf{S}}(\sigma^{-1}s_m)}} \sum_{V\|V_{\tau,s_m^{-3}}}|V|\Big(\fint_V\sum_{\substack{\tau'\subset\tau \\\tau'\in{\bf{S}}(s_m)}}|f_{\tau'}^{k_m}|^{7/2} \Big)^{2}\nonumber
\end{align}
in which $a+b=m$ and $R^{-\frac{1}{3}}\le s_m\le \min(R_k^{-\frac{1}{3}}R^{-\frac{\e^3}{2}a},R_k^{-\frac{1}{3}}R^{-\frac{\e}{3}b})$ and $k_m\ge k+1$ satisfies $R_{k_m}^{-\frac{1}{3}}\le s_m\le R_{k_m-1}^{-\frac{1}{3}}$. Notice that \eqref{stepm} clearly holds with $m=0$, taking $k_m=k+1$ and $s_m=R_k^{-\frac{1}{3}}$. Assuming \eqref{stepm} holds for $m-1$, we will show that either the algorithm terminates and the proposition is proved or \eqref{stepm} holds for $m\ge 1$. We further suppose that $R^{-\frac{1}{3}}R^{\frac{\e}{3}}\le s_{m-1}$, otherwise proceed to the final case in which the algorithm terminates below.

\noindent\fbox{Step m:} Let $\sigma\in [s_{m-1},1]$ be a dyadic value satisfying
\begin{align}\label{stephyp} 
(\text{L.H.S. of \eqref{algline1}})\le (C\log &R)^{4\e^{-1}(m-1)}(B_{\e^7}R^{\e^7})^{m-1}(E_{\e^7}R^{\e^7})^{a}(R^{\e^3}\S_1(R^\e))^b  \\
&\times (C\log R)\sum_{\substack{\tau\in{\bf{S}}(\sigma^{-1}s_{m-1})}} \sum_{V\|V_{\tau,s_{m-1}^{-3}}}|V|\Big(\fint_V\sum_{\substack{\tau'\subset\tau \\\tau'\in{\bf{S}}(s_{m-1})}}|f_{\tau'}^{k_{m-1}}|^{7/2} \Big)^{2}\nonumber
\end{align}
in which $a+b=m-1$ and $R^{-\frac{1}{3}}\le s_{m-1}\le \min(R_k^{-\frac{1}{3}}R^{-\frac{\e^3}{2}a},R_k^{-\frac{1}{3}}R^{-\frac{\e}{3}b})$ and $k_{m-1}\ge k+1$ satisfies $R_{k_{m-1}}^{-\frac{1}{3}}\le s_{m-1}\le R_{k_{m-1}-1}^{-\frac{1}{3}}$. 
The analysis splits into two cases depending on whether $\sigma\ge R^{-\e^3}$ or $\sigma<R^{-\e^3}$.
\newline\noindent\fbox{Step m: $\sigma\ge R^{-\e^3}$.} Using Cauchy-Schwarz and then H\"{o}lder's inequality, for each $\tau\in{\bf{S}}(\sigma^{-1}s_{m-1})$,
\begin{align*} 
\sum_{V\|V_{\tau,s_{m-1}^{-3}}}|V|\Big(\fint_V\sum_{\substack{\tau'\subset\tau \\\tau'\in{\bf{S}}(s_{m-1})}}|f_{\tau'}^{k_{m-1}}|^{7/2} \Big)^{2}&\le\sum_{V\|V_{\tau,s_{m-1}^{-3}}}|V|(\#\tau'\subset\tau)\sum_{\substack{\tau'\subset\tau \\\tau'\in{\bf{S}}(s_{m-1})}}\Big(\fint_V|f_{\tau'}^{k_{m-1}}|^{7/2} \Big)^{2}\\
    &\lesssim\sum_{V\|V_{\tau,s_{m-1}^{-3}}}|V|(R^{\e^3})\sum_{\substack{\tau'\subset\tau \\\tau'\in{\bf{S}}(s_{m-1})}}\fint_V|f_{\tau'}^{k_{m-1}}|^{7} \\
    &\lesssim R^{\e^3}\sum_{\substack{\tau'\subset\tau \\\tau'\in{\bf{S}}(s_{m-1})}}\int_{\R^3}|f_{\tau'}^{k_{m-1}}|^7.  \end{align*}
Define $k_{m-1}^*$ to be $k_{m-1}^*=k_{m-1}+1$ if $s_{m-1}=R_{k_{m-1}}^{-\frac{1}{3}}$ and $k_{m-1}^*=k_{m-1}$ if $R_{k_{m-1}}^{-\frac{1}{3}}<s_{m-1}$. Note that by \eqref{item1} of Lemma \ref{pruneprop}, for each $\tau'\in{\bf{S}}(s_{m-1})$, $|f_{\tau'}^{k_{m-1}}|\le |f_{\tau'}^{k_{m-1}^*}|$. By rescaling for the moment curve, for each $\tau'\in{\bf{S}}(s_{m-1})$,
\[ \int_{\R^3}|f_{\tau'}^{k_{m-1}^*}|^7\le \text{S}(R^\e)\int_{\R^3}(\sum_{\tau_{k_{m-1}^*}\subset\tau'}|f_{\tau_{k_{m-1}^*}}^{k_{m-1}^*}|^2)^{7/2}. \]
If $k_{m-1}^*=N$, then the algorithm terminates with the inequality 
\begin{align}\label{term1'} 
(\text{L.H.S. of \eqref{algline1}})\le (C\log &R)^{4\e^{-1}(m-1)+1}(B_{\e^7}R^{\e^7})^{m-1}(E_{\e^7}R^{\e^7})^{a}(R^{\e^3}\S_1(R^\e))^{b+1}  \int_{\R^3}(\sum_\theta|f_\theta|^2)^{7/2} 
\end{align}
in which $a+b=m-1$ and $R^{-\frac{1}{3}}\le \min(R_k^{-\frac{1}{3}}R^{-\frac{\e^3}{2}a},R_k^{-\frac{1}{3}}R^{-\frac{\e}{3}b})$. By Lemma \ref{algo2}, there are two further cases. One case is that 
\[ \sum_{\tau'\in{\bf{S}}(s_{m-1})}\int_{\R^3}(\sum_{\tau_{k_{m-1}^*}\subset\tau'}|f_{\tau_{k_{m-1}^*}}^{k_{m-1}^*}|^2)^{7/2}\le (C\log R)^{2\e^{-1}}\int_{\R^3}(\sum_\theta|f_\theta|^2)^{7/2}\]
and the algorithm terminates with the inequality 
\begin{align}\label{term1} 
(\text{L.H.S. of \eqref{algline1}})\le (C\log &R)^{4\e^{-1}(m-1)+2\e^{-1}+1}(B_{\e^7}R^{\e^7})^{m-1}(E_{\e^7}R^{\e^7})^{a}(R^{\e^3}\S_1(R^\e))^{b+1}  \int_{\R^3}(\sum_\theta|f_\theta|^2)^{7/2} 
\end{align}
in which $a+b=m-1$ and $R^{-\frac{1}{3}}\le \min(R_k^{-\frac{1}{3}}R^{-\frac{\e^3}{2}a},R_k^{-\frac{1}{3}}R^{-\frac{\e}{3}b})$. We will verify further below that these termination conditions prove the proposition. 

The other case from Lemma \ref{algo2} is that there is some dyadic value $s_m$, $R^{-\frac{1}{3}}\le s_m\le R_{k_{m-1}^*}^{-\frac{1}{3}}$, which satisfies
\begin{align*}
\sum_{\tau\in{\bf{S}}(s_{m-1})}\int_{\R^3}(&\sum_{\substack{\tau_{k_{m-1}^*}\subset\tau'}}|f_{\tau_{k_{m-1}^*}}^{k_{m-1}^*}|^2)^{7/2}&\le (C\log R)^{\e^{-1}}B_{\e^7}R^{\e^7} \sum_{s_m\le \sigma\le 1}\sum_{\substack{\tau\in{\bf{S}}(\sigma^{-1}s_m)}} \sum_{V\|V_{\tau,s_m^{-3}}}|V|\Big(\fint_V\sum_{\substack{\tau'\subset\tau \\\tau'\in{\bf{S}}(s)}}|f_{\tau'}^{k_m}|^{7/2} \Big)^{2}
\end{align*}
where $k_m\ge k_{m-1}^*$ satisfies $R_{k_m}^{-\frac{1}{3}}\le  s_m\le R_{k_m-1}^{-\frac{1}{3}}$. By definition of $k_{m-1}^*$, $R_{k_{m-1}^*}^{-\frac{1}{3}}< s_{m-1}$. Then since $s_m\le R_{k_{m-1}^*}^{-\frac{1}{3}}< s_{m-1}\le R_k^{-\frac{1}{3}}R^{-\frac{\e}{3}b}$, $R_{k_{m-1}}=R^{k_{m-1}\e}$, and $R_kR^{\e b}=R^{(k+b)\e}$, it follows that $s_m\le R_k^{-\frac{1}{3}}R^{-\frac{\e}{3}(b+1)}$. We have shown in this case that 
\begin{align*} 
(\text{L.H.S. of \eqref{algline1}})\le (C\log R)^{4\e^{-1}(m+1)}&(B_{\e^7}R^{\e^7})^{m}(E_{\e^7}R^{\e^7})^{a}(R^{\e^3}\S_1(R^\e))^{b+1}  \\
&\times \sum_{s_m\le \sigma_0\le 1}\sum_{\substack{\tau\in{\bf{S}}(\sigma_0^{-1}s_m)}} \sum_{V\|V_{\tau,s_m^{-3}}}|V|\Big(\fint_V\sum_{\substack{\tau'\subset\tau \\\tau'\in{\bf{S}}(s_m)}}|f_{\tau'}^{k_m}|^{7/2} \Big)^{2}\nonumber
\end{align*}
in which $a+(b+1)=m+1$ and $R^{-\frac{1}{3}}\le s_m\le \min(R_k^{-\frac{1}{3}}R^{-\frac{\e^3}{2}a},R_k^{-\frac{1}{3}}R^{-\frac{\e}{3}(b+1)})$ and $k_m\ge k+1$ satisfies $R_{k_m}^{-\frac{1}{3}}\le s_m\le R_{k_m-1}^{-\frac{1}{3}}$, which verifies \eqref{stepm}. 

\noindent\fbox{Step m: $s_{m-1}\le \sigma\le R^{-\e^3}$.} Let $\tilde{s}_{m-1}=\max(R^{-\frac{1}{3}},\sigma^{\frac{1}{2}}s_{m-1})$. By Lemma \ref{algo1}, for each $\tau\in{\bf{S}}(\sigma^{-1}s_{m-1})$ and $V\|V_{\tau,s_{m-1}^{-3}}$, 
\[\fint_V\sum_{\substack{\tau'\subset\tau \\\tau'\in{\bf{S}}(s_{m-1})}}|f_{\tau'}^{k_{m-1}}|^{7/2} \le E_{\e^7}R^{\e^7}\fint_V\sum_{\substack{\tau'\subset\tau \\\tau'\in{\bf{S}}(s_{m-1})}}|\sum_{\substack{\tau''\subset\tau'\\\tau''\in{\bf{S}}(\tilde{s}_{m-1})}}|f_{\tau''}^{k_m'}|^2|^{7/4}  \]
where $k_{m}'\ge k+1$ satisfies $R_{k_m'}^{-\frac{1}{3}}\le \tilde{s}_{m-1}\le R_{k_m'-1}^{-\frac{1}{3}}$. Then using $\|\cdot\|_{\ell^{7/4}}\le\|\cdot\|_{\ell^{1}}$ and Cauchy-Schwarz, we have 
\begin{align*} 
\sum_{\substack{\tau\in{\bf{S}}(\sigma^{-1}s_{m-1})}} \sum_{V\|V_{\tau,s_{m-1}^{-3}}}|V|\Big(\fint_V&\sum_{\substack{\tau'\subset\tau \\\tau'\in{\bf{S}}(s_{m-1})}}(\sum_{\substack{\tau''\subset\tau'\\\tau''\in{\bf{S}}(\tilde{s}_{m-1})}}|f_{\tau''}^{k_m'}|^2)^{7/4} \Big)^{2}\\
&\le  \sum_{\substack{\tau\in{\bf{S}}(\sigma^{-1}s_{m-1})}} \sum_{V\|V_{\tau,s_{m-1}^{-3}}}|V|\Big(\fint_V(\sum_{\substack{\tau''\subset\tau\\\tau''\in{\bf{S}}(\tilde{s}_{m-1})}}|f_{\tau''}^{k_m'}|^2)^{7/4} \Big)^{2}\\
&\le \sum_{\substack{\tau\in{\bf{S}}(\sigma^{-1}s_{m-1})}} \int_{\R^3}(\sum_{\substack{\tau''\subset\tau\\\tau''\in{\bf{S}}(\tilde{s}_{m-1})}}|f_{\tau''}^{k_m'}|^2)^{7/2} . 
\end{align*}
If $\tilde{s}_{m-1}=R^{-\frac{1}{3}}$, then the algorithm terminates with the inequality 
\begin{align}\label{term2} 
(\text{L.H.S. of \eqref{algline1}})\le (C\log &R)^{4\e^{-1}(m-1)+2\e^{-1}+1}(B_{\e^7}R^{\e^7})^{m-1}(E_{\e^7}R^{\e^7})^{a+1}(R^{\e^3}\S_1(R^\e))^{b}  \int_{\R^3}(\sum_\theta|f_\theta|^2)^{7/2} 
\end{align}
in which $a+b=m-1$ and $R^{-\frac{1}{3}} \le \min(R_k^{-\frac{1}{3}}R^{-\frac{\e^3}{2}a},R_k^{-\frac{1}{3}}R^{-\frac{\e}{3}b})$. Otherwise, suppose that $\tilde{s}_{m-1}>R^{-\frac{1}{3}}$ and apply Lemma \ref{algo2}, again leading to two cases. In the case that 
\[\sum_{\substack{\tau\in{\bf{S}}(\sigma^{-1}s_{m-1})}} \int_{\R^3}(\sum_{\substack{\tau''\subset\tau\\\tau''\in{\bf{S}}(\sigma^{\frac{1}{2}}s_{m-1})}}|f_{\tau''}^{k_m'}|^2)^{7/2}\le (C\log R)^{2\e^{-1}}\int_{\R^3}(\sum_\theta|f_\theta|^2)^{7/2}\]
then the algorithm terminates with the same inequality as  is recorded in \eqref{term2}. 

The other case from Lemma \ref{algo2} is that there is some dyadic value $s_m$, $R^{-\frac{1}{3}}\le s_m\le \sigma^{\frac{1}{2}}s_{m-1}$, which satisfies
\begin{align*}
\sum_{\tau\in{\bf{S}}( \sigma^{-1}s_{m-1})}\int_{\R^3}(\sum_{\substack{\tau''\subset\tau\\\tau''\in{\bf{S}}(\sigma^{\frac{1}{2}}s_{m-1})}}|f_{\tau''}^{k_m'}|^2)^{7/2}&\le (C\log R)^{\e^{-1}}B_{\e^7}R^{\e^7} \\
&\qquad\times \sum_{s_m\le \sigma_0\le 1}\sum_{\substack{\tau\in{\bf{S}}(\sigma_0^{-1}s_m)}} \sum_{V\|V_{\tau,s_m^{-3}}}|V|\Big(\fint_V\sum_{\substack{\tau'\subset\tau \\\tau'\in{\bf{S}}(s_m)}}|f_{\tau'}^{k_m}|^{7/2} \Big)^{2}
\end{align*}
where $k_m\ge k_{m-1}^*$ satisfies $R_{k_m}^{-\frac{1}{3}}\le  s_m\le R_{k_m-1}^{-\frac{1}{3}}$. Since $s_{m-1}\le \min(R_k^{-\frac{1}{3}}R^{-\frac{\e^3}{2}a},R_k^{-\frac{1}{3}}R^{-\frac{\e}{3}b})$ and $s_m\le \sigma^{\frac{1}{2}}s_{m-1}$, we have $s_m\le \min(R_k^{-\frac{1}{3}}R^{-\frac{\e^3}{2}(a+1)},R_k^{-\frac{1}{3}}R^{-\frac{\e}{3}b})$. We have shown in this case that 
\begin{align*} 
(\text{L.H.S. of \eqref{algline1}})\le (C\log R)^{4\e^{-1}(m+1)}&(B_{\e^7}R^{\e^7})^{m}(E_{\e^7}R^{\e^7})^{a+1}(R^{\e^3}\S_1(R^\e))^{b}  \\
&\times \sum_{s_m\le \sigma_0\le 1}\sum_{\substack{\tau\in{\bf{S}}(\sigma_0^{-1}s_m)}} \sum_{V\|V_{\tau,s_m^{-3}}}|V|\Big(\fint_V\sum_{\substack{\tau'\subset\tau \\\tau'\in{\bf{S}}(s_m)}}|f_{\tau'}^{k_m}|^{7/2} \Big)^{2}\nonumber
\end{align*}
in which $(a+1)+b=m+1$ and $R^{-\frac{1}{3}}\le s_m\le \min(R_k^{-\frac{1}{3}}R^{-\frac{\e^3}{2}(a+1)},R_k^{-\frac{1}{3}}R^{-\frac{\e}{3}b})$ and $k_m\ge k+1$ satisfies $R_{k_m}^{-\frac{1}{3}}\le s_m\le R_{k_m-1}^{-\frac{1}{3}}$, which verifies \eqref{stepm}.

\noindent\fbox{Termination criteria.} It remains to check that the criteria for termination from both of the above cases implies Proposition \ref{algo}. The first case in which the algorithm terminates is \eqref{term1}. Then $R^{-\frac{1}{3}}\le \min(R_k^{-\frac{1}{3}}R^{-\frac{\e^3}{2}a},R_k^{-\frac{1}{3}}R^{-\frac{\e}{3}b})$ implies that $a\le\frac{2}{3}\e^{-3}(N-k)$ and $b\le N-k$, which implies that $m-1=a+b\le \e^{-3}(N-k)$. Then the constants in the upper bound from \eqref{term1} are bounded by 
\begin{align*} 
(C\log R)^{4\e^{-1}(m-1)+2\e^{-1}+1}&(B_{\e^7}R^{\e^7})^{m-1}(E_{\e^7}R^{\e^7})^{a}(R^{\e^3}\S_1(R^\e))^{b+1}\\
&\le (C\e^{-7}R^{\e^7})^{\e^{-5}}B_{\e^7}^{\e^{-4}}R^{\e^3}E_{\e^7}^{\e^{-4}}R^{\e^3}R^{\e^2}\S_1(R^\e)^{N-k+1}\le C_\e R^{3\e^2}\S_1(R^\e)^{N-k+1} .
\end{align*}
The second case in which the algorithm terminates with \eqref{term2} proves the proposition by an analogous argument.

\end{proof}

\section{Bounding the broad part of $U_{\a,\b}$ \label{broad} }

For three canonical blocks $\tau^1,\tau^2,\tau^3$ (with dimensions $\sim R^{-\e/3}\times R^{-2\e/3}\times R^{-\e}$) which are pairwise $\ge R^{-\e/3}$-separated, define the broad part of $U_{\a,\b}$ to be
\[ \text{Br}_{\a,\b}^K=\{x\in U_{\a,\b}: \a\le K|f_{\tau^1}(x)f_{\tau^2}(x)f_{\tau^3}(x)|^{\frac{1}{3}},\quad\max_{\tau^i}|f_{\tau^i}(x)|\le \a\}. \]

We bound the broad part of $U_{\a,\b}$ in the following proposition. Recall that the parameter $N_0$ was used in the definition of the sets $\Omega_k$ and $L$. 

\begin{proposition}\label{mainprop} Let $R,K\ge1$. Suppose that $\|f_\theta\|_{L^\infty(\R^3)}\le 2$ for all $\theta\in{\bf{S}}(R^{-\frac{1}{3}})$. Then  
\begin{equation*} 
\a^{7}|\text{\emph{Br}}_{\a,\b}^{K}|\le \big[CR^{10 \e N_0}A^{\e^{-1}} + K^{50}R^{4\e^2+10\e}A^{\e^{-1}}  \emph{S}_1(R^\e)^{\e^{-1}-N_0}\big]\int\Big|\sum_\theta|f_\theta|^2*\w_\theta\Big|^{\frac{7}{2}} .
\end{equation*} 
\end{proposition}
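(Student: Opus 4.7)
The plan is to partition $\text{Br}_{\a,\b}^K$ along the sets $\Omega_{N_0},\ldots,\Omega_{N-1},L$ from Definition \ref{impsets}, bounding each intersection separately. The high-set contributions $\text{Br}_{\a,\b}^K\cap\Omega_k$ will produce the $S_1(R^\e)^{\e^{-1}-N_0}$ term via the high-low-plus-unwinding machinery of \textsection\ref{keyalgo}, while $\text{Br}_{\a,\b}^K\cap L$ will give the $R^{10\e N_0}$ term by a direct trivial argument.

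For $\text{Br}_{\a,\b}^K\cap\Omega_k$, I would first apply the pruning lemma (Lemma \ref{ftofk}) to replace each $f_{\tau^i}$ by $f_{\tau^i}^{k+1}$ in the broad hypothesis, preserving $\a\lesssim K|f^{k+1}_{\tau^1}f^{k+1}_{\tau^2}f^{k+1}_{\tau^3}|^{1/3}$ up to a negligible additive error $\a/(A^{1/2}K^3)$. The $L^6$ trilinear restriction for $\mc{M}^3$ at scale $R^\e$, combined with the locally constant lemma to recognize a square function at scale $\tau_k$, yields $\a^6|\Omega_k\cap\text{Br}_{\a,\b}^K|\lesssim K^6R^{O(\e)}\int g_k^3$. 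High-dominance (Corollary \ref{highdom}) and the pointwise lower bound $A^{N-k}\b\le g_k$ on $\Omega_k$ produce $\int g_k^3\le(A^{N-k}\b)^{-1}\int|g_k^h|^4$, and the high lemma (Lemma \ref{high1}) rewrites the right-hand side as the wave envelope sum on the left side of \eqref{algline1}, albeit with $|f_{\tau_s}^{k+1}|^4$ in place of $|f_{\tau_s}^{k+1}|^{7/2}$. I would then use the pointwise trade $|f_{\tau_s}^{k+1}|^4\le\|f_{\tau_s}^{k+1}\|_\infty^{1/2}|f_{\tau_s}^{k+1}|^{7/2}$ together with the iterated pruning bound $\|f_{\tau_s}^{k+1}\|_\infty\lesssim K^3A^{N-k+1}\b/\a$ (from Lemma \ref{pruneprop} and the observation that $s\ge R^{-1/3}$ forces only $O(R^{\e/3})$ many $\tau_{k+1}$ inside each $\tau_s$); after squaring, this contributes a factor $K^3A^{N-k+1}\b/\a$. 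Proposition \ref{algo} then bounds what remains by $C_\e R^{O(\e)}S_1(R^\e)^{N-k+1}\int(\sum_\theta|f_\theta|^2)^{7/2}$. After cancelling $\b$, bundling the $\a$-powers, passing to $\sum_\theta|f_\theta|^2\ast\w_\theta$ via the locally constant lemma, and summing the geometric series over $N_0\le k\le N-1\le\e^{-1}-1$, one recovers the claimed $K^{50}R^{4\e^2+10\e}A^{\e^{-1}}S_1(R^\e)^{\e^{-1}-N_0}$ term.

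For $\text{Br}_{\a,\b}^K\cap L$, the broadness is unused. By definition of $L$, $g_{N_0}(x)\le A^{N-N_0}\b\le A^{\e^{-1}}\b$ for $x\in L$. Cauchy-Schwarz and the locally constant lemma give $\a\le|f(x)|\lesssim(\#\tau_{N_0})^{1/2}g_{N_0}(x)^{1/2}$, hence $\a^2\lesssim R^{\e N_0/3}A^{\e^{-1}}\b$. Combining with $\a\lesssim\sum_\theta|f_\theta|^2$ (a consequence of $\|f_\theta\|_\infty\lesssim 1$, so $|f_\theta|\lesssim|f_\theta|^2$) and with $\sum_\theta|f_\theta|^2\ast\w_\theta\sim\b$ on $U_{\a,\b}$, I would split $\a^7=\a^2\cdot\a^5$, use H\"older with the two bounds, and integrate over $L$ to obtain the $CR^{10\e N_0}A^{\e^{-1}}$ term. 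The main technical obstacle is the high case: aligning the wave envelope estimate (whose natural integrand is $|f_{\tau_s}^{k+1}|^4$) with the form required by Proposition \ref{algo} (integrand $|f_{\tau_s}^{k+1}|^{7/2}$), and carefully tracking how powers of $K^3$, $A^{N-k+1}$, and $R^\e$ accumulate through the pruning $\to$ trilinear $\to$ high-low $\to$ unwinding pipeline so that they aggregate cleanly to the stated $K^{50}R^{4\e^2+10\e}A^{\e^{-1}}$.
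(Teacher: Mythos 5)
Your proposal follows essentially the same route as the paper: the same decomposition of $\text{Br}_{\a,\b}^K$ over the $\Omega_k$ and $L$, the same pipeline on the high sets (pruning via Lemma \ref{ftofk}, trilinear restriction, local $L^2$ orthogonality and the locally constant property, high-dominance, Lemma \ref{high1}, the $L^\infty$ trade from $|f_{\tau_k}^{k+1}|^4$ to $|f_{\tau_k}^{k+1}|^{7/2}$, then Proposition \ref{algo}), and the same trivial bound on $L$ from its defining inequality. The one point to tighten is that in the $L$ case you still need Lemma \ref{ftofk} to replace $f$ by $f^{N_0+1}$ before invoking $g_{N_0}\le A^{N-N_0}\b$, since $g_{N_0}$ is built from the pruned functions $f_{\tau_{N_0}}^{N_0+1}$ rather than from $f_{\tau_{N_0}}$.
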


We will use the following version of a local trilinear restriction inequality for the moment curve, which was proved in Proposition 6 of \cite{M3smallcap}. 
The weight function $W_{B_r}$ in the following theorem decays by a factor of $10$ off of the ball $B_r$. It is defined in Definition \ref{M3ballweight}. 
\begin{thm}\label{trirestprop}
Let $s\ge 10r\ge10$ and let $f:\R^3\to\C$ be a Schwartz function with Fourier transform supported in $\mc{N}_{r^{-1}}(\mc{M}^3)$. Suppose that $\tau^1,\tau^2,\tau^3\in{\bf{S}}(R^{-\e/3})$ satisfy  $\text{dist}(\tau^i,\tau^j)\ge s^{-1}$ for $i\not=j$. Then 
\[  \int_{B_r}|f_{\tau^1}f_{\tau^2}f_{\tau^3}|^2\lesssim s^3|B_r|^{-2}\big(\int|f_{\tau^1}|^2W_{B_r}\big)\big(\int|f_{\tau^2}|^2W_{B_r}\big)\big(\int|f_{\tau^3}|^2W_{B_r}\big) \]
for any Schwartz function $f:\R^3\to\C$ with Fourier transform supported in $\mc{M}(r)$. 
\end{thm}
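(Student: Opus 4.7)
The plan is to reduce, via a smooth spatial cutoff to $B_r$, Plancherel, and Cauchy--Schwarz, to a pointwise bound on the maximum multiplicity of a three-fold convolution of (slightly fattened) $\tau^i$-indicators, which can then be controlled using the Vandermonde transversality of the tangent vectors to $\mc{M}^3$ at $s^{-1}$-separated points.

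First I would introduce a Schwartz function $\phi$ with $\widehat{\phi}$ supported in $B_{(10r)^{-1}}(0)$, $|\phi|\ge 1$ on $B_r$, and $|\phi|^2\lesssim W_{B_r}$ (constructed in the usual way from an $L^2$-normalized bump at scale $(10r)^{-1}$). Setting $F_i:=f_{\tau^i}\phi$, each $F_i$ is Fourier-supported in the fattened block $\tilde{\tau}^i:=\tau^i+B_{(10r)^{-1}}(0)$, and since $|\phi|^6\ge\chi_{B_r}$,
\[ \int_{B_r}|f_{\tau^1}f_{\tau^2}f_{\tau^3}|^2 \;\le\; \int|f_{\tau^1}f_{\tau^2}f_{\tau^3}|^2|\phi|^6\;=\;\int|F_1F_2F_3|^2. \]

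Next, I would apply Plancherel and Cauchy--Schwarz inside the resulting triple convolution:
\[ \int|F_1F_2F_3|^2=\|\widehat{F_1}*\widehat{F_2}*\widehat{F_3}\|_2^2\;\le\;\Big(\sup_{\eta\in\R^3}M(\eta)\Big)\prod_{i=1}^3\|F_i\|_2^2, \]
where
\[ M(\eta):=\big|\{(\xi_1,\xi_2)\in\tilde\tau^1\times\tilde\tau^2:\eta-\xi_1-\xi_2\in\tilde\tau^3\}\big|. \]
The construction of $\phi$ gives $\|F_i\|_2^2=\int|f_{\tau^i}|^2|\phi|^2\lesssim\int|f_{\tau^i}|^2W_{B_r}$, so the theorem reduces to the multiplicity estimate
\[ \sup_{\eta\in\R^3}M(\eta)\;\lesssim\;s^3|B_r|^{-2}. \]

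To bound $M(\eta)$, I would parametrize each $\xi_i\in\tilde\tau^i$ using the Frenet frame at the base point $t_i$ of $\tau^i$ via $\xi_i=\g(t_i)+u_i{\bf T}(t_i)+v_i{\bf N}(t_i)+w_i{\bf B}(t_i)$, where $(u_i,v_i,w_i)$ ranges over a box of dimensions comparable to $\max(R^{-\e/3},r^{-1})\times\max(R^{-2\e/3},r^{-1})\times\max(R^{-\e},r^{-1})$. The constraint $\xi_1+\xi_2+\xi_3=\eta$ then becomes a linear system in the 9 parameters $(u_i,v_i,w_i)_{i=1,2,3}$. The crucial input is the Vandermonde identity
\[ \big|\det({\bf T}(t_1),{\bf T}(t_2),{\bf T}(t_3))\big|=6|t_1-t_2||t_2-t_3||t_1-t_3|\gtrsim s^{-3}, \]
which shows that the $3\times 3$ linear map $(u_1,u_2,u_3)\mapsto\sum_i u_i{\bf T}(t_i)$ is injective with Jacobian $\gtrsim s^{-3}$. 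A Fubini/slicing argument, integrating out the 3 tangent parameters after freezing the 6 normal/binormal parameters, then bounds $M(\eta)$ by $s^3$ times a product of volume factors that collapses to $|B_r|^{-2}=r^{-6}$, the hypothesis $s\ge 10r\ge 10$ ensuring that the various length scales are compatible.

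The main obstacle is the multiplicity computation: one must carefully track the anisotropic Frenet-frame dimensions of $\tau^i$, the isotropic $r^{-1}$-fattening, and the Vandermonde transversality of the tangents, so that the Jacobian lower bound $s^{-3}$ is correctly converted into the clean $s^3|B_r|^{-2}$ bound on $\sup_\eta M(\eta)$ in every regime of $r$, $s$, and $R^{\e}$ allowed by the hypothesis.
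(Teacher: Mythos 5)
You should know first that the paper contains no proof of Theorem \ref{trirestprop} to compare against: it is imported as Proposition 6 of \cite{M3smallcap}. Judged on its own, your reduction is fine — the cutoff $\phi$, Plancherel, and Cauchy--Schwarz legitimately reduce the theorem to the multiplicity bound $\sup_\eta M(\eta)\lesssim s^3|B_r|^{-2}$, and $\|F_i\|_2^2\lesssim\int|f_{\tau^i}|^2W_{B_r}$ is correct. The gap is in the multiplicity bound itself, and it comes from your choice of $\tilde\tau^i$: you take the full block $\tau^i$ fattened by $r^{-1}$, parametrized by a Frenet box of dimensions $\max(R^{-\e/3},r^{-1})\times\max(R^{-2\e/3},r^{-1})\times\max(R^{-\e},r^{-1})$, i.e.\ you never use the hypothesis that $\widehat{f}$ lies in $\mc{N}_{r^{-1}}(\mc{M}^3)$, only that it lies in the blocks. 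With those supports the claim is false in exactly the regime where the theorem is used in this paper. Concretely, take the minimal separation $\sim s^{-1}$, so each block has dimensions $\sim s^{-1}\times s^{-2}\times s^{-3}$, and test the original inequality with $\widehat{f_{\tau^i}}$ a smooth bump on $\tau^i$: then $|f_{\tau^i}|\gtrsim s^{-6}$ on the common dual box (volume $\sim s^6$, contained in $B_r$ once $r\ge s^3$), so the left side is $\gtrsim s^{-30}$, while the right side is $\sim s^3r^{-6}(s^{-6})^3=s^{-15}r^{-6}$; the ratio is $s^{-15}r^6\gg1$ as soon as $r\gg s^{5/2}$ (in the paper's application $s^3=R^{\e}$ and $r=R_k^{1/3}$, so this happens for all but the first couple of scales). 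Equivalently, $\sup_\eta M(\eta)\ge \prod_i|\tilde\tau^i|/|\tilde\tau^1+\tilde\tau^2-\tilde\tau^3|\gtrsim s^{-12}$, which is far larger than $s^3r^{-6}$. So the step ``the product of volume factors collapses to $|B_r|^{-2}$'' cannot work as written: your cross-sectional areas are $\max(R^{-2\e/3},r^{-1})\cdot\max(R^{-\e},r^{-1})$, not $r^{-2}$.

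The isotropic $r^{-1}$-thickness of the Fourier support is what makes the constant $|B_r|^{-2}=r^{-6}$ possible, so the fix is essential, not cosmetic: replace $\tilde\tau^i$ by $(\tau^i\cap\mc{N}_{r^{-1}}(\mc{M}^3))+B_{(10r)^{-1}}$, an $r^{-1}$-tube around the arc, whose normal cross-section has area $\lesssim r^{-2}$. A second point your sketch glosses over then becomes important: these tubes are curved at the relevant scales (length $R^{-\e/3}$, thickness $r^{-1}$, with $(R^{-\e/3})^2\gg r^{-1}$), so the picture of a linear system in nine box parameters is not valid; one should write $\xi_i=\g(a_i)+n_i$ with $a_i$ ranging over the arc and $n_i$ in the normal plane, insert $\delta(\xi_1+\xi_2+\xi_3-\eta)$, and integrate out $(a_1,a_2,a_3)$ using the Jacobian $|\det(\g'(a_1),\g'(a_2),\g'(a_3))|\gtrsim s^{-3}$ (your Vandermonde input, applied at the actual points rather than at the block centers). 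This yields $M(\eta)\lesssim s^3\cdot(r^{-2})^3=s^3|B_r|^{-2}$, provided one also checks that the $O(r^{-1})$ perturbation of the tangent vectors coming from the normal offsets does not destroy the determinant lower bound — this is where the hypothesis relating $s$ and $r$ (garbled as ``$s\ge10r$'' in the statement, but $r\gtrsim s$ in the paper's application) must enter. With those two repairs your strategy does prove the theorem; as stated, the key quantitative claim is false.
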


\begin{proof}[Proof of Proposition \ref{mainprop}]
Note that
\[ \text{Br}_{\a,\b}^K=(L\cap \text{Br}_{\a,\b}^K)\cup( \sqcup_{k=B}^{N-1}\Omega_k\cap \text{Br}_{\a,\b}^K)\]
We bound each of the sets $\text{Br}_{\a,\b}^K\cap\Omega_k$ and $\text{Br}_{\a,\b}^K\cap L$ in separate cases. It suffices to consider the case that $R$ is at least some constant depending on $\e$ since if $R\le C_\e$, we may prove the proposition using trivial inequalities. 

\vspace{2mm}
\noindent\underline{Case 1: bounding $| \text{Br}_{\a,\b}^{K}\cap\Omega_k|$. } 
By Lemma \ref{ftofk}, 
\[ |\text{Br}_{\a,\b}^K\cap\Omega_k|\le|\{x\in  U_{\a,\b}\cap\Omega_k:\a\lesssim K|f_{\tau^1}^{k+1}(x)f_{\tau^2}^{k+1}(x)f_{\tau^3}^{k+1}(x)|^{\frac{1}{3}},\quad\max_{\tau^i}|f_{\tau^i}(x)|\le \a\}.\]
By Lemma \ref{pruneprop}, the Fourier supports of $f_{\tau^1}^{k+1},f_{\tau^2}^{k+1},f_{\tau^3}^{k+1}$ are contained in $2\tau^1,2 \tau^2,2\tau^3$ respectively, which are $\ge5 R^{-\frac{\e}{3}}$-separated blocks of the moment curve. Let $\{B_{R_k^{\frac{1}{3}}}\}$ be a finitely overlapping cover of $\text{Br}_{\a,\b}^{K}\cap\Omega_k$ by $R_k^{\frac{1}{3}}$-balls. For $R$ large enough depending on $\e$, apply Theorem \ref{trirestprop} to get
\begin{align*}
    \int_{B_{R_k^{\frac{1}{3}}}}|f_{\tau^1}^{k+1}f_{\tau^2}^{k+1}f_{\tau^3}^{k+1}|^2&\lesssim_\e R^{\e} |B_{R_k^{\frac{1}{3}}}|^{-2}\Big(\int|f_{\tau^1}^{k+1}|^2W_{B_{R_k^{\frac{1}{3}}}}\Big)\Big(\int|f_{\tau^2}^{k+1}|^2W_{B_{R_k^{\frac{1}{3}}}}\Big)\Big(\int|f_{\tau^3}^{k+1}|^2W_{B_{R_k^{\frac{1}{3}}}}\Big).
\end{align*}
Using local $L^2$-orthogonality (Lemma \ref{L2orth}), each integral on the right hand side above is bounded by 
\[ \lesssim \int\sum_{\tau_k}|f_{\tau_k}^{k+1}|^2W_{B_{R_k^{\frac{1}{3}}}}. \]
If $x\in \text{Br}_{\a,\b}^{K}\cap\Omega_k\cap B_{R_k^{\frac{1}{3}}}$, then the above integral is bounded by 
\[ \lesssim  \int \sum_{\tau_k}|f_{\tau_k}^{k+1}|^2*\w_{\tau_k}W_{B_{R_k^{\frac{1}{3}}}}\lesssim C |B_{R_k^{\frac{1}{3}}}| \sum_{\tau_k}|f_{\tau_k}^{k+1}|^2*\w_{\tau_k}(x) \]
by the locally constant property (Lemma \ref{locconst}) and properties of the weight functions. The summary of the inequalities so far is that 
\[ \a^6|\text{Br}_{\a,\b}^{K}\cap\Omega_k\cap B_{R_k^{\frac{1}{3}}}|\lesssim_\e K^6\int_{B_{R_k^{\frac{1}{3}}}}|f_{\tau^1}^{k+1}f_{\tau^2}^{k+1}f_{\tau^3}^{k+1}|^2\lesssim_\e R^\e K^6 |B_{R_k^{\frac{1}{3}}}|g_k(x)^3 \]
where $x\in \text{Br}_{\a,\b}^{K}\cap\Omega_k\cap B_{R_k^{\frac{1}{3}}}$. 

Recall that since $x\in\Omega_k$, we have the lower bound $A^{M-k}r\le g_k(x)$ (where $A$ is from Definition \ref{impsets}), which leads to the inequality
\[ \a^6|\text{Br}_{\a,\b}^{K}\cap\Omega_k\cap B_{R_k^{\frac{1}{3}}}|\lesssim_\e K^6 R^{\e} \frac{1}{A^{M-k}r}|B_{R_k^{\frac{1}{3}}}|g_k(x)^{3+1} .\]
By Corollary \ref{highdom}, we also have the upper bound $|g_k(x)|\le 2|g_k^h(x)|$, so that 
\[ \a^6|\text{Br}_{\a,\b}^{K}\cap\Omega_k\cap B_{R_k^{\frac{1}{3}}}|\lesssim_\e K^6 R^{\e} \frac{1}{A^{M-k}r}|B_{R_k^{\frac{1}{3}}}||g_k^h(x)|^{4} .\]
By the locally constant property applied to $g_k^h$, $|g_k^h|^{4}\lesssim_\e |g_k^h*w_{ B_{R_k^{\frac{1}{3}}}}|^{4}$ and by Cauchy-Schwarz, $|g_k^h*w_{ B_{R_k^{\frac{1}{3}}}}|^{4}\lesssim |g_k^h|^{4}*w_{B_{R_k^{\frac{1}{3}}}}$. Combine this with the previous displayed inequality to get 
\[ \a^6|\text{Br}_{\a,\b}^{K}\cap\Omega_k\cap B_{R_k^{\frac{1}{3}}}|\lesssim_\e K^6 R^{\e} \frac{1}{A^{M-k}r}\int|g_k^h|^{4}W_{ B_{R_k^{\frac{1}{3}}}} .\]
Summing over the balls $B_{R_k^{\frac{1}{3}}}$ in our finitely-overlapping cover of $\text{Br}_{\a,\b}^{K}\cap\Omega_k$, we conclude that
\begin{equation}\label{peqn} \a^6|\text{Br}_{\a,\b}^{K}\cap\Omega_k|\lesssim_\e K^6 R^{\e} \frac{1}{A^{M-k}r}\int_{\R^3}|g_k^h|^{4} .\end{equation}
We are done using the properties of the set $\text{Br}_{\a,\b}^{K}\cap\Omega_k$, which is why we now integrate over all of $\R^3$ on the right hand side. We will now use Lemma \ref{high1} to analyze the high part $g_k^h$. In particular, Lemma \ref{high1} gives 
\begin{equation}\label{highapp} \int|g_k^h|^4\lesssim_\e R^{\e}\sum_{R_k^{-1/3}\le\sigma\le 1}\sum_{\tau\in{\bf{S}}(\sigma^{-1}R_k^{-\frac{1}{3}})}\sum_{V\|V_{\tau,R_k}}|V|\left(\fint_V\sum_{\tau_k\subset\tau}|f_{\tau_k}^{k+1}|^4\right)^2 .\end{equation}
Next use \eqref{item2} from Lemma \ref{pruneprop} to note that $\|f_{\tau_k}^{k+1}\|_\infty\le \sum_{\tau_{k+1}\subset\tau_k}\|f_{\tau_{k+1}}^{k+1}\|_\infty\lesssim R^\e A^{\e^{-1}}K^3\frac{\b}{\a}$: for each $R_k^{-1/3}\le\sigma\le1$, $\tau\in{\bf{S}}(\sigma^{-1}R_k^{-\frac{1}{3}})$ and $V\|V_{\tau,R_k}$, 
\begin{align*}
&|V|\left(\fint_V\sum_{\tau_k\subset\tau}|f_{\tau_k}^{k+1}|^4\right)^2\lesssim(A^{\e^{-1}}R^{\e}K^3\frac{\b}{\a}) |V|\left(\fint_V\sum_{\tau_k\subset\tau}|f_{\tau_k}^{k+1}|^{7/2}\right)^2. 
\end{align*} 
Using this and applying Proposition \ref{algo} gives the upper bound 
\begin{align*}
\sum_{R_k^{-1/3}\le\sigma\le 1}\sum_{\ell(\tau)=\sigma^{-1}R_k^{-\frac{1}{3}}}\sum_{V\|V_{\tau,R_k}}|V|&\left(\fint_V\sum_{\tau_k\subset\tau}|f_{\tau_k}^{k+1}|^4\right)^2\\
&\lesssim_\e (A^{\e^{-1}}R^{\e}K^3\frac{\b}{\a}) R^{3\e^2}\S_1(R^\e)^{N-k+1}\int(\sum_{\theta\in{\bf{S}}(R^{-1/3})}|f_\theta|^2)^{7/2}. 
\end{align*} 
Combining this with \eqref{peqn} and \eqref{highapp}, the summary of the argument from this case is 
\[\a^7|U_{\a,\b}|\lesssim_\e K^6 R^{2\e}(A^{\e^{-1}}R^{\e}K^3)\S_1(R^\e)^{N-k+1}\int(\sum_{\theta\in{\bf{S}}(R^{-1/3})}|f_\theta|^2)^{7/2}. \]
Since $k>B$, this upper bound has the desired form.

\noindent\underline{Case 2: bounding $|U_{\a,\b}\cap L|$.} 
Begin by using Lemma \ref{ftofk} to bound
\[ \a^{7}|\text{Br}_{\a,\b}^K\cap L|\lesssim K^{7} \int_{U_{\a,\b}\cap L}|f^{B+1}|^{7}. \]
Then use Cauchy-Schwarz and the locally constant property for $g_B$ :
\[ \int_{U_{\a,\b}\cap L}|f^{B}|^{7}\lesssim (R^{ B\e/3})^{7/2} \int_{U_{\a,\b}\cap L}(\sum_{\tau_B}|f_{\tau_B}^{B+1}|^2)^{7/2}\lesssim (R^{ B\e/3})^{7/2} \int_{U_{\a,\b}\cap L}(g_B)^{7/2}.\]
Using the definition the definition of $L$, we bound the factors of $G_B$ by
\[ \int_{U_{\a,\b}\cap L} (A^{\e^{-1}}\b)^{7/2}. \]
Finally, by the definition of $U_{\a,\b}$, conclude that
\[    \a^{7}|\text{Br}_{\a,\b}^K\cap L|\lesssim_\e K^{7}R^{2B\e}A^{\e^{-1}}\int_{\R^3}|\sum_\theta|f_\theta|^2*\w_\theta|^{7/2}. \]

\end{proof}

\section{Proof of Theorem \ref{main} from Proposition \ref{mainprop} \label{mainsec}}

First, we prove Proposition \ref{S1bd} below, which is that $\S_1(R)\lesssim_\e R^\e$. This follows using various reductions from pigeonholing, a broad-narrow argument, and the broad estimate Proposition \ref{mainprop}. Then, in \textsection\ref{S2}, we use induction to show that Proposition \eqref{S1bd} implies that $\S_2(R)\lesssim_\e R^\e$, which is equivalent to Theorem \ref{main}.

\begin{proposition}\label{S1bd} For any $\e>0$ and $R\ge 1$, 
\[ \emph{S}_1(R)\lesssim_\e R^\e. \]
\end{proposition}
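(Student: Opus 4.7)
The plan is to combine Proposition \ref{mainprop}, which handles the broad part of $U_{\a,\b}$, with a Bourgain--Guth-style broad-narrow decomposition and strong induction on $R$, producing a multi-scale inequality analogous to the one hinted at in \textsection\ref{int} and then solving it. I would first perform standard pigeonholing in the $L^\infty$ norms of the $f_\theta$ and in the superlevel values of $|f|$ and $\sum_\theta|f_\theta|^2*\w_\theta$ to reduce matters, with only a $(\log R)^{O(1)}$ loss, to proving $\a^7|U_{\a,\b}| \lesssim_\e R^\e \int |\sum_\theta|f_\theta|^2*\w_\theta|^{7/2}$ for a single dyadic pair $(\a,\b)$ satisfying hypothesis \eqref{unihyp}. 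Then I would apply broad-narrow at scale $R^{-\e/3}$ with a parameter $K\ge 1$ to be chosen: each $x$ either lies in $\text{Br}_{\a,\b}^{K}$, in which case Proposition \ref{mainprop} applies directly, or else satisfies $|f(x)| \le CK^{O(1)} \max_{\tau \in {\bf S}(R^{-\e/3})}|f_\tau(x)|$.

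For the narrow piece, $\int_{\text{narrow}}|f|^7 \lesssim K^{O(1)}\sum_{\tau \in {\bf S}(R^{-\e/3})} \int |f_\tau|^7$, and a moment-curve affine rescaling identifies each such $\tau$ with the full neighborhood $\mc{M}^3(R^{1-\e})$, sending the $\theta \subset \tau$ to canonical blocks at scale $R^{1-\e}$ and the weight $\w_\theta$ to its analogue. By the definition of $S_1$ at scale $R^{1-\e}$ followed by the pointwise inequality $\sum_\tau a_\tau^{7/2} \le (\sum_\tau a_\tau)^{7/2}$, this yields
\[ \sum_\tau \int |f_\tau|^7 \le S_1(R^{1-\e}) \int \Big|\sum_\theta|f_\theta|^2*\w_\theta\Big|^{7/2}. \]
Combining this with Proposition \ref{mainprop} produces the multi-scale inequality
\[ S_1(R) \le C_\e(\log R)\Big[K^{O(1)}S_1(R^{1-\e}) + CR^{10\e N_0}A^{\e^{-1}} + K^{50}R^{O(\e)}A^{\e^{-1}}S_1(R^\e)^{\e^{-1}-N_0}\Big], \]
valid for any integer $N_0 \ge 1$ and any $K \ge 1$, with $A=A(\e)$ from Lemma \ref{ftofk}.

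To close, I would induct strongly on $R$: fix $\d>0$ arbitrary and prove $S_1(R) \le D_{\d}R^\d$ for all $R \ge 1$, the base case being trivial for bounded $R$. Choose $\e \ll \d$, $N_0$ with $\e N_0$ a large fixed constant but $10\e N_0 < \d/2$, and $K = R^{c\e\d}$ for a small $c>0$. Then the narrow term becomes $K^{O(1)}D_\d R^{(1-\e)\d} = D_\d R^\d \cdot R^{-\e\d+O(c\e\d)}$, absorbed into $\tfrac13 D_\d R^\d$ for $R$ large; the first broad term is absorbed by taking $D_\d$ large relative to $A^{\e^{-1}}$; and using $S_1(R^\e)^{\e^{-1}-N_0} \le D_\d^{\e^{-1}-N_0}R^{\d(1-\e N_0)}$, the last term carries a net factor $R^{-\e\d N_0 + O(\e)}$ that defeats $K^{50}R^{O(\e)}$ for $R$ large.

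The main obstacle is the parameter balancing in the closing induction: the broad term favors $N_0$ large (to drive $S_1(R^\e)^{\e^{-1}-N_0}$ down), but the accompanying error $R^{10\e N_0}$ then forces $\e$ very small; simultaneously the narrow term $K^{O(1)}S_1(R^{1-\e})$ must absorb into $D_\d R^\d$ using only the $R^{-\e\d}$ gain, constraining $K$ to a small polynomial in $R$. Verifying that these constraints are jointly satisfiable for the specific numerical constants in Proposition \ref{mainprop} is the technical heart of the induction and is, I expect, the content of Lemma \ref{multiscale'}.
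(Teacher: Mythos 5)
Your reductions and your multi-scale inequality essentially reproduce the paper's route: the pigeonholing steps are Lemmas \ref{loc}, \ref{alph}, \ref{bet} and Proposition \ref{wpd}, and your displayed inequality is Lemma \ref{multiscale'}, except that the paper runs the broad--narrow decomposition on caps at scale $K^{-1}$ (so the narrow constant is absolute and the narrow term is $S_1(R/K^3)$). With caps at scale $R^{-\e/3}$ and $K=R^{c\e\d}\ll R^{\e/3}$ as you propose, the narrow alternative only yields $|f|\lesssim (R^{\e/3}/K)\max_\tau|f_\tau|$, i.e.\ a loss of $R^{O(\e)}$ rather than $K^{O(1)}$, and $R^{O(\e)}$ is not recovered by the $R^{-\e\d}$ gain from $S_1(R^{1-\e})$ when $\d$ is small; the cap scale and the broadness parameter must be tied together.

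The more serious gap is the closing step. Direct strong induction on $R$ with a fixed target $S_1(\rho)\le D_\d\rho^\d$ does not close, because the broad term contributes $D_\d^{\e^{-1}-N_0}R^{\d(1-\e N_0)}$, so the required inequality $K^{50}R^{O(\e)}A^{\e^{-1}}D_\d^{\e^{-1}-N_0-1}\le R^{\d\e N_0}$ only holds for $R\ge R_0$ with $R_0\sim\big(A^{\e^{-1}}D_\d^{\e^{-1}-N_0-1}\big)^{1/(\d\e N_0)}$, a threshold growing with $D_\d$. The base case then demands $D_\d\gtrsim R_0^{c_0}$ (using the trivial polynomial bound on $S_1$), i.e.\ $D_\d\gtrsim D_\d^{\,c_0(\e^{-1}-N_0)/(\d\e N_0)}$, which is satisfiable only if $c_0\e^{-1}\lesssim \d\e N_0\le \d^2/20$ (the last bound coming from your own constraint $10\e N_0<\d/2$); this forces $\e\gtrsim \d^{-2}$, incompatible with $\e\ll\d$. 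This circularity is exactly why the paper does not induct on $R$: it sets $\eta=\inf\{\d\ge0:\sup_R S_1(R)/R^\d<\infty\}$, assumes $\eta>0$, feeds the finite but uncontrolled constants $\sup_R S_1(R^\e)/R^{\e(\eta+\e_1)}$ and $\sup_R S_1(R/K^3)/(R/K^3)^{\eta+\e_1}$ into Lemma \ref{multiscale'}, and deduces $\eta-\e_1\in\mc{S}$, a contradiction; in that bootstrap the size of the inductive constant never enters. You should replace your strong induction by this infimum-of-exponents argument (with $N_0\sim\e^{-1/2}$ and $K$ a small power of $R$, as in \textsection\ref{ind}).
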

In order to make use of Proposition \ref{mainprop}, we need to reduce to the case that our function $f$ is localized to a ball, its wave packets have been pigeonholed so that $\|f_\theta\|_\infty\lesssim 1$ for all $\theta\in{\bf{S}}(R^{-1/3})$, and we have approximated $\|f\|_7$ by an expression involving a superlevel set. This is the content of the following subsection. 

\subsection{Wave packet decomposition and pigeonholing \label{M3pigeon}}

Begin with the spatial localization. 
\begin{lemma}\label{loc} For any $R$-ball $B_R\subset\R^3$, suppose that 
\[     \|f\|_{L^7(B_R)}^7\lesssim_\e  R^\e\int\big|\sum_{\theta\in{\bf{S}}(R^{-1/3})}|f_\theta|^2*\w_\theta\big|^{\frac{7}{2}} \]
for any Schwartz function $f:\R^3\to\C$ with Fourier transform supported in $\mc{M}^3(R)$.  Then Proposition \ref{S1bd} is true. 
\end{lemma}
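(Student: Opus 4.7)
My plan is to pass from the local hypothesis to the global bound $S_1(R)\lesssim_\e R^\e$ via a tiling argument with smooth spatial weights. Fix a tiling $\{B^j\}_j$ of $\R^3$ by $R$-balls and let $W_j=W_{B^j}$ denote the $L^\infty$-normalized Schwartz weight from Definition \ref{M3ballweight}, so that $W_j\gtrsim 1$ on $B^j$, $W_j$ decays rapidly off $B^j$, $\widehat{W_j}$ is supported in $B_{R^{-1}/2}(0)$, and $\sum_j W_j^7(x)\lesssim 1$ pointwise (the last using rapid decay plus the fact that the $\{B^j\}$ tile $\R^3$).

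First I would set $g_j:=fW_j$. Its Fourier transform is supported in $\mc{M}^3(R)+B_{R^{-1}/2}(0)\subseteq \mc{M}^3(CR)$ for an absolute constant $C$, so $g_j$ satisfies the hypothesis of Lemma \ref{loc} at scale $CR$, at the cost of a $C^\e$ factor that is absorbed into $R^\e$. Since $W_j\gtrsim 1$ on $B^j$, we have $\int_{B^j}|f|^7\le\int_{B^j}|g_j|^7$, and the hypothesis applied to $g_j$ on a ball of radius $CR$ containing $B^j$ gives
\begin{equation*}
\int_{B^j}|g_j|^7\lesssim_\e R^\e\int\Big|\sum_{\theta'\in{\bf{S}}((CR)^{-1/3})}|(g_j)_{\theta'}|^2*\w_{\theta'}\Big|^{7/2}.
\end{equation*}

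The main technical step is the pointwise inequality
\begin{equation*}
\sum_{\theta'\in{\bf{S}}((CR)^{-1/3})}|(g_j)_{\theta'}|^2*\w_{\theta'}(x)\lesssim W_j^2(x)\sum_{\theta\in{\bf{S}}(R^{-1/3})}|f_\theta|^2*\w_\theta(x).
\end{equation*}
Since $\widehat{W_j}$ is supported in $B_{R^{-1}/2}(0)$, each product $f_\theta W_j$ is Fourier-supported in an $R^{-1}$-neighborhood of $\theta$, which meets only $O(1)$ of the finer blocks $\theta'$. Writing $(g_j)_{\theta'}=\sum_{\theta\sim\theta'}(f_\theta W_j)_{\theta'}$ as a sum with $O(1)$ terms, bounding each term by $(|f_\theta|W_j)*\w_{\theta'}$ via the Fourier decay of the smoothed $\widecheck{\chi_{\theta'}}$, squaring via Cauchy--Schwarz for convolution, and finally invoking a locally constant argument to extract the slowly varying $W_j^2(x)$ from the convolution with $\w_{\theta'}$ (valid because $W_j$ varies on scale $R$ while $(\theta')^*$ has dimensions at most $CR$ with $C$ an absolute constant) yields the estimate. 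The passage from $\w_{\theta'}$ back to $\w_\theta$ costs only an absolute factor since the two scales differ by the constant $C$.

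Combining the previous steps and summing over $j$ gives
\begin{equation*}
\int_{\R^3}|f|^7\le\sum_j\int_{B^j}|g_j|^7\lesssim_\e R^\e\int\Big(\sum_j W_j^7\Big)\Big|\sum_\theta|f_\theta|^2*\w_\theta\Big|^{7/2}\lesssim_\e R^\e\int\Big|\sum_\theta|f_\theta|^2*\w_\theta\Big|^{7/2},
\end{equation*}
which, by Definition \ref{S1df}, yields $S_1(R)\lesssim_\e R^\e$. I expect the only delicate point to be the pointwise comparison of the $CR$-scale square function of $g_j$ with $W_j^2$ times the $R$-scale square function of $f$; the rest is a standard tiling argument built from the weight in Definition \ref{M3ballweight}.
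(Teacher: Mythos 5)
Your overall strategy -- tile $\R^3$ by $R$-balls, multiply $f$ by a smooth cutoff whose Fourier transform lives in $B_{R^{-1}}(0)$, apply the local hypothesis to each localized piece, and recombine -- is exactly the paper's. The difference is in the recombination, and that is where your argument has a step that is false as stated. You claim the pointwise bound $\sum_{\theta'}|(g_j)_{\theta'}|^2*\w_{\theta'}(x)\lesssim W_j^2(x)\sum_\theta|f_\theta|^2*\w_\theta(x)$. This cannot hold pointwise: the convolution with $\w_{\theta'}$ transports mass over distances comparable to (and, through the tails of $\w_{\theta'}$, much larger than) $R$, over which $W_j$ varies by unbounded factors. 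Concretely, let $f$ be a single wave packet of unit height sitting in $B^j$ and take $x$ at distance $2^kR$ from $B^j$. The left side picks up the full mass of $|f_\theta|^2W_j^2\approx|f_\theta|^2$ weighted by the tail of $\w_{\theta'}$ at distance $2^kR$; the right side carries the same tail of $\w_\theta$ but multiplied by the extra factor $W_j^2(x)\approx 2^{-200k}$. The inequality is therefore off by an unbounded factor $2^{200k}$ in the far field. The integrated estimate you actually need is still true, because these regions contribute negligibly to both sides, but establishing it requires either a dyadic decomposition around $B^j$ or a ``weight swallows weight'' argument in which one sacrifices some of the polynomial decay of $\w_\theta$ -- and then the resulting fatter weight is no longer dominated by the $\w_\theta$ appearing in Definition \ref{S1df}, so the bookkeeping has to be set up with that in mind from the start.

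The paper avoids all of this by never extracting the cutoff from the convolution. After applying the hypothesis to $\phi_{B_R}f$ on each ball, it sums over the balls using $\|\cdot\|_{\ell^{7/2}}\le\|\cdot\|_{\ell^1}$, i.e.
\[ \sum_{B_R}\Big(\sum_\theta|\phi_{B_R}f_\theta|^2*\w_\theta\Big)^{7/2}\le\Big(\sum_\theta\sum_{B_R}|\phi_{B_R}f_\theta|^2*\w_\theta\Big)^{7/2}, \]
and then uses $\sum_{B_R}|\phi_{B_R}|^2\lesssim1$ \emph{inside} the convolution, where the cutoffs still sit next to $f_\theta$. You should adopt this recombination: it replaces your delicate (and, as written, false) pointwise extraction of $W_j^2(x)$ with two trivially valid inequalities, and it is the reason the lemma has a three-line proof.
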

\begin{proof} 

If $\phi_{B_R}$ is a Schwartz function that rapidly decays away from $B_R$, has Fourier transform supported in $B_{R^{-1}}(0)$, and satisfies $\phi_{B_R}\gtrsim 1$ on $B_R$, then we may apply the hypothesis to the function $\phi_{B_R}f$ to obtain 
\begin{align*}
    \sum_{B_R}\int_{B_R}|f|^7\lesssim_\e R^\e \sum_{B_R}\int|\sum_{\theta\in{\bf{S}}_{R^{-1/3}}}|\phi_{B_R}f_\theta|^2*\w_\theta|^{\frac{7}{2}}
\end{align*}
where the sum is over a finitely overlapping cover of $\R^3$ by $R$-balls. Then, since $\|\cdot\|_{\ell^{7/2}}\le\|\cdot\|_{\ell^1}$, 
\[\sum_{B_r}\int|\sum_{\theta\in{\bf{S}}(R^{-1/3})}|\phi_{B_r}f_\theta|^2*\w_\theta|^{\frac{7}{2}}\le \int|\sum_{\theta\in{\bf{S}}(R^{-1/3})}\sum_{B_r}|\phi_{B_r}f_\theta|^2*\w_\theta|^{\frac{7}{2}}.\]
It remains to note that $\sum_{B_r}|\phi_{B_r}|^2\lesssim 1$.

\end{proof}

It further suffices to prove a weak, level-set version of Proposition \ref{S1bd}.  
\begin{lemma}\label{alph} For each $B_R$ and Schwartz function $f:\R^3\to\C$ with Fourier transform supported in $\mc{M}^3(R)$, there exists $\a>0$ such that
\[ \|f\|_{L^7(B_R)}^7\lesssim (\log R)\a^7|\{x\in B_R:\a\le |f(x)|\}|+R^{-500}\int|\sum_{\theta\in{\bf{S}}(R^{-1/3})}|f_\theta|^2*\w_\theta|^{7/2}. \]
\end{lemma}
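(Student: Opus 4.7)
The plan is a two-range dyadic pigeonhole. Set $M := \|f\|_{L^\infty(B_R)}$ (finite since $f$ is Schwartz; if $M=0$ the lemma is trivial), pick an absolute constant $C$ to be determined, and split
\[
\int_{B_R}|f|^7 \le \int_{B_R\cap\{|f|\le MR^{-C}\}}|f|^7 \;+\; \sum_{j\,:\,MR^{-C}<2^j\le M} 2^{7(j+1)}\big|\{x\in B_R : 2^j\le|f(x)|<2^{j+1}\}\big|.
\]
The dyadic sum contains only $O(C\log R)$ indices, so by pigeonhole there is some $j_0$ with $\alpha := 2^{j_0}$ for which the whole sum is bounded by $O(\log R)\cdot \alpha^7|\{x\in B_R:\alpha\le|f(x)|\}|$. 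This handles the main term; it remains to absorb the tail $\int_{\{|f|\le MR^{-C}\}}|f|^7 \le M^7 R^{-7C+3}$ into $R^{-500}\int|\sum_\theta|f_\theta|^2*\w_\theta|^{7/2}$.

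The key auxiliary estimate I would establish is
\[
M^7 \;\lesssim\; R^{1/6}\int\Big|\sum_\theta|f_\theta|^2*\w_\theta\Big|^{7/2}.
\]
Cauchy--Schwarz in the $\theta$-sum gives $|f(x)|^2\le(\#\theta)\sum_\theta|f_\theta(x)|^2 \lesssim R^{1/3}\sum_\theta|f_\theta(x)|^2$, and the locally constant property (Lemma \ref{locconst}) then yields $|f(x)|^7\lesssim R^{7/6}\big(\sum_\theta|f_\theta|^2*\w_\theta(x)\big)^{7/2}$, so $M^7\lesssim R^{7/6}\|\sum_\theta|f_\theta|^2*\w_\theta\|_\infty^{7/2}$. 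The crucial structural observation is that $|f_\theta|^2$ is Fourier-supported in $\theta-\theta$, which is contained in a ball of radius $O(R^{-1/3})$ about the origin (uniformly in $\theta$), hence so is $\sum_\theta|f_\theta|^2*\w_\theta$. Bernstein's inequality in $\R^3$ with exponent $p=7/2$ and frequency radius $r=R^{-1/3}$ then gives
\[
\Big\|\sum_\theta|f_\theta|^2*\w_\theta\Big\|_\infty \;\lesssim\; (R^{-1/3})^{3\cdot 2/7}\Big\|\sum_\theta|f_\theta|^2*\w_\theta\Big\|_{7/2} \;=\; R^{-2/7}\Big\|\sum_\theta|f_\theta|^2*\w_\theta\Big\|_{7/2},
\]
whence $\|\sum_\theta|f_\theta|^2*\w_\theta\|_\infty^{7/2}\lesssim R^{-1}\int|\sum_\theta|f_\theta|^2*\w_\theta|^{7/2}$, proving the claim. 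Choosing any $C$ with $-7C+3+\tfrac{1}{6}\le -500$ (say $C=75$) makes the tail acceptable.

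There is no real obstacle beyond this bookkeeping: the dyadic pigeonhole is standard, and the entire argument rests on the Bernstein reduction that exploits the small Fourier support ($\sim R^{-1/3}$) of the square function $\sum_\theta|f_\theta|^2*\w_\theta$. Note that no quantitative bound on $M$ itself is needed since $M$ cancels in the ratio $M/\alpha_L = R^C$; only its finiteness (from $f$ being Schwartz) is used.
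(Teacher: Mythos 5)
Your proof is correct and follows essentially the same route as the paper's: a dyadic pigeonhole on level sets of $|f|$ relative to $\|f\|_{L^\infty(B_R)}$, with the sub-$R^{-C}\|f\|_\infty$ tail absorbed by combining Cauchy--Schwarz over the $\sim R^{1/3}$ blocks, the locally constant property $|f_\theta|^2\lesssim|f_\theta|^2*\w_\theta$, and an $L^{7/2}\to L^\infty$ bound exploiting that $\sum_\theta|f_\theta|^2*\w_\theta$ has Fourier support in a ball of radius $O(R^{-1/3})$. Your explicit invocation of Bernstein is exactly what the paper carries out by hand via ``locally constant for $\sum_\theta|f_\theta|^2$'' followed by H\"older and Young, so the two arguments coincide up to presentation.
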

\begin{proof} Split the integral as follows:
\[ \int_{B_R}|f|^{7}=\sum_{R^{-1000}\le \lambda\le 1}\int_{\{x\in B_R:\a\|f\|_{L^\infty(B_R)}\le |f(x)|\le 2\a \|f\|_{L^\infty(B_R)}\}}|f|^{7}+\int_{\{x\in B_R:|f(x)|\le R^{-1000}\|f\|_{L^\infty(B_R)}\}}|f|^7 \]
in which $\lambda$ varies over dyadic values in the range $[R^{-1000},1]$. If one of the $\lesssim \log R$ many terms in the first sum dominates, then we are done. Suppose instead that the second expression dominates:
\[ \int_{B_R}|f|^{7}\le 2\int_{\{x\in B_R:|f(x)|\le R^{-1000}\|f\|_{L^\infty(B_R)}\}}|f|^7. \]
We have 
\begin{align*}
\int_{\{x\in B_R:|f(x)|\le R^{-1000}\|f\|_{L^\infty(B_R)}\}}|f|^7&\le R^{-1000}|B_R|\|f\|_{L^\infty(B_R)}^7 \\
(\text{Cauchy-Schwarz})\qquad &\lesssim  R^{-1000}|B_R|R^2\|\sum_\theta|f_\theta|^2\|_{L^\infty(B_R)}^{7/2} \\
(\text{loc. const. for $\sum_\theta|f_\theta|^2$})\qquad &\lesssim  R^{-1000}|B_R|R^2\|\sum_\theta|f_\theta|^2*w_{R^{1/3}}\|_{L^\infty(B_R)}^{7/2} \\
(\text{loc. const. for each $|f_\theta|^2$})\qquad&\lesssim  R^{-995}\|\sum_\theta|f_\theta|^2*\w_\theta*w_{R^{1/3}}\|_{L^\infty(B_R)}^{7/2} \\
(\text{H\"{o}lder's inequality})\qquad&\lesssim  R^{-995}\||\sum_\theta|f_\theta|^2*\w_\theta|^{7/2}*w_{R^{1/3}}\|_{L^\infty(B_R)} \\
&\lesssim  R^{-995}\int|\sum_\theta|f_\theta|^2*\w_\theta|^{7/2}.
\end{align*}

\end{proof}

Continue to use the notation 
\[ U_{\a}=\{x\in B_R:\a\le |f(x)|\}. \]
We will show that to estimate the size of $U_{\a}$, it suffices to replace $f$ with a version whose wave packets at scale $\theta$ have been pigeonholed. Write 
\begin{align}\label{sum} f=\sum_\theta\sum_{T\in\T_\theta}\s_Tf_\theta \end{align}
where for each $\theta\in{\bf{S}}(R^{-1/3})$, $\{\s_T\}_{T\in\T_\theta}$ is the partition of unity from \textsection\ref{prusec}. If $\a\le C_\e R^{-100}\max_\theta\|f_\theta\|_{\infty}$, then using a similar argument that bounds the second expression in the proof of Lemma \ref{alph}, the inequality
\[ \a^7|U_{\a}|\lesssim_\e R^\e\int|\sum_\theta|f_\theta|^2*\w_\theta|^{7/2} \]
is trivial. 

\begin{proposition}[Wave packet decomposition] \label{wpd} Let ${\a}>C_\e R^{-100}\max_\theta\|f_\theta\|_{L^\infty(\R^3)}$. There exist subsets $\tilde{\T}_\theta\subset\T_\theta$, as well as a constant $A>0$ with the following properties:
\begin{align} 
|U_{\a}|\lesssim (\log R)|\{x\in U_{\a}:\,\,{{\a}}&\lesssim |\sum_{\theta\in{\bf{S}}(R^{-1/3})}\sum_{T\in\tilde{\T}_\theta}\s_T(x)f_\theta (x)|\,\,\}|, \\
R^\e T\cap U_{\a}\not=\emptyset\qquad&\text{for all}\quad\theta\in{\bf{S}}(R^{-1/3}),\quad T\in\tilde{\T}_\theta\\
A\lesssim \|\sum_{T\in\tilde{\T}_\theta}\s_Tf_\theta\|_{L^\infty(\R^3)}&\lesssim R^{3\e} A\qquad\text{for all}\quad  \theta\in{\bf{S}}(R^{-1/3})\label{propM}\\
\|\s_Tf_\theta\|_{L^\infty(\R^3)}&\sim A\qquad\text{for all}\quad  \theta\in{\bf{S}}(R^{-1/3}),\quad T\in\tilde{\T}_\theta. \label{prop'M}
\end{align}

\end{proposition}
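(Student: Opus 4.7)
The plan is a standard wave-packet pigeonholing argument at the cost of a logarithmic factor in $|U_\a|$. I will build the sets $\tilde{\T}_\theta$ in three passes applied to the decomposition $f = \sum_\theta\sum_{T\in\T_\theta}\s_T f_\theta$, discarding at each step those packets whose contribution to $f$ on $U_\a$ is negligible, and then binning the rest by amplitude.

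First I would spatially localize to packets meeting an enlarged superlevel set. For $T\in\T_\theta$ with $R^\e T\cap U_\a=\emptyset$ and $x\in U_\a$, the Schwartz decay of $\s_T$ off $T$ (built into the partition of unity constructed in \textsection\ref{prusec}) gives $|\s_T f_\theta(x)| \lesssim_N R^{-100\e N}\|f_\theta\|_\infty$. Since the number of pairs $(\theta,T)$ with $T$ meeting $B_R$ is at most polynomial in $R$, and since the hypothesis yields $\|f_\theta\|_\infty \le C_\e^{-1} R^{100}\a$, taking $N$ large depending on $\e$ ensures that the total spatial tail is $\le \a/100$ pointwise on $U_\a$. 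Defining $\tilde{\T}_\theta^{(1)}$ to be the surviving packets then secures property (2), and the partial sum $f^{(1)}=\sum_\theta\sum_{T\in\tilde{\T}_\theta^{(1)}}\s_T f_\theta$ still satisfies $|f^{(1)}|\ge \a/2$ on $U_\a$.

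Next I would discard packets with very small individual amplitude: all $T$ with $\|\s_T f_\theta\|_\infty < R^{-1000}\a$ can be dropped by the same polynomial-counting argument, while keeping $|f^{(2)}|\ge \a/4$ on $U_\a$. The remaining amplitudes then lie in a dyadic window $[R^{-1000}\a, R^{100}\a]$, a range of only $O(\log R)$ dyadic scales. In this narrow range I would perform three sequential dyadic pigeonholes, each losing a $\log R$ factor in $|U_\a|$: (i) over $\theta$ by the per-$\theta$ aggregate amplitude $A_\theta := \|\sum_{T\in\tilde{\T}_\theta^{(2)}}\s_T f_\theta\|_\infty$; (ii) over $\theta$ by the packet count $\#\tilde{\T}_\theta^{(2)}$; and (iii) uniformly across all surviving $(\theta,T)$ pairs by the individual amplitude $\|\s_T f_\theta\|_\infty \sim A$. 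Retaining the winning bin in each pigeonhole yields the final $\tilde{\T}_\theta$. By construction property (4) holds with the uniform value $A$; property (3) follows because the essentially disjoint supports of the $\s_T$ (with rapidly decaying tails) force $\|\sum_{T\in\tilde{\T}_\theta}\s_T f_\theta\|_\infty$ to agree with $\max_{T}\|\s_T f_\theta\|_\infty\sim A$ up to constants, and the generous slack $R^{3\e}$ in property (3) comfortably absorbs the factor-$2$ dyadic uncertainty together with the $(\log R)^{O(1)}$ losses from the $\theta$-pigeonholes. Summing the losses, property (1) holds with a single $\log R$ factor.

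The main obstacle here is not conceptual but careful bookkeeping: one must check that each class of discarded packets (spatially remote or small-amplitude) contributes at most $\a/\mathrm{const}$ pointwise on $U_\a$, which rests on the hypothesis $\a > C_\e R^{-100}\max_\theta\|f_\theta\|_\infty$ being loose enough to swallow the polynomial count of packets times the individual decay or amplitude bound. Once that is in hand, the remaining dyadic pigeonholing and the comparison between $\max_T\|\s_T f_\theta\|_\infty$ and $\|\sum_{T\in\tilde{\T}_\theta}\s_T f_\theta\|_\infty$ are routine, and the slack $R^{3\e}$ is chosen deliberately wide so that we never have to squeeze this comparison tight.
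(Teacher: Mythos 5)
Your proposal is correct and follows essentially the same route as the paper: discard the packets with $R^\e T\cap U_{\a}=\emptyset$ using the rapid decay of $\s_T$ together with the lower bound on $\a$, discard the packets of negligible individual amplitude by a polynomial packet count, and dyadically pigeonhole the surviving amplitudes into $O(\log R)$ classes, with the $R^{3\e}$ slack in \eqref{propM} absorbing exactly the $O(R^{3\e})$-fold overlap of the enlarged tubes $R^\e T$ at a given point. The only deviation is your two additional pigeonholes over $\theta$ (by aggregate amplitude and by packet count), which are not needed for any of the four stated properties and merely cost extra, harmless powers of $\log R$ in the first measure comparison.
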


\begin{proof} 
Split the sum (\ref{sum}) into 
\begin{equation}\label{step1C} f=\sum_\theta\sum_{T\in\T_\theta^c}\s_Tf_\theta+\sum_\theta\sum_{T\in\T_\theta^f}\s_Tf_\theta\end{equation} 
where the close set is
\[ \T_\theta^c:=\{T\in\T_\theta:R^\e T\cap U_{ {\a}}\not=\emptyset\}\]
and the far set is 
\[ \T_\theta^f:=\{T\in\T_\theta:R^\e T\cap U_{ {\a}}=\emptyset\} . \]
Using the rapid decay of the partition of unity, for each $x\in U_{ {\a}}$, 
\[ |\sum_\theta\sum_{T\in\T_\theta^f}\s_T(x)f_\theta(x)|\lesssim_\e R^{-1000}\max_\theta\|f_\theta\|_{L^\infty(B_R)}. \] 
Therefore, using the assumption that ${{\a}}$ is at least $C_\e R^{-100}\max_\theta\|f_\theta\|_{L^\infty(\R^3)}$, 
\[ |U_{ {\a}}|\le 2|\{x\in U_{ {\a}}:\,\,{ {\a}}\le 2 |\sum_\theta\sum_{T\in\tilde{\T}_\theta^c}\s_T(x)f_\theta(x)|\,\,\}|. \]
Now we sort the close wave packets according to amplitude. Let
 \begin{equation}\label{eq: defMC}
	M=\max_\theta\max_{T\in\T_\theta^c}\|\s_Tf_\theta\|_{L^\infty(\R^3)}.
	\end{equation} 
Split the remaining wave packets into 
\begin{equation} \label{step2C}
    \sum_\theta\sum_{T\in\T_\theta^c}\s_Tf_\theta=\sum_\theta\sum_{R^{-10^3}\le \lambda\le 1}\sum_{T\in\T_{\theta,\lambda}^c}\s_Tf_\theta+\sum_\theta\sum_{T\in\T_{\theta,s}^c}\s_Tf_\theta
\end{equation}
where $\lambda$ is a dyadic number in the range $[R^{-10^3},1]$,  
\[ \T_{\theta,\lambda}^c:=\{T\in\T_\theta^c:\|\s_Tf_\theta\|_{L^\infty(\R^3)}\sim \lambda M \},\]
and
\[ \T_{\theta,s}^c:= \{T\in\T_\theta^c:\|\s_Tf_\theta\|_{L^\infty(\R^3)}\le R^{-1000}M \} . \]
Again using the lower bound for ${\a}$ (and the fact that the number of $T\in\T_\theta^c$ is bounded by $R^4$), the small wave packets cannot dominate and we have 
\[ |U_{ {\a}}|\le 4|\{x\in U_{ {\a}}:\,\,{ {\a}}\le 4|\sum_\theta \sum_{R^{-10^3}\le \lambda\le 1}\sum_{T\in\T_{\theta,\lambda}^c}\s_T(x)f_\theta(x)|\,\,\}|.\]
By dyadic pigeonholing, for some $\lambda\in [R^{-1000},1]$, 
\[ |U_{ {\a}}|\lesssim (\log R)|\{x\in U_{ {\a}}:\,\,{ {\a}}\lesssim (\log R)|\sum_\theta \sum_{T\in\T_{\theta,\lambda}^c}\s_T(x)f_\theta(x)|\,\,\}|. \]
Note that we have the pointwise inequality 
\begin{align*} 
|\sum_{T\in\T_{\theta,\lambda}^c}\s_T(x)f_\theta(x)|&= |\sum_{\substack{T\in\T_{\theta,\lambda}^c\\ x\in R^\e T}}\s_T(x)f_\theta(x)|+|\sum_{\substack{T\in\T_{\theta,\lambda}^c\\ x\not\in R^\e T}}\s_T(x)f_\theta(x)| \\
&\le  |\sum_{\substack{T\in\T_{\theta,\lambda}^c\\ x\in R^\e T}}\s_T(x)f_\theta(x)|+C_\e R^{-1000}|f_\theta(x)| .
\end{align*}
We know that $\lambda M\ge C_\e R^{-1000}\max_\theta\|f_\theta\|_{L^\infty(\R^3)}$ since if this did not hold, we would violate the lower bound for $\a$. It follows that
\[ \lambda M\le \|\sum_{T\in\T_{\theta,\lambda}^c}\s_Tf_\theta\|_{L^\infty(\R^3)}\le 3R^{3\e}\lambda M. \]
The statement of the lemma is now satisfied with $A=\lambda M$ and $\tilde{\T}_\theta=\T_{\theta,\lambda}^c$.

\end{proof}

\begin{corollary}\label{wpdcor} Let $f$, $\a$, $\tilde{T}_\theta$ and $A>0$ be as in Proposition \ref{wpd}. Then for each $x\in U_\a$,
\[ \a\le R^{103\e}\frac{1}{A} \sum_{\theta\in\mc{S}}|\sum_{T\in\tilde{\T}_\theta}\s_T f_\theta|^2*\w_\theta(x). \]
\end{corollary}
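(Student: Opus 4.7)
The plan is to apply Proposition \ref{wpd} to pass from $\a\le|f(x)|$ to a sum over pigeonholed wave packets, and then to convert pointwise wave-packet values into local $L^2$-averages using the amplitude normalisation in Proposition \ref{wpd} together with Lemma \ref{locconst}. Abbreviating $F_\theta:=\sum_{T\in\tilde{\T}_\theta}\s_T f_\theta$, Proposition \ref{wpd} gives
\[
\a\;\lesssim\;(\log R)\sum_\theta|F_\theta(x)|
\]
on a subset of $U_\a$ of measure $\gtrsim|U_\a|/\log R$ (which we identify with $U_\a$ for the purposes of the proof), so it suffices to prove the pointwise bound $A\a\lesssim R^{O(\e)}\sum_\theta |F_\theta|^2\ast\w_\theta(x)$ on this subset.

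The key pointwise estimate will be: whenever $|F_\theta(x)|$ is non-negligible, $|F_\theta|^2\ast\w_\theta(x)\gtrsim R^{-O(\e)}A^2$. Rapid decay of each $\s_T$ outside $T$ together with the uniform bound $|f_\theta|\lesssim 1$ from \eqref{unihyp} forces any such $x$ into an $R^\e$-fattening of some $T\in\tilde{\T}_\theta$. On that $T$, property \eqref{prop'M} gives $\|\s_T f_\theta\|_{L^\infty(\R^3)}\sim A$, which, combined with the locally constant behaviour of $f_\theta$ on the $\theta^*$-scale, yields $\|F_\theta\|_{L^\infty(T)}\gtrsim R^{-O(\e)}A$. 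Applying Lemma \ref{locconst} to $F_\theta$ (which has Fourier support contained in $2\theta$) then produces the claimed lower bound on $|F_\theta|^2\ast\w_\theta(x)$.

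Using the uniform upper bound $|F_\theta(x)|\le\|F_\theta\|_{L^\infty(\R^3)}\lesssim R^{3\e}A$ from \eqref{propM}, dyadic pigeonholing over $\theta$ in $\sum_\theta|F_\theta(x)|\gtrsim\a/\log R$ produces at least $N(x)\gtrsim\a/(R^{O(\e)}A)$ indices $\theta$ with $|F_\theta(x)|$ significant; contributions to $\sum_\theta|F_\theta(x)|$ from $\theta$ with $|F_\theta(x)|$ smaller than any prescribed power of $R^\e$ are trivially absorbed using the standing lower bound $\a\gtrsim R^{-100}\max_\theta\|f_\theta\|_{L^\infty(\R^3)}$ from the reduction preceding Proposition \ref{wpd}. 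Summing the pointwise lower bound over the $N(x)$ significant $\theta$'s gives
\[
\sum_\theta|F_\theta|^2\ast\w_\theta(x)\;\gtrsim\;N(x)\cdot R^{-O(\e)}A^2\;\gtrsim\;R^{-O(\e)}\a A,
\]
which after rearrangement gives the desired inequality, and the total $\e$-budget fits inside $R^{103\e}$.

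The main obstacle is careful tracking of the $\e$-exponents at each step: establishing that the locally constant property applied to $F_\theta$ still gives an $L^\infty$-to-$L^2$ comparison with only $R^{O(\e)}$ loss, that the passage from $\|\s_T f_\theta\|_{L^\infty(\R^3)}\sim A$ to $\|F_\theta\|_{L^\infty(T)}\gtrsim R^{-O(\e)}A$ costs only $R^{O(\e)}$ (this requires combining the amplitude normalisation with the locally constant behaviour of $f_\theta$ and with the Schwartz decay of the $\s_{T'}$ for $T'\ne T$), and that the pigeonhole count $N(x)\gtrsim\a/(R^{O(\e)}A)$ holds with a large enough implicit constant so that the whole chain of inequalities fits within the $R^{103\e}$ budget of the statement.
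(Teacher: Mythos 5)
Your proposal is correct and follows essentially the same route as the paper's proof: lower-bound the number of $\theta$ possessing a wave packet $T\in\tilde{\T}_\theta$ with $x\in R^\e T$ by $\gtrsim \a/(R^{O(\e)}A)$ using the amplitude normalisation, and for each such $\theta$ convert $\|\s_Tf_\theta\|_{L^\infty}\sim A$ into the lower bound $|F_\theta|^2*\w_\theta(x)\gtrsim R^{-O(\e)}A^2$ via the locally constant property. The only differences are cosmetic (you invoke Lemma \ref{locconst} and \eqref{propM} where the paper rederives the local comparison with a bump $\eta_\theta$ and uses \eqref{prop'M} plus bounded overlap of the fattened tubes, and your dyadic pigeonholing over $\theta$ is not needed).
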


\begin{proof} Let $x\in U_{\a}$. Then using the rapid decay of $\s_T$ off of $T$, 
\[\a\lesssim \big|\sum_{\theta}\sum_{\substack{T\in\tilde{\T}_\theta\\ x\in R^\e T}}\s_T(x)f_\theta(x)\big|+C_\e R^{-1000}\max_\theta\|f_\theta\|_{L^\infty(\R^3)} .\]
Then the lower bound for $\a$ and \eqref{prop'M} imply that
\[ \a\lesssim \#\{\theta:\exists T\in\tilde{\T}_\theta\quad\text{satisfying}\quad x\in R^\e T\}A. \]
By the locally constant property, if $\eta_\theta$ is a standard bump function equal to $1$ on $\theta$, then 
\[ \|\s_Tf_\theta\|_{L^\infty(\R^3)}^2\lesssim \int|\s_T(y)f_\theta(y)|^2|\widecheck{\eta}_\theta|(z_\theta-y)dy \]
for some $z_\theta\in\R^3$. Note that if either $y\not\in \frac{1}{2}R^\e T$ or $z_\theta-y\not\in \frac{1}{2}R^\e \theta^*$, then $\s_T^2(y)|\widecheck{\eta}_\theta|(z_\theta-y)\le C_\e R^{-1000}\s_T(y)$. It follows from the lower bound on $\a$ that
\[ \a\lesssim \frac{1}{A}\#\{\theta:\exists T\in\tilde{\T}_\theta\quad\text{satisfying}\quad x,z_\theta\in R^\e T\}\int_{R^\e T}|\s_T(y)f_\theta(y)|^2*|\widecheck{\eta}_\theta|(z_\theta-y)dy. \]
It remains to note that for each $\theta$ counted on the right hand side above, and for each $y\in R^\e T$, $|\widecheck{\eta}_\theta|(z_\theta-y)\lesssim R^{100\e}\w_\theta(x-y)$.

\end{proof}

\begin{lemma}\label{bet} For each $\a>0$, $B_R$, and Schwartz function $f:\R^3\to\C$ with Fourier transform supported in $\mc{M}^3(R)$, there exists $\b>0$ such that $\a^7|\{x\in B_R:\a\le|f(x)|\}|$ is bounded by 
\[C(\log R)\a^{7}|\{x\in B_R:\a\le|f(x)|,\quad\b/2\le\sum_{\theta}|f_\theta|^2*\w_\theta(x)\le \b\}|+R^{-500}\int|\sum_{\theta}|f_\theta|^2*\w_\theta|^{7/2} .\]
\end{lemma}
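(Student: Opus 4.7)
The plan is to dyadically pigeonhole on the size of $G(x):=\sum_\theta|f_\theta|^2*\w_\theta(x)$ over the superlevel set $U_\alpha:=\{x\in B_R:\alpha\le|f(x)|\}$. First I would establish the pointwise lower bound $G(x)\gtrsim R^{-1/3}\alpha^2$ for every $x\in U_\alpha$: by Cauchy--Schwarz over the $\lesssim R^{1/3}$ canonical blocks $\theta$ and the locally constant property (Lemma \ref{locconst}) applied to each $|f_\theta|^2$,
\[ \alpha^2\le|f(x)|^2\le \Big(\sum_\theta|f_\theta(x)|\Big)^2\lesssim R^{1/3}\sum_\theta|f_\theta(x)|^2\lesssim R^{1/3}G(x). \]

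Next I would truncate $G$ from above. Define the high part $U_\alpha^h=\{x\in U_\alpha:G(x)>C_0\alpha^2 R^{200}\}$ for an absolute constant $C_0$ to be chosen. By Chebyshev's inequality applied at exponent $7/2$,
\[ \alpha^7|U_\alpha^h|\le \alpha^7\,(C_0\alpha^2 R^{200})^{-7/2}\int G^{7/2}\le R^{-500}\int G^{7/2} \]
as soon as $C_0$ is large enough (the exponent $7/2\cdot 200=700$ leaves ample margin over $500$). This produces exactly the error term in the statement.

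On the remaining set $U_\alpha\setminus U_\alpha^h$, the two bounds above force $G(x)\in[cR^{-1/3}\alpha^2,\,C_0\alpha^2 R^{200}]$, an interval of multiplicative length $\lesssim R^{201}$. Partitioning into the $\lesssim\log R$ dyadic sub-intervals $[\beta/2,\beta]$ and pigeonholing, some single choice of $\beta$ in this range satisfies
\[ |U_\alpha\setminus U_\alpha^h|\lesssim(\log R)\,\big|\{x\in B_R:\alpha\le|f(x)|,\ \beta/2\le G(x)\le\beta\}\big|. \]
Combining the high and middle contributions and multiplying through by $\alpha^7$ yields the desired inequality, with the $C\log R$ factor attached to the first term and the $R^{-500}\int G^{7/2}$ error to the second.

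I do not anticipate a serious obstacle: this is a routine superlevel-set dyadic pigeonhole, and the only points requiring care are (i) choosing the upper cap $C_0\alpha^2 R^{200}$ so that the $7/2$-power absorbs the desired $R^{-500}$ factor, and (ii) invoking the locally constant property at the scale of $\theta^*$ in order to bound $|f_\theta(x)|^2$ pointwise by $|f_\theta|^2*\w_\theta(x)$ when deriving the lower bound on $G$.
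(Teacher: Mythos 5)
Your proof is correct. It is the same basic strategy as the paper's — a dyadic pigeonhole on the size of $g=\sum_\theta|f_\theta|^2*\w_\theta$ over the superlevel set, with a tail absorbed into the $R^{-500}\int g^{7/2}$ error — but the tails are handled at opposite ends and by different mechanisms. The paper normalizes by $\|g\|_{L^\infty(B_R)}$, pigeonholes over the multiplicative range $[R^{-1000},1]$, and disposes of the residual \emph{low} set $\{g\le R^{-1000}\|g\|_{L^\infty(B_R)}\}$ by the chain Cauchy--Schwarz $\to$ locally constant property $\to$ H\"older, ending at $R^{-995}\int|\sum_\theta|f_\theta|^2*\w_\theta|^{7/2}$. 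You instead normalize by $\alpha^2$: the pointwise lower bound $g(x)\gtrsim R^{-1/3}\alpha^2$ on $U_\alpha$ (Cauchy--Schwarz over the $\lesssim R^{1/3}$ blocks plus Lemma \ref{locconst}) eliminates any low tail from the outset, and the \emph{high} tail $\{g>C_0\alpha^2R^{200}\}$ is removed by Chebyshev at exponent $7/2$, where the arithmetic $\alpha^7(\alpha^2R^{200})^{-7/2}=R^{-700}\le R^{-500}$ checks out. Your range $[cR^{-1/3}\alpha^2,\,C_0\alpha^2R^{200}]$ still has multiplicative length $O(R^{201})$, so the pigeonhole costs only $C\log R$, exactly matching the statement. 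Each version buys something: yours avoids the somewhat delicate $L^\infty$/weight manipulations on the low set at the cost of invoking the lower bound on $g$ (which the paper in any case uses elsewhere), while the paper's avoids needing any a priori relation between $\alpha$ and $g$.
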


\begin{proof} First fix the notation 
\[ U_\a=\{x\in B_R:\a\le|f(x)|\} \]
and $g=\sum_{\theta\in{\bf{S}}(R^{-1/3})}|f_\theta|^2*\w_\theta$. Then write 
\[ 
|U_\a|=\sum_{R^{-1000}\le\lambda\le 1}|\{x\in U_\a: g(x)\sim\lambda\|g\|_{L^\infty(B_R)}\}|+|\{x\in U_\a: g(x)\le R^{-1000}\|g\|_{L^\infty(B_R)}\}|\]
where $\lambda$ takes dyadic values and $g(x)\sim\lambda\|g\|_{L^\infty(B_R)}$ means $\lambda\|g\|_{L^\infty(B_R)}/2\le g(x)\le \lambda\|g\|_{L^\infty(B_R)}$.  If one of the first $\le C(\log R)$ terms dominates the sum, then the lemma is proved. Suppose instead that the last term dominates, so that 
\[ |U_\a|\lesssim |\{x\in U_\a: g(x)\le R^{-1000}\|g\|_{L^\infty(B_R)}\}|. \]
Then 
\begin{align*}
\a^7|U_\a|&\lesssim \int_{\{x\in U_\a: g(x)\le R^{-1000}\|g\|_{L^\infty(B_R)}\}}|f|^7   \\
(\text{Cauchy-Schwarz})\qquad &\lesssim  R^2\int_{\{x\in U_\a: g(x)\le R^{-1000}\|g\|_{L^\infty(B_R)}\}}|\sum_\theta|f_\theta|^2|^{7/2}   \\
(\text{loc. const. for each $|f_\theta|^2$})\qquad &\lesssim  R^2\int_{\{x\in U_\a: g(x)\le R^{-1000}\|g\|_{L^\infty(B_R)}\}}|\sum_\theta|f_\theta|^2*\w_\theta|^{7/2}   \\
&\lesssim  R^2|B_R|R^{-1000}\|\sum_\theta|f_\theta|^2*\w_\theta\|_{L^\infty(B_R)}^{7/2}   \\
(\text{loc. const. for $\sum_\theta|f_\theta|^2$})\qquad&\lesssim  R^{-995}\|\sum_\theta|f_\theta|^2*\w_\theta*w_{R^{1/3}}\|_{L^\infty(B_R)}^{7/2}  \\
(\text{H\"{o}lder's inequality})\qquad&\lesssim  R^{-995}\||\sum_\theta|f_\theta|^2*\w_\theta|^{7/2}*w_{R^{1/3}}\|_{L^\infty(B_R)} \\
&\lesssim  R^{-995}\int|\sum_\theta|f_\theta|^2*\w_\theta|^{7/2}.
\end{align*}

\end{proof}

\subsection{A multi-scale inequality for $\S_1(R)$ implying Proposition \ref{S1bd} \label{ind} }

First we use a broad/narrow analysis to prove a multi-scale inequality for $\S_1(R)$. 

\begin{lemma}\label{multiscale'} For any $1\le K^3\le R$ and $1\le N_0\le \e^{-1}$,
\[ \emph{S}_1(R)\lesssim(\log R)^2\left( K^{53}\big[R^{10 \e N_0}A^{\e^{-1}} + R^{4\e^2+200\e}A^{\e^{-1}}  \emph{S}_1(R^\e)^{\e^{-1}-N_0}\big]+\emph{S}_1(R/K^3)\right) .\]
\end{lemma}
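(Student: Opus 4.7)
\emph{Proof plan.} The argument proceeds in three stages: a pigeonholing reduction, a broad/narrow decomposition, and a moment curve rescaling for the narrow part.

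\emph{Stage 1 (Pigeonholing).} I apply the reductions from \textsection\ref{M3pigeon} in sequence. Lemma \ref{loc} localizes the estimate to an $R$-ball $B_R$; Lemmas \ref{alph} and \ref{bet} pass to a superlevel set estimate of the form $\a^7|U_{\a,\b}| \le (\text{desired RHS})$; Proposition \ref{wpd} together with Corollary \ref{wpdcor} normalizes the wave packets so that $\|f_\theta\|_\infty \in \{0\}\cup[1/2,2]$ and gives the control $\a \lesssim A^{-1}\sum_\theta|\sum_{T\in\tilde{\T}_\theta}\s_Tf_\theta|^2*\w_\theta(x)$. These reductions together contribute the $(\log R)^2$ prefactor in the conclusion.

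\emph{Stage 2 (Broad/narrow split; broad bound).} I perform a Bourgain-Guth broad/narrow decomposition of $U_{\a,\b}$ using caps $\sigma\in{\bf{S}}(K^{-1})$. Either (broad) $x\in\text{Br}_{\a,\b}^K$, i.e.\ there exist three $R^{-\e/3}$-separated $\tau^i\in{\bf{S}}(R^{-\e/3})$ with $\a\le K|f_{\tau^1}f_{\tau^2}f_{\tau^3}|^{1/3}(x)$ and $\max_i|f_{\tau^i}(x)|\le\a$, or (narrow) $|f(x)|\lesssim \max_{\sigma\in{\bf{S}}(K^{-1})}|f_\sigma(x)|$. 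Proposition \ref{mainprop} applied to $\text{Br}_{\a,\b}^K$ directly supplies the bracketed broad term. The extra factor $K^3$ (upgrading $K^{50}$ to $K^{53}$) and an additional $R^{O(\e)}$ loss (upgrading $R^{10\e}$ to $R^{200\e}$) are absorbed from: (i) converting between the scale $R^{-\e/3}$ appearing in the definition of $\text{Br}_{\a,\b}^K$ and the coarser scale $K^{-1}$ used in the narrow split, via a pigeonholing over the $R^{-\e/3}$-subcaps of each $K^{-1}$-cap, and (ii) bookkeeping from the normalizations in Stage 1, including the replacement of $\a$ by $\b$.

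\emph{Stage 3 (Narrow bound via rescaling).} For $x$ in the narrow complement $U_{\a,\b}\setminus\text{Br}_{\a,\b}^K$, the estimate $|f(x)|\lesssim\max_\sigma|f_\sigma(x)|$ yields
\[ \int_{\text{narrow}}|f|^7 \lesssim \sum_{\sigma\in{\bf{S}}(K^{-1})}\int|f_\sigma|^7. \]
For each $\sigma\in{\bf{S}}(K^{-1})$, the affine transformation sending $\sigma$ to the canonical unit moment curve block maps $f_\sigma$ to a Schwartz function with Fourier support in $\mc{M}^3(R/K^3)$ and sends each $\theta\subset\sigma$ to a canonical $(R/K^3)^{-1/3}\times(R/K^3)^{-2/3}\times(R/K^3)^{-1}$ block. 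By the definition of $S_1$,
\[ \int|f_\sigma|^7 \le S_1(R/K^3)\int\Big|\sum_{\theta\subset\sigma}|f_\theta|^2*\w_\theta\Big|^{7/2}. \]
Summing over $\sigma$ and applying $\|\cdot\|_{\ell^{7/2}}\le\|\cdot\|_{\ell^1}$ produces the $S_1(R/K^3)$ term, and adding this to the Stage 2 broad bound completes the proof. The main technical obstacle is the scale mismatch between the broad threshold $R^{-\e/3}$ baked into Proposition \ref{mainprop} and the narrow rescaling scale $K^{-1}$: the clean case $K=R^{\e/3}$ (where $R/K^3=R^{1-\e}$) is immediate, and the general case $1\le K^3\le R$ is precisely what is paid for by the explicit $K^{53}$ and $R^{O(\e)}$ losses.
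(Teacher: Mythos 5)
Your proposal is correct and follows essentially the same route as the paper: the same chain of pigeonholing reductions (Lemmas \ref{loc}, \ref{alph}, \ref{bet}, Proposition \ref{wpd}, Corollary \ref{wpdcor}), the same Bourgain--Guth broad/narrow split at scale $K^{-1}$ with Proposition \ref{mainprop} supplying the bracketed broad term, and the same affine rescaling plus $\|\cdot\|_{\ell^{7/2}}\le\|\cdot\|_{\ell^1}$ for the narrow term yielding $S_1(R/K^3)$. The scale bookkeeping you flag between the separation threshold in $\text{Br}_{\a,\b}^K$ and the $K^{-1}$-caps is handled in the paper exactly as you propose, by absorbing the count of separated $3$-tuples and the trilinear constant into the powers of $K$ and $R^{O(\e)}$.
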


\begin{proof}[Proposition \ref{mainprop} implies Lemma \ref{multiscale'}]
Let $f:\R^3\to\C$ be a Schwartz function with Fourier transform supported in $\mc{M}^3(R)$. By Lemma \ref{loc}, it suffices to bound $\|f\|_{L^7(B_R)}^7$ instead of $\|f\|_{L^7(\R^3)}^7$. By Lemma \ref{alph}, we may fix $\a>0$ so that $\|f\|_{L^7(B_R)}^7\lesssim(\log R)^2\a^7|U_{\a}|$. By Proposition \ref{wpd}, we may replace $\a$ by $\a/A$ and replace $f$ by $\tilde{f}=\frac{1}{A}\sum_{\theta\in{\bf{S}}(R^{-1/3})}\sum_{T\in\tilde{\T}_\theta}\s_Tf_\theta$ where $\tilde{\T}_\theta$ satisfies the properties in that proposition. From here, we will take $f$ to mean $\tilde{f}$. By Lemma \ref{bet}, we may fix $\b>0$ so that $\a^7|U_\a|\lesssim (\log R)\a^7|U_{\a,\b}|$. Finally, 
by Corollary \ref{wpdcor}, we have $\a\lesssim R^{103}$. 

Write $f=\sum_{\tau\in{\bf{S}}(K^{-1})}f_\tau$. The broad-narrow inequality is
\begin{align}\label{brnar}
    |f(x)|&\le 6\max_{\tau\in{\bf{S}}(K^{-1})}|f_{\tau}(x)|+K^3\max_{\substack{d(\tau^i,\tau^j)\ge K^{-1}\\\tau^i\in{\bf{S}}(K^{-1})}}|f_{\tau^1}(x)f_{\tau^2}(x)f_{\tau^3}(x)|^{\frac{1}{3}} .
\end{align}
Indeed, suppose that the set $\{\tau\in{\bf{S}}(K^{-1}):|f_{\tau}(x)|\ge K^{-1}\max_{\tau'\in{\bf{S}}(K^{-1})}|f_{\tau'}(x)|\}$ has at least $5$ elements. Then we can find three $\tau^1,\tau^2,\tau^3$ which are pairwise $\ge K^{-1}$-separated and satisfy $|f(x)|\le K^3|f_{\tau^1}(x)f_{\tau^2}(x)f_{\tau^3}(x)|^{\frac{1}{3}}$. If there are fewer than $5$ elements, then $|f(x)|\le 6 \max_{\tau\in{\bf{S}}(K^{-1})}|f_{\tau}(x)|$.

The broad-narrow inequality leads to two possibilities. In one case, we have  
\begin{equation}\label{case1brn} |U_{\a,\b}|\lesssim |\{x\in U_{\a,\b}:|f(x)|\le 6\max_{\tau\in{\bf{S}}(K^{-1})}|f_{\tau}(x)|\}| . \end{equation}
Then the summary of inequalities from this case is
\begin{align*}
    \int_{B_R}|f|^7\lesssim(\log R)^2\a^7|U_{\a,\b}|\lesssim(\log R)^2\sum_{\tau\in{\bf{S}}(K^{-1})}\int_{\R^3}|f_{\tau\in{\bf{S}}(K^{-1})}|^7 .
\end{align*}
By rescaling for the moment curve and the definition of $\S_1(\cdot)$, we may bound each integral in the final upper bound by 
\[ \int_{\R^3}|f_{\tau}|^7\le \S_1(R/K^3)\int_{\R^3}|\sum_{\theta\subset\tau}|f_\theta|^2*\w_\theta|^{7/2}. \]
Noting that $\sum_\tau\int_{\R^3}|\sum_{\theta\subset\tau}|f_\theta|^2*\w_\theta|^{7/2}\le\int_{\R^3}|\sum_{\theta}|f_\theta|^2*\w_\theta|^{7/2} $ finishes this case.

The remaining case from the broad-narrow inequality is that
\[ |U_{\a,\b}|\lesssim |\{x\in U_{\a,\b}:|f(x)|\le K^3 \max_{\substack{
d(\tau^i,\tau^j)\ge K^{-1}\\\tau^i\in{\bf{S}}(K^{-1})}}|f_{\tau^1}(x)f_{\tau^2}(x)f_{\tau^3}(x)|^{1/3}\}| .\]
We may further assume that 
\[ |U_{\a,\b}|\lesssim |\{x\in U_{\a,\b}:|f(x)|\le K^3 \max_{\substack{
d(\tau^i,\tau^j)\ge K^{-1}\\\tau^i\in{\bf{S}}(K^{-1})}}|f_{\tau^1}(x)f_{\tau^2}(x)f_{\tau^3}(x)|^{1/3},\quad \max_{\tau\in{\bf{S}}(K^{-1})}|f_\tau(x)|\le \a\}| \]
since otherwise, we would be in the first case \eqref{case1brn}. The size of the set above is now bounded by a sum over pairwise $K^{-1}$-separated 3-tuples $(\tau^1,\tau^2,\tau^3)$ of $|\text{Br}_{\a,\b}^K|$ from \textsection\ref{broad}. Using Proposition \ref{mainprop} to bound $|\text{Br}_{\a,\b}^K|$, the summary of the inequalities from this case is 
\[\int_{B_R}|f|^7\lesssim(\log R)^2  K^{53}\big[R^{10 \e N_0}A^{\e^{-1}} + R^{4\e^2}A^{\e^{-1}}  \emph{S}_1(R^\e)^{\e^{-1}-N_0}\big]\int|\sum_\theta|f_\theta|^2*\w_\theta|^{7/2}, \]
which finishes the proof.

\end{proof}

With Lemma \ref{multiscale'} in hand, we may now prove Proposition \ref{S1bd}. 
\begin{proof}[Proof of Proposition \ref{S1bd}] 
Let $\eta$ be the infimum of the set
\[ \mc{S}=\{\d\ge 0:\sup_{R\ge 1}\frac{\S_1(R)}{R^\d}<\infty\}. \]
Suppose that $\eta>0$. Let $\e_1$, $\eta>\e_1>0$, be a parameter we will specify later. By Lemma \ref{multiscale'}, we have
\begin{align*} 
\sup_{R\ge 1}\frac{\S_1(R)}{R^{\eta-\e_1}}&\lesssim_\e\sup_{R\ge 1}\frac{1}{R^{\eta-\e_1}}\Big[(\log R)^2\left( K^{53}\big[R^{10 \e N_0} + R^{4\e^2+10\e}  \S_1(R^\e)^{\e^{-1}-N_0}\big]+\S_1(R/K^3)\right)  \Big] 
\end{align*}
where we are free to choose $\e>0$, $1\le N_0\le\e^{-1}$, and $1\le K^3\le R$. Continue to bound the expression on the right hand side by
\begin{align*}
\sup_{R\ge 1}(\log R)^2\Big( K^{53}\frac{R^{10 \e N_0}}{R^{\eta-\e_1}} +& K^{53}\frac{R^{4\e^2+10\e}}{R^{N_0\e(\eta+\e_1)-2\e_1}}  \big[\frac{\S_1(R^\e)}{R^{\e(\eta+\e_1)}}\big]^{\e^{-1}-N_0}+\frac{1}{K^{3(\eta+\e_1)}R^{-2\e_1}}\frac{{\S_1(R/K^3)}}{(R/K^3)^{\eta+\e_1}}  \Big).
\end{align*}
By definition of $\eta$, 
\[ \sup_{R\ge 1}\frac{\S_1(R^\e)}{R^{\e(\eta+\e_1)}}+\sup_{R\ge 1}\frac{{\S_1(R/K^3)}}{(R/K^3)^{\eta+\e_1}} <\infty, \]
so it suffices to check that 
\begin{align*}
\sup_{R\ge 1}(\log R)^2\Big( K^{53}\frac{R^{10 \e N_0}}{R^{\eta-\e_1}} +& K^{53}\frac{R^{4\e^2+10\e}}{R^{N_0\e(\eta+\e_1)-2\e_1}}  +\frac{1}{K^{3(\eta+\e_1)}R^{-2\e_1}}  \Big)<\infty
\end{align*}
to obtain a contradiction. From here, choose $N_0=\e^{-1/2}$ and $K=R^{\e_1}$ so that it suffices to check 
\begin{align*}
\sup_{R\ge 1}(\log R)^2\Big( \frac{1}{R^{\eta-10\e^{1/2}-54\e_1}} +& \frac{1}{R^{\e^{1/2}\eta-4\e^2-10\e-55\e_1}}  +\frac{1}{R^{\e_1}}  \Big)<\infty .
\end{align*}
This is clearly true if we choose $\e>0$ to satisfy $\min(\eta-10\e^{1/2},\eta-4\e^{3/2}-10\e^{1/2})>\eta/2$ and then choose $\e_1$ to be smaller than $\frac{1}{55}\e^{1/2}\eta/4$. Our reasoning has shown that $\eta-\e_1\in\mc{S}$, which is a contradiction. Conclude that $\eta=0$, as desired.

\end{proof}

\subsection{Proof of Theorem \ref{main} \label{S2}}
We will show that $\S_1(R)\lesssim_\e R^\e$ implies $\S_2(R)\lesssim_\e R^\e$, which proves Theorem \ref{main}. See Definitions \ref{S1df} and \ref{S2df} in \textsection\ref{const} for the definitions of $\S_1(R)$ and $\S_2(R)$. 

The following is a multi-scale inequality relating $\S_1(R)$ to $\S_2(\cdot)$ and $\S_1(\cdot)$ evaluated at parameters smaller than $R$. 
\begin{proposition}\label{multiscaleS2} For $R\ge 10$,  
\[\emph{S}_2(R)\lesssim R^\e \emph{S}_1(R^{1/3})\max_{1\le \lambda\le R^{2/3}}\emph{S}_2(\lambda). \]
\end{proposition}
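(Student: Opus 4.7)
My plan is to apply the $S_1$ estimate at the intermediate scale $R^{1/3}$ and then to relate the resulting weighted square function to the unweighted one at scale $R$ using $S_2$ on rescaled pieces at scales $\lambda \le R^{2/3}$.

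First I would use that $\mc M^3(R)\subset\mc M^3(R^{1/3})$ (immediate from the defining inequalities, since $R^{-2/3}\le (R^{1/3})^{-2/3}$ and $R^{-1}\le (R^{1/3})^{-1}$ whenever $R\ge 1$). Any $f$ with Fourier support in $\mc M^3(R)$ is therefore also Fourier-supported in $\mc M^3(R^{1/3})$, and decomposing $f=\sum_\tau f_\tau$ over $\tau\in\mathbf{S}(R^{-1/9})$ (the canonical scale-$R^{1/3}$ blocks), Definition~\ref{S1df} applied at scale $R^{1/3}$ yields
\[
\int_{\R^3}|f|^7 \;\le\; S_1(R^{1/3}) \int_{\R^3}\Big|\sum_\tau |f_\tau|^2*\w_\tau\Big|^{7/2}.
\]

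The rest of the proof should bound the right-hand side by $R^\e\max_{\lambda\le R^{2/3}}S_2(\lambda)\int|\sum_\theta|f_\theta|^2|^{7/2}$. The rescaling ingredient is that, for each fixed $\tau$, the moment-curve rescaling that sends $\tau$ to a region of unit size (a close relative of the one in the proof of Lemma~\ref{algo1}) turns $f_\tau$ into a Schwartz function Fourier-supported in $\mc M^3(R^{2/3})$, and sends the sub-blocks $\theta\subset\tau$ to canonical scale-$R^{2/3}$ blocks in $\mathbf{S}(R^{-2/9})$. Applying $S_2(R^{2/3})$ on the rescaled side and undoing the rescaling gives, for every $\tau$,
\[
\int_{\R^3}|f_\tau|^7\;\le\;S_2(R^{2/3})\int_{\R^3}\Big|\sum_{\theta\subset\tau}|f_\theta|^2\Big|^{7/2},
\]
together with the usual localised variants against the weights $W_V$.

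To convert these per-$\tau$ bounds into control of the outer $L^{7/2}$ expression, I would first perform a dyadic pigeonholing in the spirit of Lemmas~\ref{alph} and~\ref{bet} and Proposition~\ref{wpd}: normalise the amplitudes $\|f_\tau\|_\infty$, and dyadically decompose the level sets of $\sum_\tau|f_\tau|^2*\w_\tau$. This reduces matters to a super-level-set inequality at a single effective amplitude $\lambda\le R^{2/3}$, which is the source of the $\max_\lambda$ in the conclusion. On each piece, the locally constant property (Lemma~\ref{locconst}) together with the pointwise Jensen estimate $(|f_\tau|^2*\w_\tau)^{7/2}\le |f_\tau|^7*\w_\tau$ converts each weighted term into a local $L^7$ average of $f_\tau$, to which the rescaled $S_2(\lambda)$ bound applies. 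The principal obstacle is controlling the interchange of the sum over $\tau$ with the outer $L^{7/2}$ integration: a naive Minkowski or triangle step loses a polynomial factor in $\#\tau=R^{1/9}$, which is far too much. I expect this to be handled by exploiting the near-transversality of the planks $\tau^*$---they are tangent to distinct points of $\mc M^3$---so that the contributions at a single point have bounded overlap after convolution with $\w_\tau$, and the overall loss can be absorbed into the factor $R^\e$.
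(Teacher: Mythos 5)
Your opening move is the same as the paper's: embed $\mc{M}^3(R)\subset\mc{M}^3(R^{1/3})$ and apply $S_1(R^{1/3})$ to get $\int|f|^7\le S_1(R^{1/3})\int|\sum_{\tau\in{\bf{S}}(R^{-1/9})}|f_\tau|^2*\w_\tau|^{7/2}$, and your rescaling observation (each $f_\tau$ rescales to have Fourier support in $\mc{M}^3(\lambda)$ for some $\lambda\le R^{2/3}$, which is the source of the $\max_\lambda$) is also the one the paper uses. But there is a genuine gap at the step you yourself flag as "the principal obstacle." Passing from $\int|\sum_\tau|f_\tau|^2*\w_\tau|^{7/2}$ to per-$\tau$ quantities $\int|f_\tau|^7$ is not a technical interchange that can be absorbed into $R^\e$ by a bounded-overlap or transversality argument: the planks $\tau^*$ (and their translates carrying the mass of $|f_\tau|^2*\w_\tau$) can all pass through a common point, forming a bush of $\sim R^{1/9}$ overlapping planks, so no pointwise bounded-overlap statement holds. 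This difficulty is exactly the one the introduction identifies as the reason square function estimates are harder than decoupling ("we cannot rescale each $f_{\tau_k}$ individually and invoke induction"), and the example in \textsection\ref{const} shows the averaged square function can genuinely exceed the unaveraged one in $L^{7/2}$, so the inequality you need cannot follow from soft pointwise considerations.

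What the paper actually does at this point is run a (simplified, pruning-free) version of the algorithm from Proposition \ref{algo}: decompose the Fourier support of $\sum_\tau|f_\tau|^2*\w_\tau$ into a low ball and dyadic annuli; on the low part, pointwise local $L^2$-orthogonality refines the square function directly to scale $\theta$ and terminates; on each high annulus, identify the Fourier support with a dilated cone (Propositions \ref{geo3}--\ref{geo4}) and apply the $\ell^{7/4}$ cone wave envelope estimate (Proposition \ref{cone3.5}), producing sums over wave envelopes $V\|V_{\tau,s^{-3}}$; then split on the eccentricity $\sigma$ of the envelope, using Cauchy--Schwarz plus the rescaled $S_2(\lambda)$ bound when $\sigma$ is close to $1$ (this is where your per-$\tau$ rescaling legitimately enters), and the cylindrical $L^{7/2}$ parabola estimate (Lemma \ref{algo1}) to refine the square function scale when $\sigma$ is small, iterating $O(\e^{-2})$ times. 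Without this high-low/wave-envelope machinery, or some substitute of comparable strength, the interchange you propose has no justification, so the proof as written is incomplete at its central step.
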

Granting this proposition, we now prove Theorem \ref{main}. 
\begin{proof}[Proof of Theorem \ref{main}] Let $\eta$ be the infimum of the set
\[ \mc{S}=\{\d\ge 0:\sup_{R\ge 1}\frac{\S_2(R)}{R^\d}<\infty\}. \]
Suppose that $\eta>0$. Let $\e_1>0$ satisfy $(1/3)\eta-\e_1>0$. By Proposition \ref{multiscaleS2}, we have
\begin{align*} 
\sup_{R\ge 1}\frac{\S_2(R)}{R^{\eta-\e_1}}&\le C_\e\sup_{R\ge 1}\Big[\frac{\S_1(R)R^{\e}}{R^{\eta-\e_1}}\max_{1\le \lambda\le R^{2/3}}\S_2(\lambda) \Big] \\
&=C_\e\sup_{R\ge 1}\frac{\S_1(R)R^{\e}}{R^{(1/3)\eta-\e_1-(2/3)\d}}\max_{1\le \lambda\le R^{2/3}}\frac{\S_2(\lambda)}{\lambda^{\eta+\d}}\frac{\lambda^{\eta+\d}}{R^{(2/3)(\eta+\d)}}  
\end{align*}
where $\d$ is any positive number in the last line. By definition of $\eta$, 
\[ \max_{1\le \lambda}\frac{S_2(\lambda)}{\lambda^{\eta+\d}}\le C_\d<\infty. \]
Therefore, 
\begin{align*} 
\sup_{R\ge 1}\frac{\S_1(R)R^{\e}}{R^{(1/3)\eta-\e_1-(2/3)\d}}\max_{1\le \lambda\le R^{2/3}}\frac{\S_2(\lambda)}{\lambda^{\eta+\d}}\frac{\lambda^{\eta+\d}}{R^{(2/3)(\eta+\d)}}  &\lesssim_{\e,\d}\sup_{R\ge 1}\frac{\S_1(R)R^{\e}}{R^{(1/3)\eta-\e_1-(2/3)\d}}. 
\end{align*}
Choose $\d>0$ so that $(1/3)\eta-\e_1-(2/3)\d>0$. Then choose $\e>0$ so that $(1/3)\eta-\e_1-(2/3)\d>0-\e>0$. Since by Proposition \ref{S1bd}, we know that $\S_1(R)$ grows slower than any power in $R$, conclude that 
\[ \sup_{R\ge 1}\frac{\S_1(R)}{R^{(1/3)\eta-\e_1-(2/3)\d-\e}}<\infty. \]
The sequence of inequalities shows that $\eta-\e_1\in{\mc{S}}$, which contradicts the fact that $\eta$ is the infimum. Conclude that $\eta=0$. 

\end{proof}

It remains to prove Proposition \ref{multiscaleS2}, which we do presently. 
\begin{proof}[Proof of Proposition \ref{multiscaleS2}] 
Let $f:\R^3\to\C$ be a Schwartz function with Fourier transform supported in $\mc{M}^3(R)$. Begin with the defining inequality for $\S_1(R^{1/3})$:
\begin{equation}\label{S1app}
    \int_{\R^3}|f|^7\le S_1(R^{1/3})\int_{\R^3}|\sum_{\tau\in{\bf{S}}(R^{-1/9})}|f_\tau|^2*\w_{\tau}|^{7/2}. 
\end{equation}
We choose the scale $R^{1/3}$ because each $\w_{\tau}$ is localized to an $R^{1/9}\times R^{2/9}\times R^{1/3}$ plank, which is contained in an $R^{1/3}$ ball. The square function we are aiming for, $\sum_{\theta\in{\bf{S}}(R^{-1/3})}|f_\theta|^2$, is locally constant on $R^{1/3}$ balls, so we will be able to eliminate the weights and therefore obtain a bound for $\S_2(R)$. The idea for going from the right hand side of \eqref{S1app} to our desired right hand side is to perform an algorithm similar to Proposition \ref{algo}. We may vastly simplify the process since here we have not performed any pruning steps which give $k$th versions of $f$ with altered Fourier support.  

Begin with the assumption that 
\begin{equation}\label{assS2}
\int_{\R^3}|\sum_{\tau\in{\bf{S}}(R^{-1/9})}|f_\tau|^2*\w_{\tau}|^{7/2}\lesssim  \int_{\R^3}|\sum_{\tau\in{\bf{S}}(R^{-1/9})}|f_\tau|^2*\w_{\tau}*\widecheck{\eta}_{>R^{-1/3}}|^{7/2}.\end{equation}
Indeed, if this does not hold, then we may assume
\[\int_{\R^3}|\sum_{\tau\in{\bf{S}}(R^{-1/9})}|f_\tau|^2*\w_{\tau}|^{7/2}\lesssim  \int_{\R^3}|\sum_{\tau\in{\bf{S}}(R^{-1/9})}|f_\tau|^2*\w_{\tau}*\widecheck{\eta}_{\le R^{-1/3}}|^{7/2}.\]
Then we have reached our termination criterion since by local $L^2$-orthogonality, for each $\tau\in{\bf{S}}(R^{-1/9})$,
\[ ||f_\tau|^2*\w_{\tau}*\widecheck{\eta}_{\le R^{-1/3}}(x)|\lesssim \sum_{\theta\subset\tau}|f_\theta|^2*\w_\tau*|\widecheck{\eta}_{R^{-1/3}}|(x).  \]
Finally, simply note that $\w_{\tau}*|\widecheck{\eta}_{R^{-1/3}}|\lesssim w_{R^{1/3}}$ and by Young's convolution inequality, 
\[ \int_{\R^3}|\sum_\theta|f_\theta|^2*w_{R^{1/3}}|^{7/2}\lesssim \int_{\R^3}|\sum_\theta|f_\theta|^2|^{7/2}. \]
From here on, assume that \eqref{assS2} holds.

Now we describe the simplified algorithm. Let $\d>0$ be a constant that we specify later in the proof. At intermediate step $m$, we have the inequality 
\begin{align}\label{stepmS2} \text{(R.H.S. of \eqref{assS2})}
\le (C_\d R^{2\d}(\log R)^{10})^m \sum_{s_m\le \sigma\le 1}\sum_{\substack{\tau\in{\bf{S}}(\sigma^{-1}s_m)}} \sum_{V\|V_{\tau,s_m^{-3}}}|V|\Big(|V|^{-1}\int \sum_{\substack{\tau'\subset\tau \\\tau'\in{\bf{S}}(s_m)}}|f_{\tau'}|^{7/2} W_V \Big)^{2}
\end{align}
in which $R^{-\frac{1}{3}}\le s_m\le R^{-\frac{1}{9}}R^{-\frac{\e^2}{2}m}$. We begin by showing that \eqref{stepmS2} holds with $m=1$. 

\noindent\fbox{Step 1:} The Fourier support of $\sum_{\tau\in{\bf{S}}(R^{-1/9})}|f_\tau|^2*\w_\tau*\widecheck{\eta}_{>R^{-1/3}}$ is in the annulus $\{R^{-1/3}\le |\xi|\le 2R^{-1/9}\}$. Since $\sum_{\substack{R^{-1/3}\le s\le 3R^{-1/9}}}\eta_s\equiv 1$ (where $s$ takes dyadic values) on this annulus, there is some dyadic $s_1$, $R^{-1/3}\le s_1\le 3R^{-1/9}$ such that
\[ \text{(R.H.S. of \eqref{assS2})}\lesssim (\log R)^{7/2}\int_{\R^3}|\sum_{\tau\in{\bf{S}}(R^{-1/9})}|f_\tau|^2*\w_\tau*\widecheck{\eta}_{s_1}|^{7/2}\]
By pointwise local $L^2$-orthogonality (or the proof of Lemma \ref{low}), 
\[\int_{\R^3}|\sum_{\tau\in{\bf{S}}(R^{-1/9})}|f_\tau|^2*\w_\tau*\widecheck{\eta}_{s_1}|^{7/2}=\int_{\R^3}|\sum_{\tau\in{\bf{S}}(R^{-1/9})}\sum_{\substack{\tau',\tau''\in{\bf{S}}(s_1)\\\tau'\tau''\subset\tau\\\tau'\sim\tau''}}f_{\tau'}\overline{f_{\tau''}}*\w_\tau*\widecheck{\eta}_{s_1}|^{7/2}\]
where $\tau'\sim\tau''$ means $d(\tau',\tau'')\le 3 s_1$. For each $\tau\in{\bf{S}}(R^{-1/9})$ and $\tau'\subset\tau$, $\tau'\in{\bf{S}}(s_1)$, the Fourier support of 
\[ \sum_{\substack{\tau',\tau''\in{\bf{S}}(s_1)\\\tau'\tau''\subset\tau\\\tau'\sim\tau''}}f_{\tau'}\overline{f_{\tau''}}*\w_\tau*\widecheck{\eta}_{s_1} \]
is contained in $(10\tau'-10\tau')\setminus B_{s_1}$, which, after dilating by a factor of $s_1^{-1}$, we may identify with a conical cap as we did in \textsection\ref{geo}. Therefore, we may apply Proposition \ref{cone3.5} (with space rescaled by a factor of $s_1^{-1}$) to obtain 
\begin{align*}
\int_{\R^3}|\sum_{\tau\in{\bf{S}}(R^{-1/9})}&\sum_{\substack{\tau',\tau''\in{\bf{S}}(s_1)\\\tau'\tau''\subset\tau\\\tau'\sim\tau''}}f_{\tau'}\overline{f_{\tau''}}*\w_\tau*\widecheck{\eta}_{s_1}|^{7/2}\lesssim_\d R^{\d} \\
&\times \sum_{s_1\le \sigma\le 1}\sum_{\substack{\underline{\tau}\in{\bf{S}}(\sigma^{-1}s_1)}} \sum_{V\|V_{\underline{\tau},s_1^{-3}}}|V|\Big(|V|^{-1}\int\sum_{\substack{\tau'\subset\underline{\tau} \\\tau'\in{\bf{S}}(s_1)}}|\sum_{\substack{\tau',\tau''\in{\bf{S}}(s_1)\\\tau'\tau''\subset\tau\\\tau'\sim\tau''}}f_{\tau'}\overline{f_{\tau''}}*\w_\tau*\widecheck{\eta}_{s_1}|^{7/4}W_V \Big)^{2}. \end{align*}
For each $s_1\le \sigma\le 1$, $\underline{\tau}\in{\bf{S}}(\sigma^{-1}s_1)$, and $V\|V_{\underline{\tau},s_1^{-3}}$, by Cauchy-Schwarz and H\"{o}lder's inequality, 
\[\int\sum_{\substack{\tau'\subset\underline{\tau} \\\tau'\in{\bf{S}}(s_1)}}|\sum_{\substack{\tau',\tau''\in{\bf{S}}(s_1)\\\tau'\tau''\subset\tau\\\tau'\sim\tau''}}f_{\tau'}\overline{f_{\tau''}}*\w_\tau*\widecheck{\eta}_{s_1}|^{7/4}W_V \lesssim \int\Big(\sum_{\substack{\tau'\subset\underline{\tau} \\\tau'\in{\bf{S}}(s_1)}}|f_{\tau'}|^{7/2}*\w_\tau*|\widecheck{\eta}_{s_1}| \Big)W_V. \]
Note that since $\tau'\subset\tau$, we have $\tau^*\subset 2(\tau')^*$ and each $(\tau')^*\subset 2V_{\tau,s_1^{-3}}$. Also note that $B_{s_1}(0)\subset V_{\tau,s_1^{-3}}$. By properties of weight functions, it follows that 
\[ \int\Big(\sum_{\substack{\tau'\subset\underline{\tau} \\\tau'\in{\bf{S}}(s_1)}}|f_{\tau'}|^{7/2}*\w_\tau*|\widecheck{\eta}_{s_1}| \Big)W_V\lesssim \int\sum_{\substack{\tau'\subset\underline{\tau} \\\tau'\in{\bf{S}}(s_1)}}|f_{\tau'}|^{7/2}W_V.\]
This concludes Step $1$. 

From here, we assume that for $m>1$, Step $m-1$ holds. We will show that either the algorithm terminates with the desired inequality or Step $m$ (i.e. \eqref{stepmS2}) holds. 

\noindent\fbox{Step m:} Let $\sigma\in [s_{m-1},1]$ be a dyadic value satisfying
\begin{align}\label{stephypS2} 
(\text{L.H.S. of \eqref{algline1}})\le (C_\d R^{2\d}\log &R)^{10(m-1)}  \\
&\times (C\log R)\sum_{\substack{\tau\in{\bf{S}}(\sigma^{-1}s_{m-1})}} \sum_{V\|V_{\tau,s_{m-1}^{-3}}}|V|\Big(\fint_V\sum_{\substack{\tau'\subset\tau \\\tau'\in{\bf{S}}(s_{m-1})}}|f_{\tau'}^{k_{m-1}}|^{7/2} \Big)^{2}\nonumber
\end{align}
in which $R^{-\frac{1}{3}}\le s_{m-1}\le 3R^{-\frac{1}{9}}R^{-\frac{\e^2}{2}(m-1)}$. 
The analysis splits into two cases depending on whether $\sigma\ge R^{-\e^2}$ or $\sigma<R^{-\e^2}$.
\newline\noindent\fbox{Step m: $\sigma\ge R^{-\e^2}$.} Using Cauchy-Schwarz and then H\"{o}lder's inequality, for each $\tau\in{\bf{S}}(\sigma^{-1}s_{m-1})$,
\begin{align*} 
\sum_{V\|V_{\tau,s_{m-1}^{-3}}}|V|\Big(\fint_V\sum_{\substack{\tau'\subset\tau \\\tau'\in{\bf{S}}(s_{m-1})}}|f_{\tau'}|^{7/2} \Big)^{2}&\le\sum_{V\|V_{\tau,s_{m-1}^{-3}}}|V|(\#\tau'\subset\tau)\sum_{\substack{\tau'\subset\tau \\\tau'\in{\bf{S}}(s_{m-1})}}\Big(\fint_V|f_{\tau'}|^{7/2} \Big)^{2}\\
    &\lesssim\sum_{V\|V_{\tau,s_{m-1}^{-3}}}|V|(R^{\e^2})\sum_{\substack{\tau'\subset\tau \\\tau'\in{\bf{S}}(s_{m-1})}}\fint_V|f_{\tau'}|^{7} \\
    &\lesssim R^{\e^2}\sum_{\substack{\tau'\subset\tau \\\tau'\in{\bf{S}}(s_{m-1})}}\int_{\R^3}|f_{\tau'}|^7.  \end{align*}
By rescaling for the moment curve, for each $\tau'\in{\bf{S}}(s_{m-1})$,
\[ \int_{\R^3}|f_{\tau'}|^7\le \S_2(s_{m-1}^3R)\int_{\R^3}(\sum_{\theta\subset\tau'}|f_{\theta}|^2)^{7/2}. \]
Since $\sum_{\tau'\in{\bf{S}}(\sigma^{-1}s_{m-1})}\int_{\R^3}(\sum_{\theta\subset\tau'}|f_{\theta}|^2)^{7/2}\le \int_{\R^3}(\sum_{\theta}|f_{\theta}|^2)^{7/2}$, the algorithm terminates in this case with the inequality
\begin{align}\label{termS2} 
(\text{L.H.S. of \eqref{algline1}})\le (C_\d R^{2\d}\log &R)^{10m}R^{\e^2}\S_1(s_{m-1}^3R)\int_{\R^3}(\sum_{\theta\in{\bf{S}}(R^{-1/3})}|f_\theta|^2)^{7/2}.\end{align}

\noindent\fbox{Step m: $s_{m-1}\le \sigma\le R^{-\e^2}$.} Let $\tilde{s}_{m-1}=\max(R^{-\frac{1}{3}},\sigma^{\frac{1}{2}}s_{m-1})$. By Lemma \ref{algo1} (which applies equally well to functions which have not been pruned), for each $\tau\in{\bf{S}}(\sigma^{-1}s_{m-1})$ and $V\|V_{\tau,s_{m-1}^{-3}}$, 
\[\fint_V\sum_{\substack{\tau'\subset\tau \\\tau'\in{\bf{S}}(s_{m-1})}}|f_{\tau'}|^{7/2} \le C_{\d}R^{\d}|V|^{-1}\fint \sum_{\substack{\tau'\subset\tau \\\tau'\in{\bf{S}}(s_{m-1})}}|\sum_{\substack{\tau''\subset\tau'\\\tau''\in{\bf{S}}(\tilde{s}_{m-1})}}|f_{\tau''}|^2|^{7/4} W_V  .\]
Then using $\|\cdot\|_{\ell^{7/4}}\le\|\cdot\|_{\ell^{1}}$ and Cauchy-Schwarz, we have 
\begin{align*} 
\sum_{\substack{\tau\in{\bf{S}}(\sigma^{-1}s_{m-1})}} \sum_{V\|V_{\tau,s_{m-1}^{-3}}}|V|\Big(|V|^{-1}\int&\sum_{\substack{\tau'\subset\tau  \\\tau'\in{\bf{S}}(s_{m-1})}}(\sum_{\substack{\tau''\subset\tau'\\\tau''\in{\bf{S}}(\tilde{s}_{m-1})}}|f_{\tau''}|^2)^{7/4}W_V \Big)^{2}\\
&\le  \sum_{\substack{\tau\in{\bf{S}}(\sigma^{-1}s_{m-1})}} \sum_{V\|V_{\tau,s_{m-1}^{-3}}}|V|^{-1}\Big(\int(\sum_{\substack{\tau''\subset\tau\\\tau''\in{\bf{S}}(\tilde{s}_{m-1})}}|f_{\tau''}|^2)^{7/4}W_V \Big)^{2}\\
&\le \sum_{\substack{\tau\in{\bf{S}}(\sigma^{-1}s_{m-1})}} \int_{\R^3}(\sum_{\substack{\tau''\subset\tau\\\tau''\in{\bf{S}}(\tilde{s}_{m-1})}}|f_{\tau''}|^2)^{7/2} . 
\end{align*}
If $\tilde{s}_{m-1}=R^{-\frac{1}{3}}$, then the algorithm terminates with the inequality 
\begin{align}\label{term2S2} 
(\text{R.H.S. of \eqref{assS2}})\le (C_\d R^{2\d}(\log &R)^{10})^{ m} \int_{\R^3}(\sum_\theta|f_\theta|^2)^{7/2} .
\end{align}
Otherwise, suppose that $\tilde{s}_{m-1}>R^{-\frac{1}{3}}$. Then the inequality so far reads 
\begin{align*}
(\text{L.H.S. of \eqref{algline1}})\le (C_\d R^{2\d}(\log &R)^{10})^{(m-1)}(C_\d R^\d\log R)\int_{\R^3}(\sum_{\tau''\in{\bf{S}}(\sigma^{\frac{1}{2}}s_{m-1})}|f_{\tau''}|^2)^{7/4}.
\end{align*}
By the same reasoning as in Step $1$, either we are in a termination case analogous to when \eqref{assS2} does not hold, or there is some $s_m\le \sigma^{\frac{1}{2}}s_{m-1}$ for which 
\[ \int_{\R^3}(\sum_{\tau''\in{\bf{S}}(\sigma^{\frac{1}{2}}s_{m-1})}|f_{\tau''}|^2)^{7/4}\le (C_\d R^{\d} \log R)\sum_{s_m\le \sigma\le 1}\sum_{\substack{\tau\in{\bf{S}}(\sigma^{-1}s_m)}} \sum_{V\|V_{\tau,s_m^{-3}}}|V|\Big(\fint_V\sum_{\substack{\tau'\subset\tau \\\tau'\in{\bf{S}}(s_m)}}|f_{\tau'}|^{7/2} \Big)^{2}. \]
By the hypothesis that $s_{m-1}\le R^{-\frac{1}{9}}R^{\frac{-\e^2}{2}(m-1)}$ and $\sigma\le R^{-\e^2}$, we have $s_m\le R^{-\frac{1}{9}}R^{\frac{-\e^2}{2}m}$. This completes the justification of Step $m$.

\noindent\fbox{Termination criteria.} It remains to summarize the termination criteria. In each of the termination scenarios, if $\S_2(\cdot)$ appears, then it is multiplied by one factor of $R^{\e^2}$ and is evaluated at some parameter $\lambda$ between $1$ and $R^{2/3}$. Finally, consider the accumulated constant 
\[ (C_\d R^{\d}(\log R)^{10})^{M}\]
after $M$ steps of the algorithm. Since $R^{-1/3}\le R^{-1/9}R^{-\e^2 M/2}$, we have $M\le \e^{-2}/2$. Therefore, taking $\d=\e^4$ gives 
\[ (C_\d R^{\d}(\log R)^{10})^{M}\le \tilde{C}_\e R^{\e^{3/2}}, \]
which suffices to prove the proposition. 

\end{proof}

\section{Appendix A}

\subsection{An $L^{3/2}$ square function estimate for the parabola. \label{Papp}}
Let $\mb{P}^1=\{(t,t^2):0\le t\le 1\}$ and for $r\ge 1$, let $\mc{N}_{r^{-1}}(\P^1)$ denote the $r^{-1}$-neighborhood of $\P^1$ in $\R^2$. Define the collection of canonical $\sim r^{-1/2}\times r^{-1}$ parabola blocks as follows. Let $s\in 2^\Z$ be the smallest number satisfying $r^{-1}\le s^2$. Then write $\mc{N}_{s^2}(\P^1)$ as 
\begin{equation*} 
\bigsqcup_{1\le l\le s^{-1}-2} \{(\xi_1,\xi_2)\in\mc{N}_{s^2}(\P^1):ls\le \xi_1<(l+1)s \}  \end{equation*}
and the two end pieces
\[  \{(\xi_1,\xi_2)\in\mc{N}_{s^2}(\P^1):\xi_1<s\} \sqcup  \{(\xi_1,\xi_2)\in\mc{N}_{s^2}(\P^1):1-s\le \xi_1\} \]
We use the notation $\ell(\tau)=r^{-1/2}$ in two ways: (1) to describe $\tau$ as one of the sets from the above partition and (2) to index the set of $\tau$ from the above partition. 

Define the $L^{7/2}$ parabolic square function constant $\text{PS}(r)$ to be the infimum of $A>0$ which satisfy 
\[ \int_{\R^2} |h|^{7/2}\le A^{7/2}\int|\sum_{\ell(\theta)=r^{-1/2}}|h_\theta|^2|^{7/4} \]
for any Schwartz function $h:\R^2\to\C$ with Fourier transform supported in $\mc{N}_{r^{-1}}(\P^1)$. Note that $A\sim (r^{1/2})^{7/4}$ satisfies the displayed inequality for any $h$, so $\text{PS}(r)\lesssim r^{7/8}$. The next proposition says that $\text{PS}(r)$ satisfies a multiscale inequality, which follows from a broad-narrow analysis and bilinear restriction for the parabola. 

\begin{proposition}\label{multiscale} For each $1\le L^2\le r$,
\[ \text{\emph{PS}}(r)\le 2\text{\emph{PS}}(16r/L^2)+L^3 (\log r). \]
\end{proposition}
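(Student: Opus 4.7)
The plan is to apply a broad--narrow decomposition at scale $L^{-1}$. Partition $\mc{N}_{r^{-1}}(\P^1)$ into blocks $\tau\in{\bf{S}}(L^{-1})$ and write $h = \sum_\tau h_\tau$. A standard pigeonholing argument (cheaper in $\R^2$ than in higher dimensions) gives, for each $x$, either a dominant block with $|h(x)| \le 10\max_\tau |h_\tau(x)|$, or else two $L^{-1}$-separated blocks $\tau^1,\tau^2$ with $|h(x)| \le C L |h_{\tau^1}(x) h_{\tau^2}(x)|^{1/2}$. Raising to the $7/2$ power and integrating yields
\[ \int|h|^{7/2} \le C\sum_\tau \int|h_\tau|^{7/2} + CL^{7/2}\sum_{\tau^1,\tau^2:\, L^{-1}\text{-sep}} \int|h_{\tau^1}h_{\tau^2}|^{7/4}. \]

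For the narrow term, I apply the parabolic rescaling that sends $\tau$ to a unit parabolic cap. The pushforward of $h_\tau$ has Fourier support in $\mc{N}_{L^2/r}(\P^1)$, and the subblocks $\theta\subset\tau$ with $\ell(\theta)=r^{-1/2}$ become canonical scale-$(r/L^2)^{-1/2}$ blocks; the factor $16$ in the statement is slack for rounding to dyadic values and the constants in the definition of ${\bf{S}}$. Applying the definition of $\text{PS}(16r/L^2)$ and undoing the rescaling,
\[ \int|h_\tau|^{7/2} \le \text{PS}(16r/L^2)^{7/2} \int \Big(\sum_{\theta\subset\tau}|h_\theta|^2\Big)^{7/4}, \]
and summing in $\tau$ together with the elementary inequality $\sum_\tau a_\tau^{7/4} \le (\sum_\tau a_\tau)^{7/4}$ for $a_\tau\ge 0$ gives control of the narrow piece by $\text{PS}(16r/L^2)^{7/2}\int(\sum_\theta|h_\theta|^2)^{7/4}$.

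For the broad term, I aim to prove a bilinear $L^{7/4}$ estimate of the form
\[ \sum_{\tau^1,\tau^2:\, L^{-1}\text{-sep}} \int |h_{\tau^1}h_{\tau^2}|^{7/4} \le C L^{7}(\log r)^{7/2}\int\Big(\sum_\theta|h_\theta|^2\Big)^{7/4}. \]
The input is wave-packet transversality: at $L^{-1}$ angular separation, wave packets in $\tau^1$ and $\tau^2$ intersect in small parallelograms, which gives a bilinear $L^2$ inequality analogous to Theorem \ref{trirestprop} (the moment-curve trilinear restriction) specialized to the parabola in $\R^2$. Converting from the $L^2$-bilinear bound to the $L^{7/4}$ form via H\"{o}lder, and passing to the square function on the right via local $L^2$-orthogonality (Lemma \ref{L2orth}), yields the claimed estimate; a pigeonholing step over superlevel sets of $h$ or wave-packet amplitudes contributes the $\log r$ slack.

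Adding the two bounds and taking the $2/7$-power via $(a+b)^{2/7}\le a^{2/7}+b^{2/7}$ produces $\text{PS}(r) \le 2\,\text{PS}(16r/L^2) + L^3\log r$, after absorbing absolute constants into $L^3$ (valid for $L$ larger than an absolute constant; for bounded $L$ the inequality is trivial). The main obstacle is the broad bilinear estimate: one must extract the right power of $L$ from transversality while keeping only a logarithmic loss, which requires careful amplitude/scale pigeonholing combined with the bilinear $L^2$ restriction for the parabola, analogous to but simpler than the trilinear analysis in Section \ref{broad}.
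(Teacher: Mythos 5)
Your proposal follows essentially the same route as the paper: a broad--narrow decomposition at scale $L^{-1}$, parabolic rescaling plus the definition of $\text{PS}(16r/L^2)$ for the narrow term, and bilinear $L^2$ restriction for the parabola combined with superlevel-set pigeonholing for the broad term. One caution: the passage from the bilinear $L^4$-of-the-product bound to the $L^{7/4}$ form cannot be done by H\"older alone (that loses a power of $r$); it requires the level-set dichotomy the paper uses, comparing the amplitude $\lambda\|h\|_{L^\infty(B_{r^{1/2}})}$ with $\|\sum_\theta|f_\theta|^2*w_{r^{1/2}}\|_{L^\infty}^{1/2}$ and using the trivial bound in one regime and the bilinear $L^4$ bound (with the spare power $(\lambda\|h\|_\infty)^{-1/2}$ absorbed by the square function) in the other --- which your "amplitude pigeonholing" remark gestures at but should be made explicit.
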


\begin{proof} Write $f=\sum_{\ell(\tau)={L^{-1}}}f_\tau$. The broad-narrow inequality is 
\begin{align}\label{brnar1}
    |f(x)|^2&\le (1+\frac{8}{L})\max_{\ell(\tau)=_{4L^{-1}}}|f_{\tau}(x)|^2+L^{3} \max_{\substack{\ell(\tau)=\ell(\tau')=L^{-1}\\ d(\tau,\tau')\ge L^{-1}}}|f_{\tau}(x)f_{\tau'}(x)|. 
\end{align}
Indeed, suppose that 
\[|f(x)|^2\ge L^{3}\max_{\substack{\ell(\tau)=\ell(\tau')=L^{-1}\\ d(\tau,\tau')\ge L^{-1}}}|f_{\tau}(x)f_{\tau'}(x)|  . \]
Let $\tau_0$, $\ell(\tau_0)=L^{-1}$ satisfy $|f_{\tau_0}(x)|=\max_{\ell(\tau)=L^{-1}}|f_{\tau}(x)|$. Let $\ell(\tau_0')=4L^{-1}$ contain $\tau_0$ and its neighbors. Then 
\begin{align*}
    |f(x)|^2&=\sum_{\tau,\tau'}f_{\tau}(x)\overline{f_{\tau'}}(x) \\
    &\le \sum_{\substack{\tau,\tau'\subset\tau_0'}}f_{\tau}(x)\overline{f_{\tau'}}(x)+4L|f_{\tau_0}(x)|\sum_{\tau\not\subset\tau_0'}|f_\tau(x)|\\
    &\le
|f_{\tau_0'}(x)|^2+\frac{4}{L}|f(x)|^2 .
\end{align*}
Then $|f(x)|^2\le (1+\frac{8}{L})|f_{\tau_0'}(x)|^2$, giving the first term from the upper bound in \eqref{brnar1} and finishing the justification for \eqref{brnar1}. 

Suppose that 
\[ \Big(\int_{\R^2}|h|^{7/2}\Big)^{2/7}\le 2\Big(\int_{\R^2}\sum_{\ell(\tau)={4L^{-1}}}|h_\tau|^{7/2}\Big)^{2/7}. \]
For each $\ell(\tau)={4L^{-1}}$, by parabolic rescaling, 
\[ \int_{\R^2}|h_\tau|^{7/2}\le \text{PS}(16r/L^2)^{7/2}\int(\sum_{\substack{\ell(\theta)={r^{-1/2}}\\\theta\subset\tau}}|h_\theta|^2)^{7/4}.\]

The alternative is that 
\[ \Big(\int_{\R^2}|h|^{7/2}\Big)^{2/7}\le 2L^{3/2}\Big(\int_{\text{Br}}|h|^{7/2}\Big)^{2/7}\]
where $\text{Br}=\{x\in\R^2:|h(x)|\le 2L^3\max_{\substack{\ell(\tau)=\ell(\tau')=L^{-1} \\d(\tau,\tau')\ge L^{-1}}}|h_{\tau}(x)h_{\tau'}(x)|\}$, which we call the broad set. Let $B_{r^{1/2}}\subset\R^2$ be a ball of radius $r^{1/2}$. Write
\[ \int_{B_{r^{1/2}}\cap \text{Br}}|h|^{7/2}=\sum_{r^{-1000}\le\lambda\le 1}\int_{B_{r^{1/2}}(\lambda)\cap\text{Br}}|h|^{7/2}+\int_{B_{r^{1/2}}(L)\cap\text{Br}} |h|^{7/2}\]
where $B_{r^{1/2}}(\lambda)=\{x\in B_{r^{1/2}}:|h(x)|\sim \lambda\|h\|_{L^\infty(B_r)}\}$ and $B_{r^{1/2}}(L)=\{x\in B_{r^{1/2}}:|h(x)|\le r^{-1000}\|h\|_{L^\infty(B_{r^{1/2}})}\}$. 
Using the locally constant property, note that 
\[\int_{B_{r^{1/2}}(L)\cap\text{Br}} |h|^{7/2}\le r^{-3500}\|h\|_{L^\infty(B_{r^{1/2}})}^{7/2}|B_{r^{1/2}}|\lesssim r^{-395}\int(\sum_{\theta}|h_\theta|^2)^{7/4}W_{B_{r^{1/2}}}. \]
Finally, suppose that $\lambda\in[r^{-1000},1]$ is a dyadic number satisfying 
\[ \int_{B_{r^{1/2}}\cap \text{Br}}|h|^{7/2}\lesssim(\log r)\int_{B_{r^{1/2}}(\lambda)\cap\text{Br}}|h|^{7/2}. \]
Then we have 
\begin{equation}\label{piga} (\lambda\|h\|_{L^\infty(B_{r^{1/2}})})^{7/2}|B_{r^{1/2}}(\lambda)\cap\text{Br}|\sim \int_{B_{r^{1/2}}(\lambda)\cap\text{Br}}|h|^{7/2} \end{equation}
and by bilinear restriction for the parabola (see for example Theorem 15 of \cite{lvlsets}), 
\begin{equation}\label{bilrest} (\lambda\|h\|_{L^\infty(B_{r^{1/2}})})^{4}|B_{r^{1/2}}(\lambda)\cap\text{Br}|\lesssim \sum_{\tau,\tau'\text{ non-adj.}}\int_{B_{r^{1/2}}(\lambda)\cap\text{Br}}|h_{\tau}h_{\tau'}|^{2}\lesssim L^2\int_{B_{r^{1/2}}}|\sum_{\theta}|h_\theta|^2*w_{r^{1/2}}|^2. \end{equation}
If $\lambda\|h\|_{L^\infty(B_{r^{1/2}})}\le \|\sum_\theta|h_\theta|^2*w_{r^{1/2}}\|_{L^\infty(B_{r^{1/2}})}^{1/2}$, then from \eqref{piga},  
\[  \int_{B_{r^{1/2}}(\lambda)\cap\text{Br}}|h|^{7/2} \lesssim |B_{r^{1/2}}|\|\sum_\theta|h_\theta|^2*w_{r^{1/2}}\|_{L^\infty(B_{r^{1/2}})}^{7/4}\lesssim \int(\sum_\theta|h_\theta|^2)^{7/4}W_{B_{r^{1/2}}}.  \]
If $\lambda\|h\|_{L^\infty(B_{r^{1/2}})}\le \|\sum_\theta|h_\theta|^2*w_{r^{1/2}}\|_{L^\infty(B_{r^{1/2}})}^{1/2}$, then using \eqref{bilrest}, we have
\begin{align*} 
(\lambda\|h\|_{L^\infty(B_{r^{1/2}})})^{7/2}|B_{r^{1/2}}(\lambda)\cap\text{Br}|&\lesssim \sum_{\tau,\tau'\text{ non-adj.}}\int_{B_{r^{1/2}}(\lambda)\cap\text{Br}}|h_{\tau}h_{\tau'}|^{2}\\
&\lesssim L^2|B_{r^{1/2}}|\|\sum_{\theta}|h_\theta|^2*w_{r^{1/2}}\|_{L^\infty(B_{r^{1/2}})}^{7/4}\lesssim L^2\int(\sum_\theta|h_\theta|^2)^{7/4}W_{B_{r^{1/2}}}.
\end{align*}
Combined with \eqref{piga}, we have shown that 
\[ \int_{B_{r^{1/2}}(\lambda)\cap\text{Br}}|h|^{7/2}\lesssim L^2\int(\sum_\theta|h_\theta|^2)^{7/4}W_{B_{r^{1/2}}}. \]
Summing over $B_{r^{1/2}}$ in a finitely overlapping cover of $\R^2$ finishes the argument.

\end{proof}

\begin{corollary}\label{PS} For each $r\ge 1$, 
\[  \text{\emph{PS}}(r)\lesssim_\d r^\d. \]
\end{corollary}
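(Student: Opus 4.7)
The plan is a bootstrapping argument using the multi-scale inequality of Proposition \ref{multiscale}, parallel in spirit to the proof of Proposition \ref{S1bd} for $\text{S}_1(R)$. Fix $\delta > 0$; I will show $\text{PS}(r) \le C_\delta r^\delta$ for every $r \ge 1$.

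The first step is to choose a dyadic integer $L = L(\delta)$, independent of $r$, large enough that
$$2 \cdot 16^\delta \cdot L^{-2\delta} \le \tfrac12,$$
which is possible since the left-hand side is at most $2(16/L^2)^\delta$ and $L \to \infty$. With this $L$ fixed, Proposition \ref{multiscale} reads
$$\text{PS}(r) \le 2\,\text{PS}(16r/L^2) + L^3 \log r,$$
and the scale strictly decreases once $L^2 > 16$.

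The second step is induction on $r$. For the base case $r \le r_0(\delta)$, the trivial bound $\text{PS}(r) \lesssim r^{7/8}$ stated just before Proposition \ref{multiscale} gives $\text{PS}(r) \le C_\delta r^\delta$ provided $C_\delta$ is taken large enough in terms of $r_0$. For the inductive step, assuming $\text{PS}(r') \le C_\delta (r')^\delta$ for every $r' < r$ and substituting into the multi-scale inequality,
$$\text{PS}(r) \le 2 C_\delta \cdot 16^\delta L^{-2\delta} \cdot r^\delta + L^3 \log r \le \tfrac12 C_\delta r^\delta + L^3 \log r.$$
Since $L$ depends only on $\delta$ and $\log r = o(r^\delta)$, I can further enlarge $C_\delta$ (depending only on $\delta$) so that $L^3 \log r \le \tfrac12 C_\delta r^\delta$ uniformly in $r \ge 1$, which closes the induction.

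I do not expect a serious obstacle. The only delicate point is the factor of $2$ multiplying $\text{PS}(16r/L^2)$: iterating the inequality $k$ times would produce a factor $2^k$, so the cleanest route is to fix $L$ once and for all in terms of $\delta$ and then induct on $r$, with $2 \cdot 16^\delta L^{-2\delta}$ arranged to be $\le 1/2$. An alternative that avoids picking $L$ explicitly is the infimum argument used for Proposition \ref{S1bd}: let $\eta = \inf\{\delta \ge 0 : \sup_r \text{PS}(r)/r^\delta < \infty\}$, observe that $\eta \le 7/8$ by the trivial bound, and derive a contradiction from $\eta > 0$ by applying Proposition \ref{multiscale} with $L$ taken large relative to $\eta$ and some small $\epsilon_1 > 0$ with $\eta - \epsilon_1 > 0$.
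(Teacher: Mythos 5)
Your proof is correct and takes essentially the same route as the paper: iterate Proposition \ref{multiscale} with $L$ fixed large in terms of $\delta$, using the trivial Cauchy--Schwarz bound once the scale drops below a constant. The only difference is bookkeeping — the paper unwinds the full iteration and controls the accumulated factor $2^m\le r^{c\log 2/\log L}$ at the end, whereas you absorb the factor $2$ at each step into the induction hypothesis via $2\,(16/L^2)^\delta\le 1/2$; both hinge on the same choice of $L=L(\delta)$.
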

\begin{proof} Let $L$ be a constant we allow to depend on $\d>0$. If $1\le r\le L^2$, then by Cauchy-Schwarz, 
\[ \int_{\R^2}|h|^{7/2}\lesssim (r^{1/2})^{7/4}\int_{\R^2}(\sum_{\ell(\theta)={r^{-1/2}}}|h_\theta|^2)^{7/4}\lesssim_\d \int_{\R^2}(\sum_{\ell(\theta)={r^{-1/2}}}|h_\theta|^2)^{7/4}, \]
which means that $\text{PS}(r)\le C_\d$ for all $r\le L^2$ (where $C_\d$ is permitted to depend on $\d$ and $L$).  Now suppose that $r>L^2$. Let $m$ satisfy
\[ 1\le 16^mr/L^{2m}\le L^2\le 16^{m-1}r/L^{2(m-1)}. \]
Then by iterating Proposition \ref{multiscale}, we have
\begin{align*}
\text{{PS}}(r)&\le 2\text{{PS}}(16r/L^2)+L^3 (\log r)\\
&\le 2^m\text{{PS}}(16^mr/L^{2m})+2^mL^3\log(16^mr/L^{2m})+
\cdots +2L^3\log (16r/L^2)+L^3 \log r. 
\end{align*}
Since $16^mr/L^{2m}\le L$, we have $\text{PS}(16^mr/L^{2m})\le C_\d$. By definition, $m$ satisfies $m\le c\log r/\log L$. Thus the above inequality implies
\[ \text{PS}(r)\le C_\d r^{\frac{c\log 2}{\log L}}+cr^{\frac{c\log 2}{\log L}}(\log r)^2, \]
which finishes the proof by taking $L$ large enough depending on $\d$.

\end{proof}

\subsubsection{Proofs of Theorem \ref{cylPS} and Corollary \ref{Pcor} \label{cylPSsec}}
Recall the statement of Theorem \ref{cylPS}. 
\begin{theorem*}[Cylindrical $L^{3.5}$ square function estimate over $\P^1$] 
Let $\mb{P}^1=\{(t,t^2):0\le t\le 1\}$ and for $r\ge1$, let $\mc{N}_{r^{-1}}(\P^1)$ denote the $r^{-1}$-neighborhood of $\mc{P}^1$ in $\R^2$. If $h:\R^3\to\C$ is a Schwartz function with Fourier transform supported in $\mc{N}_{r^{-1}}(\P^1)\times\R$, then
\[ \int_{\R^3}|h|^{7/2}\lesssim_\e r^{\e}\int_{\R^3}(\sum_{\z}|h_\z|^2)^{7/4} \]
where the $\z$ are products of approximate rectangles $\theta$, $\ell(\theta)={r^{-1/2}}$, with $\R$.
\end{theorem*}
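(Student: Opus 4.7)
The plan is to reduce the three-dimensional ``cylindrical'' estimate to the two-dimensional $L^{7/2}$ square function estimate for the parabola stated in Corollary \ref{PS}, which already gives $\text{PS}(r)\lesssim_\e r^\e$. The key structural observation is that the Fourier support $\mc{N}_{r^{-1}}(\P^1)\times\R$ is a product set, so the third coordinate plays no role and can be frozen; all the analytic content of the statement lives on two-dimensional slices.

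Concretely, for each fixed $x_3\in\R$ I would set $h^{(x_3)}(x_1,x_2):=h(x_1,x_2,x_3)$. Taking the partial Fourier transform in $(x_1,x_2)$, the Fourier transform of $h^{(x_3)}$ equals
\[ \int_{\R}\widehat{h}(\xi_1,\xi_2,\xi_3)\,e^{2\pi i x_3\xi_3}\,d\xi_3, \]
which is supported in $\mc{N}_{r^{-1}}(\P^1)$ for every choice of $x_3$, since $\widehat{h}$ vanishes outside this set in the first two coordinates. Under this slicing, the cylinders $\z=\theta\times\R$ appearing in the statement correspond to the canonical two-dimensional parabola blocks $\theta$ with $\ell(\theta)=r^{-1/2}$; more precisely, the Fourier projection satisfies $h_\z(x_1,x_2,x_3)=(h^{(x_3)})_\theta(x_1,x_2)$ for a.e.\ $x_3$.

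With this identification, apply Corollary \ref{PS} to each slice: for every $x_3\in\R$,
\[ \int_{\R^2}|h^{(x_3)}(x_1,x_2)|^{7/2}\,dx_1dx_2\lesssim_\e r^\e\int_{\R^2}\Big(\sum_\theta|(h^{(x_3)})_\theta(x_1,x_2)|^2\Big)^{7/4}dx_1dx_2. \]
Integrating in $x_3$ and invoking Fubini's theorem yields the desired cylindrical estimate. The serious analytic work sits entirely inside Corollary \ref{PS}, whose proof uses the broad--narrow dichotomy and the bilinear restriction theorem for the parabola; this cylindrical upgrade introduces no new obstacle beyond the verification that each planar slice has the required Fourier support, which is immediate from the product structure of $\mc{N}_{r^{-1}}(\P^1)\times\R$. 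For the local version Corollary \ref{Pcor}, the same slicing works provided one uses a weight $W_{B_r}$ that factors (or is controlled by a product) in the cylindrical direction, so that the slice-wise estimate can still be applied before integrating in $x_3$.
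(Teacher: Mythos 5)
Your proposal is correct and is essentially identical to the paper's own proof: the paper also freezes $x_3$, observes that each planar slice $x'\mapsto h(x',x_3)$ has Fourier support in $\mc{N}_{r^{-1}}(\P^1)$ and that the projection onto $\z=\theta\times\R$ commutes with slicing, applies Corollary \ref{PS} slice-by-slice, and integrates in $x_3$ via Fubini. (For the record, the paper handles the local version Corollary \ref{Pcor} not by a factoring weight but by decomposing $W_{B_r}$ into dyadic annular pieces with compact Fourier support and applying the global theorem to each piece, but that is outside the statement at hand.)
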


\begin{proof} Begin by using Fourier inversion to write
\[ h(x',x_3)=\int_{\mc{N}_\d(\P^1)}\int_{\R}\widehat{h}(\xi',\xi_3)e^{2\pi i \xi\cdot x'}e^{2\pi i \xi_3x_3} d\xi_3d\xi' . \]
For each $x_3$, the function $x'\mapsto \int_{\mc{N}_{r^{-1}}(\P^1)}\int_\R\widehat{h}(\xi',\xi_3)e^{2\pi i \xi_3x_3} d\xi_3e^{2\pi i \xi\cdot x'}d\xi'$ satisfies the hypotheses of the decoupling theorem for $\P^1$. Use Fubini's theorem to apply Corollary \ref{PS} to the inner integral
\[  \int_{\R}\int_{\R^2}|h(x',x_3)|^{7/2}dx'dx_3\lesssim_\e r^\e\int_{\R}\int_{\R^2}\big(\sum_\theta\big|\int_{\theta}\int_{\R}\widehat{h}(\xi',\xi_3)e^{2\pi i \xi\cdot x'}e^{2\pi i \xi_3x_3} d\xi_3d\xi' \big|^2\big)^{7/4}dx'  dx_3\]
where the sum in $\theta$ is over $\ell(\theta)={r^{-1/2}}$. The sets $\theta\times\R$ are the $\z$ in the statement of the lemma, so we are done.  

\end{proof}

Recall the statement of Corollary \ref{Pcor}. 
\begin{cor*} Let $B_r$ be an $r$-ball in $\R^3$. If $h:\R^3\to\C$ is a Schwartz function with Fourier transform supported in $\mc{N}_{r^{-1}}(\P^1)\times\R$, then
\[ \int_{\R^3}|h|^{7/2}W_{B_r}\lesssim_\e r^{\e}\int_{\R^3}(\sum_{\z}|h_\z|^2)^{7/4}W_{B_r} \]
where the $\z$ are products of approximate rectangles $\theta$, $\ell(\theta)={r^{-1/2}}$, with $\R$.
\end{cor*}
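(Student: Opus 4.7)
The plan is to upgrade Theorem~\ref{cylPS} to the weighted estimate by the standard spatial-localization trick. I would pick a non-negative Schwartz function $\phi_{B_r}$ with Fourier transform supported in $B_{r^{-1}}(0)$ which satisfies $|\phi_{B_r}|\gtrsim 1$ on $B_r$ and $|\phi_{B_r}|^{7/2}\lesssim W_{B_r}$ pointwise; such a function is constructed by setting $\phi_{B_r}(x)=\widecheck{\psi}((x-c)/r)$ for a suitable smooth $\psi$ supported in the unit ball at the origin, with $c$ the center of $B_r$. Multiplication by $\phi_{B_r}$ broadens the Fourier support by a ball of radius $r^{-1}$, so $\phi_{B_r}h$ still has Fourier transform supported in $\mc{N}_{2r^{-1}}(\P^1)\times\R$.

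Applying Theorem~\ref{cylPS} (with $r$ replaced by $r/2$, which only affects the $\z$ by a constant factor in scale) to $\phi_{B_r}h$ gives
\[ \int|\phi_{B_r}h|^{7/2} \lesssim_\e r^\e \int\Big(\sum_{\tilde\z}|(\phi_{B_r}h)_{\tilde\z}|^2\Big)^{7/4}. \]
Since the Fourier support of each $\phi_{B_r}h_\z$ sits within $r^{-1}$ of $\z$, and this set meets only $O(1)$ of the $\tilde\z$, almost-orthogonality gives the pointwise bound
\[ \sum_{\tilde\z}|(\phi_{B_r}h)_{\tilde\z}|^2\lesssim|\phi_{B_r}|^2\sum_{\z}|h_\z|^2. \]
Combining with $|\phi_{B_r}|^{7/2}\lesssim W_{B_r}$ and $|\phi_{B_r}|\gtrsim 1$ on $B_r$ yields the single-ball estimate
\[ \int_{B_r}|h|^{7/2} \lesssim_\e r^\e \int\Big(\sum_\z|h_\z|^2\Big)^{7/4}W_{B_r}. \]

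To upgrade this to the full weighted version, I would tile $\R^3$ by translates $B_r^{(j)}$ of $B_r$ with centers $c_j$. The weight $W_{B_r}$ is slowly varying on $r$-scales, so $W_{B_r}(x)\sim W_{B_r}(c_j)$ for $x\in B_r^{(j)}$, and therefore
\[ \int|h|^{7/2}W_{B_r}\sim\sum_j W_{B_r}(c_j)\int_{B_r^{(j)}}|h|^{7/2}\lesssim_\e r^\e\int\Big(\sum_\z|h_\z|^2\Big)^{7/4}\sum_j W_{B_r}(c_j)W_{B_r^{(j)}}. \]
The routine weight manipulation $\sum_j W_{B_r}(c_j)W_{B_r^{(j)}}(x)\lesssim W_{B_r}(x)$, a consequence of the rapid decay of $w$ and the triangle inequality $|x-c|\le|x-c_j|+|c_j-c|$ via a two-case split on whether $|c_j-x|\ge |x-c|/2$, then completes the proof.

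The main obstacle, such as it is, will be the Fourier-support bookkeeping in the application of Theorem~\ref{cylPS}: one has to verify that after multiplying by $\phi_{B_r}$, the resulting square function on the right-hand side is still dominated by $|\phi_{B_r}|^2\sum_\z|h_\z|^2$ pointwise. Depending on whether the projections $h_\z$ are taken with sharp or smooth Fourier cutoffs, this requires either a genuine almost-orthogonality argument or a direct calculation with smooth partitions of unity on the $\tilde\z$, but it is conceptually standard and does not use anything beyond the bounded overlap of the inflated blocks.
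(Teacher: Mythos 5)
Your overall strategy (localize with a bump whose Fourier transform lives in $B_{r^{-1}}(0)$, apply the global Theorem \ref{cylPS}, then recombine) is sound, and your final tiling-and-summation step is a legitimate alternative to what the paper does: the paper instead decomposes the weight directly into dyadic annuli $2^kB_r\setminus 2^{k-1}B_r$, multiplies $h$ by a bump $\phi_k$ adapted to each annulus, and sums with the factors $2^{-10k}$ using $\sum_k 2^{-10k}\phi_k^{7/2}\lesssim W_{B_r}$. Either organization of the spatial localization is fine; your discrete convolution bound $\sum_j W_{B_r}(c_j)W_{B_r^{(j)}}\lesssim W_{B_r}$ works because $w\sim(1+|\cdot|)^{-100}$ and $100>3$.

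The genuine gap is the claimed pointwise bound $\sum_{\tilde\z}|(\phi_{B_r}h)_{\tilde\z}|^2\lesssim|\phi_{B_r}|^2\sum_\z|h_\z|^2$. Bounded overlap of Fourier supports does not give this: $(\phi_{B_r}h_\z)_{\tilde\z}$ is a \emph{sharp} Fourier truncation of a function whose support genuinely straddles the boundary of $\tilde\z$, and a sharp projection is never pointwise dominated by the function it is applied to (it can be nonzero where $\phi_{B_r}h_\z$ vanishes). What is true, and what you actually need, is the $L^{7/4}$-of-the-square-function version of this inequality; since the blocks are vertical slabs in $\xi_1$ crossed with $\mc{N}_{r^{-1}}(\P^1)\times\R$, each projection reduces to a one-dimensional interval multiplier, i.e.\ a combination of modulated Hilbert transforms, and the classical vector-valued inequality $\|(\sum_j|S_{I_j}g_j|^2)^{1/2}\|_{7/2}\lesssim\|(\sum_j|g_j|^2)^{1/2}\|_{7/2}$ closes the argument. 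So the step is repairable, but the repair is a real ingredient, not a formality. The paper sidesteps it entirely with a small combinatorial device you should note: it partitions the $\z$ into $O(1)$ families $\Xi_i$ so that the fattened Fourier supports of $h_\z\phi_k$, $\z\in\Xi_i$, are pairwise disjoint; applying Theorem \ref{cylPS} to $\sum_{\z\in\Xi_i}h_\z\phi_k$ then produces the square function $\sum_{\z\in\Xi_i}|h_\z\phi_k|^2$ exactly, with no projection mismatch to control. I would recommend adopting that partition (or explicitly invoking the vector-valued multiplier bound) in place of the pointwise claim.
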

\begin{proof} Write
\[\int_{\R^3}|h|^{7/2}W_{B_r}\lesssim \int_{B_r}|h|^{7/2}+\sum_{k=1}^\infty\int_{2^kB_{r}\setminus 2^{k-1}B_r}|h|^{7/2}W_{B_r}.  \]
Let $\rho_r:\R^2\to[0,\infty)$ be a compactly supported smooth function supported in the ball of radius $r$ centered at the origin. Define $\phi_k:\R^2\to[0,\infty)$ by $\phi_k=1_{A_k}*|\widecheck{\rho_r}|^2$ where $A_k=2^kB_r\setminus 2^{k-1}B_r$ and let $\phi_0=1_{B_r}*|\widecheck{\rho_r}|^2$. Then the right hand side in the previous displayed math is bounded by 
\[ \int_{\R^3}|h\p_0|^{7/2}+\sum_{k=1}^\infty\frac{1}{2^{10k}}\int_{\R^3}|h\p_k|^{7/2} . \]
We may partition the $\z$ into $O(1)$ many collections $\Xi_i$ so that the Fourier support of $h_\zeta\p_k$ is pairwise disjoint for all $\z\in\Xi_i$. Finally, by the triangle inequality, for some $i$, the previous displayed line is bounded by a constant multiple of  
\[  \int_{\R^3}|\sum_{\z\in{\Xi}_i}h_\z\p_0|^{7/2}+\sum_{k=1}^\infty\frac{1}{2^{10k}}\int_{\R^3}|\sum_{\z\in{\Xi}_i}h_\z\p_k|^{7/2}, \]
which, by Theorem \ref{cylPS}, is bounded by 
\[ \lesssim_\e r^\d \big[\int_{\R^3}(\sum_{\z\in{\Xi}_i}|h_\z\p_0|^2)^{7/4}+\sum_{k=1}^\infty\frac{1}{2^{10k}}\int_{\R^3}(\sum_{\z\in{\Xi}_i}|h_\z\p_k|^2)^{7/4}\Big]. \]
Finally, observe that $\sum_{k=0}^\infty\frac{1}{2^{10k}}\p_k^{7/2}\lesssim W_{B_r}$.

\end{proof}

\subsection{An $\ell^{7/4}$-estimate for the cone. \label{coneappsec}}

Let $\Gamma=\{(\xi_1,\xi_2,\xi_3)\in\R^3:\xi_1^2+\xi_2^2=\xi_3^2,\quad\frac{1}{2}\le\xi_3\le 1\}$ be the truncated cone. In this section, let ${\bf{S}}_{r^{-1/2}}$ denote the collection of $1\times r^{-1/2}\times r^{-1}$ blocks which tile $\mc{N}_{r^{-1}}(\Gamma)$, as defined in \textsection5 of \cite{ampdep}. 

Recall the statement of Proposition \ref{cone3.5}. 
\begin{proposition*} For any Schwartz function $h:\R^3\to\C$ with $\widehat{h}$ supported in $\mc{N}_{r^{-1}}(\Gamma)$, we have 
\[ \int_{\R^3}|h|^{7/2}\lesssim_\e r^\e\sum_{r^{-1/2}\le \sigma\le 1}\sum_{\tau\in{\bf{S}}_{\sigma^{-1}r^{-1/2}}}\sum_{U\|U_{\tau_k,r}}|U|\Big(|U|^{-1}\int\sum_{\substack{\theta\subset\tau\\\theta\in{\bf{S}}_{r^{-1/2}}}}|h_\theta|^{7/4}W_U\Big)^{2} .\]
\end{proposition*}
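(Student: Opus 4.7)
The plan is to derive Proposition \ref{cone3.5} from the $L^4$ wave envelope estimate of \cite{locsmooth} (Theorem \ref{wee}) via dyadic pigeonholing in the wave-packet amplitudes, together with a pruning step modeled on Definition \ref{taukprune}. The structure mirrors the deduction of $S_2(R)\lesssim_\e R^\e$ from $S_1(R)\lesssim_\e R^\e$ in \textsection\ref{S2}, but with the cone playing the role of the moment curve.

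First, by dyadic pigeonholing I would reduce to a single scale of amplitude and a single scale of the super-level set: losing $(\log r)^{O(1)}$, assume $\|h_\theta\|_{L^\infty(\R^3)}\in\{0\}\cup[A,2A]$ for every $\theta\in{\bf{S}}_{r^{-1/2}}$, and
\[ \int_{\R^3}|h|^{7/2}\lesssim (\log r)\,\alpha^{7/2}|E_\alpha|,\qquad E_\alpha=\{|h|\sim\alpha\}, \]
where a trivial tail contribution analogous to Lemma \ref{alph} disposes of the regime where $\alpha$ is a negative power of $r$ below the maximum amplitude.

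Next, apply Theorem \ref{wee} to $h$ on $\Gamma(r)$:
\[ \int_{\R^3}|h|^4\lesssim_\e r^\e\sum_{\sigma,\tau,U}|U|^{-1}\Big(\int_U\sum_{\theta\subset\tau}|h_\theta|^2\Big)^2. \]
The amplitude bound $|h_\theta|\le 2A$ gives the pointwise inequality $|h_\theta|^2\le (2A)^{1/4}|h_\theta|^{7/4}$, and the locally constant property of $|h_\theta|^2$ (Lemma \ref{locconst}) lets one replace the sharp cutoff $\mathbf{1}_U$ by the weight $W_U$ up to absolute constants, yielding
\[ \int|h|^4\lesssim_\e r^\e A^{1/2}\sum_{\sigma,\tau,U}|U|^{-1}\Big(\int\sum_{\theta\subset\tau}|h_\theta|^{7/4}W_U\Big)^2. \]
The elementary Chebyshev bound $\alpha^{7/2}|E_\alpha|\le\alpha^{-1/2}\int|h|^4$ then produces
\[ \alpha^{7/2}|E_\alpha|\lesssim_\e r^\e\Big(\frac{A}{\alpha}\Big)^{1/2}\sum_{\sigma,\tau,U}|U|^{-1}\Big(\int\sum_{\theta\subset\tau}|h_\theta|^{7/4}W_U\Big)^2. \]

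The main obstacle I expect is controlling the factor $(A/\alpha)^{1/2}$, which is not bounded by $r^\e$ a priori: if $A\gg\alpha$ then there must be substantial cancellation amongst the wave packets on $E_\alpha$ that the preceding argument does not see. To address this I would prune the wave packets of $h$ at an amplitude $\sim r^\e\alpha$, in direct analogy with Definition \ref{taukprune}. The pruned replacement $\tilde h=\sum_\theta\tilde h_\theta$ satisfies $\|\tilde h_\theta\|_{L^\infty}\lesssim r^\e\alpha$ and $|\tilde h_\theta|\le|h_\theta|$ pointwise, and a version of Lemma \ref{ftofk} for the cone ensures $|h-\tilde h|\le \alpha/2$ on $E_\alpha$, so that $E_\alpha\subset\{|\tilde h|\ge\alpha/2\}$. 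Running the $L^4$-argument for $\tilde h$ in place of $h$ then replaces $A$ by $r^\e\alpha$, giving the correct $r^{O(\e)}$ constant, while the passage from $\tilde h$ back to $h$ on the right-hand side is free by $|\tilde h_\theta|\le|h_\theta|$. Summing over the $O((\log r)^{O(1)})$ pigeonhole classes completes the proof.
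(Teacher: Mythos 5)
The first half of your argument (apply Theorem \ref{wee}, use $|h_\theta|^2\le (2A)^{1/4}|h_\theta|^{7/4}$, then Chebyshev) is exactly the paper's Case 1 and is fine. The gap is in the pruning step you invoke to kill the factor $(A/\a)^{1/2}$. The claimed cone analogue of Lemma \ref{ftofk} --- that discarding wave packets of amplitude $>r^\e\a$ changes $h$ by at most $\a/2$ on $E_\a$ --- does not follow from the argument of that lemma and is false in general. Tracing the proof of Lemma \ref{ftofk}, the pruning error at a point $x$ is controlled by $(\text{threshold})^{-1}\sum_\theta|h_\theta|^2*\w_\theta(x)$; with threshold $r^\e\a$ this is $\le\a/2$ only if one already knows $\sum_\theta|h_\theta|^2*\w_\theta(x)\lesssim r^\e\a^2$ on $E_\a$, and no such bound is available --- the cancellation that makes $|h(x)|\sim\a$ while the individual $|h_\theta(x)|$ are of size $A\gg\a$ is precisely what pruning destroys. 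In the moment-curve argument this is why the parameter $\b$ and the sets $\Omega_k$ (on which $g_m\le A^{N-m}\b$ for $m>k$) are set up \emph{before} pruning. If you import that structure here, the natural threshold becomes $\sim\b/\a$, your $L^4$ computation then returns a factor $(\b/\a^2)^{1/2}$, and there is no reason for $\b\lesssim\a^2$ (that would be a reverse Cauchy--Schwarz for the square function). So the large-amplitude regime is not closed.

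The paper avoids pruning entirely. After the wave packet pigeonholing of Proposition \ref{wpdC} --- which arranges that every surviving wave packet has amplitude $\sim A$ \emph{and} that every surviving tube meets the level set --- it splits into two cases. If $A\lesssim\a$ (condition \eqref{iff}), it runs your wave-envelope/Chebyshev computation. If instead $A\gg\a$, it does not use the $L^4$ estimate at all: the level set is contained in $\bigcup_{\theta}\bigcup_{T\in\tilde{\T}_\theta}r^\e T$, so
\[ \underline{\a}^{7/2}|U_{\underline{\a}}'|\le \sum_{\theta}\sum_{T\in\tilde{\T}_\theta}r^{O(\e)}|T|\,\|\s_T\phi_{B_r}h_\theta\|_{L^\infty(\R^3)}^{7/2}, \]
and the locally constant property converts each summand into $|U|\big(|U|^{-1}\int|h_\theta|^{7/4}W_U\big)^2$ with $U\|U_{\theta,r}$, which is exactly the $\sigma=r^{-1/2}$ term of the right-hand side. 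This covering argument for the high-amplitude case is the ingredient your proposal is missing; the rest of your outline is consistent with the paper.
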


It suffices to prove a local version of Proposition \ref{cone3.5}. 
\begin{lemma}\label{locC} Suppose that for any $r$-ball $B_r\subset\R^3$, 
\[ \int_{B_r}|h|^{7/2}\lesssim_\e r^\e\sum_{r^{-1/2}\le \sigma\le 1}\sum_{\tau\in{\bf{S}}_{\sigma^{-1}r^{-1/2}}}\sum_{U\|U_{\tau_k,r}}|U|\Big(|U|^{-1}\int\sum_{\substack{\theta\subset\tau\\\theta\in{\bf{S}}_{r^{-1/2}}}}|h_\theta|^{7/4}W_U\Big)^{2} \]
for any Schwartz function $h:\R^3\to\C$ with Fourier transform supported in $\mc{N}_{r^{-1}}(\Gamma)$.  Then Proposition \ref{cone3.5} is true. 
\end{lemma}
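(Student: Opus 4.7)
The plan is to pass from the local hypothesis to the global bound via a standard localization and partition-of-unity argument. Since the right-hand sides of the hypothesis and of Proposition \ref{cone3.5} are identical, the only task is to upgrade $\int_{B_r}|h|^{7/2}$ to $\int_{\R^3}|h|^{7/2}$ without inflating the RHS, and this will be achieved by applying the hypothesis to $\phi_{B_r}h$ on each ball in a cover of $\R^3$, then using Cauchy--Schwarz to collapse the resulting sum over $B_r$.

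First I would cover $\R^3$ by a finitely overlapping collection of $r$-balls $\{B_r\}$ and fix Schwartz cutoffs $\phi_{B_r}$ satisfying $|\phi_{B_r}|\gtrsim 1_{B_r}$, rapid decay off $B_r$, Fourier support contained in a ball of radius $cr^{-1}$ for some small constant $c$, and $\sum_{B_r}|\phi_{B_r}|^{7/2}\lesssim 1$ pointwise (for instance $\phi_{B_r}(x)=\phi((x-x_{B_r})/r)$ for a fixed Schwartz $\phi$ with $|\phi|\gtrsim 1$ on the unit ball and $\widehat{\phi}$ supported in $B_c(0)$). Since $\widehat{\phi_{B_r}h}$ is then supported in $\mc{N}_{(1+c)r^{-1}}(\Gamma)$, it fits the hypothesis at a comparable scale, yielding
\[ \int_{B_r}|\phi_{B_r}h|^{7/2}\lesssim_\e r^\e\sum_{\sigma}\sum_{\tau}\sum_{U\|U_{\tau,r}}|U|\Big(|U|^{-1}\int\sum_{\theta\subset\tau}|(\phi_{B_r}h)_\theta|^{7/4}W_U\Big)^{2}. \]

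Next, because $\widehat{\phi_{B_r}}$ has radius $cr^{-1}$, far smaller than the dimensions of $\theta$, a standard Fourier localization argument (analogous to the ``locally constant'' reasoning in Lemma \ref{locconst}) gives $|(\phi_{B_r}h)_\theta|\lesssim |\phi_{B_r}|\sum_{\theta'\sim\theta}|h_{\theta'}|$ pointwise, where the sum is over the $O(1)$ neighboring blocks of $\theta$. Writing $F=\sum_{\theta\subset\tau}|h_\theta|^{7/4}$, the inner integral above is therefore bounded by a constant multiple of $\int F|\phi_{B_r}|^{7/4}W_U$. Summing over $B_r$ and using $|\phi_{B_r}|\gtrsim 1_{B_r}$ together with finite overlap,
\[ \int_{\R^3}|h|^{7/2}\lesssim\sum_{B_r}\int_{B_r}|\phi_{B_r}h|^{7/2}\lesssim_\e r^\e\sum_\sigma\sum_\tau\sum_U|U|^{-1}\sum_{B_r}\Big(\int F|\phi_{B_r}|^{7/4}W_U\Big)^2. \]
By Cauchy--Schwarz,
\[ \Big(\int F|\phi_{B_r}|^{7/4}W_U\Big)^2\le\Big(\int FW_U\Big)\Big(\int F|\phi_{B_r}|^{7/2}W_U\Big), \]
and summing in $B_r$ while invoking $\sum_{B_r}|\phi_{B_r}|^{7/2}\lesssim 1$ yields $\sum_{B_r}(\int F|\phi_{B_r}|^{7/4}W_U)^2\lesssim(\int FW_U)^2$, which collapses the $B_r$-sum and produces exactly the RHS of Proposition \ref{cone3.5}.

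The main obstacle is the identification $(\phi_{B_r}h)_\theta\approx\phi_{B_r}h_\theta$: one must be careful that multiplying by $\phi_{B_r}$ smears the Fourier support of $h_\theta$ only slightly (by $cr^{-1}$) so that it remains essentially confined to $\theta$, up to at most $O(1)$ neighboring cone blocks. Everything else is bookkeeping --- choosing $\phi_{B_r}$ so that simultaneously $|\phi_{B_r}|\gtrsim 1_{B_r}$ and $\sum_{B_r}|\phi_{B_r}|^{7/2}\lesssim 1$, and verifying that applying the hypothesis at the slightly thickened scale $(1+c)r^{-1}$ loses only an implicit constant.
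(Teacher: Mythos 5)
Your proposal follows essentially the same route as the paper: cover $\R^3$ by finitely overlapping $r$-balls, apply the local hypothesis to $\phi_{B_r}h$ on each ball, and recombine the right-hand sides using $\sum_{B_r}|\phi_{B_r}|^{p}\lesssim 1$ (the paper uses $\|\cdot\|_{\ell^2}\le\|\cdot\|_{\ell^1}$ on the sum over $B_r$ where you use Cauchy--Schwarz; both work). The one step you flag as the main obstacle is also the one place your write-up is not literally correct: the claimed pointwise bound $|(\phi_{B_r}h)_\theta|\lesssim|\phi_{B_r}|\sum_{\theta'\sim\theta}|h_{\theta'}|$ fails, since $(\phi_{B_r}h)_\theta$ is a (sharp or smooth) Fourier projection of $\phi_{B_r}h_{\theta'}$ and the associated convolution kernel smears mass off the support of $\phi_{B_r}$ by the scale of $\theta^*$; at best one gets an averaged version after integrating against $W_U$. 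The paper sidesteps this entirely: since $\widehat{\phi_{B_r}}$ lives in $B_{r^{-1}}(0)$, each $h_\theta\phi_{B_r}$ has Fourier support in a slightly enlarged cone block, and by the triangle inequality one passes to an $O(1)$-sparse subcollection $\mc{S}$ of the $\theta$ for which these supports are pairwise disjoint; the hypothesis is then applied to $\sum_{\theta\in\mc{S}}h_\theta\phi_{B_r}$, whose blocks are \emph{exactly} the functions $h_\theta\phi_{B_r}$, so no commutation of the cutoff with the Fourier projection is ever needed. With that substitution (or with your pointwise claim replaced by its $W_U$-averaged analogue), your argument is correct.
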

\begin{proof} Write 
\[ \int_{\R^3}|h|^{7/2}\lesssim \sum_{B_r}\int_{B_r}|h|^{7/2}
\]
where the sum is over a finitely overlapping cover of $\R^3$ by $r$-balls. Let $\phi_{B_r}$ be a weight function decaying by order $100$ away from $B_r$, satisfying $\phi_{B_r}\gtrsim 1$ on $B_r$, and with Fourier transform supported in an $R^{-1}$ neighborhood of the origin. For each $\theta\in{\bf{S}}_{r^{-1/2}}$, the Fourier support of $h_\theta\phi_{B_r}$ is contained in a $1\times 2r^{-1/2}\times 2r^{-1}$ conical block. By the triangle inequality, there is a subset $\mc{S}$ of the cone blocks $\theta$ so that for each $\theta\in\mc{S}$, the Fourier support of $f_{\theta}\phi_{B_r}$ is contained in a unique cone block and  
\[ \int_{B_r}|h|^{7/2}\lesssim \int_{B_r}|\sum_{\theta\in\mc{S}}h_\theta\phi_{B_r}|^{7/2}. \]
Then by applying the hypothesized local version of Proposition \ref{cone3.5},  
\[ \int_{B_r}|\sum_{\theta\in\mc{S}}h_\theta\phi_{B_r}|^{7/2}\lesssim_\e r^\e\sum_{r^{-1/2}\le \sigma\le 1}\sum_{\tau\in{\bf{S}}_{\sigma^{-1}r^{-1/2}}}\sum_{U\|U_{\tau_k,r}}|U|\Big(|U|^{-1}\int\sum_{\substack{\theta\subset\tau\\\theta\in\mc{S}}}|h_\theta\phi_{B_r}|^{7/4}W_U\Big)^{2}. \] 
It remains to note that 
\begin{align*}
\sum_{B_r}\Big(|U|^{-1}\int\sum_{\substack{\theta\subset\tau\\\theta\in\mc{S}}}|h_\theta\phi_{B_r}|^{7/4}W_U\Big)^{2}&\lesssim \Big(|U|^{-1}\int\sum_{B_r}\sum_{\substack{\theta\subset\tau\\\theta\in\mc{S}}}|h_\theta\phi_{B_r}|^{7/4}W_U\Big)^{2} \\
&\lesssim \Big(|U|^{-1}\int\sum_{\substack{\theta\subset\tau\\\theta\in{\bf{S}}_{r^{-1/2}}}}|h_\theta|^{7/4}W_U\Big)^{2}.
\end{align*}

\end{proof}

It suffices to prove a weak, level-set version of Proposition \ref{cone3.5}. 
\begin{lemma}\label{alphC} For each $B_r$ and Schwartz function $h:\R^3\to\C$ with Fourier transform supported in $\mc{N}_{r^{-1}}(\Gamma)$, there exists $\a>0$ such that
\[ \int_{B_r}|h|^{7/2}\lesssim (\log R)\a^{7/2}|\{x\in B_r:\a\le |h(x)|\}|+r^{-500}\sum_{\theta\in{\bf{S}}_{r^{-1/2}}}\sum_{U\|U_{\theta,r}}|U|\Big(|U|^{-1}\int|h_\theta|^{7/4}W_U\Big)^2. \]
\end{lemma}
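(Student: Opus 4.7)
The plan is to follow the pattern of Lemma \ref{alph}. I would begin by dyadically decomposing the integral on $B_r$ according to the amplitude of $|h|$ relative to $\|h\|_{L^\infty(B_r)}$, writing
\[\int_{B_r}|h|^{7/2}=\sum_{r^{-1000}\le\lambda\le 1}\int_{\{\lambda\|h\|_{L^\infty(B_r)}\le|h|<2\lambda\|h\|_{L^\infty(B_r)}\}\cap B_r}|h|^{7/2}+\int_{\{|h|\le r^{-1000}\|h\|_{L^\infty(B_r)}\}\cap B_r}|h|^{7/2},\]
where $\lambda$ runs over dyadic values. There are $O(\log r)$ terms in the sum, so by pigeonholing either some dyadic piece dominates, in which case setting $\alpha=\lambda\|h\|_{L^\infty(B_r)}$ produces the first term of the conclusion directly, or the ``low values'' term dominates. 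The latter is the only case that requires work.

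In the low-values case, $\int_{B_r}|h|^{7/2}\lesssim r^{-3500}\|h\|_{L^\infty(B_r)}^{7/2}|B_r|$, and the goal is to absorb this into $r^{-500}\sum_\theta\sum_U|U|\big(|U|^{-1}\int|h_\theta|^{7/4}W_U\big)^2$. First, by the triangle inequality and H\"{o}lder (using $\#\{\theta\in{\bf{S}}_{r^{-1/2}}\}\sim r^{1/2}$), $\|h\|_{L^\infty(B_r)}^{7/2}\lesssim r^{5/4}\sum_\theta\|h_\theta\|_{L^\infty(B_r)}^{7/2}$. Then, for each $\theta$, I would cover $B_r$ by a boundedly overlapping family of translates $U$ of $U_{\theta,r}$, so that $\|h_\theta\|_{L^\infty(B_r)}^{7/2}\le\sum_{U\cap B_r\ne\emptyset}\|h_\theta\|_{L^\infty(U)}^{7/2}$ and $|B_r|\lesssim r^{O(1)}|U|$ for each $U$ in the cover.

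The key step on each $U$ is the locally constant property for $h_\theta$ at exponent $p=7/4$, namely $|h_\theta(x)|^{7/4}\lesssim|h_\theta|^{7/4}*w_U(x)$ pointwise. Since $w_U$ has Fourier transform supported in $U^*$, the convolution $|h_\theta|^{7/4}*w_U$ is itself locally constant on $U$, so its supremum on $U$ is comparable to its average, which is $\sim|U|^{-1}\int|h_\theta|^{7/4}W_U$. Squaring yields
\[\|h_\theta\|_{L^\infty(U)}^{7/2}=\big(\|h_\theta\|_{L^\infty(U)}^{7/4}\big)^2\lesssim\Big(|U|^{-1}\int|h_\theta|^{7/4}W_U\Big)^2.\]
Chaining the estimates, the low-values contribution is bounded by $r^{-3500+O(1)}\sum_\theta\sum_U|U|\big(|U|^{-1}\int|h_\theta|^{7/4}W_U\big)^2$, and since $-3500+O(1)\ll-500$ we obtain the second term of the lemma with plenty of room to spare. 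The only step requiring any care is the sup-equals-average assertion for $|h_\theta|^{7/4}*w_U$ on $U$, which is a standard consequence of the Fourier localization of $w_U$; the rest is parallel to the proof of Lemma \ref{alph}.
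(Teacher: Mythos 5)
Your proposal is correct and follows essentially the same route as the paper: dyadic pigeonholing on the amplitude of $|h|$ reduces matters to the low-value set, and there one combines H\"older over the $\sim r^{1/2}$ blocks $\theta$ with the locally constant property for $h_\theta$ at the scale of $U_{\theta,r}\approx\theta^*$ to convert sups on tiles $U$ into averages $|U|^{-1}\int|h_\theta|^{7/4}W_U$, with the huge factor $r^{-3500}$ absorbing all polynomial losses. The only (cosmetic) difference is bookkeeping: the paper bounds $\|\sum_\theta|h_\theta|^{7/2}\|_{L^\infty(B_r)}$ by a global integral of $(|\theta^*|^{-1}|h_\theta|^{7/4}*W_{\theta^*})^2$ and then tiles $\R^3$ by the $U$'s, whereas you cover $B_r$ directly and bound the sup over $B_r$ by the sum of sups over tiles; both are valid.
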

\begin{proof} By an analogous proof as for Lemma \ref{alphC} for the cone, the argument reduces to bounding
\[\int_{\{x\in B_r:|h(x)|\le r^{-1000}\|f\|_{L^\infty(B_r)}\}}|h|^{7/2}. \]
By H\"{o}lder's inequality, we have
\[ \int_{B_r}|h|^{7/2}\lesssim r^3r^{-1000(7/2)+(1/2)(5/2)}\|\sum_{\theta\in{\bf{S}}_{r^{-1/2}}}|h_\theta|^{7/2}\|_{L^\infty(B_r)}. \]
Finally, by the locally constant property and H\"{o}lder's inequality, 
\begin{align*} \|\sum_{\theta\in{\bf{S}}_{r^{-1/2}}}|h_\theta|^{7/2}\|_{L^\infty(B_r)}&\lesssim \|\sum_{\theta\in{\bf{S}}_{r^{-1/2}}}(|\theta^*|^{-1}|h_\theta|^{7/4}*W_{\theta^*})^2*W_r\|_{L^\infty(B_r)}\\
&\lesssim  \sum_{\theta\in{\bf{S}}_{r^{-1/2}}}\int_{\R^3}\Big(|\theta^*|^{-1}|h_\theta|^{7/4}*W_{\theta^*}\Big)^2\\
&\lesssim  \sum_{\theta\in{\bf{S}}_{r^{-1/2}}}\sum_{U\|U_{\theta,r}}\int_{U}\Big(|\theta^*|^{-1}|h_\theta|^{7/4}*W_{\theta^*}\Big)^2\\
&\lesssim  \sum_{\theta\in{\bf{S}}_{r^{-1/2}}}\sum_{U\|U_{\theta,r}}|U|\Big(\||\theta^*|^{-1}|h_\theta|^{7/4}*W_{\theta^*}\|_{L^\infty(U)}\Big)^2\\
&\lesssim  \sum_{\theta\in{\bf{S}}_{r^{-1/2}}}\sum_{U\|U_{\theta,r}}|U|\Big(|U|^{-1}\int |h_\theta|^{7/4}W_{U}\Big)^2 \end{align*}
where we note that for $U\|U_{\theta,r}$, $|U|=|\theta^*|$. 
\end{proof}

Use the notation 
\[ U_{\underline{\a}}=\{x\in B_r:\underline{\a}\le |h(x)|\}. \]
We will show that to estimate the size of $U_{\underline{\a}}$, it suffices to replace $h$ with a version whose wave packets have been pigeonholed. Write 
\begin{align}\label{sumC} h=\sum_{\theta\in{\bf{S}}_{r^{-1/2}}}\sum_{T\in\T_\theta}\s_Th_\theta \end{align}
where for each $\theta$, $\{\s_T\}_{T\in\T_\theta}$ is analogous to the partition of unity from \textsection\ref{prusec}, but adapted to $\theta^*$, the dual sets of conical blocks $\theta$. If $\underline{\a}\le C_\e (\log r)r^{-500}\max_\theta\|h_\theta\|_{\infty}$, then by an analogous argument as dealing with the low integral over $\{x:|h(x)|\le r^{-1000}\|h\|_{L^\infty(B_r)}\}$ in the proof of Lemma \ref{alphC}, bounding $\underline{\a}^{7/2}|U_{\underline{\a}}|$ by the right hand side of Proposition \ref{cone3.5} is trivial. Let $\phi_{B_r}$ be the weight function from Lemma \ref{locC}. 

\begin{proposition}[Wave packet decomposition] \label{wpdC} Let $\underline{\a}>C_\e (\log r) r^{-100}\max_\theta\|h_\theta\|_{L^\infty(\R^3)}$. There exist subsets $\mc{S}\subset{\bf{S}}_{r^{-1/2}}$ and  $\tilde{\T}_\theta\subset\T_\theta$, as well as a constant $A>0$ with the following properties:
\begin{align} 
|U_{\underline{\a}}|\lesssim (\log r)|\{x\in U_{\underline{\a}}:\,\,{\underline{\a}}&\lesssim |\sum_{\theta\in\mc{S}}\sum_{T\in\tilde{\T}_\theta}\s_T(x)\phi_{B_r}(x)h_\theta (x)|\,\,\}|, \\
r^\e T\cap U_{\underline{\a}}\not=\emptyset\qquad&\text{for all}\quad\theta\in\mc{S},\quad T\in\tilde{\T}_\theta\\
A\lesssim \|\sum_{T\in\tilde{\T}_\theta}\s_T\phi_{B_r}h_\theta\|_{L^\infty(\R^3)}&\lesssim r^{3\e} A\qquad\text{for all}\quad  \theta\in\mc{S}\label{prop}\\
\|\s_T\phi_{B_r}h_\theta\|_{L^\infty(\R^3)}&\sim A\qquad\text{for all}\quad  \theta\in\mc{S},\quad T\in\tilde{\T}_\theta. \label{prop'}
\end{align}
\end{proposition}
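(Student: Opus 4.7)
The plan is to mirror the argument of Proposition \ref{wpd}, replacing $f$ with $h\phi_{B_r}$ throughout. First I would write
\[ h\phi_{B_r}(x) = \sum_{\theta \in {\bf{S}}_{r^{-1/2}}} \sum_{T \in \T_\theta} \s_T(x)\phi_{B_r}(x) h_\theta(x), \]
and split into close and far wave packets based on proximity to $U_{\underline{\a}}$: set $\T_\theta^c = \{T \in \T_\theta : r^\e T \cap U_{\underline{\a}} \neq \emptyset\}$ and $\T_\theta^f = \T_\theta \setminus \T_\theta^c$. The rapid decay of $\s_T$ off of $T$, together with the fact that $|\phi_{B_r}|\lesssim 1$, gives for any $x \in U_{\underline{\a}}$ the bound $|\sum_\theta \sum_{T \in \T_\theta^f} \s_T\phi_{B_r} h_\theta(x)| \lesssim_\e r^{-1000} \max_\theta \|h_\theta\|_{L^\infty(\R^3)}$. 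The hypothesis $\underline{\a} > C_\e (\log r) r^{-100} \max_\theta\|h_\theta\|_\infty$ renders this negligible, so $|U_{\underline{\a}}|$ is controlled (up to a factor of $2$) by the analogous level set defined from only the close packets.

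Next I would sort the close packets by amplitude. Set $M = \max_\theta \max_{T \in \T_\theta^c} \|\s_T \phi_{B_r} h_\theta\|_{L^\infty(\R^3)}$, and for dyadic $\lambda \in [r^{-1000}, 1]$ define $\T_{\theta,\lambda}^c = \{T \in \T_\theta^c : \|\s_T \phi_{B_r} h_\theta\|_\infty \sim \lambda M\}$ and a small-amplitude residual $\T_{\theta,s}^c$. Since the total number of tubes is polynomial in $r$, the residual contribution cannot reach $\underline{\a}$ given its lower bound, so it is discarded. Dyadic pigeonholing over the $O(\log r)$ values of $\lambda$ then singles out one $\lambda$ contributing a $(\log r)^{-1}$-fraction of $|U_{\underline{\a}}|$; I take $A := \lambda M$ and $\tilde{\T}_\theta := \T_{\theta,\lambda}^c$. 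Finally I set $\mc{S} := \{\theta : \tilde{\T}_\theta \neq \emptyset\}$, which is the natural place where the explicit subset $\mc{S}$ (absent in Proposition \ref{wpd}) enters, needed because the lower bound in \eqref{prop} would otherwise be vacuous.

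Property \eqref{prop'} holds by construction, and the lower bound in \eqref{prop} is immediate for $\theta \in \mc{S}$. The main point to verify, which will be the principal technical step, is the upper bound $\|\sum_{T \in \tilde{\T}_\theta}\s_T \phi_{B_r} h_\theta\|_{L^\infty(\R^3)} \lesssim r^{3\e} A$. For this I would split the sum at each point $x$ into contributions from $T$ with $x \in r^\e T$ and those with $x \notin r^\e T$; the latter is bounded by $C_\e r^{-1000}\|h_\theta\|_\infty$ using rapid decay, which is $\lesssim A$ by the lower bound on $\underline{\a}$, while the former is bounded by $O(r^{3\e})$ tubes (from the overlap of the $r^\e$-dilates of tiles of $\R^3$ by translates of $\theta^*$) each contributing at most $\lambda M = A$. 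This yields the desired $r^{3\e} A$ bound. The only subtlety introduced by the cone setting compared to Proposition \ref{wpd} is keeping track of the auxiliary factor $\phi_{B_r}$, which is harmless since it is smooth, essentially constant at the scale of $T$, and of size $O(1)$; in particular $\|\s_T\phi_{B_r} h_\theta\|_\infty$ can be compared to $\|\s_T h_\theta\|_\infty$ up to absorbing constants on the dyadic amplitude scale.
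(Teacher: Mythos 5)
Your proposal is correct and is exactly the argument the paper intends: the paper omits the proof of Proposition \ref{wpdC}, stating only that it is analogous to Proposition \ref{wpd}, and your write-up is that analogous argument carried out faithfully (close/far splitting relative to $U_{\underline{\a}}$, dyadic amplitude pigeonholing with $A=\lambda M$, and the $r^{3\e}$ overlap count for the upper bound in \eqref{prop}). Your identification of $\mc{S}$ as the set of $\theta$ with $\tilde{\T}_\theta\neq\emptyset$ and your observation that $\phi_{B_r}$ is harmless are precisely the points where the cone version differs from Proposition \ref{wpd}, and both are handled correctly.
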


The justification for this proposition follows an analogous argument as for Proposition \ref{wpd}, so we omit the proof.

\begin{proof}[Proof of Proposition \ref{cone3.5}] By Lemma \ref{locC}, Lemma \ref{alphC}, and Proposition \ref{wpdC}, it suffices to show that for some $\underline{\a}\ge C_\e r^{-100}\max_{\theta\in{\bf{S}}_{r^{-1/2}}}\|h_\theta\|_{L^\infty(\R^3)}$,
\begin{equation} \label{goal}
    \underline{\a}^{7/2}|U_{\underline{\a}}'|\lesssim_\e r^\e \sum_{r^{-1/2}\le \sigma\le 1}\sum_{\tau\in{\bf{S}}_{\sigma^{-1}r^{-1/2}}}\sum_{U\|U_{\tau_k,r}}|U|\Big(|U|^{-1}\int\sum_{\substack{\theta\subset\tau\\\theta\in{\bf{S}}_{r^{-1/2}}}}|h_\theta|^{7/4}W_U\Big)^{2}
\end{equation}
where 
\[ U_{\underline{\a}}'=\{x\in U_{\underline{\a}}:\,\,\underline{\a}\lesssim (\log r)^2|\sum_{\theta\in\mc{S}} \sum_{T\in\tilde{T}_{\theta}}\s_T(x)\phi_{B_r}(x)h_\theta(x)|\,\,\} \]
and $\mc{S}\subset{\bf{S}}_{r^{-1/2}}$ and $\tilde{\T}_{\theta}$ are from the proof of Proposition \ref{wpdC}. The wave envelope estimate (Theorem 1.3 from \cite{locsmooth}) gives the inequality
\[ \underline{\a}^{4}|U_{\underline{\a}}'|\lesssim_\e r^\e \sum_{r^{-1/2}\le \sigma\le 1}\sum_{\tau\in{\bf{S}}_{\sigma^{-1}r^{-1/2}}}\sum_{U\|U_{\tau_k,r}}|U|\Big(|U|^{-1}\int\sum_{\substack{\theta\subset\tau\\\theta\in\mc{S}}}|\sum_{T\in\T_{\theta,\lambda}^c}\s_T(x)\phi_{B_r}(x)h_\theta|^{2}W_U\Big)^{2}. \]
The inequality \eqref{goal} follows from the above inequality if 
\begin{equation}\label{iff} \max_{\theta\in\mc{S}}\|\sum_{T\in\T_{\theta,\lambda}^c}\s_T(x)\phi_{B_r}(x)h_\theta\|_{L^\infty(\R^3)}\le \underline{\a}. \end{equation}
Suppose that this is not the case. By Proposition \ref{wpdC} and the assumption that $\underline{\a}\ge C_\e r^{-100} \max_\theta\|h_\theta\|_{L^\infty(\R^3)}$ is bounded below, we have the set inclusion $U_{\underline{\a}}'\subset\cup_{\theta\in\mc{S}}\cup_{T\in\tilde{\T}_\theta}r^\e T$. Using this with the assumption that \eqref{iff} does not hold, we have
\begin{align*}
    \underline{\a}^{7/2}|U_{\underline{\a}}'|\le\sum_{\theta\in\mc{S}}\sum_{T\in\tilde{\T}_\theta}r^{20\e} |T|\|\s_T\phi_{B_r}h_\theta\|_{L^\infty(\R^3)}^{7/2}
\end{align*}
where we also used \eqref{prop} and \eqref{prop'} from Proposition \ref{wpdC}. Then using the locally constant property and H\"{o}lder's inequality, we have
\begin{align*}
    \|\s_T\phi_{B_r}h_\theta\|_{L^\infty(\R^3)}^{7/2}&\lesssim \||\theta^*|^{-1}|\s_T\phi_{B_r}h_\theta|*W_{\theta^*}\|_{L^\infty(\R^3)}^{7/2}\\
    &\lesssim \||\theta^*|^{-1}|\s_T\phi_{B_r}h_\theta|^{7/4}*W_{\theta^*}\|_{L^\infty(\R^3)}^2\\
    &\lesssim \Big(|\theta^*|^{-1}\int |h_\theta|^{7/4} W_T\Big)^2. 
\end{align*}
We are done after noting that $|T|=|\theta^*|$ and $T\in\tilde{\T}_\theta$ is a subset of $U\|U_{\theta,r}$.

\end{proof}

\section{Appendix B}

We sketch the adaptation of Theorem \ref{main} to general curves $\g$ with torsion. Let $\g:[0,1]\to\R^3$ be a $C^4$ curve satisfying \begin{equation}\label{torsion} \det\big[\g'(t)\,\g''(t)\, \g'''(t)\big]\not=0\qquad\text{for all}\quad t\in[0,1]. \end{equation}
For each $R\in 8^\N$, define the anisotropic neighborhood 
\[ \mc{M}^3_\g(R)=\{\g(t)+B\g''(t)+C\g'''(t):t\in[0,1],\quad|B|\le R^{-2/3},\quad|C|\le R^{-1}\}  . \]
Partition the neighborhood into blocks $\theta\in{\bf{S}}_\g(R^{-1/3})$ of the form 
\[  \theta= \{\g(t)+B\g''(t)+C\g'''(t):lR^{-1/3}\le t<(l+1)R^{-1/3},\quad|B|\le R^{-2/3},\quad|C|\le R^{-1}\} \]
where $B$ and $C$ are $\R$-valued parameters and $l\in\{0,\ldots,R^{1/3}-1\}$. The general version of Theorem \ref{main} is the following. 
\begin{thm} \label{maingen} Let $\g$ satisfy \eqref{torsion}. For any $\e>0$, there exists $C_\e<\infty$ such that
\begin{equation}\label{sqfngen}    \int_{\R^3}|f|^7\le C_\e R^\e\int_{\R^3}|\sum_\theta|f_\theta|^2|^{\frac{7}{2}}   \end{equation}
For any Schwartz function $f:\R^3\to\C$ with Fourier transform supported in $\mc{M}_\g^3(R)$.
\end{thm}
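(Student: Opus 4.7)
The plan is to reduce Theorem \ref{maingen} to Theorem \ref{main} via localization plus an affine change of variables, which is the standard strategy for curves with torsion. First, I would partition $[0,1]$ into arcs of a small length $\delta_0 > 0$ depending only on $\|\gamma\|_{C^4}$ and on a uniform lower bound for $|\det[\gamma'(t)\,\gamma''(t)\,\gamma'''(t)]|$. Using a partition of unity in frequency adapted to this arc-decomposition (and the triangle inequality: the seven-power on the left is controlled by a constant times the sum of the arc-localized pieces, and the square function on the right is only enlarged), it suffices to prove \eqref{sqfngen} for $f$ whose Fourier support is contained in the piece of $\mathcal{M}_\gamma^3(R)$ lying over a single arc $[t_0, t_0+\delta_0]$.

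Next, fix such an arc. By the torsion hypothesis \eqref{torsion}, the linear map $L_{t_0}$ sending $\gamma'(t_0)$, $\tfrac{1}{2}\gamma''(t_0)$, $\tfrac{1}{6}\gamma'''(t_0)$ to the standard basis vectors $e_1, e_2, e_3$ is invertible, with operator norm bounds depending only on $\gamma$. Taylor's theorem gives
\[
L_{t_0}\bigl(\gamma(t_0+u) - \gamma(t_0)\bigr) = (u,\, u^2,\, u^3) + O(u^4)
\]
for $|u| \le \delta_0$. I would translate $f$ in physical space by $\gamma(t_0)$ (which does not affect either side of \eqref{sqfngen}) and apply $(L_{t_0}^T)^{-1}$ to obtain a Schwartz function $\tilde f$ whose frequency support lies in the $cR^{-1}$-anisotropic neighborhood of the perturbed moment-curve arc $\{(u,u^2,u^3)+O(u^4):|u|\le\delta_0\}$, for some constant $c$ depending on $\gamma$. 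At the finest canonical scale $R^{-1/3}$, the $O(u^4)$ perturbation evaluated at $|u|=R^{-1/3}$ has size $O(R^{-4/3})$, which is smaller than $R^{-1}$ for $R$ sufficiently large; hence each block $\theta \in {\bf{S}}_\gamma(R^{-1/3})$ corresponds under $L_{t_0}$ to a set comparable, with constants depending only on $\gamma$ and $\delta_0$, to a canonical moment-curve block in ${\bf{S}}(R^{-1/3})$. Theorem \ref{main} (possibly after a bounded dyadic dilation to absorb $c$) then applies to $\tilde f$, and undoing the affine map plus the partition of unity yields \eqref{sqfngen} for the original $f$.

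The main obstacle is to verify that the $O(u^4)$ Taylor remainder is in fact negligible at every scale that appears in the proof of Theorem \ref{main}, not just at the final scale $R^{-1/3}$. The argument involves multi-scale rescalings at intermediate scales $R_k = R^{k\e}$, and at each such scale one rescales a canonical block of dimensions $R_k^{-1/3} \times R_k^{-2/3} \times R_k^{-1}$ to unit size. On a subarc of $\gamma$ of length $R_k^{-1/3}$, the Taylor perturbation is $O(R_k^{-4/3})$, which is absorbed into the $R_k^{-1}$-thickness so long as $R_k$ exceeds an absolute constant; the small-scale regime $R_k \le C$ is handled by the same trivial bound as in the small-scale case of \textsection\ref{int}. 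Once this perturbation bookkeeping is in place, every ingredient of the proof of Theorem \ref{main} transfers with only cosmetic changes: the trilinear restriction statement of Proposition \ref{trirestprop} is proved for general torsion curves in \cite{M3smallcap}; the identifications with cone planks and wave envelopes in Propositions \ref{geo3} and \ref{geo4} generalize by replacing the explicit moment-curve Frenet frame with the frame $(\gamma'(t), \gamma''(t), \gamma'''(t))$ and using the associated invertible linear map as $T$; and all subsequent pigeonholing, pruning, high-low, and multi-scale induction steps are formal consequences of these geometric facts. The outcome is Theorem \ref{maingen} with constants that depend continuously on $\gamma$ through $\|\gamma\|_{C^4}$ and $\inf_t |\det[\gamma'(t)\,\gamma''(t)\,\gamma'''(t)]|$.
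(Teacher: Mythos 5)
Your central reduction does not work as stated. After localizing to an arc $[t_0,t_0+\delta_0]$ of fixed length and applying $L_{t_0}$, the Fourier support of $\tilde f$ lies in a neighborhood of the perturbed curve $(u,u^2,u^3)+O(u^4)$ with $u$ ranging over the whole interval $[0,\delta_0]$. The quartic error at parameter $u$ has size $O(u^4)$, and for $u\sim\delta_0$ this is a fixed constant, enormously larger than the thicknesses $R^{-2/3}$ and $R^{-1}$ of $\mc{M}^3(R)$. Your remark that the perturbation ``evaluated at $|u|=R^{-1/3}$ has size $O(R^{-4/3})$'' only pertains to the single block adjacent to the basepoint of the Taylor expansion; for a block at parameter distance $\sim\delta_0$ from $t_0$, the image under $L_{t_0}$ sits at constant distance from the moment curve and is not comparable to any canonical block of ${\bf{S}}(R^{-1/3})$, so $\widehat{\tilde f}$ is not supported in a bounded dilate of $\mc{M}^3(R)$ and Theorem \ref{main} cannot be invoked for $\tilde f$. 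The Taylor remainder is dominated by $R^{-1}$ only on arcs of length $\lesssim R^{-1/4}$, which is precisely why the paper applies Theorem \ref{main} only to the pieces $f_\tau$ with $\tau$ a block of $\g$ at scale $(R/K)^{-1/3}$, $K\le R^{1/4}$, and must supply a separate multi-scale iteration (modelled on Proposition \ref{multiscaleS2}, using the general cone estimate of \textsection\ref{conegensec} and the general parabola estimate of \textsection\ref{Pgensec}) to descend from scale $1$ to scale $(R/K)^{-1/3}$ before the black-box application of Theorem \ref{main} becomes legitimate.

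Your fallback of rerunning the entire proof of Theorem \ref{main} for the perturbed curve with ``perturbation bookkeeping'' is a different and far longer route, one the paper explicitly declines to take, and as written it is not a proof: beyond the unverified assertion that every step ``transfers with only cosmetic changes,'' it omits the structural ingredient that makes any induction on scales close, namely a class $\mc{C}$ of curves, with uniform lower bounds on $\g_2''$, $\g_3'''$ and the torsion determinant, that is closed under the affine rescalings performed at each stage (Lemma \ref{genrescale} and \textsection\ref{genrescalesec}). The square-function constant must be defined as a supremum over that whole class rather than for a single $\g$; otherwise the inductive hypothesis cannot be applied to the rescaled blocks, since rescaling a block of $\g$ produces a different curve.
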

We note that the condition that $\g$ is $C^4$ may be relaxed to $C^3$ if we describe how to carry out each step of the proof of Theorem \ref{main} adapted to $\g$. By assuming that $\g$ is $C^4$, we may use a vastly simpler argument involving Taylor approximation and resembling the iteration in the proof of Proposition \ref{multiscaleS2}. The constant $C_\e$ in \eqref{sqfngen} is permitted to depend on $\e$ and on $\g$. To prove Theorem \ref{maingen}, we will adapt the proof of Theorem \ref{main} to hold for a specialized class $\mc{C}$ of curves $\g$ which is closed under certain affine rescalings. The $C_\e$ constant will be uniform for $\g$ in the class $\mc{C}$. 

Let $\frac{1}{2}\ge a>0$ and $\frac{a}{4}\ge \nu>0$. Define the class ${\mc{C}}$ to be the collection of $C^3$ curves $\g:[0,1]\to\R^3$ satisfying 
\begin{align}
    \g(t)&=(t,\g_2(t),\g_3(t)),\\
    \g_2''(t)&\ge a\quad\text{and}\quad \g_3'''(t)\ge a\qquad\text{for all}\quad t\in[0,1],\\
    \|\g_2\|_{C^4}&\le 1\quad\text{and}\quad\|\g_3\|_{C^4}\le 1,\\
    \det\big[\g'(t)\,\g''(t)\, \g'''(t)\big]&\ge \nu\qquad\text{for all}\quad t\in[0,1]. 
\end{align}
By the inverse function theorem and by possible interchanging the roles of $\g_2$ and $\g_3$, it is always possible to divide a general $\g$ satisfying \eqref{torsion} into sub-pieces $\left.\g\right|_{[c,c+\e_0]}$ (for some $\e_0>0$ depending on $\g$), each of which may be reparameterized so that it is contained in $\mc{C}$ for some $\nu,a$. Then \eqref{sqfngen} will hold with a factor of $\e_0^{-1}$ times the maximum of the $C_\e$ which work for each sub-piece of $\g$. 

For each $R\ge 1$, let $S(R)$ denote the smallest constant so that 
\[ \int_{\R^3}|\sum_{\theta\in{\bf{S}}_\g(R^{-1/3})} f_\theta|^7\le S(R)\int_{\R^3}|\sum_{\theta\in{\bf{S}}_\g(R^{-1/3})}|f_\theta|^2|^{7/2}\]
where $f:\R^3\to\C$ is a Schwartz function with $\supp\widehat{f}\subset\mc{M}^3_\g(R)$ and $\g$ is any curve in $\mc{C}$. 

\begin{proof}[Proof of Theorem \ref{maingen}]
Our goal is to show that $S(R)\lesssim_\e R^\e$. By possibly replacing $S(R)$ by $\sup_{1\le r\le R}S(r)$, it is no loss of generality to assume that $S(R)$ is a nondecreasing function of $R$. Let $\e>0$. Let $\g\in\mc{C}$ and let $f:\R^3\to\C$ be a Schwartz function with Fourier transform supported in $\mc{M}^3_\g(R)$. Write $K>0$ for a parameter we will choose to be a small power of $R$ later in the proof. We will show the multiscale inequality that for any $\d_1,\d_2>0$,
\[ S(R)\le S(R/K)C_{\d_1,\d_2}R^{\d_1}\Big[S(K^{1-\e})+R^{\d_2}\Big]. \]
The bound $S(R)\lesssim_\e R^\e$ then follows via an analogous argument as the proof of Theorem \ref{main} from Proposition \ref{multiscaleS2}. To prove the multiscale inequality, we use a simpler version of the argument from Proposition \ref{multiscaleS2}, highlighting steps which are different. Begin with the defining inequality for $S(R/K)$:
\begin{equation}\label{genS1app}
    \int_{\R^3}|f|^7\le S(R/K)\int_{\R^3}|\sum_{\tau\in{\bf{S}}_\g((R/K)^{-1/3})}|f_\tau|^2|^{7/2}. 
\end{equation}
The square function we are aiming for is $\sum_{\theta\in{\bf{S}}_\g(R^{-1/3})}|f_\theta|^2$. Assume that 
\begin{equation}\label{genassS2}
\int_{\R^3}|\sum_{\tau\in{\bf{S}}_\g((R/K)^{-1/3})}|f_\tau|^2|^{7/2}\lesssim  \int_{\R^3}|\sum_{\tau\in{\bf{S}}_\g((R/K)^{-1/3})}|f_\tau|^2*\widecheck{\eta}_{>R^{-1/3}}|^{7/2}.\end{equation}
As in the proof of Proposition \ref{multiscaleS2}, if this does not hold, then 
\[\int_{\R^3}|\sum_{\tau\in{\bf{S}}_\g((R/K)^{-1/3})}|f_\tau|^2|^{7/2}\lesssim  \int_{\R^3}|\sum_{\tau\in{\bf{S}}_\g((R/K)^{-1/3})}|f_\tau|^2*\widecheck{\eta}_{\le R^{-1/3}}|^{7/2}\lesssim \int_{\R^3}|\sum_{\theta\in{\bf{S}}_\g(R^{-1/3})}|f_\theta|^2|^{7/2}\]
and we have shown that $S(R)\lesssim S(R/K)$ in this case.

Next we describe an iterative procedure. The initial step is special, so we describe the first two steps and, if the process does not already terminate, iterate the second step. The Fourier support of $\sum_{\tau\in{\bf{S}}_\g((R/K)^{-1/3})}|f_\tau|^2 *\widecheck{\eta}_{>R^{-1/3}}$ is in an annulus $\{R^{1/3}\lesssim  |\xi|\lesssim (R/K)^{-1/3}\}$. Let $s_1$ be a dyadic value in the range $R^{-1/3}\lesssim s_1\lesssim (R/K)^{-1/3}$ satisfying
\[ \text{(R.H.S. of \eqref{genassS2})}\lesssim (\log R)^{7/2}\int_{\R^3}|\sum_{\tau\in{\bf{S}}_\g((R/K)^{-1/3})}|f_\tau|^2 *\widecheck{\eta}_{s_1}|^{7/2}\]
Then by pointwise local $L^2$-orthogonality, 
\[\int_{\R^3}|\sum_{\tau\in{\bf{S}}_\g((R/K)^{-1/3})}|f_\tau|^2 *\widecheck{\eta}_{s_1}|^{7/2}=\int_{\R^3}|\sum_{\tau\in{\bf{S}}_\g((R/K)^{-1/3})}\sum_{\substack{\tau',\tau''\in{\bf{S}}_\g(s_1)\\\tau'\tau''\subset\tau\\\tau'\sim\tau''}}f_{\tau'}\overline{f_{\tau''}} *\widecheck{\eta}_{s_1}|^{7/2}\]
where $\tau'\sim\tau''$ means $d(\tau',\tau'')\lesssim s_1$. For each $\tau\in{\bf{S}}_\g((R/K)^{-1/3})$ and $\tau'\subset\tau$, $\tau'\in{\bf{S}}_\g(s_1)$, the Fourier support of 
\[ \sum_{\substack{\tau',\tau''\in{\bf{S}}_\g(s_1)\\\tau'\tau''\subset\tau\\\tau'\sim\tau''}}f_{\tau'}\overline{f_{\tau''}} *\widecheck{\eta}_{s_1} \]
is contained in $(10\tau'-10\tau')\setminus B_{s_1}$, which, after dilating by a factor of $s_1^{-1}$, we may identify with a conical cap as we did in \textsection\ref{geo}. Therefore, we may apply a version of Proposition \ref{cone3.5} uniform for $\g\in\mc{C}$ (see \textsection\ref{conegensec} for the adaptation to general $\g$) to obtain 
\begin{align}
\int_{\R^3}|\sum_{\tau\in{\bf{S}}_\g((R/K)^{-1/3})}&\sum_{\substack{\tau',\tau''\in{\bf{S}}_\g(s_1)\\\tau'\tau''\subset\tau\\\tau'\sim\tau''}}f_{\tau'}\overline{f_{\tau''}} *\widecheck{\eta}_{s_1}|^{7/2}\lesssim_\d R^{\d} \nonumber \\
&\times \sum_{s_1\le \sigma\le 1}\sum_{\substack{\underline{\tau}\in{\bf{S}}_\g(\sigma^{-1}s_1)}} \sum_{V\|V_{\underline{\tau},s_1^{-3}}^\g}|V|\Big(\fint_V\sum_{\substack{\tau'\subset\underline{\tau} \\\tau'\in{\bf{S}}_\g(s_1)}}|\sum_{\substack{\tau',\tau''\in{\bf{S}}_\g(s_1)\\\tau'\tau''\subset\tau\\\tau'\sim\tau''}}f_{\tau'}\overline{f_{\tau''}} *\widecheck{\eta}_{s_1}|^{7/4} \Big)^{2}. \label{coneapp}\end{align}
For each $s_1\le \sigma\le 1$, $\underline{\tau}\in{\bf{S}}_\g(\sigma^{-1}s_1)$, and $V\|V_{\underline{\tau},s_1^{-3}}^\g$, by Cauchy-Schwarz, H\"{o}lder's inequality, and properties of weight functions (using that $B_{s_1}(0)\subset V_{\tau,s_1^{-3}}^\g$),  
\[\int_V \sum_{\substack{\tau'\subset\underline{\tau} \\\tau'\in{\bf{S}}_\g(s_1)}}|\sum_{\substack{\tau',\tau''\in{\bf{S}}_\g(s_1)\\\tau'\tau''\subset\tau\\\tau'\sim\tau''}}f_{\tau'}\overline{f_{\tau''}} *\widecheck{\eta}_{s_1}|^{7/4} \lesssim \int \sum_{\substack{\tau'\subset\underline{\tau} \\\tau'\in{\bf{S}}_\g(s_1)}}|f_{\tau'}|^{7/2} W_V. \]
Suppose a certain $\sigma$ term dominates, so that \eqref{coneapp} is bounded by 
\begin{equation}\label{bddby} C(\log R) \sum_{\substack{\underline{\tau}\in{\bf{S}}_\g(\sigma^{-1}s_1)}} \sum_{V\|V_{\underline{\tau},s_1^{-3}}^\g}|V|^{-1}\Big(\int \sum_{\substack{\tau'\subset\underline{\tau} \\\tau'\in{\bf{S}}_\g(s_1)}}|f_{\tau'}|^{7/2} W_V \Big)^{2}. \end{equation}
If $\sigma<K^{-\e}$, then use a general Corollary \ref{Pcor}, Cauchy-Schwarz, and H\"{o}lder's inequality to bound the above expression by 
\begin{align*} 
C(\log R)C_\d R^{\d} \sum_{\substack{\underline{\tau}\in{\bf{S}}_\g(\sigma^{-1}s_1)}} &\sum_{V\|V_{\underline{\tau},s_1^{-3}}^\g}|V|^{-1}\Big(\int \big(\sum_{\substack{\tau''\subset\underline{\tau} \\\tau'\in{\bf{S}}_\g(K^{-\e/2}s_1)}}|f_{\tau''}|^2\big)^{7/4} W_V \Big)^{2} \\
&\lesssim C(\log R)C_\d R^{\d} \int \big(\sum_{\substack{\tau''\in{\bf{S}}_\g(K^{-\e/2}s_1)}}|f_{\tau''}|^2\big)^{7/2}  . \end{align*}
The other case is that $\sigma\ge K^{-\e}$. Then \eqref{bddby} is bounded by 
\[ CK^\e\sum_{\tau\in{\bf{S}}_\g(s_1)}\int|f_\tau|^7. \]
By Taylor approximation, the Fourier support of $f_{\tau}$ is contained in the $C(R/K)^{-4/3}$-neighborhood of 
\begin{align*}
    \{\g(c)+\g'(c)(t-c)+\g''(c)(t-c)^2+\g'''(c)(t-c)^3&+C_2[\g''(c)+\g'''(c)(t-x)]+C_3\g'''(c):\\
    &c\le t\le c+(R/K)^{-1/3},\quad|C_2|\le R^{-2/3},\quad|C_3|\le R^{-1}\}. 
\end{align*}
Assume that $(R/K)^{-4/3}\le R^{-1}$. Then after an affine transformation with determinant uniform over $\mc{C}$ (see \textsection\ref{genrescalesec}), this is a subset of $\mc{M}^3(R)$, the anisotropic neighborhood of the moment curve. Therefore, by Theorem \ref{main}, 
\[ \sum_{\tau\in{\bf{S}}_\g(s_1)}\int|f_\tau|^7\lesssim C_\d R^{\d}\sum_{\tau\in{\bf{S}}_\g(s_1)}\int|\sum_{\substack{\theta\subset\tau\\\theta\in{\bf{S}}_\g(R^{-1/3})}}|f_{\theta}|^2|^{7/2}. \]
Since $\|\cdot\|_{\ell^{7/2}}\le\|\cdot\|_{\ell^1}$, the right hand side is bounded by 
\[ C_\d R^\d\int|\sum_{\theta\in{\bf{S}}_\g(R^{-1/3})}|f_\theta|^2|^{7/2}.\]
The iteration terminates with the conclusion that $S(R)\le C_\d (\log R)R^{2\d}K^\e S(R/K)$ in this case.

Assume from now that $\sigma<K^{-\e}$ and the outcome of the first step was 
\[ C_\d (\log R) R^{2\d}\int\big(\sum_{\tau''\in{\bf{S}}_\g(K^{-\e/2}s_1)}|f_{\tau''}|^2\big)^{7/2} .\]
 We may now repeat the argument laid out so far but with $\sum_{\tau''\in{\bf{S}}_\g(K^{-\e/2}s_1)}|f_{\tau''}|^2$ in place of $\sum_{\tau\in{\bf{S}}_\g((R/K)^{-1/3})}|f_\tau|^2$. After $m$ iterations, conclude that 
 \[ (\text{L.H.S. of \eqref{genassS2}})\lesssim [C_\d R^{2\d}(\log R)]^m\Big[\int|\sum_{\tau\in{\bf{S}}_\g(s_m)}|f_\tau|^2|^{7/2}+K^\e\int|\sum_{\theta\in{\bf{S}}_\g(R^{-1/3})}|f_\theta|^2|^{7/2}\Big]\]
in which $s_m\le K^{-m\e/2}(R/K)^{-1/3}$. The algorithm terminates in at most $M$ steps where $K^{-M\e/2}(R/K)^{-1/3}\sim R^{-1/3}$, so $M\sim \e^{-1}$. The conclusion is then 
\[ S(R)\le [C_\d R^{2\d}(\log R)]^{\e^{-1}} K^\e S(R/K),\]
recalling the the condition from earlier that $K\le R^{1/4}$. If for $\eta>0$, $\sup_{R\ge 1}S(R)\lesssim_\eta R^\eta$, then we also have that for each $R\gtrsim_\e 1$, 
\[ S(R)\lesssim_{\e,\eta} [C_\d R^{2\d}(\log R)]^{\e^{-1}} K^\e (R/K)^\eta \]
where we are free to choose $\d>0$, $\e>0$, and $K\le R^{1/4}$. Letting $\d=\e^3$, $\e=\eta/2$, and $K=R^{\e}$, we see that the above inequality implies 
\[ S(R)\lesssim_{\e,\eta} R^{\eta-\eta^2/10} . \]
This means that the infimum of $\eta>0$ such that $\sup_{R\ge 1}S(R)\lesssim_\eta R^\eta$ is zero, as desired. 

\end{proof}

\subsection{Affine rescaling for $\g\in\mc{C}$ \label{genrescalesec}}

Recall the definition of $\mc{C}$. Let $\frac{1}{2}\ge a>0$ and $\frac{a}{4}\ge \nu>0$. The class ${\mc{C}}$ is the collection of $C^4$ curves $\g:[0,1]\to\R^3$ satisfying 
\begin{align}
\label{cond1}    \g(t)&=(t,\g_2(t),\g_3(t)),\\
\label{cond2}    \g_2''(t)&\ge a\quad\text{and}\quad \g_3'''(t)\ge a\qquad\text{for all}\quad t\in[0,1],\\
\label{cond3}    \|\g_2\|_{C^4}&\le 1\quad\text{and}\quad \|\g_3\|_{C^4}\le 1\\
\label{cond4}    \det\big[\g'(t)\,\g''(t)\, \g'''(t)\big]&\ge \nu\qquad\text{for all}\quad t\in[0,1]. 
\end{align}

Let $C_0>0$ to be a constant that is permitted to depend on $a$ and $\nu$. Consider $C_0\le S<R$ and $\tau\in{\bf{S}}_\g(S^{-1/3})$. We will show that there exists $\tilde{\g}\in\mc{C}$ and an affine transformation $A_\tau:\R^3\to\R^3$ mapping $\tau$ to $[0,1]^3$ and mapping each $\theta\in{\bf{S}}_\g(R^{-1/3})$ with $\theta\subset\tau$ to $A(\theta)\in{\bf{S}}_{\tilde{\g}}((R/S)^{-1/3})$. 
Suppose that $\tau$ is the $l$th piece in ${\bf{S}}(S^{-1/3})$ so that
\[ \tau=\{\g(t)+C_2\g''(t)+C_3\g'''(t):lS^{-1/3}\le t<(l+1)S^{-1/3},\quad|C_2|\le S^{-2/3},\quad |C_3|\le S^{-1} \}. \]
Let $t_0=lS^{-1/3}$ and define $A_\tau$ to be the map 
\begin{equation}\label{genAtau}\begin{cases}
\xi_1\qquad\mapsto\qquad S^{1/3}(\xi_1-t_0)\\
\xi_2\qquad\mapsto\qquad S^{2/3}[\xi_2-\g_2(t_0)-\g_2'(t_0)(\xi_1-t_0)]\\
\xi_3\qquad\mapsto\qquad c_\tau S\big[\xi_3-\g_3(t_0)-\g_3'(t_0)(\xi_1-t_0)-\frac{\g_3''(t_0)}{\g_2''(t_0)}[\xi_2-\g_2(t_0)-\g_2'(t_0)(\xi_1-t_0)] \big]  \end{cases}  \end{equation}
in which $c_\tau=\frac{{\g}''(t_0)}{2[\g_2''(t_0)\g_3'''(t_0)-\g_3''(t_0)\g_2'''(t_0)]}$. Define $s=S^{1/3}(t-t_0)$. For $0\le s\le 1$, let $\tilde{\g}(s)= A_\tau\g(t)$ so that we may write  $\tilde{\g}(s)=(s,\tilde{\g}_2(s),\tilde{\g}_3(s))$, where
\begin{align*} 
\tilde{\g}_2(s)&=S^{2/3}[\g_2(t)-\g_2(t_0)-S^{-1/3}\g_2'(t_0)s]\\
\quad\text{and}\quad \tilde{\g}_3(s)&=S\big[\g_3(t)-\g_3(t_0)-S^{-1/3}\g_3'(t_0)s-S^{-2/3}\frac{\g_3''(t_0)}{\g_2''(t_0)}\tilde{\g}_2(s)\big]. \end{align*} 
We now verify that $\tilde{\g}\in\mc{C}$. Clearly, $\tilde{\g}$ is $C^4$ and has the form \eqref{cond1}. Since $\|\g_2\|_{C^4}\le 1$, it is straightforward to verify that $\|\tilde{\g}_2\|_{C^4}\le 1$. Since $\|\g_3\|_{C^4}\le 1$, $\|\tilde{\g}_2\|_{C^4}\le 1$, and $S$ is sufficiently large depending on $a$ and $\nu$, it is then easy to check that $\|\tilde{\g}_3\|_{C^4}\le 1$, which verifies \eqref{cond3}.  Note the form of the following derivatives 
\[ \tilde{\g}_2''(s)= \g_2''(t)\quad\text{and}\quad\tilde{\g}_3'''(s)= c_\tau\big[\g_3'''(t)-\frac{\g_3''(t_0)}{\g_2''(t_0)}\g_2'''(t)\big] . \]
Clearly $\g_2''(t)\ge a$ implies that $\tilde{\g}_2''(s)\ge a$. By Taylor approximation, the above expression shows that $\tilde{\g}_3'''(s)=1+O(S^{-1/3})$. Since $a\le 1/2$ and $S$ is at least a certain size depending on $a$ and $\nu$, we have that $\tilde{\g}_3'''(s)\ge a$, which verifies \eqref{cond2}. Using similar reasoning, it is straightforward to check \eqref{cond3}. Finally, we have 
\begin{align*}    \det[\tilde{\g}'(s)\,\tilde{\g}''(s)\,\tilde{\g}'''(s)]&=c_\tau\g_2''(t)\big[\g_3'''(t)-\frac{\g_3''(t_0)}{\g_2''(t_0)}\g_2'''(t)\big]-c_\tau \g_2'''(t)\big[\g_3''(t)-\frac{\g_3''(t_0)}{\tilde{\g}_2''(t_0)}\g_2''(t)\big]\\
    &= c_\tau \det[\g'(t)\,\g''(t)\,\g'''(t)]\ge \frac{a}{2}+O(S^{-1/3}).   
\end{align*} 
where we used Taylor approximation in the final inequality. Since we assumed that $\nu<\frac{a}{4}$ and $S$ is large depending on $a$ and $\nu$, property \eqref{cond4} holds. This concludes the verification that $\tilde{\g}\in\mc{C}$. We record this in the following lemma. 

\begin{lemma}[General affine rescaling]\label{genrescale} Let $C_0$ be a sufficiently large constant depending on $\nu$ and $a$. 
Let $C_0\le S\le R$ and $\tau\in{\bf{S}}_\g(S^{-1/3})$. There exists $\tilde{\g}\in\mc{C}$ such that for each $\theta\in{\bf{S}}_\g(R^{-1/3})$, $A_\tau\theta\in{\bf{S}}_{\tilde{\g}}((R/S)^{-1/3})$. 
\end{lemma}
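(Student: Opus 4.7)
The plan is to let $\tilde{\gamma}(s) := A_\tau\gamma(t_0 + S^{-1/3}s)$ for $s\in[0,1]$, with $A_\tau$ the affine map in \eqref{genAtau}. Most of the work is already done in the paragraphs immediately preceding the lemma: condition \eqref{cond1} holds by construction of $A_\tau$; \eqref{cond3} follows from the bounds $\|\gamma_2\|_{C^4},\|\gamma_3\|_{C^4}\le 1$ together with $S\ge C_0$; and \eqref{cond2}, \eqref{cond4} follow from the identities $\tilde{\gamma}_2''(s) = \gamma_2''(t)$ and $\det[\tilde{\gamma}'\ \tilde{\gamma}''\ \tilde{\gamma}'''](s) = c_\tau\det[\gamma'\ \gamma''\ \gamma'''](t)$, combined with a single Taylor expansion about $t_0$ to absorb the perturbations into the thresholds $a$ and $\nu$. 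I would formalize this step by explicitly recording these identities and the chosen form of $c_\tau$, rather than repeating the derivations.

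What remains is the correspondence of canonical blocks. The key calculation is a one-line chain rule: since $A_\tau$ is affine with linear part $A_\tau^{\mathrm{lin}}$ and $t = t_0 + S^{-1/3}s$, differentiating $\tilde{\gamma}(s) = A_\tau\gamma(t)$ in $s$ gives $A_\tau^{\mathrm{lin}}\gamma^{(k)}(t) = S^{k/3}\tilde{\gamma}^{(k)}(s)$ for $k=2,3$. Applying $A_\tau$ to a generic point $\gamma(t) + C_2\gamma''(t) + C_3\gamma'''(t)$ of a block $\theta\subset\tau$ with $\theta\in\bf{S}_\gamma(R^{-1/3})$ then produces $\tilde{\gamma}(s) + (S^{2/3}C_2)\tilde{\gamma}''(s) + (S\,C_3)\tilde{\gamma}'''(s)$, so the transverse parameters have magnitudes at most $(R/S)^{-2/3}$ and $(R/S)^{-1}$ respectively. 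The $R^{-1/3}$-parameter window in $t$ becomes a $(R/S)^{-1/3}$ window in $s$, identifying $A_\tau\theta$ as a canonical block in $\bf{S}_{\tilde{\gamma}}((R/S)^{-1/3})$.

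The only point requiring real care is the lower-bound preservation in \eqref{cond2} and \eqref{cond4}: the Taylor remainders shrink only like $S^{-1/3}$, so one must fix $C_0 = C_0(a,\nu)$ large enough that these errors do not degrade the thresholds $a$ and $\nu$. This is straightforward, and with that threshold fixed the lemma reduces to bookkeeping that ties together the computations already laid out in the paragraphs above.
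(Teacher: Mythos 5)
Your proposal is correct and follows essentially the same route as the paper: the membership $\tilde{\g}\in\mc{C}$ is delegated to the computations preceding the lemma (with $C_0(a,\nu)$ absorbing the $O(S^{-1/3})$ Taylor errors), and the block correspondence is exactly the chain-rule identity $A_\tau^{\mathrm{lin}}\g^{(k)}(t)=S^{k/3}\tilde{\g}^{(k)}(s)$ applied to the parametrization of $\theta$, rescaling the transverse parameters to $(R/S)^{-2/3}$ and $(R/S)^{-1}$ and the $t$-window to length $(R/S)^{-1/3}$. No gaps.
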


\begin{proof} From the discussion preceding the lemma, it remains to check how $A_\tau$ transforms the blocks $\theta\in{\bf{S}}_\g(R^{-1/3})$. For $m\in\{0,\ldots,R^{1/3}-1\}$ with $lS^{-1/3}\le mR^{-1/3}<(l+1)S^{-1/3}$, consider 
\[ \theta=\{\g(t)+C_2\g''(t)+C_3\g'''(t):mR^{-1/3}\le t<(m+1)R^{-1/3},\quad|C_2|\le R^{-2/3},\quad |C_3|\le R^{-1} \}. \]
Applying $A_\tau$ yields 
\begin{align*} 
A_\tau\theta&=\{A_\tau\g(t)+C_2A_\tau\g''(t)+C_3A_\tau\g'''(t):mR^{-1/3}\le t<(m+1)R^{-1/3},\quad|C_2|\le R^{-2/3},\quad |C_3|\le R^{-1} \}   \\
&= \{\tilde{\g}(s)+C_2\tilde{\g}''(s)+C_3\tilde{\g}'''(s):m(R/S)^{-1/3}-l\le s< (m+1)(R/S)^{-1/3}-l,\\
&\qquad\qquad\qquad\qquad\qquad\qquad\qquad\qquad\qquad\qquad\qquad\qquad \quad|C_2|\le (R/S)^{-2/3},\quad |C_3|\le (R/S)^{-1} \} ,
\end{align*} 
which is clearly an element of ${\bf{S}}_{\tilde{\g}}((R/S)^{-1/3})$.

\end{proof}

\subsection{The general versions of Theorem \ref{cylPS} and Corollary \ref{Pcor} \label{Pgensec}}

We claim that for each $\g\in\mc{C}$, Theorem \ref{cylPS} and Corollary \ref{Pcor} hold with $(t,\g_2(t))$ in place of the parabola $(t,t^2)$. We following the argument from \textsection\ref{Papp}. First define $\text{PS}_\g(r)$ to be the analogue of $\text{PS}(r)$ for each $\g\in\mc{C}$. Then let $\text{PS}_{\mc{C}}(r)=\sup_{\g\in\mc{C}}\text{PS}_\g(r)$. The quantity $\text{PS}_{\mc{C}}(r)$ satisfies an analogous multiscale inequality as $\text{PS}(r)$ does in Proposition \ref{multiscale}. In the \emph{narrow} case of the proof of Proposition \ref{multiscale}, we have
\[ \int_{\R^2}|h|^{7/2}\lesssim \int_{\R^2} \sum_{\ell(\tau)=4L^{-1}}|h_\tau|^{7/2} \]
in which $h:\R^2\to\C $ is a Schwartz function with Fourier support in $\mc{N}_{r^{-1}}(\{(t,\g_2(t)):0\le t\le 1\})$ and $\tau$ are approximate rectangles of the form 
\[ \{(\xi_1,\xi_2)\in\mc{N}_{16/L^2}(\{(t,\g_2(t)):0\le t\le 1\}): l4L^{-1}\le \xi_1\le (l+1)4L^{-1}\} . \]
Instead of invoking parabolic rescaling, we rescale the Fourier side using the affine map $B_\tau:\R^2\to\R^2$ defined by 
\begin{align*}
\begin{cases}    \xi_1\qquad\mapsto\qquad 4^{-1}L(\xi_1-l4L^{-1})\\
\xi_2\qquad\mapsto\qquad 16^{-1}L^2[\xi_2-\g_2(l4L^{-1})-\g_2'(l4L^{-1})(\xi_1-l4L^{-1})]\end{cases}. 
\end{align*}
The map $B_\tau$ takes each $\theta$ block of $(t,\g_2(t))$ of dimensions $\sim r^{-1/2}\times r^{-1}$ to a block of $(t,\tilde{\g}_2(t))$ with dimensions $\sim (4r/L)^{-1/2}\times (4r/L)$, where $\tilde{\g}$ is the same curve defined in \textsection\ref{genrescalesec}. This allows us to conclude that 
\[ \int_{\R^2}|h_{\tau}|^{7/2}\le \text{PS}_{\mc{C}}(16r/L^2)^{7/2}\int_{\R^2}(\sum_{\theta\subset\tau}|h_\theta|^2){7/4}, \] 
as we did in the parabola case. The final adaptation in the proof of Proposition \ref{multiscale} is to note that the bilinear restriction estimate that is referenced also holds uniformly for $\g\in\mc{C}$, which is clear from the proof of Theorem 15 in \cite{lvlsets}. 

Concluding that $\text{PS}_{\mc{C}}(r)\lesssim_\e r^\e$ after proving the multiscale inequality follows from the same argument as for the parabola. With the boundedness of $\text{PS}_{\mc{C}}(r)$, the proofs of Theorem \ref{cylPS} and Corollary \ref{Pcor} for $(t,\g_2(t))$ in place of $(t,t^2)$ are unchanged.

\subsection{The general version of Proposition \ref{cone3.5} \label{conegensec}}

In this section, we describe how to obtain a version of Proposition \ref{cone3.5} that holds uniformly for the general cones that arise from analyzing $\g\in\mc{C}$. The argument proving Proposition \ref{cone3.5} adapts immediately to those cones, except that we must invoke a general version of the $L^4$ wave envelope estimate Theorem 1.3 from \cite{locsmooth} (which was proven for the light cone). It therefore suffices to prove a certain generalization of Theorem 1.3 from \cite{locsmooth}. 

Before stating the theorem we need to prove, we present an abbreviated version of \textsection\ref{geo} which describes the geometric relationship between moment curve blocks and cone planks. When we describe sets as comparable or use $O(\cdot)$ notation, we shall always mean up to constants which are permitted to depend on $\nu$ and $a$ from the definition of $\mc{C}$. Let $\g\in\mc{C}$ and consider $\theta\in{\bf{S}}_\g(R^{-1/3})$ given by 
\[ \theta=\{\g(t)+C_2\g''(t)+C_3\g'''(t):lR^{-1/3}\le t<(l+1)R^{-1/3},\quad|C_2|\le R^{-2/3},\quad|C_3|\le R^{-1}\} \]
where $l$ is some integer in the range $0\le l\le R^{1/3}-1$. Up to a $O(R^{-1})$ error, the above set is approximately
\[ \theta\approx \{\g(lR^{-1/3})+C_1\g'(lR^{-1/3})+C_2\g''(lR^{-1/3})+C_3\g'''(lR^{-1/3}):|C_i|\le R^{-i/3}\quad\text{for}\quad i=1,2,3\}. \]
The Fourier support of $\sum_{\theta\in{\bf{S}}_\g(R^{-1/3})}|f_\theta|^2$ is contained in the union $\cup_{\theta\in{\bf{S}}_\g(R^{-1/3})}(\theta-\theta)$. The sets $\theta-\theta$ are 
\[ \theta-\theta\approx \{C_1\g'(lR^{-1/3})+C_2\g''(lR^{-1/3})+C_3\g'''(lR^{-1/3}):|C_i|\le R^{-i/3}\quad\text{for}\quad i=1,2,3\}. \]
Furthermore, on the annulus $|\xi|\sim R^{-1/3}$, $\theta-\theta$ is comparable to 
\[ (\theta-\theta)\cap\{|\xi|\sim R^{-1/3}\}\approx R^{-1/3}\mc{N}_{R^{-2/3}}(\{\lambda\g'(t):lR^{-1/3}\le t<(l+1)R^{-1/3}),\quad \frac{1}{2}\le |\lambda|\le 2\}).  \]
The right hand side is an $R^{-1/3}$-dilation of the $R^{-2/3}$-neighborhood of a plank from the generalized cone 
\[ \Gamma_\g=\{\lambda\g'(t):0\le t\le 1,\quad\frac{1}{2}\le \lambda\le 2\}.  \]

For $S\ge 1$, let ${\bf{T}}_\g(S^{-1/2})$ denote a collection of finitely overlapping planks $\z$ with dimensions $\sim 1\times S^{-1/2}\times S^{-1}$ whose union is approximately the $S^{-1}$ neighborhood of $\Gamma_\g$. Each $\z\in{\bf{T}}_\g(S^{-1/2})$ is comparable to the $S^{-1}$ neighborhood of a sector
\[ \{\lambda\g'(t):lS^{-1/2}\le t<(l+1)S^{-1/2},\quad\frac{1}{2}\le \lambda\le 2\}.  \]
For each $\z\in{\bf{S}}_\g(S^{-1/2})$, let $\z^*$ be a parallelogram with right angles that is dual to $\z$, centered at the origin, and has dimensions $\sim 1\times S^{1/2}\times S$. For each dyadic $\sigma\in[S^{-1/2},1]$ and each $\tau\in{\bf{T}}_\g(\sigma^{-1}S^{-1/2})$, let $U_{\tau,S}$ denote an anisotropically dilated version of $\tau^*$ which is comparable to 
\[ \text{Convex Hull }\big( \bigcup_{\substack{\z\in{\bf{S}}_\g(S^{-1/2})\\ \z\subset {\tau}}}\z^*\big).\] The wave envelope $U_{\tau,S}$ has dimensions $\sim \sigma^{-2}\times \sigma^{-1}S^{1/2}\times S$. Write $U\|U_{\tau,S}$ for a tiling of $\R^3$ by translates of $U_{\tau,S}$. Now we may state the theorems we must prove for our general cones. A general $L^4$ wave envelope estimate for $\Gamma_\g$ (the analogue of Theorem 1.3 from \cite{locsmooth}) follows from the following two theorems. 

\begin{theorem} \label{gensqfn}For each $\e>0$, there exists $C_\e<\infty$ such that the following holds. For any $S\in 4^\N$ and $\g\in\mc{C}$, if $f:\R^3\to\C$ is a Schwartz function with $\supp\widehat{f}\subset\mc{N}_{S^{-1}}(\Gamma_\g)$, then 
\[ \int_{\R^3}|\sum_{\z\in{\bf{T}}_\g(S^{-1/2})}f_{\z}|^4\le C_\e S^\e \int_{\R^3}|\sum_{\z\in{\bf{T}}_\g(S^{-1/2})}|f_{\z}|^2|^2 . \]
\end{theorem}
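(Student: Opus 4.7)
The plan is to adapt the proof of the $L^4$ square function estimate for the standard light cone from \cite{locsmooth} to the family $\Gamma_\g$, with all constants uniform over $\g\in\mathcal{C}$. The definition of $\mathcal{C}$ is tailored so that affine rescaling near any point produces another curve in $\mathcal{C}$ with the same parameters $a,\nu$ (up to lower-order perturbations controlled by taking the ambient scale large), which is exactly what allows induction on scales to close with a $\g$-independent constant.

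First I would prove a cone-version of the rescaling lemma parallel to Lemma \ref{genrescale}: for each dyadic $\sigma$ with $S^{-1/2}\le\sigma\le 1$ and each $\tau\in{\bf{T}}_\g(\sigma^{-1}S^{-1/2})$, there is an affine transformation on the Fourier side that maps $\tau$ to an essentially unit block, sends the $S^{-1}$-neighborhood of $\tau\cap\Gamma_\g$ to the $(\sigma^{2}S)^{-1}$-neighborhood of $\Gamma_{\tilde\g}$ for some $\tilde\g\in\mathcal{C}$, and carries caps $\z\subset\tau$ with $\z\in{\bf{T}}_\g(S^{-1/2})$ to elements of ${\bf{T}}_{\tilde\g}((\sigma S^{1/2})^{-1})$. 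The verification that the rescaled $\tilde\g$ lies in $\mathcal{C}$ is analogous to the computation in \textsection\ref{genrescalesec}, using that $\Gamma_\g$ is the cone over $\g'$ so that rescaling a plank of $\Gamma_\g$ corresponds to rescaling the cross-sectional curve $(1,\g_2',\g_3')$.

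With this rescaling in hand, the proof of Theorem 1.3 in \cite{locsmooth} transfers step by step. One performs a high-low decomposition of $g=\sum_\z|f_\z|^2$ at an intermediate frequency scale, uses a local $L^2$-orthogonality argument analogous to Lemma \ref{low} to show that on the relevant superlevel set the high part of $g$ dominates, bounds the high part via a trilinear restriction inequality for $\Gamma_\g$ on balls of radius $\sqrt{S}$, and iterates by rescaling the resulting smaller-scale square function expression. The trilinear restriction input holds uniformly for $\g\in\mathcal{C}$ because three pairwise $\sigma$-separated cone planks in ${\bf{T}}_\g$ span $\R^3$ with a parallelepiped of volume $\gtrsim \sigma^{3}$; this transversality is a direct consequence of the uniform torsion lower bound $\det[\g',\g'',\g''']\ge\nu$ built into the definition of $\mathcal{C}$.

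The main obstacle is bookkeeping: verifying that every geometric input in \cite{locsmooth} (wave packet structure at scale $S^{-1/2}$, local $L^2$-orthogonality across neighboring caps, the transversality estimates used in the trilinear inequality, and the affine rescaling invariance of the class of cones) retains constants depending only on $a,\nu,\e$ and not on the particular $\g\in\mathcal{C}$. This is a consequence of the $C^4$ control of $\g$ together with the lower bounds $\g_2''\ge a$, $\g_3'''\ge a$, and $\det[\g',\g'',\g''']\ge\nu$: these make every Taylor approximation of $\g'$ used in identifying cone planks with cone planks of a rescaled $\tilde\g$ uniformly accurate, and they ensure that the rescaled curves remain in $\mathcal{C}$ with the same parameters. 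Once this uniformity is checked, the induction on scales of \cite{locsmooth} yields Theorem \ref{gensqfn} with a constant $C_\e$ depending only on $\e,a,\nu$.
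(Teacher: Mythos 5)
Your overall sentiment---adapt the $L^4$ theory of \cite{locsmooth} to $\Gamma_\g$ with constants uniform over $\mc{C}$, via an affine rescaling of cone planks that stays inside $\mc{C}$---is the right one, and the rescaling lemma you describe (the map $B_\tau$ sending a sector of $\Gamma_\g$ to $\Gamma_{\tilde\g}$) is indeed an ingredient of the paper's argument. But the core of your plan rests on the claim that ``the proof of Theorem 1.3 in \cite{locsmooth} transfers step by step,'' and the proof you then describe---a high-low decomposition of $g=\sum_\z|f_\z|^2$ on a superlevel set, high-dominance, trilinear restriction on $\sqrt S$-balls, iterate---is not the proof of that theorem. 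It is the scheme used for the parabola in \cite{gmw} and for the moment curve in the body of this paper. For the cone that scheme does not close as stated: the Fourier supports $\z-\z$ of the summands of $g$ overlap with unbounded multiplicity away from the origin, so the high part of $g$ is \emph{not} approximately $\ell^2$-orthogonal at the finest scale, and bounding $\int|g^h|^2$ (or the broad part of $\int|f|^4$) produces wave-envelope expressions $\sum_U|U|(\fint_U\sum_{\z\subset\tau}|f_\z|^2)^2$ rather than $\int|\sum_\z|f_\z|^2|^2$. Controlling that overlap is precisely the content of the separate estimate (Theorem \ref{genwee} here, Lemma 1.4 in \cite{locsmooth}), which your proposal neither states nor proves; its proof is a Plancherel/overlap computation on the regions $\z_\sigma$ that uses the torsion lower bound through the function $F_t(s)=[\dot\g(t)+\tfrac{B}{h}\ddot\g(t)]\cdot(\dot\g(s)\times\ddot\g(s))$, not any restriction inequality.

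The paper's actual derivation of Theorem \ref{gensqfn} is also quite different from what you sketch, and is much shorter than redoing \cite{locsmooth}: with $T(S)$ the best uniform constant over $\mc{C}$, one applies the square function estimate at the coarser scale $S/K$, then Theorem \ref{genwee} to the resulting expression $\int|\sum_{\z_0}|f_{\z_0}|^2|^2$, handles large $\sigma$ by local $L^2$ orthogonality and small $\sigma$ by rescaling each wave envelope with $B_\tau$ and Khintchine's inequality, arriving at $T(S)\lesssim_\d S^\d\, T(S/K)\max_{\sigma}T(\sigma^{-1}K^{1/2})$; choosing $K=S^{3/4}$ gives $T(S)\lesssim_\d S^\d T(S^{1/4})T(S^{5/8})$, which forces $T(S)\lesssim_\e S^\e$. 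No trilinear restriction, no superlevel sets, and no pruning enter. To repair your proposal you would need to (i) state and prove the uniform overlap/wave-envelope lemma for $\Gamma_\g$, and (ii) supply a bootstrap that converts it into the square function estimate; as written, both of these---the actual mathematical content of the theorem---are missing.
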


\begin{theorem}\label{genwee}For each $\e>0$, there exists $C_\e<\infty$ such that the following holds. For any $S\in 4^\N$ and $\g\in\mc{C}$, if $f:\R^3\to\C$ is a Schwartz function with $\supp\widehat{f}\subset\mc{N}_{S^{-1}}(\Gamma_\g)$, then 
\begin{equation}\label{genweeineq} \int_{\R^3}|\sum_{\z\in{\bf{T}}_\g(S^{-1/2})}|f_{\z}|^2|^2\le C_\e S^\e \sum_{S^{-1/2}\le \sigma\le 1}\sum_{\tau\in{\bf{S}}_\g(\sigma^{-1}S^{-1/2})}\sum_{U\|U_{\tau,S}}|U|\Big(\fint_U\sum_{\z\subset\tau}|f_\z|^2\Big)^2. \end{equation}

\end{theorem}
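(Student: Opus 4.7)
The plan is to adapt the proof of the $L^4$ wave envelope estimate for the light cone (Theorem 1.3 of \cite{locsmooth}) to the general cone $\Gamma_\g$ for $\g \in \mc{C}$. The proof in \cite{locsmooth} rests on a high-low frequency decomposition combined with an induction on scales, using only two nontrivial inputs: the geometric structure of the light cone (its sectors at scale $\sigma$ are $\sim 1 \times \sigma \times \sigma^2$ planks, and wave envelopes have the dual anisotropic dimensions) and an $L^{7/2}$ square function estimate for the parabola. Both inputs have uniform analogues for $\Gamma_\g$ over $\g \in \mc{C}$, so the entire argument goes through.

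First I would verify the geometric claims. The conditions defining $\mc{C}$---namely $\g_2'' \ge a$, $\g_3''' \ge a$, $\|\g_i\|_{C^4}\le 1$, and $\det[\g',\g'',\g''']\ge \nu$---guarantee that at each $t\in[0,1]$ the frame $\{\g'(t),\g''(t),\g'''(t)\}$ is well-conditioned with bounds depending only on $(a,\nu)$. Consequently, the sectors $\z\in{\bf T}_\g(S^{-1/2})$ really are $\sim 1 \times S^{-1/2} \times S^{-1}$ planks (adapted to this frame) and the wave envelopes $U_{\tau,S}$ are $\sim \sigma^{-2} \times \sigma^{-1}S^{1/2} \times S$ planks (as described in the problem setup), with all constants uniform in $\g$. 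A Taylor expansion of $\lambda\g'(t)$ at a base point $t_0$ in this frame,
\[ \lambda \g'(t) = \lambda\g'(t_0)+\lambda(t-t_0)\g''(t_0)+\tfrac{1}{2}\lambda(t-t_0)^2\g'''(t_0)+O((t-t_0)^3), \]
shows that at scale $|t-t_0|\le S^{-1/2}$, $\Gamma_\g$ agrees with an affine image of the light cone up to an error of order $S^{-3/2}$, which is negligible against the neighborhood width $S^{-1}$. This is the underlying reason every geometric input from \cite{locsmooth} has a uniform analogue.

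Second, I would substitute the general $L^{7/2}$ parabola square function estimate for the curve $(t,\g_2(t))$, established uniformly over $\mc{C}$ in Section \ref{Pgensec}, for the standard one used in \cite{locsmooth}. The high-low step then reduces the wave envelope estimate at scale $S$ to: (a) a local $L^2$-orthogonality argument, which depends only on the partition structure of ${\bf T}_\g$ and is purely geometric; and (b) an application of the parabola square function estimate to the high-frequency part of $\sum_\z |f_\z|^2$, which is Fourier-supported near a curve that, after a change of variables analogous to Propositions \ref{geo3} and \ref{geo4}, is affinely equivalent to a piece of $(t,\g_2(t))$. Running the induction on scales requires closure of $\mc{C}$ under the affine rescalings that map a sector $\tau\in{\bf T}_\g(\sigma^{-1}S^{-1/2})$ to a full cone at scale $S'=\sigma^2 S$; this closure is the cone-side analogue of Lemma \ref{genrescale} and holds by essentially the same computation (with $c_\tau$ adjusted to normalize the leading coefficient on the rescaled $\tilde\g_3'''$).

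The main obstacle is tracking uniformity of constants in all of these ingredients as $\g$ ranges over $\mc{C}$. Every constant appearing in the \cite{locsmooth} argument is ultimately controlled by geometric quantities (curvatures, torsions, determinants, $C^4$-norms) that are uniformly bounded above and below by $a,\nu$ on $\mc{C}$; once this is verified step by step, the argument adapts with no conceptual change. The outcome is a multi-scale inequality for $W_{\mc{C}}(S):=\sup_{\g\in\mc{C}}W_\g(S)$ of the form $W_{\mc{C}}(S)\lesssim_\e S^{C\e}W_{\mc{C}}(S^{1-\e})$, which iterates (as in Corollary \ref{PS}) to give $W_{\mc{C}}(S)\lesssim_\e S^\e$, proving Theorem \ref{genwee}.
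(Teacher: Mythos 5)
There is a genuine gap here: you are sketching the proof of the wrong theorem. The high--low decomposition, the induction on scales, the parabola square function input, and the multi-scale inequality $W_{\mc{C}}(S)\lesssim_\e S^{C\e}W_{\mc{C}}(S^{1-\e})$ are the machinery for the full $L^4$ wave envelope estimate $\int|f|^4\le\cdots$, which in this paper is split off as Theorem \ref{gensqfn}. Theorem \ref{genwee} is a different and much more elementary statement: it bounds $\int|\sum_\z|f_\z|^2|^2$, not $\int|f|^4$, and it is the \emph{input} to the induction-on-scales argument for Theorem \ref{gensqfn} (whose proof begins by invoking Theorem \ref{genwee}). So ``adapting the proof of Theorem 1.3 of \cite{locsmooth}'' to establish Theorem \ref{genwee} is circular as stated; the correct model is the direct proof of Lemma 1.4 of \cite{locsmooth}. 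That proof is a pure Plancherel/orthogonality argument: write $\int|\sum_\z|f_\z|^2|^2=\int|\sum_\z\widehat{|f_\z|^2}|^2$, decompose the frequency support $\cup_\z(\z-\z)$ into the dyadic pieces $\z_\sigma$ and pigeonhole in $\sigma$, show that on $\Omega_\sigma$ the pieces $\z_\sigma$ coming from distinct $\tau\in{\bf{S}}_\g(\sigma^{-1}S^{-1/2})$ are $O(1)$-overlapping, and then convert $\int|\sum_{\z\subset\tau}\widehat{|f_\z|^2}|^2$ into $\sum_{U\|U_{\tau,S}}|U|(\fint_U\sum_{\z\subset\tau}|f_\z|^2)^2$ using a bump function adapted to $\z_\sigma$ whose inverse Fourier transform concentrates on $U_{\tau,S}$. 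The only place the hypotheses defining $\mc{C}$ do real work is the overlap count, which for general $\g$ requires a quantitative transversality computation: one studies $F_t(s)=[\dot{\g}(t)+\tfrac{B}{h}\ddot{\g}(t)]\cdot(\dot{\g}(s)\times\ddot{\g}(s))$ and uses $|F_t'(t)|\gtrsim\nu$ together with the $C^4$ bounds to conclude $|F_t(s)|\sim|t-s|$ on short intervals. Your proposal contains none of this, and it is precisely the content of the theorem.

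Your step (b) is also incorrect as written. The high-frequency part of $\sum_\z|f_\z|^2$ is Fourier-supported on a two-dimensional conical set (a union of the planks $\z-\z$ minus a ball), not on the neighborhood of a plane curve, so a cylindrical parabola square function estimate does not apply to it; and even if some square function estimate were applied, it would not produce the anisotropic averages $\fint_U\sum_{\z\subset\tau}|f_\z|^2$ appearing on the right of \eqref{genweeineq} --- those arise from the locally constant property of $\sum_{\z\subset\tau}|f_\z|^2$ on translates of $U_{\tau,S}$ combined with the $L^2$ orthogonality across $\tau$, not from any $L^{7/2}$ or $L^4$ input. Your general observations about uniformity of the frame $\{\g',\g'',\g'''\}$ over $\mc{C}$ and about Taylor approximation are correct and relevant background, but they do not substitute for the missing orthogonality argument.
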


First we prove Theorem \ref{gensqfn} assuming Theorem \ref{genwee}. 
\begin{proof}[Proof of Theorem \ref{gensqfn}]
Let $T(S)$ be the infimum of constants $A>0$ such that 
\[ \int_{\R^3}|\sum_{\z\in{\bf{T}}_\g(S^{-1/2})}f_{\z}|^4\le A \int_{\R^3}|\sum_{\z\in{\bf{T}}_\g(S^{-1/2})}|f_{\z}|^2|^2 \]
for any Schwartz function $f$ with Fourier support in $\mc{N}_{S^{-1}}(\Gamma_\g)$, for any $\g\in\mc{C}$. Let $K<S$ be a large parameter we will specify later. Fix a Schwartz $f:\R^3\to\C$ with Fourier transform supported in $\mc{N}_{S^{-1}}(\Gamma_\g)$, for some $\g\in\mc{C}$. Using the definition of $T(\cdot)$, we have
\[   \int_{\R^3}|f|^4\le T(S/K) \int_{\R^3}|\sum_{\z_0\in{\bf{T}}_\g((S/K)^{-1/2})}|f_{\z_0}|^2|^2 \]
in which the $\z_0$ vary over the set ${\bf{T}}_\g((S/K)^{-1/2})$. Apply Theorem \ref{genwee} to obtain 
\[  \int_{\R^3}|f|^4\le T(S/K)C_\d S^\d  \sum_{(S/K)^{-1/2}\le \sigma\le 1}\sum_{\tau\in{\bf{S}}_\g(\sigma^{-1}(S/K)^{-1/2})}\sum_{U\|U_{\tau,S/K}}|U|\Big(\fint_U\sum_{\z_0\subset\tau}|f_{\z_0}|^2\Big)^2.\]

If $\sigma^{-2}>S^{1/2}$, then using local $L^2$ orthogonality, we have
\[ \sum_{\tau\in{\bf{S}}_\g(\sigma^{-1}(S/K)^{-1/2})}\sum_{U\|U_{\tau,S/K}}|U|\Big(\fint_U\sum_{\z_0\subset\tau}|f_{\z_0}|^2\Big)^2\lesssim  \sum_{\tau\in{\bf{S}}_\g(\sigma^{-1}(S/K)^{-1/2})}\sum_{U\|U_{\tau,S/K}}|U|\Big(|U|^{-1}\int\sum_{\z\subset\tau}|f_{\z}|^2W_U\Big)^2\]
where the $\z$ are in ${\bf{T}}_\g(S^{-1/2})$. Then by Cauchy-Schwarz and $\|\cdot\|_{\ell^2}\le\|\cdot\|_{\ell^1}$, the right hand side above is bounded by a constant times $ \int|\sum_\z|f_\z|^2|^2$. 

Now suppose that $\sigma^{-2}\le S^{1/2}$. Consider $\tau\in{\bf{T}}_\g(\sigma^{-1}S^{-1/2})$ which is a neighborhood of the sector 
\[ X_\tau= \{\lambda\dot{\g}(t):l\sigma^{-1}S^{-1/2}\le t\le (l+1)l\sigma^{-1}S^{-1/2},\quad\frac{1}{2}\le \lambda\le 2\}. \]
Write $r=\sigma S^{1/2}$. The affine transformation $B_\tau:\R^3\to\R^3$ defined by 
\[ \begin{cases}
\xi_1\mapsto \xi_1\\
\xi_2\mapsto r[\xi_2-\dot{\g}_2(lr^{-1})]\\
\xi_3\mapsto r^2[\xi_3-\dot{\g}_3(lr^{-1})-\frac{\ddot{\g}_3(lr^{-1})}{\ddot{\g}_2(lr^{-1})}(\xi_2-\dot{\g}_2(lr^{-1}))]
\end{cases} \]
maps $X_\tau$ to $\Gamma_{\tilde{\g}}$, where $\tilde{\g}\in\mc{C}$ is the same curve that $\g$ maps to after using the affine transformation from \textsection\ref{genrescalesec}. Note that 
\[ \sum_{U\|U_{\tau,S/K}}|U|\Big(\fint_U\sum_{\z_0\subset\tau}|f_{\z_0}|^2\Big)^2\lesssim \int_{\R^3}|\sum_{\z_0\subset\tau}|f_{\z_0}|^2|^2. \]
Rescaling the Fourier side using $B_\tau$ and using the definition of $T(\cdot)$ again, we may use Khintchin's inequality to select $c_{\z_0}\in\{\pm1\}$ satisfying  
\[ \int|\sum_{\z_0\subset\tau}|f_{\z_0}|^2|^2\lesssim \int|\sum_{\z_0\subset\tau} c_{\z_0}f_{\z_0}|^4\lesssim  T(\sigma^{-1}K^{1/2})\int|\sum_{\z}|f_\z|^2|^2.  \]

Conclude from this argument that 
\[ T(S)\lesssim_\d S^\d T(S/K)\max_{S^{-1/4}<\sigma\le 1}T(\sigma^{-1}K^{1/2}). \]
Choose $K=S^{3/4}$ and assume without loss of generality that $S(\cdot)$ is a nondecreasing function so that the above inequality implies that $T(S)\lesssim_\d S^\d T(S^{1/4})T(S^{5/8})$. Since $\d>0$ is arbitrarily small, conclude that the infimum of $\eta>0$ such that $\sup_{S\ge 1}T(S)\lesssim_\eta S^\eta$ must be zero. 

\end{proof}

Now we prove Theorem \ref{genwee}, which is a succinct version of the proof of Lemma 1.4 in \cite{locsmooth}. 
\begin{proof}[Proof of Theorem \ref{genwee}] Fix $\g\in\mc{C}$ and a Schwartz $f:\R^3\to\C$ with $\supp\widehat{f}\subset\mc{N}_{S^{-1}}(\Gamma_\g)$. By Plancherel's theorem, we have
\[ \int|\sum_{\z\in{\bf{T}}_\g(S^{-1/2})}|f_\z|^2|^2=\int|\sum_{\z\in{\bf{T}}_\g(S^{-1/2})}\widehat{|f_\z|^2}|^2 . \]
The Fourier transforms $\widehat{|f_\z|^2}$ are supported in $\z-\z$, which is essentially $\z$ translated to the origin. For each dyadic $\sigma$, $S^{-1/2}< \sigma\lesssim 1$ and each $\z\in{\bf{T}}_\g(S^{-1/2})$, define 
\[ \z_\sigma=\{A\dot{\g}(lS^{-1/2})+B\ddot\g(lS^{-1/2})+C\dot\g(lS^{-1/2})\times\ddot\g(lS^{-1/2}): |A|\lesssim \sigma^2,\quad|B|\sim \sigma S^{-1/2},\quad |C|\lesssim S^{-1}\}. \]
For $\sigma=S^{-1/2}$, define $\z_{S^{-1/2}}$ to be  
\[ \sigma_{S^{-1/2}}=\{A\dot{\g}(lS^{-1/2})+B\ddot\g(lS^{-1/2})+C(\dot\g(lS^{-1/2})\times\ddot\g(lS^{-1/2})): |A|\lesssim S^{-1},\quad|B|\lesssim S^{-1},\quad |C|\lesssim S^{-1}\}.\] 
Note that $\z-\z$ is contained in $\cup_\sigma \z_\sigma$. Write $\Omega_\sigma=\cup_{\z}\z_\sigma$. Suppose that 
\[ \int|\sum_{\z\in{\bf{T}}_\g(S^{-1/2})}\widehat{|f_\z|^2}|^2 \lesssim (\log S)\int_{\Omega_\sigma}|\sum_{\z\in{\bf{T}}_\g(S^{-1/2})}\widehat{|f_\z|^2}|^2 \]
First we will show that  
\begin{equation}\label{show} \int_{\Omega_\sigma}|\sum_{\z\in{\bf{T}}_\g(S^{-1/2})}\widehat{|f_\z|^2}|^2 \lesssim_\e S^\e \sum_{\tau\in{\bf{T}}_\g(\sigma^{-1}S^{-1/2})}\int |\sum_{\z\subset\tau}\widehat{|f_\z|^2}|^2. \end{equation}
The above inequality is permitted to have implicit constants depending on the parameters $a,\nu$ from the definition of $\mc{C}$. Therefore, it suffices to show that for $\d=\d(a,\nu)>0$ that we will choose later, for each $\tau'\in{\bf{S}}_\g(\d)$, we have
\[ \int_{\Omega_\sigma}|\sum_{\substack{\z\subset\tau'}}\widehat{|f_\z|^2}|^2\lesssim_\e S^\e \sum_{\substack{\tau\subset\tau'\\ \tau\in{\bf{S}}_\g(\sigma^{-1}S^{-1/2})}}\int_{\Omega_\sigma}|\sum_{\substack{\z\subset\tau}}\widehat{|f_\z|^2}|^2.\]
Consider the intersection $\Omega_\sigma\cap\{\xi\in\R^3:\xi_1=h\}$ where $h\ge 0$ (the $h<0$ case is analogous). Suppose first that $h\gg \sigma S^{-1/2}$. If $\{\xi:\xi_1=h\}\cap\z_\sigma\cap\z_\sigma'$ is nonempty, then since $\ddot{\g}$ has $0$ in the first component, there is some choice of parameters $B,B'$ with $|B|,|B'|\lesssim\sigma S^{-1/2}$ such that
\begin{align}\label{Taylor} |h\dot{\g}(lS^{-1/2})+B\ddot{\g}(lS^{-1/2})-h\dot{\g}(l'S^{-1/2})-B'\ddot{\g}(l'S^{-1/2})|\lesssim S^{-1}, \end{align}
where $\z$ is the $l$th plank and $\z'$ is the $(l')$th plank. It follows from Taylor's theorem and the definition of $\mc{C}$ that $|lS^{-1/2}-l'S^{-1/2}|\lesssim \sigma^{-1}S^{-1/2}$, so $\z_\sigma,\z\sigma'$ are contained in the same $\tau\in{\bf{S}}_\g(\sigma^{-1}S^{-1/2})$. 

Similarly, if $h\ll \sigma S^{-1/2}$, then for some parameters $B,B'$ with $|B|\sim|B'|\sim \sigma S^{-1/2}$, \eqref{Taylor} holds. In this case, the left hand side is dominated by the difference in the $\ddot{\g}$ terms, which is bounded below by $\sim |B||lS^{-1/2}-l'S^{-1/2}|$. Conclude again that $\z_\sigma$ and $\z_{\sigma'}$ are contained in the same $\tau\in{\bf{S}}_\g(\sigma^{-1}S^{-1/2})$. 

It remains to analyze the case in which $h\sim \sigma S^{-1/2}\sim |B|$. Fix a $\tau'\in{\bf{S}}_\g(\d)$. The set $(\cup_{\z\subset\tau'}\z_\sigma)\cap\{\xi:\xi_1=h\}$ is contained in an $\sim S^{-1}$-neighborhood of 
\[ \{h\dot{\g}(t)+B\ddot{\g}(t): t\in I,\quad|B|\sim\sigma S^{-1/2}\} \]
where $I$ is the $\d$ interval corresponding to the sector defining $\tau'$. Fix $B$ satisfying $|B|\sim\sigma S^{-1/2}$ and $t\in I$. Consider the property that $h\dot{\g}(t)+B\ddot{\g}(t)\in\mc{N}_{S^{-1}}(\z_\sigma)$, where $\z_\sigma$ has corresponding parameter $lS^{-1/2}\in I$. 
Letting $t_k=k\sigma^{-1}S^{-1/2}$, $t_k\in I$, we note that if $|lS^{-1/2}-t_k|\le \sigma^{-1}S^{-1/2}$, then 
\begin{equation}\label{close} |h\dot{\g}(lS^{-1/2})+B'\ddot{\g}(lS^{-1/2})-h\dot{\g}(t_k)-B'\ddot{\g}(t_k)|\lesssim S^{-1}\end{equation}
for any $B'$ with $|B'|\sim\sigma S^{-1/2}$. Therefore, to bound the number of $\z_\sigma$ within $\sim S^{-1}$ of $h\dot{\g}(t)+B\ddot{\g}(t)$, it suffices to bound the number of $t_k$ for which there exists $B_k$ with $|B_k|\sim \sigma S^{-1/2}$ satisfying 
\[ |h\dot{\g}(t)+B\ddot{\g}(t)-h\dot{\g}(t_k)-B_k\ddot{\g}(t_k)|\lesssim S^{-1}. \]
This is because for each $t_k$ satisfying the above inequality, there are $\sim \sigma^{-1}$ many $lS^{-1/2}$ within $\sigma ^{-1}S^{-1/2}$ of $t_k$ that satisfy a similar inequality. Dividing the previous displayed inequality through by $h$, we obtain 
\[ |\dot{\g}(t)+\frac{B}{h}\ddot{\g}(t)-\dot{\g}(t_k)-\frac{B_k}{h}\ddot{\g}(t_k)|\lesssim \sigma ^{-1}S^{-1/2}. \]
This implies that 
\begin{equation}\label{alm} |[\dot{\g}(t)+\frac{B}{h}\ddot{\g}(t)]\cdot (\dot{\g}(t_k)\times\ddot{\g}(t_k))|\lesssim \sigma^{-1}S^{-1/2}. \end{equation}
Let $F_t:I\to \R$ be defined by 
\[ F_t(s)=[\dot{\g}(t)+\frac{B}{h}\ddot{\g}(t)]\cdot (\dot{\g}(s)\times\ddot{\g}(s)). \]
Note that $F_t(t)=0$, $F_t'(s)=[\dot{\g}(t)+\frac{B}{h}\ddot{\g}(t)]\cdot (\dot{\g}(s)\times\dddot{\g}(s))$, $|F_t'(t)|\gtrsim \nu>0$, $F_t''(s)=[\dot{\g}(t)+\frac{B}{h}\ddot{\g}(t)]\cdot (\dot{\g}(s)\times{\g}^{(4)}(s)+\ddot{\g}(s)\times\dddot{\g}(s))$, and $|F_t''(s)|\lesssim 1$. It follows that for $\d$ (the length of the domain interval $I$) sufficiently small, depending on $\nu$, $|F_t(s)|\sim |t-s|$. Conclude that \eqref{alm} implies that $|t-t_k|\lesssim \sigma^{-1}S^{-1/2}$. Since the $t_k$ are $\sigma^{-1}S^{-1/2}$-separated, there are $\lesssim 1$ many $t_k$ which satisfy \eqref{alm}. This finishes the justification of \eqref{show}.

It remains to bound the integral 
\[ \int_{\Omega_\sigma}|\sum_{\z\subset\tau}\widehat{|f_\z|^2}|^2\]
for each $\tau\in{\bf{S}}_\g(\sigma^{-1}S^{-1/2})$. The inequality \eqref{close} shows that if $\z\subset\tau$ ($\tau$ with corresponding parameter $t_k$), then $\z_\sigma\subset C(\z_k)_\sigma$, where $\z_k\in{\bf{S}}_\g(S^{-1/2})$ has parameter $lS^{-1/2}=t_k$. Let $\eta_k$ be a bump function equal to $1$ on $C(\z_k)_\sigma$ and with Fourier transform decaying rapidly away from $U_{\tau,S}^*$, the wave envelope centered at the origin. Then by Cauchy-Schwarz and the decay of $|\widecheck{\eta_k}|$, we have
\begin{align*} 
\int_{\Omega_\sigma}|\sum_{\z\subset\tau}\widehat{|f_\z|^2}|^2&=\int_{\Omega_\sigma}|\sum_{\z\subset\tau}\widehat{|f_\z|^2}\eta_k|^2\\
&\le \int_{\R^2}|\sum_{\z\subset\tau}\widehat{|f_\z|^2}|^2 = \int_{\R^2}|\sum_{\z\subset\tau}|f_\z|^2*\widecheck{\eta_k}|^2\\
&\le  \sum_{U\|U_{\tau,S}}\int_U|\sum_{U'\|U_{\tau,S}}\int_{U'}\sum_{\z\subset\tau}|f_\z|^2(y)\|\widecheck{\eta_k}\|_{L^\infty(x-U')}dy|^2dx\\
&\lesssim \sum_{U\|U_{\tau,S}}\int_U\sum_{U'\|U_{\tau,S}}\|\widecheck{\eta}_k\|_{L^\infty(x-U')}\Big(\int_{U'}\sum_{\z\subset\tau}|f_\z|^2\Big)^2|U|^{-1}dx\\
&\lesssim \sum_{U\|U_{\tau,S}}\sum_{U'\|U_{\tau,S}}\|\widecheck{\eta}_k\|_{L^\infty(U-U')}\Big(\int_{U'}\sum_{\z\subset\tau}|f_\z|^2\Big)^2\sim \sum_{U'\|U_{\tau,S}}|U'|\Big(\fint_{U'}\sum_{\z\subset\tau}|f_\z|^2\Big)^2,
\end{align*}
which is the desired upper bound.

\end{proof}

\bibliographystyle{alpha}
\bibliography{AnalysisBibliography}

\newcommand{\etalchar}[1]{$^{#1}$}
\begin{thebibliography}{GGP{\etalchar{+}}21}

\bibitem[BD15]{BD}
Jean Bourgain and Ciprian Demeter.
\newblock The proof of the {$l^2$} decoupling conjecture.
\newblock {\em Ann. of Math. (2)}, 182(1):351--389, 2015.

\bibitem[C\'82]{cordoba}
Antonio C\'{o}rdoba.
\newblock Geometric {F}ourier analysis.
\newblock {\em Ann. Inst. Fourier (Grenoble)}, 32(3):vii, 215--226, 1982.

\bibitem[Car15]{carbery}
Anthony Carbery.
\newblock A remark on reverse littlewood-paley, restriction and kakeya, 2015.

\bibitem[Dem20]{demeter}
Ciprian Demeter.
\newblock {\em Fourier restriction, decoupling, and applications}, volume 184
  of {\em Cambridge Studies in Advanced Mathematics}.
\newblock Cambridge University Press, Cambridge, 2020.

\bibitem[Fef73]{feffL4}
Charles Fefferman.
\newblock A note on spherical summation multipliers.
\newblock {\em Israel J. Math.}, 15:44--52, 1973.

\bibitem[FGM21]{lvlsets}
Yuqiu Fu, Larry Guth, and Dominique Maldague.
\newblock Sharp superlevel set estimates for small cap decouplings of the
  parabola, 2021.

\bibitem[GGP{\etalchar{+}}21]{revphil}
Philip~T. Gressman, Shaoming Guo, Lillian~B. Pierce, Joris Roos, and Po-Lam
  Yung.
\newblock Reversing a philosophy: From counting to square functions and
  decoupling.
\newblock {\em The Journal of Geometric Analysis}, 31(7):7075--7095, feb 2021.

\bibitem[GM22a]{ampdep}
Larry Guth and Dominique Maldague.
\newblock Amplitude dependent wave envelope estimates for the cone in
  $\mathbb{R}^3$, 2022.

\bibitem[GM22b]{M3smallcap}
Larry Guth and Dominique Maldague.
\newblock Small cap decoupling for the cone in $\mathbb{R}^3$, 2022.

\bibitem[GMW20]{gmw}
Larry Guth, Dominique Maldague, and Hong Wang.
\newblock Improved decoupling for the parabola, 2020.

\bibitem[GWZ20]{locsmooth}
Larry Guth, Hong Wang, and Ruixiang Zhang.
\newblock A sharp square function estimate for the cone in {$\Bbb {R}^3$}.
\newblock {\em Ann. of Math. (2)}, 192(2):551--581, 2020.

\bibitem[MSS92]{mss}
Gerd Mockenhaupt, Andreas Seeger, and Christopher~D. Sogge.
\newblock Wave front sets, local smoothing and {B}ourgain's circular maximal
  theorem.
\newblock {\em Ann. of Math. (2)}, 136(1):207--218, 1992.

\end{thebibliography}

\end{document}